\documentclass[11pt, a4paper, reqno]{amsart}

\usepackage{graphicx}
\usepackage{amsmath}
\usepackage{amssymb}
\usepackage{amsthm}
\usepackage[utf8]{inputenc}
\usepackage[T1]{fontenc}
\usepackage{amsfonts}
\usepackage{mathtools}
\usepackage{mathrsfs}
\usepackage{enumitem}
\usepackage{xfrac}
\usepackage{bbm}
\usepackage{xcolor}
\usepackage{lmodern}
\usepackage{xparse}
\usepackage[
backend=biber,
style=alphabetic,
sorting=nyt
]{biblatex}
\usepackage{geometry}
\usepackage{blindtext}
\usepackage{stmaryrd} 
\usepackage{url}
\usepackage{bbold}
\usepackage{tikz}\usetikzlibrary{arrows.meta, cd, decorations.markings, 3d, shapes.geometric}
\usepackage{pgfplots}\pgfplotsset{compat=1.18}
\usepackage{standalone}
\usepackage[hidelinks]{hyperref}
\usepackage[title]{appendix}
\usepackage{cleveref}
\usepackage{comment}

\title{An Abstract Perturbation Theorem for Compact Moduli Spaces}
\author[Peter Albers, Irene Seifert, Tom Stalljohann]{Peter Albers$^\ast$, Irene Seifert, Tom Stalljohann$^\ast{}^\ast$}
\thanks{$\ast$ Universit\"at Heidelberg, \url{palbers@mathi.uni-heidelberg.de}}
\thanks{$\ast\ast$ Universit\"at Heidelberg, \url{tstalljohann@mathi.uni-heidelberg.de}}
\date{2026}

\newtheorem{theorem}{Theorem}[section]
\newtheorem*{theorem*}{Theorem}
\newtheorem{MainThm}{Theorem}

\newtheorem{lemma}[theorem]{Lemma}
\newtheorem*{lemma*}{Lemma}
\newtheorem{corollary}[theorem]{Corollary}
\newtheorem*{corollary*}{Corollary}
\newtheorem{MainCor}[MainThm]{Corollary}
\newtheorem{proposition}[theorem]{Proposition}
\newtheorem*{proposition*}{Proposition}

\newtheorem*{claim}{Claim}

\theoremstyle{definition}
\newtheorem{definition}[theorem]{Definition}
\newtheorem*{definition*}{Definition}
\newtheorem*{convention}{Convention}

\theoremstyle{remark}
\newtheorem{remark}[theorem]{Remark}

\newtheorem{example}[theorem]{Example}

\newenvironment{proofClaim}{ \proof[Proof of the claim]}{\endproof}

\newcommand{\Forall}[0]{\forall\,}

\newcommand{\R}[0]{\mathbb{R}}
\newcommand{\C}[0]{\mathbb{C}}
\newcommand{\Z}[0]{\mathbb{Z}}
\newcommand{\N}[0]{\mathbb{N}}

\newcommand{\bbS}[0]{\mathbb{S}}
\renewcommand{\i}[0]{\mathrm{i}}
\newcommand{\ball}[0]{\mathbb{B}}
\newcommand{\ballclosed}{\Bar{\mathbb{B}}}
\newcommand{\eps}[0]{\varepsilon}
\newcommand{\comma}[0]{\, , \, }
\newcommand{\Cinfty}[0]{C^{\infty}}
\newcommand{\Cinftyloc}[0]{\Cinfty_{\mathrm{loc}}}

\newcommand{\ie}[0]{i.e.\ }

\newcommand{\cf}[0]{cf.\ }

\newcommand{\set}[2]{\left\{#1 \,\big| \, #2 \right\}}
\newcommand{\Bigset}[2]{\left\{#1 \,\Big| \, #2 \right\}}

\newcommand{\image}[0]{\mathrm{im}}
\newcommand{\supp}[0]{\mathrm{supp}}
\newcommand{\id}[0]{\mathrm{id}}

\newcommand{\Crit}[0]{\mathrm{Crit}}
\newcommand{\action}[1]{\mathcal{A}_{#1}}
\newcommand{\actionH}[0]{\action{H}}
\newcommand{\loopspace}[0]{\mathscr{L}M}
\newcommand{\Hparam}[0]{H^{\lambda}_{s,t}}

\newcommand{\bbmE}[0]{\mathbbm{E}}
\newcommand{\bbmF}[0]{\mathbbm{F}}

\newcommand{\moduli}[0]{\mathcal{M}}
\newcommand{\Bcal}[0]{\mathcal{B}}
\newcommand{\Ecal}[0]{\mathcal{E}}
\newcommand{\Ucal}[0]{\mathcal{U}}
\newcommand{\Vcal}[0]{\mathcal{V}}
\newcommand{\Wcal}[0]{\mathcal{W}}
\newcommand{\Dcal}[0]{\mathcal{D}}
\newcommand{\OcalE}[0]{\mathcal{O}_{\Ecal}}
\newcommand{\BcalHat}[0]{\widehat{\Bcal}}
\newcommand{\EcalHat}[0]{\widehat{\Ecal}}

\newcommand{\Fsection}[0]{\mathcal{F}}
\newcommand{\ZeroFsection}[0]{S_{\Fsection}}
\newcommand{\Funiv}[0]{\Fsection_{\mathrm{un}}}
\newcommand{\ev}[0]{\mathrm{ev}}
\newcommand{\Bplus}[0]{B\textsuperscript{+}}
\newcommand{\Bres}[0]{B_{\mathrm{res}}}
\newcommand{\Fsectionparam}[1]{\Fsection_{#1}}
\newcommand{\Dvert}[1]{D^{\mathrm{v}}_{#1}}

\begin{document}

\begin{abstract}
    Given a compact zero set of a Fredholm section, our theorem guarantees the existence of a perturbed compact smooth manifold nearby, leaving the original zero set unaltered wherever transversality is already achieved. Such abstract perturbations allow for typical cobordism arguments. We illustrate this by re-proving a well-known theorem of Schwarz asserting the existence of critical points of the Hamiltonian action functional of different action values on symplectically aspherical manifolds.
\end{abstract}

\maketitle

\section{Introduction}
\label{sec: Introduction}

\noindent The study of moduli spaces usually encompasses a rather intricate transversality analysis, in which one establishes transversality of a given section to the zero section, whose zero set constitutes the moduli space, for generic enough auxiliary data. Transversality and the implicit function theorem then yield a smooth manifold structure on the moduli space for generic data. The latter is vital for various cobordism arguments: from simply counting boundary points to defining chain maps or homotopies via counting elements of some moduli space. For instance, this kind of transversality analysis is ubiquitous in the study of pseudoholomorphic curves in symplectic geometry and Floer theory.
Although transversality arguments typically follow familiar patterns, they nonetheless involve some detailed analysis depending on the explicit setting.

The purpose of this paper is to provide a perturbation result in a general framework, which one can simply use as a black box, under the crucial additional assumption that the moduli space is compact. Instead of generic auxiliary data, we perturb the bundle section abstractly into a section that is transverse to the zero section. If the original section is already transverse somewhere, we can keep it unaltered there. Our result is strongly inspired by similar results in polyfold theory \cite{Polyfolds_HWZ}.

More precisely, in Section \ref{subsec: B-Pairs}, we introduce the notion of a \textit{B-bundle pair} $(\Ecal,\Ecal^1)$. Essentially, this is just a pair of Banach bundles $\Ecal \rightarrow \Bcal$ and $\Ecal^1 \rightarrow \Bcal$ so that each fiber $\Ecal_x^1$ is a dense linear subspace of $\Ecal_x$ with compact inclusion $\Ecal_x^1\hookrightarrow \Ecal_x$ and so that trivializations for $\Ecal$ restrict to trivializations for $\Ecal^1 \,$. The prototypical example of a B-bundle pair is a bundle $\Ecal$ of sections over a Banach manifold of maps $\Bcal$, with the fibers of $\Ecal^1$ consisting of sections of higher regularity.
For such a B-bundle pair $(\Ecal,\Ecal^1)$ and a Fredholm section $\Fsection : \Bcal \rightarrow \Ecal$ whose zero set $\ZeroFsection =  \{\Fsection = 0\}$ is compact, we find finitely many sections $s_1,\ldots , s_n : \Bcal \rightarrow \Ecal$ and a residual subset $\Bres$ of a sufficiently small ball $\ball_\eps \subseteq \R^n$ so that for every parameter $\lambda = (\lambda_1,\ldots ,\lambda_n) \in \Bres$ the perturbed section 
\[
\Fsection_\lambda = \Fsection + \sum_{i=1}^n \lambda_i \, s_i \, : \, \Bcal \rightarrow \Ecal
\]
is transverse to the zero section and, crucially, has compact zero set $S_{\Fsection_\lambda} = \{\Fsection_\lambda = 0\}$ as well. Moreover, if $\Fsection$ is  already transverse to the zero section on some closed subset $\Dcal \subseteq \Bcal \,$, we can arrange that the sections $s_i , i=1,\ldots , n \,$, vanish on $\Dcal \,$, so that $S_{\Fsection}$ and $S_{\Fsection_\lambda}$ coincide on $\Dcal \,$. The precise result is stated in Theorem \ref{mthm: perturbation thm}.
\\

\noindent Abstract perturbation results of similar flavor are well-known and constitute a cornerstone of the part of polyfold theory dealing with sc-Fredholm sections. Polyfold theory was developed by Hofer--Wysocki--Zehnder in an attempt to unify various recurring concepts in Floer theory into an abstract framework. More precisely, our theorem is analogous to \cite[Thm.~5.3.5 and 5.3.10]{Polyfolds_HWZ}. In fact, some proofs below are inspired by the ones in \cite{Polyfolds_HWZ}. That said, we point out that in practice it is quite cumbersome to verify the rather long list of assumptions for the theorems in polyfold theory (in particular the existence of a contraction germ). However, in a sufficiently tame setting many of these underlying assumptions are automatically satisfied, as also pointed out by Wehrheim \cite{Wehrheim_Fredholm_Notions}. It is the goal of this paper to provide a self-contained reference for abstract perturbations of classically smooth sections, where the full power of polyfold theory is not necessary.
\\

\noindent We will illustrate how to apply our abstract perturbation result by re-proving a well-known theorem by Schwarz, first stated in \cite{Schwarz-Action_Spectrum}: Suppose $(M,\omega)$ is a closed (compact and without boundary) symplectic manifold that is symplectically aspherical, that is $\int_{\bbS^2} f^*\omega = 0$ for all $f \in \Cinfty(\bbS^2, M) \,$. Given a non-autonomous Hamiltonian $H : M \times \bbS^1 \rightarrow \R \,$, its action functional is defined on the space of contractible loops $\loopspace = C^{\infty}_{\mathrm{contr}}(\bbS^1,M)$ by
\begin{align*}
    \actionH : \loopspace \rightarrow \R \comma \quad \actionH(x) := -\int_{\mathbbm{D}^2} \overline{x}^* \omega + \int_0^1 H_t(x(t)) \, dt \,\, ,
\end{align*}
where $\overline{x} : \mathbbm{D}^2 \rightarrow M$ is any choice of filling disk for $x$, \ie $\overline{x}|_{\bbS^1} = x \,$. Defining the Hamiltonian vector field $X_{H_t}$ by $\omega(X_{H_t}, \,\cdot\,) = -dH_t \,$, the critical points of $\actionH$ are precisely the contractible one-periodic solutions of $\dot x(t) =X_{H_t}(x(t)) \,$, the set of all we also denote by $\mathcal{P}_{\mathrm{contr}}(H) \,$. The Hamiltonian flow $\Phi_H^t : M \rightarrow M \comma t \in \R \,$, is defined via
\begin{align*}
 \begin{cases}
    \frac{d}{dt} \Phi_H^t = X_{H_t} \circ \Phi_H^t  \\
     \Phi_H^0= \id_{M} 
 \end{cases}   
\end{align*}
and the fixed points of $\Phi_H^1$ are precisely the initial points of one-periodic Hamiltonian orbits. Schwarz's theorem now asserts the following.

\begin{MainThm}[\cite{Schwarz-Action_Spectrum}]
\label{mthm: Schwarz theorem}
Given a closed symplectic manifold $(M,\omega)$ which is symplectically aspherical and a smooth Hamiltonian $H : M \times \bbS^1 \rightarrow \R$ for which the time-$1$ map $\Phi_H^1 : M \rightarrow M$ is different from the identity. Then there exist $x,y \in \mathcal{P}_{\mathrm{contr}}(H) = \Crit(\actionH) $ with
\[
\actionH(x) \not= \actionH(y) \,\, .
\]
\end{MainThm}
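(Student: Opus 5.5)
Argue by contradiction: assume all contractible one-periodic orbits of $H$ have the same action $c$, and build a compact moduli space of Floer-type cylinders whose (perturbed) version is a compact one-dimensional manifold with the wrong number of boundary points.

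\emph{Setup and moduli space.} Choose a compatible almost complex structure $J$ and a cutoff $\beta : \R \rightarrow [0,1]$ with $\beta(s) = 0$ for $|s|$ large and $\beta = 1$ on a large interval. For $R \ge 0$ let $\beta_R(s) = \beta(s/R)$, or a standard stretched family with $\beta_0 \equiv 0$, and consider the family
\[
\partial_s u + J(u)\bigl(\partial_t u - \beta_R(s) X_{H_t}(u)\bigr) = 0
\]
of finite-energy cylinders $u : \R \times \bbS^1 \rightarrow M$. Since $\beta_R$ is compactly supported, these are, after compactifying the ends, maps from spheres, equivalently from the plane with removable singularities at $\pm\infty$. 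For $R = 0$ the solutions are holomorphic spheres, which are constant by asphericity. The moduli space is
\[
\moduli = \set{(R,u)}{R \in [0,\infty),\ u \text{ solves the equation above}}.
\]
The energy identity gives $E(u) = -\int \beta_R'(s) \int_0^1 H_t(u(s,t))\,dt\,ds$, hence a uniform bound $E(u) \le C\,\|H\|_{\mathrm{osc}}$. Asphericity excludes bubbling, so $\moduli$ is compact over each bounded interval of $R$.

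\emph{Degeneration as $R \rightarrow \infty$.} Here the contradiction hypothesis enters. As $R \rightarrow \infty$, sequences break into a negative-gradient-type half cylinder converging to some orbit $x$, possibly Floer trajectories, and a positive half cylinder ending at $y$. The energies of the half-cylinders are controlled by action differences. If all actions equal $c$, every Floer cylinder in the limit has zero energy and is therefore a constant trajectory. I would then truncate the family at some large $R_0$ and show that the $R_0$-slice is empty or else forces a contradiction. Concretely, choose $\beta$ so that the limit pieces are a "cap" $\partial_s u + J(\partial_t u - \beta^+ X_H)=0$ on $\C$ ending at $x$ with energy $\actionH(x) - \text{const}$. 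The hardest step is to extract from $\Phi_H^1 \neq \id$ a nonvanishing count. I would use the evaluation map $\ev(R,u) = u(0,0)$ and intersect with a generic point $p \in M$. At $R=0$ the constant spheres give $\ev$ of degree one, so exactly one point lies over $p$. At large $R$, energy zero on the Floer parts forces $u$ to be near a concatenation of two caps with total energy $\approx \actionH(x)-\actionH(y) = 0$ plus correction terms. Since $\Phi_H^1 \neq \id$, I would choose $p$ not a fixed point and show that no such configuration passes through $p$, so the $R_0$-end contributes nothing. This gives a compact one-dimensional cobordism from one point to the empty set, which is a contradiction.

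\emph{Transversality.} Transversality for this cut-down space $\moduli_p = \ev^{-1}(p) \cap \{R \le R_0\}$ is exactly what Theorem \ref{mthm: perturbation thm} supplies. I set it up as the zero set of a Fredholm section on a Banach manifold of $W^{1,p}$ cylinders times $[0,R_0]$, with the B-bundle pair given by $L^p$ and $W^{1,p}$ sections, plus the evaluation constraint. Compactness was established above. At $R = 0$ the section is already transverse, since the linearization at constant spheres is surjective. Near $R = R_0$ it is trivially transverse because the zero set is empty there. Taking $\Dcal$ to be these closed regions, the perturbed zero set is a compact one-manifold whose boundary is precisely the unperturbed single point at $R = 0$, giving the parity contradiction.

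The main obstacle is the large-$R$ analysis: showing rigorously that, under the equal-action hypothesis together with $\Phi_H^1 \neq \id$, no solutions through $p$ exist for $R_0$ large. I would attack this first via energy going to zero and the resulting $C^0$-closeness to orbits.
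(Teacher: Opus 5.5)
Your construction is the paper's. It uses the interpolating family $\beta_R$ with $\beta_0\equiv 0$, compactifies to spheres, and starts from the constant spheres at $R=0$ with surjective linearization. It then uses the Hofer-norm energy bound, excludes bubbling by asphericity, imposes $u(0,0)=p$ at a point $p\notin\mathrm{Fix}(\Phi_H^1)$, and closes with the parity argument via Theorem~\ref{mthm: perturbation thm}. The only structural difference is the order of the logic. The paper assumes nothing about actions: the cobordism shows $\moduli_R\neq\varnothing$ for every $R$ (Lemma~\ref{lemma: moduli_R non-empty}), and a limit $R\to\infty$ then produces the two orbits. You instead assume equal actions, aim to show the slice at some large $R_0$ is empty, and then run the cobordism. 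The two are contrapositives, so the plan is sound. However, the step you call the main obstacle is not carried out in your proposal, and the route you sketch for it is off target.

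Your hypothesis gives no reason for $E(u_R)$ itself to become small. The energy identity expresses $E(u_R)$ entirely through the regions where $\beta_R'\neq 0$, and that contribution is not controlled by action differences of periodic orbits. So "total energy $\approx \actionH(x)-\actionH(y)=0$" is not justified, and breaking into caps is an unnecessary detour. What is needed is local at the marked point, which lies where $\beta_R\equiv 1$.

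Suppose $u_n$ solve the equation with $R_n\to\infty$ and $u_n(0,0)=p$. The uniform energy bound and absence of bubbling give a subsequence converging in $\Cinftyloc(\R\times\bbS^1,M)$ to a finite-energy Floer cylinder $u$ for $H$ with $u(0,0)=p$. Its slices $u(s,\cdot)$ are contractible, being limits of loops on spheres. Along sequences $s_k\to\pm\infty$ these slices converge to contractible orbits $x_\pm$, and $E(u)=\actionH(x_-)-\actionH(x_+)=0$ under your hypothesis. Hence $u$ is $s$-independent, $u(0,\cdot)$ is a one-periodic orbit through $p$, and $\Phi_H^1(p)=p$, a contradiction. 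This is exactly the paper's final step (Lemma~\ref{lemma: Floer compactness parameter-independent case}, Corollary~\ref{cor: convergence to critical pts at infty}) read backwards, and no Floer--Gromov breaking analysis is required.

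Finally, make explicit that you work on the component of null-homotopic spheres with $u(0,0)=p$, as the paper does via $[u]=0\in\pi_2(M,q_0)$. Asphericity of $\omega$ alone does not force $c_1(u^*TM)=0$ in other classes, and Theorem~\ref{mthm: perturbation thm} needs a constant Fredholm index $1$ on $[0,R_0]\times\Bcal$ to produce a one-dimensional cobordism.
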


\noindent Schwarz's original proof relied on the construction of a certain spectral selector in Floer homology. Our proof is based on an argument by McDuff--Salamon \cite[Thm.~9.1.6]{J_holomorphic_curves}: The idea is, for a fixed sequence $(R_n)_n \subseteq \R_{\geq 0}$ tending to infinity, to find finite-energy cylinders $u_n : \R \times \bbS^1 \rightarrow M$ that are solutions of the Floer equation for $H$ on $[-R_n,R_n] \times \bbS^1$ and $J$-holomorphic curves outside $(-R_n - 1,R_n+1) \times \bbS^1 \,$. Taking the $\Cinftyloc$-limit as $n \rightarrow \infty$ of a converging subsequence of the $(u_n)_n$ gives a Floer cylinder $u$ for $H$, whose endpoints $x = u(-\infty,\,\cdot\,)$ and $y = u(+\infty,\,\cdot\,)$ are the desired periodic orbits. The crucial step is showing that such solutions $u_n$ exist. To this end, McDuff--Salamon make use of the Gromov--Witten invariants (which in particular again require elaborate transversality arguments). In contrast, we invoke our abstract perturbation result. This approach has already been carried out in the second author's unpublished master's thesis.

\subsection{Structure of the paper}
\label{subsec: Structure of paper}
In Section \ref{sec: Abstract Perturbation Thm} we present the abstract perturbation result. After introducing the necessary definitions, we state the main results (Theorem \ref{mthm: perturbation thm} and Corollary \ref{mcor: perturbation thm}) in Subsection \ref{subsec: Abstract Perturbation Theorem}. The perturbation theorem (Theorem \ref{mthm: perturbation thm}) itself will be proven in Section \ref{sec: Proof Abstract Perturbation Thm}.

As an application, in Section \ref{sec: Application Schwarz thm} we prove Schwarz's theorem (Theorem \ref{mthm: Schwarz theorem}); the necessary preliminaries are recalled in Subsection \ref{subsec: Preliminaries}. 

Finally, in Section \ref{sec: Perturbing Evaluation Maps}, we extend the abstract perturbation result by also perturbing evaluation maps.
We illustrate the main result of the section (Theorem \ref{mthm: evaluation maps - cobordism}), regarding evaluation maps for cobordisms, by giving a slightly modified proof of the central lemma used to prove Schwarz's theorem.

Two appendices are added: The first contains some rather elementary lemmata from nonlinear functional analysis with the aim of making this paper more self-contained. The second deals with the explicit B-bundle pair construction in the proof of Schwarz's theorem.

\subsection{Acknowledgments}
\label{subsec: Acknowledgments}

We thank Alberto Abbondandolo for inspiring discussions. We acknowledge funding by the Deutsche Forschungsgemeinschaft (DFG, German Research Foundation) through Germany’s Excellence Strategy EXC-2181/1 - 390900948 (the Heidelberg STRUCTURES Excellence Cluster), the Transregional Collaborative Research Center CRC/TRR 191 (281071066) and the Research Training Group RTG 2229 (281869850).

\section{An Abstract Perturbation Theorem}
\label{sec: Abstract Perturbation Thm}

\subsection{Conventions}
\label{subsec: Conventions}

\noindent We mostly follow Lang \cite{Lang_Differential_Geometry} regarding the definitions of a Banach manifold and Banach bundle. For the sake of precision, we nevertheless spell them out below.

By a \textit{Banach manifold} we always mean a Hausdorff topological space equipped with a smooth structure, \ie an equivalence class of a smooth atlas. Here, an atlas is smooth if the transition maps of the charts it contains are smooth ($C^\infty$) maps between subsets of Banach spaces and two smooth atlases are equivalent if their union is a smooth atlas. Without further qualification, all Banach manifolds are allowed to have boundary. A \textit{(smooth) chart} $(\Ucal,\varphi)$ of a Banach manifold $\Bcal$ is an element of an atlas contained in the given smooth structure. It is a homeomorphism $\varphi : \Ucal \rightarrow \Vcal$ from an open subset $\Ucal \subseteq \Bcal$ to a relatively open subset $\Vcal \subseteq \lambda^{-1}(\R_{\geq 0}) \,$, where $\lambda : \bbmE \rightarrow \R$ is a continuous linear functional on some Banach space $\bbmE \,$. The set $\lambda^{-1}(\R_{\geq 0}) \subseteq \bbmE$ is called \textit{generalized half-space in $\bbmE$}
and \textit{(proper) half-space} if $\lambda$ is a non-zero functional. 

A Banach manifold $\Bcal$ is \textit{modeled on the Banach space $\bbmE$} if its smooth structure contains an atlas all whose charts map to relatively open subsets of generalized half-spaces in $\bbmE \,$. A connected Banach manifold is always modeled on some Banach space, which moreover is unique up to toplinear isomorphism.  The smooth structure on $\Bcal$ allows one to speak of $C^k$-differentiable maps for every $k \in \N_{\geq 0} \cup \{\infty\} \,$. Without further qualification, all maps between Banach manifolds are assumed to be smooth. 

A Banach manifold $\Bcal$ is called \textit{$n$-dimensional manifold} if it is modeled on the Banach space $\R^{n} $ and additionally is second-countable. It is called \textit{finite-dimensional manifold} if it is an $n$-dimensional manifold for some $n \in \N_{0} \,$.

\begin{definition}
\label{def: bump functions}
A Banach manifold $\Bcal$ \textit{admits $C^k$-bump functions} (for $k \in \N_{\geq 0} \cup \{\infty\} $) if for every $x \in \Bcal$ and every open neighborhood $\Ucal \subseteq \Bcal$ of $x$ there exists some $f \in C^k(\Bcal,[0,1])$ with $f(x)=1$ and $\supp(f) \subseteq \Ucal \,$.
\end{definition}

\begin{remark}
\label{rmk: Banach mfds - topological properties}
The following topological properties of Banach manifolds will be relevant subsequently.
\begin{enumerate}[label=\roman*)]
    \item\label{it: Banach mfds topological - first countable} Banach manifolds are first-countable. Hence every compact subset of a Banach manifold is sequentially compact.
    \item\label{it: Banach mfds topological - locally R^n and compact} A Banach manifold modeled on the Banach space $\R^n$ for which there exists a countable atlas is second-countable and thus an $n$-dimensional manifold. In particular, compact Banach manifolds modeled on $\R^n$ are $n$-dimensional manifolds.
    \item\label{it: Banach mfds topological - second-countable} If every connected component of a Banach manifold $\Bcal$ is second-countable or (more generally) admits a countable atlas, then $\Bcal$ is metrizable, see \cite[Cor.~1 and 2]{Palais_ParacompactBanachMfds}.
    \item\label{it: Banach mfds topological - regular} Recall that a topological space is called \textit{regular} if every point has a neighborhood basis consisting of closed sets. Every metrizable space is a regular space. A Banach manifold is a regular space if and only if it admits $C^0$-bump functions.
\end{enumerate}
\end{remark}

\begin{remark}
\label{rmk: Banach mfds - bump functions}
The following facts about bump functions are immediate from the definition or are asserted in \cite{Lp_smooth_bump_fcts} and \cite{Bonic_Frampton_Bump_Fcts}. Let $k \in \N_{\geq 0} \cup \{\infty\}$ be fixed.
\begin{enumerate}[label=\roman*)]
\item\label{it: Banach mfds bump fcts - local property} Let $\Bcal$ be a regular topological space and a Banach manifold, modeled on the Banach space $\bbmE \,$. Then  $\Bcal$ admits $C^k$-bump functions if and only if $\bbmE$ admits $C^k$-bump functions.
    \item\label{it: Banach mfds bump fcts - Banach space} A Banach space $\bbmE$ admits $C^k$-bump functions if and only if there exists a non-vanishing function $f \in C^k(\bbmE,\R)$ with bounded support. If for some $p > 0$ and some bounded closed subset $B \subseteq \bbmE$ the $p$-th power of the norm $\lVert \cdot \rVert_{\bbmE}^p $ restricts to a $C^k$-differentiable map $ \bbmE \backslash B \rightarrow \R_{\geq 0} \,$, then $\bbmE$ admits $C^k$-bump functions.
\end{enumerate}
\end{remark}

A \textit{(Banach) submanifold} of a Banach manifold $\Bcal$ is a subset $\mathcal{S} \subseteq \Bcal$ for which there exists a smooth structure on $\mathcal{S}$, where $\mathcal{S}$ is endowed with the subspace topology, so that the inclusion $\mathcal{S} \hookrightarrow \Bcal$ is a smooth immersion. If such a smooth structure exists, it is unique and we regard $\mathcal{S}$ with this smooth structure as a Banach manifold in its own right.

Recall that a \textit{Banachable space} is a topological vector space for which there exists a complete norm inducing the given topology.

A \textit{Banach bundle} consists of the following data:
\begin{itemize}
    \item A smooth surjective map $\pi : \Ecal \rightarrow \Bcal$ between Banach manifolds $\Ecal$ (the \textit{total space}) and $\Bcal$ (the \textit{base}).
    \item On each fiber $\Ecal_x := \pi^{-1}(x)$ the structure of a Banachable space.
    \item The equivalence class of a smooth bundle atlas. 
\end{itemize}
We briefly explain the last point. A \textit{local trivialization} $\Phi: \Ecal|_\Ucal := \pi^{-1}(\Ucal) \overset{\sim}{\rightarrow} \Ucal \times \bbmF $ over the open subset $\Ucal \subseteq \Bcal$ with Banach space $\bbmF$ as model fiber is a diffeomorphism with $\mathrm{pr}_1 \circ \Phi = \pi |_{\Ecal|_\Ucal}$ and so that the fiber-restriction $\Phi_x : \Ecal_x \rightarrow \{x\} \times \bbmF \cong \bbmF$ is a toplinear isomorphism for every $x \in \Ucal \,$. A \textit{smooth bundle atlas} is a collection $\{ \Phi^{(i)} : \Ecal|_{\Ucal_i} \rightarrow \Ucal_i \times \bbmF^{(i)} \}_{i \in I}$ of local trivializations so that $\{\Ucal_i\}_{i \in I}$ is an open covering of $\Bcal$ and the induced maps
\[
\Ucal_i \cap \Ucal_j \rightarrow \mathcal{L}(\bbmF^{(i)}, \bbmF^{(j)}) \comma \quad x \mapsto (\Phi_x^{(j)})^{-1} \circ \Phi_x^{(i)}
\]
are smooth whenever $\Ucal_i \cap \Ucal_j$ is non-empty. Two smooth bundle atlases are equivalent if their union is again a smooth bundle atlas.

For a Banach bundle $\Ecal$, notice that the topology of the Banachable fiber $\Ecal_x$ agrees with the subspace topology of $\Ecal_x$ in $\Ecal$. 

We will often just write $\pi: \Ecal \rightarrow \Bcal$ or $\Ecal$ for a Banach bundle. Let us stipulate that a \textit{(smooth local) trivialization} of a Banach bundle is, by default, a local trivialization which is an element of some smooth bundle atlas in the given bundle atlas equivalence class. Without further qualification, sections of Banach bundles are assumed to be smooth. The \textit{zero section} $\mathcal{O}_{\Ecal} \subseteq \Ecal$ of a Banach bundle $\Ecal$ is a closed subset of $\Ecal$ (and not a map). The \textit{zero set} of a section $\Fsection: \Bcal \rightarrow \Ecal$ will be designated by 
\[
\ZeroFsection := \Fsection \,^{-1}(\mathcal{O}_{\Ecal}) = \set{x \in \Bcal}{\Fsection(x) = 0}  \,\, .
\]
Given a section $\Fsection : \Bcal \rightarrow \Ecal $ and a Banach submanifold $\mathcal{S} \subseteq \Bcal \,$, the restriction $\Fsection|_{\mathcal{S}} : \mathcal{S} \rightarrow \Ecal|_{\mathcal{S}}$ gives a section of the restricted bundle $\Ecal|_{\mathcal{S}} \rightarrow \mathcal{S} \,$. The latter is nothing else than the pullback of $\Ecal$ along the inclusion $\mathcal{S} \hookrightarrow \Bcal \,$.

\subsection{Transversality and Fredholm Sections}
\label{subsec: Transversality and Fredholm Sections}

For our main perturbation theorem, we have to define transversality to the zero section and the notion of a Fredholm section.

Let $\pi : \Ecal \rightarrow \Bcal $ be a Banach bundle with zero section $\OcalE \subseteq \Ecal \,$. At $0_x \in \Ecal_x$ we have the canonical splitting
\[
T_{0_x} \Ecal = T_{0_x} \OcalE \oplus T_{0_x} \Ecal_x \cong T_x \Bcal \oplus \Ecal_x 
\]
and we designate by $\mathrm{pr}^{\mathrm{v}}_x : T_{0_x} \Ecal \rightarrow \Ecal_x$ the projection to $\Ecal_x$ with respect to this splitting.

\begin{definition}
\label{def: vertical differential and transversality}
Let $\Fsection : \Bcal \rightarrow \Ecal$ be a section with zero set $\ZeroFsection = \Fsection^{-1}(\OcalE) \,$.
\begin{enumerate}[label=\alph*)]
    \item For every $x \in \ZeroFsection$ the \textit{vertical differential at $x$} is 
    \[
    \Dvert{x} \Fsection := \mathrm{pr}^{\mathrm{v}}_x \circ d \Fsection(x)  \, : \,\, T_x \Bcal \rightarrow \Ecal_x \,\, ,
    \]
    where $d \Fsection(x) : T_x \Bcal \rightarrow T_{0_x} \Ecal $ denotes the usual differential of $\Fsection \,$.
    \item $\Fsection$ is \textit{transverse to the zero section} if for every $x \in \ZeroFsection$ the vertical differential $\Dvert{x} \Fsection $ is surjective and its kernel has a closed complement in $T_x \Bcal \,$.
    \item $\Fsection$ is a \textit{Fredholm section} if $\Dvert{x} \Fsection : T_x \Bcal \rightarrow \Ecal_x$ is a Fredholm operator for every $x \in \ZeroFsection \,$. If the Fredholm indices additionally satisfy
    \[
    \mathrm{ind}(\Dvert{x} \Fsection ) = d \in \Z \qquad \Forall x \in \ZeroFsection  \,\, ,
    \]
    then $\Fsection$ is called \textit{Fredholm section of index $d$}.
\end{enumerate}
\end{definition}

\begin{remark}
\label{rmk: vertical differential}
Let $\Fsection: \Bcal \rightarrow \Ecal$ be a section of a Banach bundle $\Ecal \,$.
\begin{enumerate}[label=\roman*)]
    \item If $\Fsection$ has empty zero set $\ZeroFsection \,$, then it is transverse to the zero section and a Fredholm section of index $d$ for every $d \in \Z \,$.
    \item Given $x \in \ZeroFsection $ and a trivialization $\Phi:\Ecal|_{\Ucal} \rightarrow \Ucal \times \bbmF$ over an open neighborhood $\Ucal$ of $x \,$. Then 
    \[
    \Phi_x \circ \Dvert{x} \Fsection =  d (\mathrm{pr}_2 \circ \Phi \circ \Fsection)(x)  \, : \, T_x \Bcal \rightarrow \bbmF \,\, ,
    \]
    where $\mathrm{pr}_2 : \Ucal \times \bbmF \rightarrow \bbmF$ denotes the projection and the right-hand side maps to $\bbmF$ via the canonical identification $T_0 \bbmF \cong \bbmF \,$.
    In particular, $\Fsection|_\Ucal : \Ucal \rightarrow \Ecal|_\Ucal$ is transverse to the zero section if and only if $0 \in \bbmF$ is a regular value of $\mathrm{pr}_2 \circ \Phi \circ \Fsection|_\Ucal \,$.
    \item For a Fredholm section the kernel of the vertical differential is finite-dimensional and thus always has a closed complement.
\end{enumerate}
\end{remark}

\begin{remark}
\label{rmk: restricting sections to submfds}
Let $\Fsection : \Bcal \rightarrow \Ecal$ be a section of the Banach bundle $\Ecal \,$.
\begin{enumerate}[label=\roman*)]
    \item Suppose $\mathcal{S} \subseteq \Bcal$ is a Banach submanifold and $x_0 \in \mathcal{S}$ is a zero of $\Fsection \,$. Then
    \[
    (\Dvert{x_0} \Fsection ) |_{T_{x_0} \mathcal{S}} = \Dvert{x_0} (\Fsection|_{\mathcal{S}} ) \,\, ,
    \]
    that is the restriction of the vertical differential of $\Fsection$ to $T_{x_0} \mathcal{S}$ is the vertical differential of the restricted section $\Fsection|_{\mathcal{S}} : \mathcal{S} \rightarrow \Ecal|_{\mathcal{S}} \,$.
    \item If $\Fsection$ is a Fredholm section, then also $\Fsection|_{\partial \Bcal} : \partial \Bcal \rightarrow \Ecal|_{\partial \Bcal}$ is a Fredholm section. Moreover $\mathrm{ind}(\Dvert{x} \Fsection) = 1 + \mathrm{ind}(\Dvert{x} (\Fsection|_{\partial \Bcal}))$ for every $x \in \ZeroFsection \cap \partial \Bcal \,$. This follows immediately from the splitting $T_x \Bcal \cong \R \oplus T_x (\partial \Bcal)$ and Lemma \ref{app lemma: Fredholm finite dim summands} in the appendix.
\end{enumerate}
\end{remark}

\subsection{B-Pairs and B-Bundle Pairs}
\label{subsec: B-Pairs}

We introduce the notion of a B-pair and of a B-bundle pair (the `B' stands for `Banach').

\begin{definition}[B-pair]
A \textit{B-pair} is a pair $(\bbmF, \bbmF_1)$ of Banachable spaces $\bbmF$ and $\bbmF_1 \,$, each with its own topology, so that $\bbmF_1 \subseteq \bbmF$ is a dense (in $\bbmF$) linear subspace and for which the inclusion $\bbmF_1 \hookrightarrow \bbmF $ is a compact linear operator.
\end{definition}

\noindent We will usually tacitly work with Banach spaces (that is, with a distinguished norm), but keep in mind that the notion of a B-pair is a topological one. 

\begin{definition}[B-bundle pair]
A \textit{B-bundle pair $(\Ecal, \Ecal^1)$ over $\Bcal$} consists of two Banach bundles $\pi : \Ecal \rightarrow \Bcal$ and $\pi^1:\Ecal^1 \rightarrow \Bcal$ satisfying 
 \begin{enumerate}[label=\arabic*)]
     \item $\Ecal^1 \subseteq \Ecal$ and $\pi^1 = \pi|_{\Ecal^1}$ (equivalently $\Ecal_x^1 \subseteq \Ecal_x$ for every $x \in \Bcal$),
     \item for every $x \in \Bcal$ there exists an open neighborhood $\Ucal \subseteq \Bcal$ of $x \,$, a B-pair $(\bbmF,\bbmF_1)$ and a smooth local trivialization $\Phi : \Ecal|_{\Ucal} \overset{\sim}{\rightarrow} \Ucal \times \bbmF$ of $\Ecal$ that restricts to a smooth local trivialization $\Phi : \Ecal^1|_{\Ucal} \overset{\sim}{\rightarrow} \Ucal \times \bbmF_1$ of $\Ecal^1 \,$.
 \end{enumerate}
\end{definition}

\begin{remark}
\label{rmk: B-bundle pair}
Given a B-bundle pair $(\Ecal,\Ecal^1) $ over $\Bcal \,$.
\begin{enumerate}[label=(\roman*)]
    \item For every $x \in \Bcal$ the pair of fibers $(\Ecal_x,\Ecal_x^1)$ is a B-pair.
    \item The inclusion $\Ecal^1 \hookrightarrow \Ecal$ is a smooth bundle homomorphism.
\end{enumerate}
\end{remark}

\begin{definition}[\Bplus-section]
\label{B^+ -section}
A smooth section $s : \Bcal \rightarrow \Ecal$ is called \textit{\Bplus-section} of a B-bundle pair $(\Ecal,\Ecal^1)$ if $s(\Bcal) \subseteq \Ecal^1$ and $s : \Bcal \rightarrow \Ecal^1$ is a smooth section.
\end{definition}

\subsection{The Perturbation Theorem}
\label{subsec: Abstract Perturbation Theorem}

The previously introduced notation suffices to now state our main theorem. Suppose we are given
\begin{itemize}
    \item a Banach manifold $\Bcal$ which admits smooth bump functions,
    \item a B-bundle pair $(\Ecal,\Ecal^1) $ over $\Bcal \,$.
\end{itemize}

\noindent We remind the reader that $\Bcal$ is allowed to have non-empty boundary.
We denote by $\ball^n_r \subseteq \R^n$ the open ball of radius $r$ centered at the origin. Recall that a \textit{residual subset} of a topological space is a subset containing a countable intersection of open, dense subsets. Every residual subset of a locally compact Hausdorff space (for example the ball $\ball^n_r$) is dense by the Baire category theorem.

\begin{MainThm}[Perturbation Theorem]
\label{mthm: perturbation thm}
Let $\Fsection : \Bcal \rightarrow \Ecal$ be a Fredholm section of index $d \in \Z$ with compact zero set $\ZeroFsection\,$. Let $\Dcal \subseteq \Bcal$ be a (possibly empty) closed subset of $\Bcal $ so that
\begin{enumerate}[label=(B.\arabic*)]\setcounter{enumi}{-1}
    \item\label{it: perturbation thm assumption}
    for every $x \in \Dcal \cap \ZeroFsection$ the vertical differential $\Dvert{x} \Fsection : T_x \Bcal \rightarrow \Ecal_x$ is surjective and for every $x \in \Dcal \cap \ZeroFsection \cap \partial \Bcal$ the vertical differential $\Dvert{x} (\Fsection|_{\partial \Bcal}) : T_x(\partial \Bcal) \rightarrow \Ecal_x$ is surjective.
\end{enumerate}
Then there exist finitely many \Bplus-sections $s_1,\ldots , s_n : \Bcal \rightarrow \Ecal $ with support contained in $\Bcal \backslash \Dcal $ respectively, some $\eps > 0$ and a residual subset $B_{\mathrm{res}} \subseteq \ball^n_\eps$ of the $\eps$-ball with the following significance: For every $\lambda = (\lambda_1,\ldots , \lambda_n) \in \Bres$ it holds
\begin{enumerate}[label=(B.\arabic*),resume]
    \item\label{it: perturbation thm conclusion - Fredholm} $\Fsection_{\lambda} := \Fsection + \displaystyle{\sum_{i=1}^n \lambda_i \, s_i } : \Bcal \rightarrow \Ecal$ is a Fredholm section of index $d$.
    \item\label{it: perturbation thm conclusion - transverse} $\Fsection_{\lambda}$
    is transverse to the zero section $\OcalE$ and the restriction $\Fsection_{\lambda}|_{\partial \Bcal} : \partial \Bcal \rightarrow \Ecal|_{\partial \Bcal}$ is transverse to the zero section $\mathcal{O}_{\Ecal|_{\partial \Bcal}} \,$.
    \item\label{it: perturbation thm conclusion - zero set compact mfd} The zero set $S_{\Fsection_{\lambda}}$ is a (possibly empty) compact $d$-dimensional smooth submanifold of $\Bcal$ with boundary $\partial S_{\Fsection_{\lambda}} = S_{\Fsection_{\lambda}} \cap \partial \Bcal $ and with $S_{\Fsectionparam{\lambda}} \cap \Dcal  = \ZeroFsection \cap \Dcal \,$.
\end{enumerate}
\end{MainThm}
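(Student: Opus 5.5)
Everything rests on the standard route: build a finite-dimensional universal family, apply Sard--Smale to the projection onto the parameter space, and use compactness of $\ZeroFsection$ together with the Fredholm property to keep the perturbed zero sets compact.

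\textbf{Step 1: choice of the sections $s_i$.} I fix a point $x\in\ZeroFsection\setminus\Dcal$ and a trivialization $\Phi:\Ecal|_\Ucal\to\Ucal\times\bbmF$ that restricts to $\Ecal^1|_\Ucal\to\Ucal\times\bbmF_1$, with $\Ucal\cap\Dcal=\emptyset$. Since $\Dcal$ is closed, $\Ucal$ can be shrunk so that this holds. The image of $\Dvert{x}\Fsection$ has finite codimension and is closed, and $\bbmF_1$ is dense in $\bbmF$. Hence there are finitely many vectors $e_1,\dots,e_k\in\bbmF_1$ spanning a complement of $\Phi_x(\image\Dvert{x}\Fsection)$; for boundary points I also want them to complement $\Phi_x(\image\Dvert{x}(\Fsection|_{\partial\Bcal}))$. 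Let $f$ be a smooth bump function with $f(x)=1$ and support in $\Ucal$, and set $s=\Phi^{-1}(\cdot,f\cdot e_j)$. This is a \Bplus-section supported in $\Bcal\setminus\Dcal$, and surjectivity persists on an open neighbourhood $\Vcal_x$ of $x$, because surjectivity of the linear part is an open condition in a trivialization.

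At points $x\in\ZeroFsection\cap\Dcal$, assumption \ref{it: perturbation thm assumption} gives surjectivity, which again holds on an open neighbourhood. The $\Vcal_x$ together with these neighbourhoods cover the compact set $\ZeroFsection$, so finitely many suffice. The resulting sections $s_1,\dots,s_n$ have the property that
\[
\Funiv(\lambda,y)=\Fsection(y)+\textstyle\sum\lambda_i s_i(y)
\]
has surjective vertical differential in $(\lambda,y)$, both on $\Bcal$ and on $\partial\Bcal$, at every $(0,y)$ with $y$ in an open neighbourhood $\Wcal\supseteq\ZeroFsection$. On $\Wcal\cap\Dcal$ this surjectivity already holds in the $y$-direction alone.

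\textbf{Step 2: compactness and Fredholmness for small $\lambda$ (main obstacle).} I must show that for $\eps$ small, every zero of $\Funiv$ with $|\lambda|<\eps$ lies in $\Wcal$, that $\{\Funiv=0\}\cap(\bar\ball_{\eps}\times\Bcal)$ is compact, and that $\Funiv$ is Fredholm there.

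The key input is that Fredholm maps are locally proper; I would prove this or invoke an appendix lemma. Near each $x\in\ZeroFsection$, in a trivialization, $\Fsection$ is a $C^1$ map whose derivative at $x$ is Fredholm. Hence on a small closed neighbourhood $\bar{\Vcal}$ of $x$ the map is proper, and therefore $\{\Funiv=0\}\cap(\bar\ball_\eps\times\bar{\Vcal})$ is compact. Here the compactness of $\Ecal^1\hookrightarrow\Ecal$ is what makes $\sum\lambda_i s_i$ a compact perturbation, but for finitely many sections this holds trivially.

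To rule out zeros escaping far away, I cover $\ZeroFsection$ by finitely many such neighbourhoods with union $\Vcal'$. I then argue by contradiction: suppose $(\lambda_k,y_k)$ are zeros with $\lambda_k\to0$ and $y_k\notin\Vcal'$. Then $s_i(y_k)\neq0$ for some $i$, so $y_k$ lies in $\bigcup\supp s_i$, which I arrange to sit inside the finitely many $\bar{\Vcal}$'s. Hence $y_k\in\Vcal'$, a contradiction. Outside the supports, $\Funiv=\Fsection$ has no new zeros. This support argument is the delicate point, and I would organise the bump-function supports carefully so that the properness neighbourhoods contain them.

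\textbf{Step 3: Sard--Smale and conclusions.} On $\ball_\eps\times\Bcal$, the zero set $\moduli=\Funiv^{-1}(0)$ is then a smooth submanifold of dimension $n+d$, with boundary $\moduli\cap(\ball_\eps\times\partial\Bcal)$, by the implicit function theorem. The projection $\moduli\to\ball_\eps$ is a Fredholm map of index $d$, and so is its restriction to the boundary. Sard--Smale (or plain Sard, since $\moduli$ is finite-dimensional) yields a residual set $\Bres$ of regular values for both.

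For $\lambda\in\Bres$, the standard lemma that a regular value of the projection is the same as transversality of $\Fsection_\lambda$ gives \ref{it: perturbation thm conclusion - transverse}. Property \ref{it: perturbation thm conclusion - Fredholm} follows because the zeros lie in $\Wcal$ and $\Dvert{}\Fsection_\lambda$ is a finite-rank perturbation of $\Dvert{}\Fsection$ near zeros; continuity of the index along the path $t\lambda$ gives index $d$.

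For \ref{it: perturbation thm conclusion - zero set compact mfd}, compactness is Step 2 and the manifold-with-boundary structure is the implicit function theorem. Finally, $S_{\Fsection_\lambda}\cap\Dcal=\ZeroFsection\cap\Dcal$ holds because $s_i$ vanishes on $\Dcal$.
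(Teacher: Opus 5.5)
Your architecture matches the paper's: cokernel-filling sections supported off $\Dcal$, the universal section $\Funiv$, compactness for small $\lambda$, then Sard for the projection. But Step 2 has a genuine gap, located exactly where the paper says the B-bundle pair enters.

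You derive compactness of $\{\Funiv=0\}\cap(\ballclosed_\eps\times\bar\Vcal)$ from local properness of $\Fsection$ on $\bar\Vcal$. For zeros $(\lambda_k,y_k)$ this requires $\Fsection(y_k)=-\sum_i\lambda_{k,i}s_i(y_k)$ to have a convergent subsequence in the model fibre $\bbmF$ over $\bar\Vcal$. In other words, each set $\{s_i(y) : y\in\bar\Vcal\}$ must be precompact in that trivialization. Your claim that this ``holds trivially for finitely many sections'' is false. Finiteness only controls the $\lambda$-direction, while $y$ ranges over a non-compact subset of an infinite-dimensional manifold. A section you built as $y\mapsto f(y)e_j$ in one trivialization reads $y\mapsto f(y)\,T(y)e_j$ in the trivialization near another zero, where $T$ is a transition map. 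Such orbits need not be precompact: on $\ell^2$, the smooth family of isomorphisms $T(y)=\id+\langle\cdot,e\rangle\,y$ with $|e|=1$, $|y|<\tfrac12$, sends $e$ to $e+y$, which sweeps out a ball. So the compactness claim is unproven. Everything built on it is therefore unproven as well: (B.3), transversality of $\Funiv$ at all zeros with $|\lambda|<\eps$, and the second countability that Sard needs.

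The paper fixes precisely this point.
\begin{itemize}
\item It fixes an auxiliary norm $N$ on $\Ecal^1$ near $\ZeroFsection$ and takes \Bplus-sections with $N\circ s_i\le 1$.
\item In a common local trivialization of $(\Ecal,\Ecal^1)$ the perturbations are then bounded in $\bbmF_1$, hence precompact in $\bbmF$ by the compact inclusion.
\item Together with local properness, this gives a neighbourhood $\Wcal$ of $\ZeroFsection$ with $\{x\in\Wcal : N(\Fsection(x))\le 1\}$ precompact.
\item Any zero of $\Fsection_\lambda$ with $|\lambda|\le 1/\sqrt n$ satisfies $N(\Fsection(x))=N(s_\lambda(x))\le\sqrt n\,|\lambda|\le 1$, so it lies in that set.
\end{itemize}
Your sections are already \Bplus, so this fix is available to you; you simply discarded the ingredient that makes it work.

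Another valid repair is to apply local properness to $\Funiv$ itself, whose derivative at $(x,0)$ is Fredholm of index $d+n$. Those neighbourhoods depend on the $s_i$, though, so your support argument would become circular. It must then be replaced by a lower bound $N(\Fsection)\ge c>0$, for an auxiliary norm $N$ on $\Ecal$, on $\bigcup_i\supp s_i$ minus a neighbourhood of $\ZeroFsection$; this bound does follow from local properness of $\Fsection$.

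Two smaller points. First, after compactness you still need, as in the paper, that zeros with small $|\lambda|$ lie in some $\Ucal\times\ball^n_\eps$ on which $D\Funiv$ is surjective; your support argument only places them in $\bigcup\bar\Vcal$. Second, in (B.1) the term $\sum_i\lambda_i\,Ds_i(y)$ is small in norm rather than finite-rank in general, which still suffices by openness of Fredholm operators.
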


\noindent We point out the trivial but crucial observation that a submanifold $S_{\Fsection_\lambda}$ as in \ref{it: perturbation thm conclusion - zero set compact mfd} is necessarily non-empty if $\ZeroFsection \cap \Dcal$ is non-empty. The situation in the theorem is depicted schematically in Figure \ref{fig: zero set perturbation thm}.

\begin{remark}
\label{rmk: perturbation thm}
In fact, we will prove slightly more. Namely that we can additionally arrange the following:
\begin{enumerate}[label=(B.\arabic*)]\setcounter{enumi}{3}
    \item\label{it: perturbation rmk - Fredholm}
    $\Fsection_\lambda$ is a Fredholm section of index $d$ for every $\lambda \in \ball^n_\eps \,$.
    \item\label{it: perturbation rmk - compactness}
   The set
    \begin{equation*}
        \Bigset{(x,\lambda) \in \Bcal \times \R^n}{|\lambda| \leq \eps \text{ and } \Fsection_\lambda(x) = 0}
    \end{equation*}
    is a compact subset of $\Bcal \times \R^n \,$.
\end{enumerate}
\end{remark}

\noindent An immediate consequence of Theorem \ref{mthm: perturbation thm} is the next corollary, still in the setting described above.

\begin{MainCor}
\label{mcor: perturbation thm}
Let $\Fsection : \Bcal \rightarrow \Ecal$ be a Fredholm section of index $d \in \Z$ with compact zero set. Suppose that $\Fsection|_{\partial \Bcal} : \partial \Bcal \rightarrow \Ecal|_{\partial \Bcal}$ is transverse to the zero section. Then there exists a compact $d$-dimensional smooth submanifold $\widetilde{S}$ of $\Bcal$ with boundary
\[
\partial \widetilde{S} = \widetilde{S} \cap \partial \Bcal = S_{\Fsection} \cap \partial \Bcal \,\, .
\]
\end{MainCor}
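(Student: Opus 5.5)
The plan is to apply Theorem \ref{mthm: perturbation thm} with the closed set $\Dcal := \partial \Bcal$. The boundary $\partial \Bcal$ of a Banach manifold is closed in $\Bcal$, so this is a legitimate choice. It then remains to check assumption \ref{it: perturbation thm assumption} and to read off $\widetilde{S} := S_{\Fsection_\lambda}$ for some parameter $\lambda \in \Bres$. Since $\Bres$ is residual in the locally compact ball $\ball^n_\eps$, it is non-empty by the Baire category theorem, so such a $\lambda$ exists.

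To verify \ref{it: perturbation thm assumption}, I take $x \in \Dcal \cap \ZeroFsection = \partial\Bcal \cap \ZeroFsection$. The hypothesis says that $\Fsection|_{\partial\Bcal}$ is transverse to the zero section, so $\Dvert{x}(\Fsection|_{\partial \Bcal}) : T_x(\partial\Bcal) \rightarrow \Ecal_x$ is surjective. This is already the second half of \ref{it: perturbation thm assumption}. For the first half I use Remark \ref{rmk: restricting sections to submfds}, applied to the submanifold $\partial\Bcal$, which gives
\[
\Dvert{x}(\Fsection|_{\partial\Bcal}) = (\Dvert{x}\Fsection)|_{T_x(\partial\Bcal)} .
\]
Since the restriction to $T_x(\partial\Bcal) \subseteq T_x\Bcal$ is already onto $\Ecal_x$, the full vertical differential $\Dvert{x}\Fsection : T_x \Bcal \rightarrow \Ecal_x$ is surjective as well.

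Theorem \ref{mthm: perturbation thm} now applies, and by \ref{it: perturbation thm conclusion - zero set compact mfd} the set $\widetilde S = S_{\Fsection_\lambda}$ is a compact $d$-dimensional smooth submanifold of $\Bcal$ with $\partial\widetilde S = \widetilde S\cap\partial\Bcal$. The same item also gives $\widetilde S\cap \Dcal = \ZeroFsection\cap\Dcal$, which for $\Dcal = \partial\Bcal$ reads $\widetilde S\cap\partial\Bcal = S_{\Fsection}\cap\partial\Bcal$. These two equalities together are exactly the claimed identity for $\partial\widetilde S$.

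There is no real obstacle here; the only points needing care are that $\partial\Bcal$ is a closed subset and a submanifold, so that both the theorem and Remark \ref{rmk: restricting sections to submfds} apply, and that $\Bres$ is non-empty.
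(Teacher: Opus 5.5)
Your proposal is correct and follows essentially the same route as the paper: apply Theorem~\ref{mthm: perturbation thm} with $\Dcal := \partial\Bcal$ and take $\widetilde{S} := S_{\Fsection_\lambda}$ for some $\lambda \in \Bres$. The paper leaves implicit two details that you spell out. First, you verify \ref{it: perturbation thm assumption} via Remark~\ref{rmk: restricting sections to submfds}. Second, you check that $\Bres \neq \varnothing$ by Baire.
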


\begin{proof}
    This follows from \Cref{mthm: perturbation thm} with $\Dcal := \partial \Bcal \,$. Define $\widetilde{S} := S_{\Fsection_{\lambda}} \,$, for any choice of $\lambda$ from the residual set $B_{\mathrm{res}} \,$.
\end{proof}

\begin{figure}
    \centering
    \begin{tikzpicture}[scale=0.5, x=10cm, y=10cm, ]

\begin{scope}[fill opacity=0.05, draw opacity=1]

\filldraw[clip] plot [smooth, tension=0.6] coordinates {(-3.5,0.5) (-3,2.5) (-1,3) (1.5,3) (3.5,3.5) (5,2.5) (5,0)  (2.5,-2) (0,-2) (0,0) (-3,-1.5) (-3.5,0.5)};

\filldraw[color=black, ultra thin]  plot[smooth, tension=0.9] coordinates {(0,0) (1.5,3) (1,3) (-4,3) (-4,-3) (0,0)};


\draw[ultra thick] plot [smooth, tension = 0.9] coordinates { (-1,3) (-0.5,2.2) (-0.4,2.5) (0,2) (0.5,2.5) (1.3, 2.2)} ;

\draw[ultra thick, color=red] plot [smooth, tension=0.9] coordinates {(1.25,2.23) (1.3,2.2) (1.35,2.19) (1.4,2.5) (1.55,2) (1.7, 2.2) (2,2.4) (2.5,2.7) (3,3.1) (3.3, 3.6) (3.5, 4)};

\draw[thick, decorate, decoration={coil,segment length=10pt}] (1.3,2.2) -- (1.35,2.1) -- (1.45,2) -- (1.5,1.85) .. controls (1.55,1.85) .. (1.6,1.8)  ;
\filldraw[thick, color=black, fill opacity=1, decorate, decoration={zigzag, segment length=10pt}] (1.6,1.8) -- (1.7,1.82) -- (1.71,1.95) -- (1.62,1.92)  -- cycle ;
\draw[thick, decorate, decoration={coil,segment length=10pt}] (1.46,2.01) -- (1.44,2.09) -- (1.41,2.25)  ;
\fill[fill=black, fill opacity=1] (1.41,2.25) circle (5pt)  ;
\draw[thick, decorate, decoration={coil,segment length=10pt}] (1.69,1.9) .. controls (1.7,1.9) .. (1.8,2.1) -- (2,2.3) .. controls (2.2,2.4) .. (2.4,2.5) -- (2.6,2.6) .. controls (2.7,2.72) .. (2.8,2.82) -- (3,2.9) .. controls (3.1,3) .. (3.2,3.2) .. controls (3.4,3.5) .. (3.5,3.7) ;
\filldraw[thick, color=black, fill opacity=1, rounded corners = 5pt, decorate, decoration={zigzag, segment length=10pt}] (3.35,3.5) -- (3.55,3.5) -- (3.5,3.4) -- (3.3,3.4)  -- cycle ;


\draw[ultra thick] plot [smooth, tension = 0.9] coordinates { (-4,0) (-3,0.5) (-2,0) (-1.5, 0.5) (-1,0) (0.5,0.5)} ;

\draw[ultra thick, color=red] plot [smooth, tension = 0.9] coordinates { (0.3,0.4) (0.4,0.45) (0.45,0.455)  (0.5, 0.47) (0.7,0.5) (1,0.5) (1.25,0.45) (1.5,0.36) (2,0.3) (2.25,0.23) (2.5,0.25) (3,0.15) (3.5,0.25) (3.75,0.1) (4,0.2) (4.25,0.15) (4.5,0) (5,-0.4)};

\draw[thick, decorate, decoration={coil,segment length=10pt}] (0.5,0.5) -- (0.75,0.58) -- (0.8,0.57) -- (1,0.6) .. controls (1.2,0.5) and (1.4,0.5) .. (1.5,0.45) -- (1.7,0.43) -- (2,0.42) .. controls (2.1,0.45) and (2.2,0.43) .. (2.35,0.48) -- (2.5,0.35) -- (2.8,0.33) .. controls (3,0.3) and (3.1,0.25) .. (3.3,0.35) -- (3.5,0.3) -- (3.7,0.3) -- (4,0.29) -- (4.1,0.35) -- (4.2,0.3) -- (4.5,0.1) -- (4.7,0) .. controls (4.8,-0.2) and (4.9,0) .. (5,-0.2);
\draw[thick, decorate, decoration={coil,segment length=10pt}] (1.7,0.43)  -- (1.7,0.5) -- (1.85,0.65);
\draw[thick, decorate, decoration={coil,segment length=10pt}] (1.7,0.43)  -- (1.6,0.55) -- (1.55,0.6);
\draw[thick, decorate, decoration={coil,segment length=10pt}] (1.7,0.43)  -- (1.7,0.5) -- (1.7,0.75) -- (1.75,0.77);
\fill[fill=black, fill opacity=1] (1.7,0.43) circle (5pt)  ;
\fill[fill=black, fill opacity=1] (1.85,0.65) circle (5pt)  ;
\fill[fill=black, fill opacity=1] (1.55,0.6) circle (5pt)  ;
\fill[fill=black, fill opacity=1] (1.75,0.77) circle (5pt)  ;
\filldraw[thick, color=black, fill opacity=1, decorate, decoration={zigzag, segment length=10pt}] (2.35,0.51) -- (2.2,0.45) -- (2.2,0.38) -- (2.36,0.39) -- (2.4,0.45) -- cycle ;


\draw[ultra thick, color=red] plot [smooth, tension = 0.9] coordinates { (3,2) (3.3,2.4) (3.5,2.1) (4,1.7) (3.9,1.4) (3.7,1.1) (3.4,1.1) (3,1.3) (2.7,1.7) (2.9,1.8) (3,2)};

\draw[thick, decorate, decoration={coil,segment length=10pt}] (3,2.2) .. controls (3.2,2.5) .. (3.3,2.53) .. controls (3.4,2.45) and (3.45,2.4) .. (3.5,2.27) -- (3.7,2.1) --  (4,1.85) -- (4.2,1.7) -- (4.1,1.5) .. controls (3.9,1.1) .. (3.8,1.1) -- (3.5,0.9) -- (3.3,1) -- (3.3,0.8)  -- (3.2,1) .. controls (3,1) and (3.1,1.1) .. (2.8,1.3) -- (2.6,1.7) -- (2.6,1.8) -- (2.5,1.75) -- (2.7,1.9) -- (2.9,2) -- (3,2.3) -- cycle;
\filldraw[thick, color=black, fill opacity=1, decorate, decoration={zigzag, segment length=10pt}] (4.2,1.7) -- (4.15,1.7) -- (4.1,1.6) -- (4.3,1.6) -- cycle ;
\filldraw[thick, color=black, fill opacity=1, rounded corners = 6pt, decorate, decoration={zigzag, segment length=10pt}] (2.4,1.9) -- (2.6,1.8) -- (2.7,1.5) -- (2.6,1.45) -- (2.2,1.5) -- cycle ;
\draw[thick, decorate, decoration={coil, segment length=10pt}] (2.3,1.7) ellipse [x radius=0.2, y radius = 0.1, rotate = 10];

\draw[thick, decorate, decoration={coil,segment length=15pt}] (1.3,1.1) .. controls (1.4,1.1) .. (1.5,1.3) -- (1.7,1.45)  ;
\draw[thick, decorate, decoration={coil,segment length=15pt}] (1.3,1.1) .. controls (1.2,1.25) .. (1.1,1.3)  ;
\draw[thick, decorate, decoration={coil,segment length=15pt}] (1.3,1.1) .. controls (1.2,0.9) .. (1.3,0.8)  ;
\fill[fill=black, fill opacity=1] (1.3,1.1) circle (5pt)  ;
\fill[fill=black, fill opacity=1] (1.7,1.45) circle (5pt)  ;
\fill[fill=black, fill opacity=1] (1.78,1.52) circle (5pt)  ;
\fill[fill=black, fill opacity=1] (1.75,1.41) circle (5pt)  ;
\fill[fill=black, fill opacity=1] (1.1,1.3) circle (5pt)  ;
\fill[fill=black, fill opacity=1] (1.3,0.8) circle (5pt)  ;

\end{scope}

\node[font=\fontsize{60}{0}\selectfont] at (4,-1.5) {$\mathcal{B}$};
\node[font=\fontsize{60}{0}\selectfont] at (-2.5,2.3) {$\mathcal{D}$};
\node[font=\fontsize{50}{0}\selectfont] at (-2.2,-0.25) {$S_{\mathcal{F}} \cap \mathcal{D}$};
\node[font=\fontsize{50}{0}\selectfont] at (-2.3,-0.7) {$=\,${\color{red}$ S_{\mathcal{F}_{\lambda}}$} $\cap\, \mathcal{D}$};
\node[font=\fontsize{50}{0}\selectfont] at (2.5,1.2) {$S_{\mathcal{F}}$};
\node[font=\fontsize{50}{0}\selectfont, color=red] at (2.5,-0.05) {$S_{\mathcal{F}_{\lambda}}$};

\end{tikzpicture}
    \caption{The closed subset $\Dcal$ of the manifold with boundary $\Bcal$ is shaded in a darker gray. The compact zero set $\ZeroFsection$ of the section $\Fsection : \Bcal \rightarrow \Ecal$ is colored black. Since $\Fsection$ is already transverse on $\Dcal \,$, the black lines within $\Dcal$ are smooth. Outside $\Dcal$ smoothness does not hold in general (indicated by the zigzag lines and irregular shapes). The zero set $S_{\Fsection_{\lambda}}$ of the perturbed section $\Fsection_{\lambda}$ is a smooth submanifold. It is colored red outside $\Dcal$ and its intersection with $\Dcal$ agrees with $\ZeroFsection \cap \Dcal \,$.} 
    \label{fig: zero set perturbation thm}
\end{figure}

\subsection{Discussion}
\label{subsec: Discussion Abstract Perturbation Thm}

\noindent The notion of a B-bundle pair is inspired by strong bundles appearing in polyfold theory, \cf \cite[Ch.~2.6]{Polyfolds_HWZ}, and Theorem \ref{mthm: perturbation thm} has polyfold analogues as well, see \cite[Thm.~5.3.5 and 5.3.10]{Polyfolds_HWZ}. In fact, our proof of Theorem \ref{mthm: perturbation thm} follows the arguments in \cite{Polyfolds_HWZ} closely. However, since we work with ordinary Fredholm sections in contrast to sc-Fredholm sections, establishing compactness for the zero set of perturbed sections (see Subsection \ref{subsubsec: proof perturbation thm - compactness results}) is easier than in the sc-case: We can resort to local properness of Fredholm maps (see Lemma \ref{app lemma: non-linear Fredholm map}) and avoid the sc-contraction germ property, as in \cite[Thm.~5.2.2]{Polyfolds_HWZ}. Nonetheless, following the polyfold proof in this regard, we still require a distinguished compact dense Banach space in each fiber, and this is where the B-bundle pair comes into play.

Perturbation theorems in the flavor of Theorem \ref{mthm: perturbation thm} are common in nonlinear functional analysis. We refer to \cite[Thm.~A.1]{Action_Selector_SimpleConstruction} for a similar proof in the case of a trivial vector bundle and point out that, for the trivial vector bundle case, no B-bundle pair is required in their proof.

\section{Proof of Theorem \ref{mthm: perturbation thm}}
\label{sec: Proof Abstract Perturbation Thm}

\noindent In this section, we prove the perturbation theorem, that is Theorem \ref{mthm: perturbation thm}. Subsections \ref{subsec: Implicit Function Thm and Parametric Transversality}\,-\,\ref{subsec: Compactness Results for Fredholm Sections} are of preparatory nature: We first recall the implicit function theorem (Subsection \ref{subsec: Implicit Function Thm and Parametric Transversality}), introduce auxiliary norms on bundles (Subsection \ref{subsec: Auxiliary Norms}) and then discuss how to fill up the cokernel of a Fredholm section (Subsection \ref{subsec: Filling up Cokernel}). In Subsection \ref{subsec: Compactness Results for Fredholm Sections} we show compactness for the zero set of a small \Bplus-perturbation of a Fredholm section. It is here where the definition of a B-bundle pair enters crucially. This subsection in particular is inspired by \cite[Ch.~5.2]{Polyfolds_HWZ}. We conclude the proof of Theorem \ref{mthm: perturbation thm} in Subsection \ref{subsec: Proof Perturbation Thm}.

\subsection{Implicit Function Theorem and Parametric Transversality}
\label{subsec: Implicit Function Thm and Parametric Transversality}

We start by reviewing well-known versions of the implicit function theorem and parametric transversality.
The first proposition in this regard is an immediate consequence of the classical implicit function theorem between Banach spaces and the local description of the vertical differential in a trivialization (Remark \ref{rmk: vertical differential}). 

\begin{proposition}[Implicit Function Theorem]
\label{prop: implicit function thm}
Let $\Fsection : \Bcal \rightarrow \Ecal $ be a smooth section of a Banach bundle $\Ecal \rightarrow \Bcal \,$.
Suppose $\Fsection$ is transverse to the zero section $\mathcal{O}_{\Ecal}$ and $\Fsection|_{\partial \Bcal}$ is transverse to the zero section $\mathcal{O}_{\Ecal|_{\partial \Bcal}} \,$. Then the zero set $\ZeroFsection$ is a smooth Banach submanifold of $\Bcal$ with boundary
    \[
    \partial \ZeroFsection = \ZeroFsection \cap \partial \Bcal \,\, .
    \]
    For $x \in \ZeroFsection$ its tangent space at $x$ is $T_x \ZeroFsection = \ker(\Dvert{x} \Fsection)$ and for $x \in \partial \ZeroFsection$ the tangent space of the boundary at $x$ is 
    \[
    T_x (\partial \ZeroFsection) = \ker(\Dvert{x} (\Fsection|_{\partial \Bcal})) = \ker(\Dvert{x} \Fsection ) \cap T_x(\partial \Bcal) \,\, .
    \]
\end{proposition}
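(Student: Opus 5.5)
The statement is local, so I will verify it chart by chart. Fix $x_0 \in \ZeroFsection$. I choose a chart $\varphi : \Ucal \rightarrow \Vcal \subseteq \lambda^{-1}(\R_{\geq 0}) \subseteq \bbmE$ around $x_0$ and a trivialization $\Phi : \Ecal|_\Ucal \rightarrow \Ucal \times \bbmF$; shrinking $\Ucal$ makes both defined on the same set. Set
\[
f := \mathrm{pr}_2 \circ \Phi \circ \Fsection \circ \varphi^{-1} : \Vcal \rightarrow \bbmF \,\, .
\]
Then $\varphi(\ZeroFsection \cap \Ucal) = f^{-1}(0)$. By Remark \ref{rmk: vertical differential}, $df(\varphi(x))$ corresponds to $\Phi_x \circ \Dvert{x}\Fsection$ under $d\varphi(x)$. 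Hence transversality of $\Fsection$ means exactly that $df(v)$ is surjective with complemented kernel at every zero $v$ of $f$. The boundary version is analogous, by Remark \ref{rmk: restricting sections to submfds}.

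\emph{Interior points} ($x_0 \notin \partial\Bcal$, i.e.\ $\lambda(\varphi(x_0)) > 0$ or $\lambda = 0$). Write $\bbmE = K \oplus C$ with $K = \ker df(v_0)$ and $C$ a closed complement. Then $\partial_C f(v_0) : C \rightarrow \bbmF$ is a toplinear isomorphism by the open mapping theorem. The classical implicit function theorem gives a smooth $g$ on a neighborhood of $0 \in K$ such that, locally, $f^{-1}(0) = \{k + c_0 + g(k)\}$, where $v_0 = k_0 + c_0$. This makes $f^{-1}(0)$ locally a graph, hence a submanifold modeled on $K$ with tangent space $K$. The same description holds at all nearby zeros, so these local graph charts have smooth transitions, since they are restrictions of the ambient charts composed with smooth projections. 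Consequently $\ZeroFsection$ is a submanifold with $T_x\ZeroFsection = \ker \Dvert{x}\Fsection$.

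\emph{Boundary points}, i.e.\ $\lambda(v_0) = 0$ with $\lambda \neq 0$. This is the step I expect to be the main obstacle. The idea is to use both transversality assumptions to reduce to a half-space in $K$. Surjectivity of $df(v_0)|_{\ker\lambda}$ implies $K \not\subseteq \ker\lambda$: otherwise, picking $e$ with $\lambda(e) = 1$ and $w \in \ker\lambda$ with $df(v_0) w = df(v_0) e$, we would get $e - w \in K$ with $\lambda(e - w) = 1 \neq 0$. So $\lambda|_K \neq 0$. Now I choose the complement $C$ inside $\ker\lambda$. To do so, start from a closed complement $C'$ of $K \cap \ker\lambda$ in $\ker\lambda$, which exists because $\ker df(v_0)|_{\ker\lambda} = K \cap \ker\lambda$ is complemented in $\ker\lambda$, noting $\ker\lambda$ is complemented in $\bbmE$. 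Then check that $\bbmE = K \oplus C'$, using $\bbmE = \ker\lambda \oplus \R k_1$ for some $k_1 \in K$ with $\lambda(k_1) = 1$.

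With $C \subseteq \ker\lambda$, I extend $f$ smoothly to a neighborhood of $v_0$ in $\bbmE$. If smoothness at the boundary is taken to mean "locally extendable", this is immediate; otherwise one can argue via one-sided derivatives, and I would make that convention explicit. Apply the implicit function theorem to the extension. The zero set near $v_0$ is the graph $k \mapsto k + c_0 + g(k)$. Since $\lambda(c_0 + g(k)) = 0$, we get $\lambda(k + c_0 + g(k)) = \lambda(k)$. Thus the zero set inside the half-space corresponds exactly to the half-space $\{\lambda|_K \geq 0\}$ in $K$, with boundary $\{\lambda|_K = 0\} = K \cap \ker\lambda$.

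This gives a boundary chart. It shows $\partial\ZeroFsection = \ZeroFsection \cap \partial\Bcal$ near $x_0$ and identifies
\[
T_{x_0}(\partial\ZeroFsection) = \ker\Dvert{x_0}\Fsection \cap T_{x_0}\partial\Bcal \,\, ,
\]
which equals $\ker \Dvert{x_0}(\Fsection|_{\partial\Bcal})$ by Remark \ref{rmk: restricting sections to submfds}. Finally, compatibility of these charts with the interior ones follows as before, and uniqueness of the submanifold structure finishes the proof.
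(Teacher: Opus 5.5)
Your proposal is correct and follows the same route as the paper, which simply calls the proposition an immediate consequence of the classical Banach-space implicit function theorem together with the local description of the vertical differential in a trivialization (Remark~\ref{rmk: vertical differential}). Your write-up fills in the details the paper leaves implicit. The main one is the boundary case: the boundary transversality hypothesis lets you choose the closed complement $C$ of $K=\ker df(v_0)$ inside $\ker\lambda$, so that $\lambda$ on the zero set agrees with $\lambda|_K$, and this gives the half-space chart.
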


\noindent In the implicit function theorem, the empty set is also considered to be a Banach manifold and the possibility that $\ZeroFsection$ is empty is not excluded.

\begin{corollary}
\label{cor: implicit function thm Fredholm sections}
Suppose $\Fsection : \Bcal \rightarrow \Ecal$ is a Fredholm section. Suppose moreover that both $\Fsection$ and $\Fsection|_{\partial \Bcal}$ are transverse to the zero section respectively. Then $\ZeroFsection$ is a smooth Banach submanifold of $\Bcal$ with boundary $\partial \ZeroFsection = \ZeroFsection \cap \partial \Bcal$. Locally around $x \in \ZeroFsection$ it is modeled on $\R^{d(x)} \,$, where $d(x) := \mathrm{ind}(\Dvert{x} \Fsection) \geq 0 \,$. 
\end{corollary}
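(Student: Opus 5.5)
The corollary follows directly from Proposition \ref{prop: implicit function thm}, supplemented by the Fredholm property. The plan has two steps: first obtain the manifold structure and the boundary statement, then compute the local model at each point $x \in \ZeroFsection$.

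\emph{Manifold structure.} A Fredholm section has finite-dimensional kernel of $\Dvert{x}\Fsection$ at every zero, so by Remark \ref{rmk: vertical differential} this kernel has a closed complement. Hence the transversality hypotheses are exactly those of Proposition \ref{prop: implicit function thm}. Applying it gives that $\ZeroFsection$ is a smooth Banach submanifold of $\Bcal$ with $\partial \ZeroFsection = \ZeroFsection \cap \partial \Bcal$, and that $T_x\ZeroFsection = \ker(\Dvert{x}\Fsection)$ for every $x \in \ZeroFsection$.

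\emph{Local model.} Transversality makes $\Dvert{x}\Fsection$ surjective, so its cokernel is zero and
\[
\dim \ker(\Dvert{x}\Fsection) = \mathrm{ind}(\Dvert{x}\Fsection) = d(x) \geq 0 .
\]
Thus $T_x\ZeroFsection \cong \R^{d(x)}$. The manifold charts produced by the implicit function theorem map a neighborhood of $x$ in $\ZeroFsection$ onto a relatively open subset of a generalized half-space in the model space $T_x\ZeroFsection$. So $\ZeroFsection$ is modeled on $\R^{d(x)}$ near $x$.

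\emph{Boundary points.} The only point needing care is $x \in \partial\ZeroFsection$, where the half-space must be a proper half-space in $\R^{d(x)}$. By Proposition \ref{prop: implicit function thm},
\[
T_x(\partial \ZeroFsection) = \ker(\Dvert{x}\Fsection) \cap T_x(\partial\Bcal) = \ker\bigl(\Dvert{x}(\Fsection|_{\partial\Bcal})\bigr),
\]
and by Remark \ref{rmk: restricting sections to submfds} this space has dimension $d(x)-1$. Here we use that $\Fsection|_{\partial \Bcal}$ is also transverse and Fredholm with index one less. This codimension-one boundary is consistent with a proper half-space in $\R^{d(x)}$, and it is already built into the chart given by the implicit function theorem, so nothing further needs to be checked.
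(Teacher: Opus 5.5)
Your proposal is correct and matches the paper's one-line proof. Apply Proposition~\ref{prop: implicit function thm} to get the submanifold structure with $T_x\ZeroFsection = \ker(\Dvert{x}\Fsection)$, then note that surjectivity kills the cokernel, so $\dim\ker(\Dvert{x}\Fsection) = \mathrm{ind}(\Dvert{x}\Fsection)$. Your closed-complement remark and the boundary dimension check are harmless extras: the former is already part of the definition of transversality, and the latter is already contained in the proposition's conclusion.
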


\begin{proof}
    Locally around $x \in \ZeroFsection$ the Banach submanifold $\ZeroFsection$ is modeled on $T_{x} \ZeroFsection = \ker(\Dvert{x}\Fsection) \,$, which has dimension $\mathrm{ind}(\Dvert{x}\Fsection) \,$.
\end{proof}

\noindent Next, we state a version of a parametric transversality result entering in the proof of the perturbation theorem. 
Although in general it is well-known how to prove parametric transversality, for convenience we highlight the subtleties arising when working with Banach manifolds with boundary.

\begin{proposition}[Parametric Transversality]
\label{prop: Parametric Transversality}
Given a Banach bundle $\Ecal$ over a Banach manifold $\Bcal \,$. Let $P$ be an $n$-dimensional manifold without boundary. Denote by $\EcalHat \rightarrow \Bcal \times P$ the pullback of $\Ecal$ along the projection $\Bcal \times P \rightarrow \Bcal $ to the first factor. Let $\Fsection : \Bcal \times P \rightarrow \EcalHat$ be a Fredholm section of index $d+n \,$, with $d \in \Z \,$. Suppose that
\begin{enumerate}[label=\alph*)]
    \item\label{it: param transversality assumption - transverse} both $\Fsection$ and $\Fsection|_{\partial \Bcal \times P}$ are transverse to the zero section respectively,
    \item\label{it: param transversality assumption - second countable} the zero set $S_{\Fsection}$ is second-countable.
\end{enumerate}
Then for every $p \in P$ it holds
\begin{enumerate}[label=\arabic*)]
    \item\label{it: param transversality conclusion - Fredholm} $\Fsection_p := \Fsection(\,\cdot\,, p) : \Bcal \rightarrow \Ecal$ is a Fredholm section of index $d$
\end{enumerate}
and there exists a residual subset $P_{\mathrm{res}} \subseteq P$ so that for every $p \in P_{\mathrm{res}}$
\begin{enumerate}[label=\arabic*),resume]
    \item\label{it: param transversality conclusion - transverse} $\Fsection_p$ and $\Fsection_p|_{\partial \Bcal}$ are transverse to the zero section $\mathcal{O}_{\Ecal}$ respectively $\mathcal{O}_{\Ecal|_{\partial \Bcal}} \,$.
\end{enumerate}
\end{proposition}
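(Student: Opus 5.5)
The plan is the standard Sard--Smale argument: apply the implicit function theorem to the parametrized section, then Sard's theorem to the projection of its zero set onto $P$, and finally a linear-algebra lemma identifying regular values of that projection with transversality of $\Fsection_p$.

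\emph{Step 1 (Fredholm property of $\Fsection_p$).} Fix $(x,p)\in S_{\Fsection}$, and hence $x \in S_{\Fsection_p}$. Use the splitting $T_{(x,p)}(\Bcal\times P)=T_x\Bcal\oplus T_pP$. The vertical differential of $\Fsection$ restricts on $T_x\Bcal\oplus 0$ to $\Dvert{x}\Fsection_p$; this is Remark \ref{rmk: restricting sections to submfds} applied to the submanifold $\Bcal\times\{p\}$. Since $T_pP\cong\R^n$ is finite-dimensional, Lemma \ref{app lemma: Fredholm finite dim summands} shows that $\Dvert{x}\Fsection_p$ is Fredholm of index $(d+n)-n=d$. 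This gives conclusion \ref{it: param transversality conclusion - Fredholm}.

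\emph{Step 2 (manifold structure).} By assumption \ref{it: param transversality assumption - transverse} and Corollary \ref{cor: implicit function thm Fredholm sections}, $S_{\Fsection}$ is a $(d+n)$-dimensional manifold. Its boundary is $\partial S_{\Fsection}=S_{\Fsection}\cap(\partial\Bcal\times P)$; here one uses that $P$ has no boundary, so $\partial(\Bcal\times P)=\partial\Bcal\times P$. By assumption \ref{it: param transversality assumption - second countable} and Remark \ref{rmk: Banach mfds - topological properties}, both $S_{\Fsection}$ and $\partial S_{\Fsection}$ are second-countable finite-dimensional manifolds. If $d+n<0$, the zero set is empty and everything is trivial.

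\emph{Step 3 (Sard).} Let $\pi:S_{\Fsection}\to P$ be the smooth projection. Sard's theorem applies to $\pi$ and to $\pi|_{\partial S_{\Fsection}}$, because both domains are second-countable. Hence the regular values of each form a residual set; in fact each set of critical values is a countable union of closed measure-zero sets, using $\sigma$-compactness. Take $P_{\mathrm{res}}$ to be the intersection of the two residual sets of regular values.

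\emph{Step 4 (key lemma, the main obstacle).} We must show that if $p$ is a regular value of $\pi$, then $\Dvert{x}\Fsection_p$ is surjective for all $x\in S_{\Fsection_p}$. Write $L=\Dvert{(x,p)}\Fsection$, which is surjective, and $L_1=L|_{T_x\Bcal}$. Given $e\in\Ecal_x$, pick $(\xi,v)$ with $L(\xi,v)=e$. Regularity gives $(\xi',v)\in\ker L=T_{(x,p)}S_{\Fsection}$, and then $L_1(\xi-\xi')=e$. So $L_1$ is surjective, and its kernel is finite-dimensional, hence complemented, so $\Fsection_p$ is transverse to the zero section.

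The boundary version is the delicate part. At a point with $x\in\partial\Bcal$, the same argument is run with $L^\partial=\Dvert{}(\Fsection|_{\partial\Bcal\times P})$ and with $\pi|_{\partial S_{\Fsection}}$. The tangent spaces come from Proposition \ref{prop: implicit function thm}: $T\partial S_{\Fsection}=\ker L\cap T(\partial\Bcal\times P)=\ker L^\partial$. This shows $\Dvert{x}(\Fsection_p|_{\partial\Bcal})$ is surjective.

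The point needing care is that for a manifold with boundary, \emph{regular value} is used only in the sense of surjective differential. That $\pi$ is a submersion at boundary points is not needed for the lemma: the interior argument at $x\in\partial\Bcal$ only uses surjectivity of $d\pi$ on the full tangent space $\ker L$, which is a vector space even at boundary points. One also checks that Sard's theorem holds for manifolds with boundary, by applying it separately on the interior and on the boundary stratum. This yields conclusion \ref{it: param transversality conclusion - transverse} for $p\in P_{\mathrm{res}}$.
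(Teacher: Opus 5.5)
Your proposal is correct and follows essentially the same route as the paper: Fredholmness of $\Fsection_p$ via Lemma \ref{app lemma: Fredholm finite dim summands}, then the implicit function theorem together with second-countability to make $S_{\Fsection}$ and $\partial S_{\Fsection}$ finite-dimensional manifolds, then Sard's theorem for $\pi$ and for $\pi|_{\partial S_{\Fsection}}$, and finally the intersection of the two residual sets of regular values. Your Step 4 writes out the ``direct calculation'' that the paper delegates to a reference, and it is carried out correctly, including at boundary points.
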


\begin{proof}
    That \ref{it: param transversality conclusion - Fredholm} holds for every $p \in P$ follows immediately from $\Fsection$ being a Fredholm section of index $d+n$ and Lemma \ref{app lemma: Fredholm finite dim summands} in the appendix.
    
    Next, we find a residual subset of $P$ as desired.
    By Corollary \ref{cor: implicit function thm Fredholm sections}, the zero set $S_{\Fsection} \subseteq \Bcal \times P$ is a smooth $(d+n)$-dimensional manifold, second-countable due to assumption. Consider the restriction $\pi : \ZeroFsection \rightarrow P$ of the projection $\Bcal \times P \rightarrow P \,$. By Sard's theorem (requires second-countability), the regular values $R(\pi) \subseteq P$ of $\pi$ form a residual subset of $P$. A direct calculation (as in \cite[Lemma 3.3.29]{Weber} for example) shows that for every regular value $p \in R(\pi)$ the Fredholm section $\Fsection_p$ has surjective vertical differential at every zero and hence
    is transverse to the zero section.
    Repeating the same argument with $\pi' := \pi|_{\partial \ZeroFsection} \,$, the restriction of the projection $\partial \Bcal \times P \rightarrow P$ to the zero set of $\Fsection|_{\partial \Bcal \times P} \,$, we see that
    the section $\Fsection_p|_{\partial \Bcal} : \partial \Bcal \rightarrow \Ecal|_{\partial \Bcal}$ is transverse to the zero section for every $p \in R(\pi') \,$, where $R(\pi') \subseteq P$ denotes the residual set of regular values of $\pi' \,$. The intersection $P_{\mathrm{res}} := R(\pi) \cap R(\pi')$ of the two residual sets is the desired residual set.
\end{proof}

\subsection{Auxiliary Norms}
\label{subsec: Auxiliary Norms}

Inspired by \cite[Def.~5.1.1]{Polyfolds_HWZ}, we introduce the notion of an auxiliary norm on a Banach bundle.

\begin{definition}[Auxiliary Norm]
\label{def: auxiliary norm}
Given a Banach bundle $\pi : \Ecal \rightarrow \Bcal \,$. A continuous function $N : \Ecal \rightarrow \R_{\geq 0}$ is called \textit{auxiliary norm on $\Ecal$} if
\begin{enumerate}[label=\alph*)]
    \item\label{it: auxiliary norm (a)} $N|_{\Ecal_x} : \Ecal_x \rightarrow \R_{\geq 0}$ is a norm on the fiber $\Ecal_x $ for every $x \in \Bcal \,$.
    \item\label{it: auxiliary norm (b)} If $(e_n)_{n \in \N} \subseteq \Ecal$ is a sequence with $\pi(e_n) \rightarrow x \in \Bcal$ and $N(e_n) \rightarrow 0 \,$, then $e_n \rightarrow 0_x$ in $\Ecal \,$.
\end{enumerate}
\end{definition}

\begin{lemma}[\cf Lemma 5.1.2 in \cite{Polyfolds_HWZ}]
\label{lemma: auxiliary norm characterization}
Let $\Ecal$ be a Banach bundle and $N :  \Ecal \rightarrow \R_{\geq 0}$ be a continuous function satisfying \ref{it: auxiliary norm (a)} in Definition \ref{def: auxiliary norm}. Then the following are equivalent.
\begin{enumerate}[label=\arabic*)]
    \item\label{it: auxiliary norm characterization 1)} $N$ is an auxiliary norm on $\Ecal$.
    \item\label{it: auxiliary norm characterization 2)} For every $x_0 \in \Bcal$ and every trivialization $\Phi : \Ecal|_{\Ucal} \overset{\sim}{\rightarrow} \Ucal \times \bbmF$ over an open neighborhood $\Ucal \subseteq \Bcal$ of $x_0$ there exists an open neighborhood $\Ucal' \subseteq \Ucal$ of $x_0$ and a constant $C >0$ with
    \[
    \frac{1}{C} \, \lVert  h \rVert_{\bbmF} \leq N \big( \Phi^{-1}(x,h)\big) \leq C \, \lVert h \rVert_{\bbmF} \qquad \Forall (x,h) \in \Ucal' \times \bbmF \,\, .
    \] 
    Here $\lVert \,\cdot\, \rVert_{\bbmF}$ denotes the norm on the fiber $\bbmF \,$.
\end{enumerate}
\end{lemma}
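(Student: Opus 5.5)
The plan is to prove the two implications separately, working locally in a fixed trivialization $\Phi : \Ecal|_{\Ucal} \overset{\sim}{\rightarrow} \Ucal \times \bbmF$ around $x_0$. Throughout, write $\widetilde N(x,h) := N(\Phi^{-1}(x,h))$. This is a continuous function on $\Ucal \times \bbmF$, and for each fixed $x$ it is a norm on $\bbmF$.

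\emph{\ref{it: auxiliary norm characterization 2)} $\Rightarrow$ \ref{it: auxiliary norm characterization 1)}.} Take a sequence $(e_n)$ with $\pi(e_n) \to x$ and $N(e_n) \to 0$. Choose a trivialization around $x$, and let $\Ucal'$ and $C$ be as in \ref{it: auxiliary norm characterization 2)}. For large $n$ we have $\pi(e_n) \in \Ucal'$, and we write $\Phi(e_n) = (x_n,h_n)$. The lower bound gives $\lVert h_n \rVert_{\bbmF} \leq C\, N(e_n) \to 0$. Hence $(x_n,h_n) \to (x,0)$ in $\Ucal \times \bbmF$, and applying the homeomorphism $\Phi^{-1}$ gives $e_n \to 0_x$.

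\emph{\ref{it: auxiliary norm characterization 1)} $\Rightarrow$ \ref{it: auxiliary norm characterization 2)}.} By homogeneity of the fiber norms, it suffices to find $\Ucal'$ and $C$ with $\widetilde N(x,h) \leq C$ and $\widetilde N(x,h) \geq 1/C$ for all $x \in \Ucal'$ and all $h$ with $\lVert h \rVert_{\bbmF} = 1$. Both bounds will be proved by contradiction using sequences; the base is first-countable (Remark \ref{rmk: Banach mfds - topological properties}), so shrinking neighborhoods $\Ucal_k$ of $x_0$ are available.

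\emph{Upper bound.} Continuity of $\widetilde N$ at $(x_0,0)$, together with $\widetilde N(x_0,0)=0$, gives a neighborhood $\Ucal' \times \ball_\delta$ of $(x_0,0)$ on which $\widetilde N < 1$. By homogeneity, $\widetilde N(x,h) \leq \delta^{-1}\lVert h \rVert_{\bbmF}$ for all $x \in \Ucal'$ and all $h \in \bbmF$. This step uses only continuity.

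\emph{Lower bound.} Suppose no such bound exists. Then there are points $x_k \to x_0$ and unit vectors $h_k$ with $\widetilde N(x_k,h_k) < 1/k$. Set $e_k := \Phi^{-1}(x_k,h_k)$. Then $\pi(e_k) = x_k \to x_0$ and $N(e_k) \to 0$, so property \ref{it: auxiliary norm (b)} gives $e_k \to 0_{x_0}$ in $\Ecal$. Applying $\Phi$ yields $h_k \to 0$ in $\bbmF$, which contradicts $\lVert h_k \rVert_{\bbmF} = 1$. Combining the two bounds on a common $\Ucal'$ gives the constant $C$.

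The main point requiring care is the lower bound. Property \ref{it: auxiliary norm (b)} is exactly what supplies the uniformity in $x$ there, since continuity alone would only give it fiberwise. Its use above is legitimate because $\Phi$ is a homeomorphism onto $\Ucal \times \bbmF$, so convergence in $\Ecal|_{\Ucal}$ translates directly into convergence of both components. No compactness of the unit sphere is needed, since the contradiction comes directly from property \ref{it: auxiliary norm (b)}.
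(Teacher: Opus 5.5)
Your proposal is correct and follows essentially the same route as the paper: the upper bound comes from continuity of $N\circ\Phi^{-1}$ at $(x_0,0)$ plus homogeneity, the lower bound from a first-countability contradiction argument using property b), and the converse is read off directly from the lower inequality. One small correction, which concerns only your closing commentary and not the argument: continuity alone does not give a lower bound even on a single fiber, because in infinite dimensions a continuous norm can be strictly weaker than $\lVert\cdot\rVert_{\bbmF}$, so property b) is already needed fiberwise.
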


\begin{proof}
    That \ref{it: auxiliary norm characterization 2)} implies \ref{it: auxiliary norm characterization 1)} follows directly from the inequality in \ref{it: auxiliary norm characterization 2)}. Conversely, suppose $N$ is an auxiliary norm. Let $x_0 \in \Bcal$ be given together with a trivialization $\Phi : \Ecal|_{\Ucal} \rightarrow \Ucal \times \bbmF$ over a neighborhood of $x_0$. Because $\widetilde{N}:= N \circ \Phi^{-1}$ is continuous, there exists a product neighborhood $\Ucal_1 \times \Bar{B}_\eps(0) \subseteq \Ucal \times \bbmF$ of $(x_0,0) \,$, where $\Bar{B}_\eps(0) \subseteq \bbmF$ is the closed ball of radius $\eps >0 \,$, so that $\widetilde{N} \leq 1$ on $\Ucal_1 \times B_\eps(0) \,$.
    As norms are homogeneous, this implies $\widetilde{N}(x,h) \leq \eps^{-1} \, \lVert h \rVert_{\bbmF}$ for every $h \in \bbmF$ and $x \in \Ucal_1 \,$. Now suppose by contradiction that there does not exist a constant $C > 0$ and a neighborhood $\Ucal' \subseteq \Ucal_1$ of $x_0$ with
    \[
    \frac{1}{C} \, \lVert  h \rVert_{\bbmF} \leq \widetilde{N}(x,h) \qquad \Forall (x,h) \in \Ucal' \times \bbmF \,\, .
    \]
    Then there exists a sequence $(x_n,h_n)_n \subseteq \Ucal \times \bbmF$ so that $(x_n)_n$ converges to $x_0$ and $\lVert h_n \rVert_{\bbmF} = 1$ for all $n \in \N$ as well as $\widetilde{N}(x_n,h_n) \rightarrow 0$ as $ n \rightarrow \infty \,$. This uses that $\Bcal$, being a Banach manifold, is first-countable.
    Because $\widetilde{N}$ is an auxiliary norm, $(x_n,h_n)_n$ converges to $(x_0,0) \,$, contradicting the fact that $\lVert h_n\rVert_{\bbmF} = 1$ for all $n \in \N \,$.
\end{proof}

\begin{remark}
\label{rmk: auxiliary norms characterization}
The preceding lemma shows that the restriction $N|_{\Ecal_x}$ of an auxiliary norm $N$ to the fiber $\Ecal_x$ is a norm inducing the given topology on $\Ecal_x \,$.
\end{remark}

\begin{lemma}
\label{lem: auxiliary norms exist around compactum}
Let $\pi : \Ecal \rightarrow \Bcal$ be a Banach bundle over a Banach manifold $\Bcal$ which admits continuous bump functions. Then for every compact set $\mathcal{K} \subseteq \Bcal$ there exists an open set $\Ucal \subseteq \Bcal$ containing $\mathcal{K} $ and an auxiliary norm $N$ on $\Ecal|_{\Ucal} \,$.
\end{lemma}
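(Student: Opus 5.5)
The plan is to build $N$ by gluing local norms with a partition of unity subordinate to finitely many trivializations covering $\mathcal{K}$. For every $x \in \mathcal{K}$ choose a trivialization $\Phi^{(x)} : \Ecal|_{\Ucal_x} \rightarrow \Ucal_x \times \bbmF^{(x)}$ and define the local norm $N_x(e) := \lVert \mathrm{pr}_2 \Phi^{(x)}(e) \rVert_{\bbmF^{(x)}}$ on $\Ecal|_{\Ucal_x}$. Each $N_x$ is continuous, restricts to a norm on every fiber, and by Lemma \ref{lemma: auxiliary norm characterization} (its condition \ref{it: auxiliary norm characterization 2)} is immediate for $\Phi^{(x)}$ itself, and transfers to any other trivialization because transition maps $y \mapsto (\Phi^{(j)}_y)^{-1}\circ\Phi^{(i)}_y$ are continuous into $\mathcal{L}$, hence locally bounded together with their inverses) it is an auxiliary norm on $\Ecal|_{\Ucal_x}$.

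Next, use that $\Bcal$ admits continuous bump functions. For each $x \in \mathcal{K}$ pick $f_x \in C^0(\Bcal,[0,1])$ with $f_x(x) = 1$ and $\supp(f_x) \subseteq \Ucal_x$. The open sets $\{f_x > 1/2\}$ cover $\mathcal{K}$, so by compactness finitely many $x_1,\dots,x_m$ suffice. Set
\[
\Ucal := \bigcup_{j=1}^m \{ f_{x_j} > 1/2 \} \supseteq \mathcal{K}
\]
and define $N(e) := \sum_{j=1}^m f_{x_j}(\pi(e)) \, N_{x_j}(e)$ for $e \in \Ecal|_{\Ucal}$, where each summand is interpreted as $0$ when $\pi(e) \notin \supp(f_{x_j})$. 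Continuity of each summand follows since $f_{x_j}\circ\pi \cdot N_{x_j}$ is continuous on $\Ecal|_{\Ucal_{x_j}}$ and vanishes on the open set $\pi^{-1}(\Bcal\setminus\supp f_{x_j})$, and these two open sets cover $\Ecal$. On each fiber $\Ecal_y$, $y \in \Ucal$, $N$ is a nonnegative combination of seminorms (actually norms) with at least one coefficient $>1/2$, hence a norm: property \ref{it: auxiliary norm (a)}.

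The main point is property \ref{it: auxiliary norm (b)}, which I would verify via the characterization \ref{it: auxiliary norm characterization 2)} of Lemma \ref{lemma: auxiliary norm characterization}. Fix $y_0 \in \Ucal$ and a trivialization $\Phi$ near $y_0$. Each $N_{x_j}$ with $y_0 \in \Ucal_{x_j}$ satisfies the two-sided estimate with $\lVert\cdot\rVert_{\bbmF}$ on a neighborhood of $y_0$; for $j$ with $y_0 \notin \supp f_{x_j}$, the term vanishes near $y_0$ (closedness of the support). This gives the upper bound $N \leq C \lVert h\rVert$ near $y_0$. For the lower bound, pick $j_0$ with $f_{x_{j_0}}(y_0) > 1/2$; by continuity $f_{x_{j_0}} > 1/2$ on a neighborhood, so $N \geq \tfrac12 N_{x_{j_0}} \geq \tfrac{1}{2C}\lVert h \rVert$ there. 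Intersecting finitely many neighborhoods yields the estimate, so $N$ is an auxiliary norm on $\Ecal|_{\Ucal}$. The only subtle step is the local equivalence of $N_{x_j}$ with an arbitrary trivialization norm, i.e. local uniform boundedness of transition operators and their inverses, which follows from continuity of $y\mapsto(\Phi_y)^{-1}\circ\Phi^{(x_j)}_y$ in operator norm and continuity of inversion.
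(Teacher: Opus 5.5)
Your proposal is correct and is essentially the paper's proof. Like the paper, you transport the fiber norms through trivializations, choose continuous bump functions $f_{x_j}$ supported in the trivializing sets, take a finite subcover of $\mathcal{K}$, and set $N := \sum_j f_{x_j} N_{x_j}$ on a neighborhood where some $f_{x_j}$ is positive. Your threshold $f_{x_j} > 1/2$ and the explicit two-sided estimate via Lemma~\ref{lemma: auxiliary norm characterization} just fill in the verification that the paper leaves as ``one readily verifies''.
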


\begin{proof}
Let $\mathcal{K} \subseteq \Bcal$ be a compact subset. We claim the following:
\begin{claim}
There exists a finite collection $(\Ucal_i, f_i, N_i)_{i=1}^m$ so that
\begin{enumerate}[label=\arabic*)]
    \item the collection $\{\Ucal_i\}_{i=1}^m$ is an open cover of $\mathcal{K}$, 
    \item $f_i \in C^0(\Bcal,[0,1])$ has support contained in $\Ucal_i$, for every $1 \leq i \leq m \,$,
    \item $N_i$ is an auxiliary norm on $\Ecal|_{\Ucal_i} \,$, for every $1 \leq i \leq m$,
    \item $\sum_{i=1}^m f_i > 0$ on $\mathcal{K}$.
\end{enumerate}
\end{claim}

\noindent Assuming the claim, choose an open neighborhood $\Ucal \subseteq \bigcup_{i=1}^m \Ucal_i$ of $\mathcal{K}$ so that $\sum_{i=1}^m f_i > 0$ on $\Ucal \,$. Now set $N := \sum_{i=1}^m f_i \, N_i : \, \Ecal|_{\Ucal} \rightarrow \R_{\geq 0} \,$. One readily verifies that $N$ is an auxiliary norm on $\Ecal|_{\Ucal} \,$.

\begin{proofClaim}
    For each $x \in \mathcal{K}$ choose an open neighborhood $\Ucal_x \subseteq \Bcal$ of $x$ over which a trivialization is defined. Let $N_x$ be the auxiliary norm on $\Ecal|_{\Ucal_x}$ given by transporting the fiber norm $\lVert \,\cdot\, \rVert_{\bbmF}$ via such a trivialization $\Ecal|_{\Ucal_x} \cong \Ucal_x \times \bbmF \,$. Since $\Bcal$ admits continuous bump functions, we can choose a continuous function $f_x : \Bcal \rightarrow [0,1]$ with support contained in $\Ucal_x $ and $f_x(x)=1 \,$. Since $\mathcal{K}$ is compact, we can choose finitely many $x_1,\ldots ,x_m \in \mathcal{K}$ so that $\{f_{x_i}^{-1}(\R_{>0})\}_{i=1}^m$ covers $\mathcal{K} \,$. Finally, relabel $\Ucal_i := \Ucal_{x_i} \comma N_i := N_{x_i} $ and $f_i := f_{x_i} \,$.
\end{proofClaim}
\end{proof}

\subsection{Filling up the Cokernel}
\label{subsec: Filling up Cokernel}

The following lemmata are concerned with sections filling up the cokernel of the vertical differential of a section.

\begin{lemma}
\label{lemma: spanning the cokernel open condition}
Let $\Fsection : \Bcal \rightarrow \Ecal$ be a section of the Banach bundle $\Ecal$ with zero $x_0 \in \ZeroFsection \,$. Suppose $s_1, \ldots , s_n : \Bcal \rightarrow \Ecal$ are finitely many sections so that $s_1(x_0), \ldots , s_n(x_0)$ span the cokernel of $\Dvert{x_0} \Fsection \,$. Then there exists an open neighborhood $\Ucal \subseteq \Bcal$ of $x_0$ so that $s_1(x), \ldots , s_n(x)$ span the cokernel of $\Dvert{x} \Fsection$ for every $x \in \Ucal \cap \ZeroFsection \,$.
\end{lemma}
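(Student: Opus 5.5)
Throughout we work in a local trivialization, where the statement becomes the lower semicontinuity of the rank of a continuous family of bounded operators on a fixed Banach space. Choose a trivialization $\Phi : \Ecal|_{\Ucal_0} \rightarrow \Ucal_0 \times \bbmF$ over an open neighborhood $\Ucal_0$ of $x_0$, and a chart of $\Bcal$ around $x_0$ modeled on $\bbmE$ (shrinking $\Ucal_0$ if needed), so that $T_x\Bcal \cong \bbmE$ for all $x\in\Ucal_0$. Set $f := \mathrm{pr}_2\circ\Phi\circ\Fsection$ and $\sigma_i := \mathrm{pr}_2\circ\Phi\circ s_i$. By Remark \ref{rmk: vertical differential}, for $x\in\Ucal_0\cap\ZeroFsection$ we have $\Phi_x\circ\Dvert{x}\Fsection = df(x)$. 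Since $\Phi_x$ is a toplinear isomorphism, the claim "$s_1(x),\ldots,s_n(x)$ span the cokernel of $\Dvert{x}\Fsection$" is equivalent to the surjectivity of
\[
L_x : \bbmE\oplus\R^n \rightarrow \bbmF \comma \quad L_x(v,\mu) := df(x)v + \sum_{i=1}^n \mu_i\,\sigma_i(x) \,\, .
\]
The map $x\mapsto L_x \in \mathcal{L}(\bbmE\oplus\R^n,\bbmF)$ is continuous in operator norm on all of $\Ucal_0$, since $f$ and the $\sigma_i$ are smooth. Note that this family is defined for every $x\in\Ucal_0$, not only on the zero set.

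By hypothesis $L_{x_0}$ is surjective. It therefore suffices to show that the surjective operators form an open subset of $\mathcal{L}(\bbmE\oplus\R^n,\bbmF)$ near $L_{x_0}$; continuity of $x\mapsto L_x$ then produces the required neighborhood $\Ucal$. We do not assume $\ker L_{x_0}$ has a closed complement, so we avoid constructing a right inverse directly. Instead we use the open mapping theorem. Surjectivity of $L_{x_0}$ gives a constant $c>0$ such that for every $y\in\bbmF$ there is $w$ with $L_{x_0}w=y$ and $\lVert w\rVert\le c\lVert y\rVert$.

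For $\lVert L-L_{x_0}\rVert < 1/(2c)$ we solve $Lw=y$ by the standard iteration. Set $w_0$ to be the preimage of $y$ under $L_{x_0}$ given above. Then $y_1 := y - Lw_0$ satisfies $\lVert y_1\rVert\le\tfrac12\lVert y\rVert$. Repeating the construction on $y_1$, and so on, yields a geometric series $w=\sum_k w_k$ that converges by completeness and satisfies $Lw=y$. Hence $L$ is surjective. This lemma belongs to basic functional analysis, and could alternatively be quoted as the openness of surjections among bounded operators between Banach spaces.

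There is an alternative route, since only finitely many vectors are added. In the Fredholm case, where the kernel is finite-dimensional and has a closed complement $X$ by Remark \ref{rmk: vertical differential}, the restriction $L_{x_0}|_{X\oplus\R^n}$ may fail to be injective. One would then shrink the set of $s_i$ to a basis of the cokernel complement and invert the restricted operator, using openness of invertible operators. The iteration argument above handles the general case without this bookkeeping, so I would present that one.

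The main obstacle is only this: it is not the operators restricted to the zero set that vary continuously, but the local operators $df(x)$, and the vertical differential agrees with $\Phi_x^{-1}\circ df(x)$ only at zeros. Identifying the two on $\ZeroFsection\cap\Ucal$ via Remark \ref{rmk: vertical differential} resolves this. We then take $\Ucal := \{x\in\Ucal_0 : \lVert L_x-L_{x_0}\rVert<1/(2c)\}$, which is open and contains $x_0$.
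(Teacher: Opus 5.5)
Your proposal is correct and follows the paper's route. You localize in a trivialization and chart, use that at zeros the vertical differential agrees with the everywhere-defined differential of the principal part, and recast spanning the cokernel as surjectivity of the augmented operator $L_x(v,\mu)=df(x)v+\sum_i\mu_i\sigma_i(x)$ (the paper's $G(x)$ in its appendix lemma), pulling back the open set of surjective operators under the continuous map $x\mapsto L_x$. The only difference is that you prove openness of surjections yourself via the open mapping theorem and a geometric-series iteration, whereas the paper simply cites Lang for this fact.
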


\begin{proof}
This is a local question, so we may assume that $\Ecal|_{\Ucal} = \Ucal \times \bbmF$ is a trivial bundle for a small neighborhood $\Ucal$ of $x_0 $ contained in  the domain of a chart for $\Bcal$. The vertical differential at
a zero $x \in \ZeroFsection \cap \Ucal$ then is just the usual differential of the principal part $\Fsection : \Ucal \rightarrow \bbmF $ at $x$ (Remark \ref{rmk: vertical differential}) and lies in $\mathcal{L}(\bbmE,\bbmF) \,$, 
the space of bounded linear operators from the the Banach space $\bbmE \,$, on which $\Bcal$ is locally modeled, to the model fiber $\bbmF \,$. The differential of the principal part in the fixed trivialization is defined not only for zeroes of $\Fsection$ but at arbitrary points. The lemma follows by employing Lemma \ref{app lemma: spanning cokernel open condition} in the appendix to the differential of the principal part.
\end{proof}

\begin{lemma}[\cf Lemma 5.3.3 in \cite{Polyfolds_HWZ}]
\label{lemma: bump B^+ sections}
Suppose $\Bcal$ admits smooth bump functions.
Let $(\Ecal,\Ecal^1)$ be a B-bundle pair over $\Bcal$ and $N$ be an auxiliary norm on $\Ecal^1 \,$. Given $x \in \Bcal$, an open neighborhood $\Ucal \subseteq \Bcal$ of $x$ and $e \in \Ecal^1_x\,$. Then for every $\eps > 0$ there exists a \Bplus-section $s : \Bcal \rightarrow \Ecal$ with $s(x)= e \comma \supp(s) \subseteq \Ucal $ and $N \circ s \leq N(e) + \eps \,$.
\end{lemma}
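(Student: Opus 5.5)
The construction is local: we extend $e$ to a local section that is constant in a trivialization, and then cut it off with a bump function. By the definition of a B-bundle pair, choose an open neighborhood $\Ucal_0 \subseteq \Ucal$ of $x$, a B-pair $(\bbmF,\bbmF_1)$ and a smooth trivialization $\Phi : \Ecal|_{\Ucal_0} \to \Ucal_0 \times \bbmF$ restricting to a smooth trivialization $\Phi : \Ecal^1|_{\Ucal_0} \to \Ucal_0 \times \bbmF_1$. Set $h := \mathrm{pr}_2 \Phi(e) \in \bbmF_1$ and define the local section $\sigma(y) := \Phi^{-1}(y,h)$ for $y \in \Ucal_0$. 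Then $\sigma$ is smooth as a section of $\Ecal^1|_{\Ucal_0}$ with $\sigma(x) = e$. Composing with the smooth inclusion $\Ecal^1 \hookrightarrow \Ecal$ (Remark \ref{rmk: B-bundle pair}), $\sigma$ is also smooth as a section of $\Ecal|_{\Ucal_0}$.

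Next we control the norm. Since $N$ is continuous on $\Ecal^1$ and $\sigma$ is a continuous section of $\Ecal^1$, the function $y \mapsto N(\sigma(y))$ is continuous on $\Ucal_0$ and takes the value $N(e)$ at $x$. Hence there is an open neighborhood $\Ucal_1 \subseteq \Ucal_0$ of $x$ on which $N \circ \sigma < N(e) + \eps$.

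Now use that $\Bcal$ admits smooth bump functions. Pick $f \in \Cinfty(\Bcal,[0,1])$ with $f(x) = 1$ and $\supp(f) \subseteq \Ucal_1$, and define $s := f\sigma$ on $\Ucal_0$ and $s := 0$ on $\Bcal \setminus \supp(f)$. These two open sets cover $\Bcal$, since $\supp(f)$ is closed and contained in $\Ucal_0$. On their overlap both definitions give $0$, so $s$ is a well-defined smooth section of $\Ecal^1$, and hence of $\Ecal$, that is, a \Bplus-section. It satisfies $s(x) = e$ and $\supp(s) \subseteq \supp(f) \subseteq \Ucal$. For the norm bound, homogeneity of the fiber norm gives $N(s(y)) = f(y)\, N(\sigma(y)) \leq N(e) + \eps$ for $y \in \Ucal_1$, and $N(s(y)) = 0$ elsewhere.

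There is no real obstacle here. The only points needing care are the following.
\begin{enumerate}[label=\roman*)]
    \item Smoothness into $\Ecal^1$, as opposed to merely into $\Ecal$, comes from using a trivialization that restricts to $\Ecal^1$, which the definition of a B-bundle pair provides.
    \item The gluing is legitimate because $\supp(f)$ is a closed subset of $\Bcal$ lying inside $\Ucal_0$.
\end{enumerate}
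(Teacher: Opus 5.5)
Your proposal is correct and is essentially the paper's proof: trivialize $\Ecal^1$ near $x$, take the constant section with value $\Phi_x(e)$, multiply by a bump function of sufficiently small support, and use continuity of $N$ for the bound. The paper only needs a trivialization of $\Ecal^1$, not one compatible with $\Ecal$, but using the common trivialization is harmless, and your explicit gluing and homogeneity steps simply spell out what the paper leaves implicit.
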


\begin{proof}
    Choose a local trivialization $\Phi : \Ecal^1|_{\Ucal'} \rightarrow \Ucal' \times \bbmF_1$ for $\Ecal^1$ over an open neighborhood $\Ucal' \subseteq \Ucal$ of $x$. Let $h := \Phi_x(e) \in \bbmF_1 \,$. Multiply the section $y \mapsto \Phi^{-1}(y,h)$ of $\Ecal^1|_{\Ucal'}$ by a smooth bump function $\beta : \Bcal \rightarrow [0,1]$ with support in $\Ucal'$ and with $\beta(x) = 1 \,$.
    This gives a smooth section $s : \Bcal \rightarrow \Ecal^1$ with $s(x) = e$ and support in $\Ucal \,$. Since $\Ecal^1 \hookrightarrow \Ecal$ is a smooth bundle map, the section $s : \Bcal \rightarrow \Ecal$ is smooth as well. If the support of $\beta$ is sufficiently small, we additionally obtain $N \circ s \leq N(e) + \eps $ by continuity of $N$.
\end{proof}

\begin{lemma}
\label{lemma: bump B^+ sections fill up cokernel}
Given a B-bundle pair $(\Ecal,\Ecal^1)$ over a Banach manifold $\Bcal$ which admits smooth bump functions and a Fredholm section $\Fsection : \Bcal \rightarrow \Ecal \,$. Let $\Wcal \subseteq \Bcal$ be an open subset containing $\ZeroFsection$ and $N$ an auxiliary norm on $\Ecal^1|_{\Wcal} \,$. Given $x_0 \in \ZeroFsection$ and an open neighborhood $\Ucal_{x_0} \subseteq \Bcal$ of $x_0 \,$.
\begin{enumerate}[label=\alph*)]
    \item\label{it: bump B^+ sections fill up cokernel} There exist finitely many \Bplus-sections $s_1,\ldots , s_n : \Bcal \rightarrow \Ecal$ having support in  $\Ucal_{x_0}$ and with $N \circ s_1|_{\Wcal} \,, \ldots , N \circ s_n|_{\Wcal} \leq 1$ so that
\[
\Dvert{x_0} \Fsection(T_{x_0} \Bcal) + \mathrm{span}\{s_1(x_0),\ldots , s_n(x_0)\} = \Ecal_{x_0} \,\, .
\]
\item\label{it: bump B^+ sections fill up cokernel boundary} If $x_0 \in \partial \Bcal \,$, then we can furthermore arrange that
\[
\Dvert{x_0} \Fsection(T_{x_0} \partial \Bcal) + \mathrm{span}\{s_1(x_0),\ldots , s_n(x_0)\} = \Ecal_{x_0}
\]
\end{enumerate}
\end{lemma}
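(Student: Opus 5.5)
The strategy is to use that the image of the vertical differential is closed of finite codimension, and that $\Ecal^1_{x_0}$ is dense in $\Ecal_{x_0}$, to pick finitely many vectors in $\Ecal^1_{x_0}$ spanning a complement. Then Lemma \ref{lemma: bump B^+ sections} turns each vector into a \Bplus-section with the required support, and rescaling gives the norm bound.

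For \ref{it: bump B^+ sections fill up cokernel}, set $R := \Dvert{x_0}\Fsection(T_{x_0}\Bcal)$. Since $\Fsection$ is Fredholm, $R$ is closed with finite codimension $k$. Choose a complement $C$ spanned by $c_1,\ldots,c_k$ and let $q : \Ecal_{x_0} \rightarrow \Ecal_{x_0}/R \cong \R^k$ be the continuous quotient map. The subspace $\Ecal^1_{x_0}$ is dense in $\Ecal_{x_0}$ (Remark \ref{rmk: B-bundle pair}), so $q(\Ecal^1_{x_0})$ is a dense linear subspace of the finite-dimensional space $\Ecal_{x_0}/R$. Dense linear subspaces of finite-dimensional spaces are closed, hence equal to the whole space. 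Therefore there exist $e_1,\ldots,e_k \in \Ecal^1_{x_0}$ with $q(e_i)$ forming a basis, which gives $R + \mathrm{span}\{e_1,\ldots,e_k\} = \Ecal_{x_0}$.

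The norm bound is handled by rescaling. Replacing $e_i$ by $e_i/(2N(e_i))$ (these are nonzero) preserves the spanning property and gives $N(e_i) = 1/2$. Now apply Lemma \ref{lemma: bump B^+ sections} to the B-bundle pair restricted to $\Wcal$, with the auxiliary norm $N$ on $\Ecal^1|_{\Wcal}$, the neighborhood $\Ucal_{x_0}\cap\Wcal$ and $\eps = 1/2$. This yields \Bplus-sections $s_i$ on $\Wcal$ with $s_i(x_0)=e_i$, support in $\Ucal_{x_0}\cap\Wcal$, and $N\circ s_i \le 1$.

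The main obstacle is that the lemma is stated on all of $\Bcal$ while $N$ lives only on $\Wcal$. To extend by zero to a smooth \Bplus-section on $\Bcal$, I need the support to be closed in $\Bcal$, not merely in $\Wcal$. I would fix this inside the proof of Lemma \ref{lemma: bump B^+ sections}: choose the bump function $\beta$ directly on $\Bcal$ with support in a trivializing neighborhood $\Ucal' \subseteq \Ucal_{x_0}\cap \Wcal$, using smooth bump functions on $\Bcal$. The resulting section $s_i$ is then a smooth \Bplus-section of $\Bcal$ with $\supp(s_i)\subseteq \Ucal_{x_0}$, and it satisfies $N\circ s_i|_{\Wcal}\le 1$ by continuity of $N$ after shrinking $\supp\beta$.

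For \ref{it: bump B^+ sections fill up cokernel boundary}, repeat the argument with $R' := \Dvert{x_0}\Fsection(T_{x_0}\partial\Bcal)$. By Remark \ref{rmk: restricting sections to submfds}, this is the range of the vertical differential of the Fredholm section $\Fsection|_{\partial\Bcal}$, so it is closed of finite codimension. Since $R'\subseteq R$, the resulting sections also satisfy the statement in \ref{it: bump B^+ sections fill up cokernel}. Thus a single family $s_1,\ldots,s_n$ handles both parts.
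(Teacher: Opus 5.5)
Your proposal is correct and follows essentially the same route as the paper: you pick vectors in the dense subspace $\Ecal^1_{x_0}$ spanning a complement of the closed, finite-codimensional image, rescale them, and turn them into \Bplus-sections via Lemma~\ref{lemma: bump B^+ sections}, with part b) handled the same way using the image of $T_{x_0}\partial\Bcal$. The only difference is how you make the extension by zero from $\Wcal$ to $\Bcal$ smooth: the paper uses regularity of $\Bcal$ to shrink $\Ucal_{x_0}$ so that its $\Bcal$-closure lies in $\Wcal$, whereas you choose the bump function on $\Bcal$ itself with support in $\Ucal_{x_0}\cap\Wcal$, which works equally well.
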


\begin{proof} We show \ref{it: bump B^+ sections fill up cokernel} in two steps.

    \textit{Step 1:} We show \ref{it: bump B^+ sections fill up cokernel} in the case where $\Wcal = \Bcal \,$. As $\Ecal^1_{x_0} \subseteq \Ecal_{x_0}$ is a dense subspace (because $(\Ecal_{x_0}, \Ecal_{x_0}^1)$ is a B-pair), we can find vectors $v_1,\ldots , v_n \in \Ecal^1_{x_0}$ that span a complement of the image of the Fredholm operator $\Dvert{x_0} \Fsection \,$. By the previous Lemma \ref{lemma: bump B^+ sections}, we can choose \Bplus-sections $s_1,\ldots,s_n$ with support in $\Ucal_{x_0} \,$, for which $v_i$ is a multiple of $s_i(x_0)$ and so that $N \circ s_i \leq 1 \,$.
    
    \textit{Step 2:} We show \ref{it: bump B^+ sections fill up cokernel} in the general case. Since $\Bcal$ admits smooth bump functions, it is a regular space. So, making $\Ucal_{x_0}$ smaller if necessary, we can assume that its closure in $\Bcal$ is contained in $\Wcal \,$. Now apply Step 1 to the restricted B-Bundle pair $(\Ecal|_{\Wcal}, \Ecal^1|_{\Wcal}) $ to obtain \Bplus-sections $s_1,\ldots , s_n : \Wcal \rightarrow \Ecal|_{\Wcal} $ with support in $\Ucal_{x_0} \,$. Since the $\Bcal$-closure of $\Ucal_{x_0}$ is contained in $\Wcal \,$, extending $s_1,\ldots , s_n$ by zero outside $\Wcal $ gives smooth sections on $\Bcal$ as desired.
 
    The proof of \ref{it: bump B^+ sections fill up cokernel boundary} works analogously, choosing vectors $v_1,\ldots , v_n \in \Ecal^1_{x_0}$ that span a complement of $\Dvert{x_0}\Fsection(T_{x_0}\partial \Bcal) \,$.
\end{proof}

\subsection{Compactness Results for Fredholm Sections}
\label{subsec: Compactness Results for Fredholm Sections}

Let $(\Ecal, \Ecal^1)$ be a B-bundle pair over a Banach manifold $\Bcal \,$. Consider a Fredholm section $\Fsection : \Bcal \rightarrow \Ecal$, whose zero set $\ZeroFsection $ we additionally assume to be compact. Moreover, let $\Ucal \subseteq \Bcal$ be an open set containing $\ZeroFsection$ and let $N$ be an auxiliary norm on $\Ecal^1|_{\Ucal} \,$. We keep these data fixed for the rest of this Subsection \ref{subsec: Compactness Results for Fredholm Sections}.

\begin{convention}
\label{convention: auxiliary norms on B-bundle pairs}
We extend $N$ to a function $N : \Ecal|_{\Ucal} \rightarrow [0,\infty]$ by setting 
\[
N(e) := \infty \quad \text{ for } e \in (\Ecal|_\Ucal) \backslash \Ecal^1 \,\, .
\]
\end{convention}

\noindent 

\begin{lemma}
\label{lemma: local precompact regularizing nbhds around zeroes of Fsection}
For every $x_0 \in \ZeroFsection$ there exists an open neighborhood $\Wcal \subseteq \Ucal$ of $x_0$ so that for every $C \in \R$ the set
\[
\set{x \in \Wcal}{N(\Fsection(x)) \leq C}
\]
is precompact in $\Bcal \,$.
\end{lemma}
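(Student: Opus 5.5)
We work locally in a chart and trivialization around $x_0$ and reduce to local properness of Fredholm maps (Lemma \ref{app lemma: non-linear Fredholm map}) combined with compactness of the inclusion $\bbmF_1 \hookrightarrow \bbmF$.

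\emph{Local setup.} Choose a chart $\varphi$ of $\Bcal$ around $x_0$ with $\varphi(x_0)=0$, taking values in a generalized half-space of $\bbmE$. By the definition of a B-bundle pair, choose a trivialization $\Phi : \Ecal|_{\Ucal_0} \rightarrow \Ucal_0 \times \bbmF$ that restricts to a trivialization $\Ecal^1|_{\Ucal_0} \rightarrow \Ucal_0 \times \bbmF_1$, with $(\bbmF,\bbmF_1)$ a B-pair and $\Ucal_0 \subseteq \Ucal$. Apply Lemma \ref{lemma: auxiliary norm characterization} to $N$ on $\Ecal^1|_{\Ucal}$ with this $\bbmF_1$-trivialization. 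After shrinking $\Ucal_0$ we obtain $C_0>0$ with
\[
\lVert h \rVert_{\bbmF_1} \leq C_0 \, N\big(\Phi^{-1}(x,h)\big) \qquad \Forall (x,h) \in \Ucal_0 \times \bbmF_1 .
\]
Let $f := \mathrm{pr}_2 \circ \Phi \circ \Fsection \circ \varphi^{-1}$ be the principal part, defined on a relatively open subset of the half-space. By Remark \ref{rmk: vertical differential}, $df(0)$ corresponds to $\Dvert{x_0}\Fsection$, so $df(0)$ is Fredholm.

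\emph{Local properness.} This is the main step. By Lemma \ref{app lemma: non-linear Fredholm map} (local properness of $C^1$ maps whose differential at a point is Fredholm), there is a closed neighborhood $V$ of $0$, say a closed ball intersected with the half-space, such that $f|_V$ is proper: preimages of compact sets in $\bbmF$ are compact. I expect the delicate point to be whether the appendix lemma covers half-space domains. If it does not, I would extend $f$ near $0$ to a $C^1$ map on a full ball, e.g.\ by reflection or by using that the boundary direction is finite dimensional and splitting $\bbmE = \R \oplus \ker\lambda$. Alternatively I would rerun the standard argument directly: write $df(0)$ restricted to a closed complement of its kernel as an isomorphism onto its image, and use the finite-dimensional kernel and cokernel. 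Set $\Wcal := \varphi^{-1}(\mathrm{int}\, V)$, shrunk so that its closure is $\varphi^{-1}$ of a subset of $V$ inside $\Ucal_0$.

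\emph{Conclusion.} Take a sequence $(x_k)$ in $\Wcal$ with $N(\Fsection(x_k)) \leq C$. By Convention \ref{convention: auxiliary norms on B-bundle pairs}, $\Fsection(x_k) \in \Ecal^1$, so $h_k := f(\varphi(x_k)) \in \bbmF_1$ and $\lVert h_k\rVert_{\bbmF_1} \leq C_0 C$. Since the inclusion $\bbmF_1 \hookrightarrow \bbmF$ is compact, a subsequence of $(h_k)$ converges in $\bbmF$. The set $K$ consisting of its limit together with its terms is compact. Properness of $f|_V$ then shows that the points $\varphi(x_k)$ of the subsequence lie in the compact set $(f|_V)^{-1}(K)$, so a further subsequence converges in $V$. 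Hence $(x_k)$ has a subsequence converging in $\Bcal$. Since $\Bcal$ is first countable (Remark \ref{rmk: Banach mfds - topological properties}) and the closure of the set lies in the chart image of $V$, which is metrizable, sequential precompactness gives precompactness.
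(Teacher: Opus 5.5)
Your proposal is correct and follows essentially the same route as the paper: localize in a chart with a common trivialization of $(\Ecal,\Ecal^1)$, bound $\lVert\cdot\rVert_{\bbmF_1}$ by $N$ via Lemma \ref{lemma: auxiliary norm characterization}, use compactness of $\bbmF_1\hookrightarrow\bbmF$ to extract a subsequence along which the principal part of $\Fsection$ converges in $\bbmF$, and conclude with the local properness Lemma \ref{app lemma: non-linear Fredholm map}, treating boundary points by extending the principal part to an open set. Your formulation via properness of $f$ on a closed neighborhood is only a repackaging of the sequential statement that the paper uses directly.
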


\begin{proof}
First assume that $x_0$ is an interior point of $\Bcal$.
By choosing a chart around $x_0$ and a common trivialization for $\Ecal$ and $\Ecal^1$ around $x_0$ (which exists by definition of a B-bundle pair) we may localize the setting to one in which
$\Fsection : \Vcal \rightarrow \Vcal \times \bbmF$ is a Fredholm section of a trivial bundle with fiber $\bbmF$ over an open neighborhood $\Vcal \subseteq \bbmE$ of the origin $x_0 = 0$. Here the Banach space $\bbmE$ is the local model of $\Bcal$. We identify the section $\Fsection$ with its principal part, so that we regard $\Fsection$ as a function from $\Vcal$ to $\bbmF \,$.
In the local setting the bundle $\Ecal^1$ corresponds to the trivial bundle $\Vcal \times \bbmF_1 \,$. The fibers $(\bbmF, \bbmF_1)$ constitute a B-pair. In particular the inclusion $\bbmF_1 \hookrightarrow \bbmF$ is compact. By Lemma \ref{lemma: auxiliary norm characterization}, we may replace the given auxiliary norm $N$ with the fiber norm $\lVert \cdot\rVert_{\bbmF_1} \,$. Following our convention, we set $\lVert h \rVert_{\bbmF_1}=\infty$ for every $h \in \bbmF\backslash\bbmF_1 \,$.
We have to find an open neighborhood $\Wcal \subseteq \overline{\Wcal} \subseteq \Vcal$ of the origin so that
\begin{equation}
\label{eq: local precompact regularizing nbhds - U goal}
\text{for every } C \in \R_{\geq 0} \text{ the set } \, \set{x \in \Wcal}{\lVert \Fsection(x) \rVert_{\bbmF_1} \leq C} \,\text{ is precompact in } \bbmE \, .
\end{equation}
Because the differential of $\Fsection : \Vcal \rightarrow \bbmF$ at its zero $x_0 = 0 $ is a Fredholm operator, by Lemma \ref{app lemma: non-linear Fredholm map} in the appendix we can choose an open neighborhood $\Wcal$ of $x_0=0$ with the property that for every sequence $(x_n)_n \subseteq \Wcal$ it holds
\begin{align}
\label{eq: local precompact regularizing nbhds - U choice}
    (\Fsection(x_n))_n \text{ converges in } \bbmF \,\,\, \Longrightarrow \,\,\, \text{a subsequence of } (x_n)_n \text{ converges in } \bbmE \,\, .
\end{align}
Making $\Wcal$ smaller if necessary, we can moreover arrange that its closure in $\bbmE$ is contained in $\Vcal \,$.
Let us verify that $\Wcal$ satisfies \eqref{eq: local precompact regularizing nbhds - U goal}.
To this end, given $C \in \R_{\geq 0}$ and a sequence $(x_n)_n \subseteq \Wcal$ with $\lVert \Fsection(x_n ) \rVert_{\bbmF_1} \leq C \,$, we have to show that a subsequence of $(x_n)_n $ converges in $\bbmE \,$. Since $(\Fsection(x_n))_n$ is bounded in $\bbmF_1 $ and the inclusion $\bbmF_1 \hookrightarrow \bbmF$ is compact, a subsequence of $(\Fsection(x_n))_n$ converges in $\bbmF \,$. By \eqref{eq: local precompact regularizing nbhds - U choice}, a subsequence of $(x_n)_n$ converges in $\bbmE \,$. This finishes the proof in the case where $x_0$ is an interior point.

If $x_0$ is in the boundary of $\Bcal$, in the local setting we may still assume that $\mathcal{V}$ is open in $\bbmE$ by considering a smooth local extension of the localized section $\Fsection$. Choosing $\mathcal{W}$ as above, the intersection of $\mathcal{W}$ with the given half-space then is the desired neighborhood.
\end{proof}

\begin{corollary}
\label{cor: zero set Fsection+s compact}
There exists an open subset $\mathcal{W} \subseteq \Ucal$ containing $\ZeroFsection$ with the following significance. For every $C \in \R$ the set
\begin{equation}
\label{eq: N circ Fsection leq C}
\set{x \in \Wcal}{N(\Fsection(x)) \leq C}
\end{equation}
is precompact in $\Bcal \,$. In particular, for every $C \in \R$ and every \Bplus-section $s : \Bcal \rightarrow \Ecal$ with support contained in $\Wcal$ and $N(s(x)) \leq C$ for all $x \in \Wcal \,$, the zero set $S_{\Fsection+ s}$ is a compact subset of $\Bcal$ and contained in \eqref{eq: N circ Fsection leq C}. 
\end{corollary}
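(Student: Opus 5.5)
The plan is to patch together the local neighborhoods from Lemma \ref{lemma: local precompact regularizing nbhds around zeroes of Fsection} using compactness of $\ZeroFsection$. For each $x_0 \in \ZeroFsection$ the lemma provides an open neighborhood $\Wcal_{x_0} \subseteq \Ucal$ on which every sublevel set $\{N \circ \Fsection \leq C\}$ is precompact. These sets cover $\ZeroFsection$, so finitely many of them, $\Wcal_{x_1},\ldots,\Wcal_{x_m}$, already cover it. Put $\Wcal := \bigcup_{i=1}^m \Wcal_{x_i}$, which is open, contained in $\Ucal$, and contains $\ZeroFsection$. Then
\[
\set{x \in \Wcal}{N(\Fsection(x)) \leq C} = \bigcup_{i=1}^m \set{x \in \Wcal_{x_i}}{N(\Fsection(x)) \leq C}
\]
is a finite union of precompact sets. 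It is therefore precompact: the closure of a finite union is the union of the closures, and a finite union of compact sets is compact.

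For the second assertion, let $s$ be a \Bplus-section with $\supp(s) \subseteq \Wcal$ and $N \circ s \leq C$ on $\Wcal$. First, $S_{\Fsection+s} \subseteq \Wcal$: if $x \notin \Wcal$, then $x \notin \supp(s)$, so $s(x)=0$ and $\Fsection(x)+s(x) = \Fsection(x)$, which is nonzero because $\ZeroFsection \subseteq \Wcal$. Second, for $x \in S_{\Fsection+s}$ we have $\Fsection(x) = -s(x) \in \Ecal^1_x$, and since $N$ is a norm on the fiber $\Ecal^1_x$,
\[
N(\Fsection(x)) = N(s(x)) \leq C .
\]
So $S_{\Fsection+s}$ is contained in the precompact set \eqref{eq: N circ Fsection leq C}.

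It remains to show that $S_{\Fsection+s}$ is compact. The zero set $S_{\Fsection+s} = (\Fsection+s)^{-1}(\OcalE)$ is closed in $\Bcal$, because the zero section $\OcalE$ is closed in $\Ecal$ and $\Fsection+s$ is continuous. A closed subset of $\Bcal$ lying inside a set with compact closure is a closed subset of that compact closure, hence compact.

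No step here is a genuine obstacle. The only points needing care are that $\Fsection(x) \in \Ecal^1$ at zeros of $\Fsection+s$, which is where the \Bplus-property of $s$ is used (otherwise Convention \ref{convention: auxiliary norms on B-bundle pairs} would give $N(\Fsection(x)) = \infty$), and that precompactness is meant in $\Bcal$, which Lemma \ref{lemma: local precompact regularizing nbhds around zeroes of Fsection} already supplies.
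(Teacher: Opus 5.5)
Your proof is correct and matches the paper's argument. You take a finite subcover of the neighborhoods from Lemma~\ref{lemma: local precompact regularizing nbhds around zeroes of Fsection}, use $\Fsection(x) = -s(x) \in \Ecal^1_x$ at zeros of $\Fsection+s$, and use closedness of the zero set; the only cosmetic difference is that you obtain $S_{\Fsection+s} \subseteq \Wcal$ by contraposition, whereas the paper splits into the cases $s(x)=0$ (so $x \in \ZeroFsection$) and $s(x)\neq 0$ (so $x \in \supp(s)$).
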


\begin{proof}
    For each $x \in \ZeroFsection$ choose an open neighborhood $\Wcal_x \subseteq \Ucal$ of $x$ as in Lemma \ref{lemma: local precompact regularizing nbhds around zeroes of Fsection}. Since $\ZeroFsection$ is compact, we can choose finitely many $x_1,\ldots , x_m$ so that $\Wcal := \bigcup_{i=1}^m \Wcal_{x_i} $ contains $\ZeroFsection \,$. For every $C \in \R$ the set
    \[
    \set{x\in \Wcal}{N(\Fsection(x)) \leq C} = \bigcup_{i=1}^m  \set{x \in \Wcal_{x_i}}{N(\Fsection(x)) \leq C} \,\, ,
    \]
     is precompact as finite union of precompact sets.
    Finally, suppose $s : \Bcal \rightarrow \Ecal$ is a \Bplus-section with support contained in $\Wcal$ and with $N \circ s|_{\Wcal} \leq C \,$. For an arbitrary $x \in S_{\Fsection + s} \,$, notice that $\Fsection(x) = -s(x) \in \Ecal^1_x \,$. If $s(x) = 0 \,$, then $x \in \ZeroFsection \subseteq \Wcal$ and $N(\Fsection(x))=0 \,$. Otherwise $s(x) \not= 0$ and $x \in \supp(s) \subseteq \Wcal \,$, so that $N(\Fsection(x)) = N(s(x)) \leq C \,$. This shows
   \[
    S_{\Fsection + s} \subseteq \set{x \in \Wcal}{N(\Fsection(x)) \leq C} \,\, ,
   \]
   hence $S_{\Fsection + s} $ is precompact as well. As zero set of a continuous section it is moreover a closed set and therefore actually compact.
\end{proof}

\subsection{Proof of Theorem \ref{mthm: perturbation thm}}
\label{subsec: Proof Perturbation Thm}

We now prove the perturbation theorem, \ie Theorem \ref{mthm: perturbation thm}.

Suppose we are given a Banach manifold $\Bcal$ which admits smooth bump functions and a B-bundle pair $(\Ecal,\Ecal^1)$ over $\Bcal\,$. Let $\Fsection : \Bcal \rightarrow \Ecal$ be a Fredholm section of index $d$ with compact zero set $\ZeroFsection$ and let $\Dcal \subseteq \Bcal$ be a (possibly empty) closed subset of $\Bcal$ satisfying assumption \ref{it: perturbation thm assumption} in the perturbation theorem.

Observe that the conclusion of the perturbation theorem trivially holds true if $\Fsection$ has empty zero set $\ZeroFsection \,$, for we can take $n=1$ and the zero section $s_1 := 0 \,$. From now on let us assume that $\ZeroFsection$ is non-empty.

By Lemma \ref{lem: auxiliary norms exist around compactum} (applied to the compact set $\ZeroFsection $) and Corollary \ref{cor: zero set Fsection+s compact}, we can fix an open subset $\Wcal \subseteq \Bcal$ containing $\ZeroFsection$ and an auxiliary norm $N$ on $\Ecal^1|_{\Wcal}$ so that 
\begin{equation}
\label{eq: choice Wcal}
\set{x \in \Wcal}{N(\Fsection(x)) \leq 1} \,\, \text{ is precompact in } \Bcal \,\, ,
\end{equation}
where we remind the reader of the convention that $N(e) = \infty$ for $e \in (\Ecal|_{\Wcal} ) \backslash \Ecal^1 \,$.
In particular (still Corollary \ref{cor: zero set Fsection+s compact}), for every \Bplus-section $s$ with $\supp(s) \subseteq \Wcal$ and $N\circ s|_{\Wcal} \leq 1\,$, the zero set $S_{\Fsection + s}$ is contained in the set displayed in \eqref{eq: choice Wcal} and compact.

\subsubsection{Choosing the \Bplus-sections}
\label{subsubsec: proof perturbation thm- choosing B^+ sections}
By Lemma \ref{lemma: bump B^+ sections fill up cokernel}, for each $x \in \ZeroFsection$ we can choose finitely many \Bplus-sections $s_1^x,\ldots , s_{k_x}^x : \Bcal \rightarrow \Ecal$ with $\supp(s_i^x) \subseteq \Wcal$ and $N \circ s_i|_{\Wcal} \leq 1$ and so that $s_1^x(x),\ldots , s_{k_x}^x(x)$ span the cokernel of $\Dvert{x} \Fsection \,$. By Lemma \ref{lemma: bump B^+ sections fill up cokernel} \ref{it: bump B^+ sections fill up cokernel boundary}, for $x \in \partial \Bcal$ we can require that $s_1^x(x),\ldots , s_{k_x}^x(x)$ even span the cokernel of $\Dvert{x}(\Fsection|_{\partial \Bcal}) = (\Dvert{x} \Fsection)|_{T_x \partial \Bcal} \,$. Since $\Bcal \backslash \Dcal \subseteq \Bcal$ is an open subset and due to assumption \ref{it: perturbation thm assumption} in the perturbation theorem, we can furthermore arrange that
\begin{align*}
    \begin{cases}
        k_x = 1 \text{ and } s_1^x \text{ vanishes identically on } \Bcal & \text{ if } x \in \Dcal \\
        \supp(s_1^x), \ldots , \supp(s_{k_x}^x) \subseteq \Bcal \backslash\Dcal & \text{ if } x \in \Bcal\backslash \Dcal \,\, .
    \end{cases}
\end{align*}
So $s_i^x$ has support contained in $\Wcal \cap \Bcal\backslash\Dcal$ for every $x \in \ZeroFsection$ and every $i = 1,\ldots , k_x \,$. Now, by Lemma \ref{lemma: spanning the cokernel open condition}, for each $x \in \ZeroFsection$ we may choose an open neighborhood $\Ucal_x \subseteq \Bcal$ of $x$ so that $s_1^x (y),\ldots , s_{k_x}^x(y)$ span the cokernel of $\Dvert{y}\Fsection$ for every $y \in \Ucal_x \cap \ZeroFsection \,$. For $x \notin \partial \Bcal$ we choose $\Ucal_x$ to be contained in the interior of $\Bcal \,$. For $x \in \partial \Bcal$, applying Lemma \ref{lemma: spanning the cokernel open condition} again to the restriction $\Fsection|_{\partial \Bcal} \,$, we can choose $\Ucal_x \subseteq \Bcal$ so that moreover $s_1^x(y),\ldots , s_{k_x}^x(y)$ span the cokernel of $\Dvert{y} (\Fsection|_{\partial \Bcal})$ for every $y \in \Ucal_x \cap \partial\Bcal \cap \ZeroFsection \,$.

We point out the trivial but crucial observation that adding finitely many \Bplus-sections to the collection $\{s_i^x\}_{i=1}^{k_x}$ gives a new collection which still spans the cokernel of $\Dvert{y} \Fsection$ for every $y \in \Ucal_x \cap \ZeroFsection$ and the cokernel of $\Dvert{y} (\Fsection|_{\partial \Bcal})$ for every $y \in \Ucal_x \cap \ZeroFsection \cap \partial \Bcal $ whenever $x \in \partial \Bcal \,$.

Since $\ZeroFsection$ is compact, we can choose finitely many $x_1,\ldots, x_m \in \ZeroFsection$ for which $\Ucal_{x_1} , \ldots , \Ucal_{x_m}$ cover $\ZeroFsection \,$. Taking the union of the corresponding collections $\{s^{x_j}_i\}_{i=1}^{k_{x_j}} \comma j = 1, \ldots , m \,$, and relabeling the sections, we obtain finitely many \Bplus-sections $s_1,\ldots , s_n : \Bcal \rightarrow \Ecal$ having the following properties:
\begin{itemize}
    \item $\supp(s_i) \subseteq \Wcal \cap \Bcal \backslash \Dcal$ and $N \circ s_i|_{\Wcal} \leq 1$ for $i=1,\ldots , n \,$,
    \item for every $x \in \ZeroFsection$ the cokernel of $\Dvert{x} \Fsection$ is spanned by $s_1(x) , \ldots , s_n(x) \,$, 
    \item for every $x \in \ZeroFsection \cap \partial \Bcal$ the cokernel of $\Dvert{x} (\Fsection|_{\partial \Bcal})$ is spanned by $s_1(x) , \ldots , s_n(x) \,$.
\end{itemize}
These are already the \Bplus-sections appearing in the conclusion of the perturbation theorem and we keep them fixed throughout the rest of the proof.

\subsubsection{The universal section}
\label{subsubsec: proof perturbation thm - universal section}
For every $\lambda = (\lambda_1,\ldots ,\lambda_n) \in \R^n $ we define the section
\[
s_{\lambda} := \sum_{i=1}^n \lambda_i \, s_i : \Bcal \rightarrow \Ecal \,\, .
\]
and observe that $s_{\lambda}$ is a \Bplus-section (as linear combination of \Bplus-sections) with support contained in $\Bcal\backslash\Dcal \cap \Wcal \,$. Using that $N \circ s_i|_{\Wcal} \leq 1$ for all $i \,$, we have the estimate
\begin{equation}
\label{eq: N(s_lambda)}
N(s_{\lambda}(x)) \leq \sum_{i=1}^n |\lambda_i| \, N(s_i(x)) \leq \sum_{i=1}^n |\lambda_i| \leq \sqrt{n} \, |\lambda| \qquad \Forall x \in \Wcal \,\, ,
\end{equation}
where $|\lambda|$ denotes the Euclidean norm of the vector $\lambda \in \R^n \,$.

Let $\BcalHat := \Bcal \times \R^{n}$ and consider the pullback bundle $\EcalHat \rightarrow \BcalHat$ of $\Ecal$ along the projection $\Bcal \times \R^n \rightarrow \Bcal \,$. Define the \textit{universal section} $\Funiv : \BcalHat \rightarrow \EcalHat$ by
\begin{equation*}
\Funiv(x,\lambda) := \Fsection_{\lambda}(x) := \Fsection(x) + s_\lambda(x) \quad \text{for } \, x \in \Bcal\comma \lambda = (\lambda_1,\ldots , \lambda_n) \in \R^n \,\, .
\end{equation*}
Notice that $\Fsection = \Fsection_{0} \,$, with $0 \in \R^n \,$.

\subsubsection{Compactness results}
\label{subsubsec: proof perturbation thm - compactness results}

For the next lemma recall that $\ball^n_r \subseteq \R^n$ (respectively $\ballclosed_r^n \subseteq \R^n$) denotes the open (respectively closed) ball of radius $r$ centered at the origin.

\begin{lemma}
\label{lemma: Funiv compact zero set}
For every $\eps \in (0, \tfrac{1}{\sqrt{n}} ]$ the zero set $S_{\Funiv} \cap (\Bcal \times \ballclosed^n_\eps)$ is compact.
\end{lemma}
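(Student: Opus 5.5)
We will show that every sequence $(x_k,\lambda_k)_k$ in $S_{\Funiv} \cap (\Bcal \times \ballclosed^n_\eps)$ has a subsequence converging to a point of this set. Since $\Bcal \times \R^n$ is first-countable (Remark \ref{rmk: Banach mfds - topological properties}), sequential compactness together with closedness then gives compactness. The set is closed because it is the intersection of the zero set of the continuous section $\Funiv$ with the closed set $\Bcal \times \ballclosed^n_\eps$. It therefore suffices to show that it is contained in a compact set, or equivalently that every sequence in it has a convergent subsequence.

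Fix $(x,\lambda)$ in the set. Then $\Fsection(x) = -s_\lambda(x)$. We split into two cases according to whether $s_\lambda(x)$ vanishes.
\begin{enumerate}[label=\roman*)]
    \item If $s_\lambda(x) = 0$, then $x \in \ZeroFsection \subseteq \Wcal$ and $N(\Fsection(x)) = 0$.
    \item If $s_\lambda(x) \neq 0$, then $x \in \supp(s_\lambda) \subseteq \Wcal$. Since $s_\lambda$ is a \Bplus-section, $\Fsection(x) \in \Ecal^1_x$, and the estimate \eqref{eq: N(s_lambda)} gives
\[
N(\Fsection(x)) = N(s_\lambda(x)) \leq \sqrt{n}\,|\lambda| \leq \sqrt{n}\,\eps \leq 1 .
\]
\end{enumerate}
In both cases $x$ lies in the set $K := \set{x \in \Wcal}{N(\Fsection(x)) \leq 1}$, which is precompact in $\Bcal$ by the choice \eqref{eq: choice Wcal}. (This is the same argument as in Corollary \ref{cor: zero set Fsection+s compact}, but now uniform in $\lambda$.) Consequently
\[
S_{\Funiv} \cap (\Bcal \times \ballclosed^n_\eps) \subseteq \overline{K} \times \ballclosed^n_\eps ,
\]
and the right-hand side is compact as a product of compact sets.

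The final step is to conclude. A closed subset of a compact subset of the Hausdorff space $\Bcal \times \R^n$ is compact. Alternatively, we can argue with sequences: the compact sets $\overline{K}$ and $\ballclosed^n_\eps$ are sequentially compact by Remark \ref{rmk: Banach mfds - topological properties}, so any sequence $(x_k,\lambda_k)_k$ has a convergent subsequence, and its limit stays in the zero set by continuity of $\Funiv$.

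The only delicate point is the uniformity in $\lambda$, and this is handled entirely by the bound $N\circ s_i|_{\Wcal} \leq 1$ together with $\eps \leq 1/\sqrt{n}$. The real analytic work, namely local properness and the compact inclusion $\Ecal^1 \hookrightarrow \Ecal$, was already done in Lemma \ref{lemma: local precompact regularizing nbhds around zeroes of Fsection}. I therefore expect no substantial obstacle beyond applying the convention $N = \infty$ off $\Ecal^1$ correctly in the case distinction above.
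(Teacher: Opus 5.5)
Your proof is correct and is essentially the paper's argument: you prove the inclusion $S_{\Funiv} \cap (\Bcal \times \ballclosed^n_\eps) \subseteq \set{x \in \Wcal}{N(\Fsection(x)) \leq 1} \times \ballclosed^n_\eps$ via the same case distinction used in Corollary~\ref{cor: zero set Fsection+s compact}, and then conclude because a closed subset of a compact set is compact. One caveat: your opening claim that first-countability plus sequential compactness gives compactness (and your ``equivalently'') is false in general, but you never need it, since the closed-subset-of-a-compact-set argument already finishes the proof.
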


\begin{proof}
    For $0 < \eps \leq \tfrac{1}{\sqrt{n}}$ we will show
    \[
    S_{\Funiv} \cap (\Bcal \times \ballclosed^n_\eps) \subseteq \set{x \in \Wcal}{N(\Fsection(x)) \leq 1} \times \ballclosed^n_\eps \,\, ,
    \]
    which proves the claimed compactness since the above left-hand side is a closed subset and the right-hand side is precompact by \eqref{eq: choice Wcal}. To show the inclusion, let $(y,\lambda)$ be a zero of $\Funiv$ with $|\lambda| \leq \eps \,$. Then $y$ lies in the zero set $S_{\Fsection + s_{\lambda}} \,$. Now $s_{\lambda}$ is a \Bplus-section with support contained in $\Wcal$ and whose auxiliary norm on $\Wcal$ is, by the estimate in \eqref{eq: N(s_lambda)}, bounded by $\sqrt{n} \, |\lambda| \leq \sqrt{n} \, \eps \leq 1 \,$. By the property of $\Wcal$ mentioned right after \eqref{eq: choice Wcal}, the zero set $S_{\Fsection + s_{\lambda}}$ therefore is contained in $\set{x \in \Wcal}{N(\Fsection(x)) \leq 1} \,$.
\end{proof}

\noindent Note that the previous lemma shows property \ref{it: perturbation rmk - compactness} in Remark \ref{rmk: perturbation thm}.  It moreover entails the following result, for which we recall from Remark \ref{rmk: Banach mfds - topological properties} that compact subsets of Banach manifolds are sequentially compact.

\begin{lemma}
\label{lemma: proof perturbation thm - zero set in prescribed nbhd of S_Fsection}
For every open subset $\Ucal \subseteq \Bcal $ containing $S_{\Fsection} = S_{\Fsection_{0}}$ there exists $\eps_0 > 0$ so that for every $0 < \eps \leq \eps_0$ it holds
\[
S_{\Funiv} \cap (\Bcal \times \ball^n_\eps) \subseteq \Ucal \times \ball_\eps^n \,\, .
\]
In other words, the zero set of the restricted section $\Funiv : \Bcal \times \ball_\eps^n \rightarrow \EcalHat|_{\Bcal \times \ball^n_\eps}$ is contained in $\Ucal \times \ball^n_\eps \,$.
\end{lemma}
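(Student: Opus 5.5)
The plan is a contradiction argument combined with the compactness from Lemma \ref{lemma: Funiv compact zero set}. Note first that the inclusion for a given $\eps$ implies it for every smaller $\eps$, since $\Bcal \times \ball^n_\eps \subseteq \Bcal \times \ball^n_{\eps_0}$ for $\eps \leq \eps_0$. So it suffices to find a single $\eps_0 \in (0, \tfrac{1}{\sqrt{n}}]$ with
\[
S_{\Funiv} \cap (\Bcal \times \ball^n_{\eps_0}) \subseteq \Ucal \times \ball^n_{\eps_0} \,\, .
\]

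Suppose no such $\eps_0$ exists. Then for every $k \in \N$ with $\tfrac1k \leq \tfrac{1}{\sqrt n}$ there is a zero $(x_k,\lambda_k)$ of $\Funiv$ with $|\lambda_k| < \tfrac1k$ and $x_k \notin \Ucal$. All these points lie in the set $K := S_{\Funiv} \cap (\Bcal \times \ballclosed^n_{1/\sqrt n})$, which is compact by Lemma \ref{lemma: Funiv compact zero set}. It lies in the Banach manifold $\Bcal \times \R^n$, which is first-countable. By Remark \ref{rmk: Banach mfds - topological properties} \ref{it: Banach mfds topological - first countable}, $K$ is therefore sequentially compact. Passing to a subsequence, we get $(x_k,\lambda_k) \to (x,\lambda) \in K$, and since $\lambda_k \to 0$ we have $\lambda = 0$.

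Now $(x,0)$ lies in $S_{\Funiv}$, because $S_{\Funiv}$ is closed as the zero set of a continuous section (equivalently, because $K$ is closed). Hence $\Fsection_0(x) = \Fsection(x) = 0$, that is $x \in \ZeroFsection \subseteq \Ucal$. On the other hand, every $x_k$ lies in the closed set $\Bcal \backslash \Ucal$, so the limit $x$ also lies in $\Bcal \backslash \Ucal$. This is a contradiction.

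The argument is short, and the only point that needs attention is the topological one: extracting a convergent subsequence requires sequential compactness. This is supplied by first-countability of Banach manifolds via Remark \ref{rmk: Banach mfds - topological properties}, applied to the product manifold $\Bcal \times \R^n$.
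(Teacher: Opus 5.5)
Your proposal is correct and follows essentially the same route as the paper: a contradiction argument that extracts a convergent subsequence from the compact set of Lemma \ref{lemma: Funiv compact zero set}, with limit $(x,0)$ where $x \in \ZeroFsection \subseteq \Ucal$. The only differences are cosmetic: you use closedness of $\Bcal \backslash \Ucal$ where the paper uses openness of $\Ucal$, and you state explicitly the monotonicity in $\eps$ and the restriction $\eps_0 \leq \tfrac{1}{\sqrt{n}}$, both of which the paper leaves implicit.
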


\begin{proof}
    Assume by contradiction that there exists a sequence $(x_m, \lambda_m)_{m \in \N}$ of zeroes of $\Funiv$ with $(\lambda_m)_m$ tending to the origin and $x_m \notin \Ucal $ for every $m \in \N \,$. By the compactness result in Lemma \ref{lemma: Funiv compact zero set}, a subsequence $(x_{m_k}, \lambda_{m_k})_k$ converges to some $(x,0) \in S_{\Funiv} \,$. Hence $x \in S_{\Fsection} \subseteq \Ucal \,$. Now the convergence of $(x_{m_k})_k $ to $x$ implies that $x_{m_k} \in \Ucal$ for almost every $k \in \N$, a contradiction.
\end{proof}

\subsubsection{Fredholm property and transversality for the universal section}
\label{subsubsec: proof perturbation thm - Fredholm + Transversality of Funiv}
 \, \\
Next we aim to show the following.

\begin{lemma}
\label{lemma: proof perturbation thm - Fredholm and transversality of Funiv}
For $\eps > 0$ sufficiently small the restriction $\Funiv : \Bcal \times \ball^n_\eps \rightarrow \EcalHat|_{\Bcal \times \ball^n_\eps}$ is a Fredholm section of index $d + n$ that is transverse to the zero section.
\end{lemma}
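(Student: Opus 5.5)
The plan is to compute the vertical differential of $\Funiv$ at a zero $(x,\lambda)$. Using the splitting $T_{(x,\lambda)}\BcalHat = T_x\Bcal \oplus \R^n$, I expect
\[
\Dvert{(x,\lambda)}\Funiv(v,\mu) = \Dvert{x}\Fsection_\lambda(v) + \sum_{i=1}^n \mu_i\, s_i(x) \,\, .
\]
This is most easily checked in a local trivialization via Remark \ref{rmk: vertical differential}, since there the principal part of $\Funiv$ is $(x,\lambda)\mapsto \Fsection(x)+\sum_i\lambda_i s_i(x)$. By Lemma \ref{app lemma: Fredholm finite dim summands} it then suffices to show that $\Dvert{x}\Fsection_\lambda$ is Fredholm of index $d$, and that $\Dvert{x}\Fsection_\lambda(T_x\Bcal)+\mathrm{span}\{s_i(x)\}=\Ecal_x$. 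The same statements are needed on $\partial\Bcal$, where $\partial\BcalHat=\partial\Bcal\times\ball^n_\eps$, so that the boundary restriction is transverse too. With these in hand, $\Dvert{(x,\lambda)}\Funiv$ is Fredholm of index $d+n$ and surjective, and its kernel is finite-dimensional and hence complemented. So $\Funiv$ is a transverse Fredholm section.

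Both properties are open conditions near zeros of $\Fsection$, which is where Lemma \ref{lemma: proof perturbation thm - zero set in prescribed nbhd of S_Fsection} comes in. Cover the compact set $\ZeroFsection$ by finitely many chart-and-trivialization neighborhoods $\Vcal_j$ that are common for $\Ecal$ and $\Ecal^1$. In each $\Vcal_j$ the principal part of $\Funiv$ is a smooth map $\Vcal_j\times\R^n\to\bbmF$, and its $x$-derivative at $(x_0,0)$ with $x_0\in\ZeroFsection$ is the Fredholm operator $\Dvert{x_0}\Fsection$ of index $d$. Fredholm operators of fixed index form an open subset of $\mathcal{L}(\bbmE,\bbmF)$, and the derivative depends continuously on $(x,\lambda)$. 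Hence there is a neighborhood $\Ucal'_{x_0}\times\ball^n_{\delta}$ of $(x_0,0)$ on which $d_x(\text{principal part})$ is Fredholm of index $d$. On the same neighborhood, Lemma \ref{app lemma: spanning cokernel open condition} (the appendix lemma behind Lemma \ref{lemma: spanning the cokernel open condition}, applied to the map $(x,\lambda)\mapsto d_x(\cdots)$ and the $s_i$) shows that the $s_i(x)$ still span the cokernel. The same argument applies to $\Fsection|_{\partial\Bcal}$ at boundary zeros; I would shrink the neighborhoods to lie in the interior for interior $x_0$.

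By compactness of $\ZeroFsection$, finitely many of these neighborhoods give an open set $\Ucal\supseteq\ZeroFsection$ and a $\delta>0$. The Fredholm and spanning properties then hold at every $(x,\lambda)\in\Ucal\times\ball^n_\delta$, including at all boundary points in this set. Lemma \ref{lemma: proof perturbation thm - zero set in prescribed nbhd of S_Fsection} yields $\eps_0$ such that every zero of $\Funiv$ with $|\lambda|<\eps$ lies in $\Ucal\times\ball^n_\eps$. Taking $\eps\le\min(\eps_0,\delta,1/\sqrt n)$ then finishes the proof, since every zero of the restricted $\Funiv$ satisfies the required conditions.

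The main obstacle is to set up the openness argument correctly away from $\ZeroFsection$. For $\lambda\neq0$ the zeros of $\Funiv$ are not zeros of $\Fsection$, so the vertical differential of $\Fsection$ is not defined there. This is why I work with the principal-part derivative in a fixed trivialization, where it is defined at all points, rather than with $\Dvert{}\Fsection$. I also need to check that at zeros of $\Funiv$ this derivative agrees with $\Dvert{}\Funiv$, which holds by Remark \ref{rmk: vertical differential}. Finally, boundary points have to be handled through local extensions across the half-space, as in Lemma \ref{lemma: local precompact regularizing nbhds around zeroes of Fsection}.
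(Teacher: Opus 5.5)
Your proposal is correct and is essentially the paper's proof. Near each $(x_0,0)$ you use openness of the Fredholm and surjectivity conditions for the differential of the principal part, and then reduce to finitely many such neighborhoods by compactness and Lemma~\ref{lemma: proof perturbation thm - zero set in prescribed nbhd of S_Fsection}; the only cosmetic differences are that you split the differential into $D_x\Fsection_\lambda$ plus the span of the $s_i(x)$ (reassembled via Lemma~\ref{app lemma: Fredholm finite dim summands}), whereas the paper applies openness directly to $D\Funiv(x,\lambda)\in\mathcal{L}(\bbmE\oplus\R^n,\bbmF)$, and that the paper handles the boundary separately in Lemma~\ref{lemma: proof perturbation thm - Funiv transverse boundary}.
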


\begin{proof} We show the lemma in two steps.

\textit{Step 1:}
We first reduce the proof to a local problem. Suppose for each $x \in \ZeroFsection$ we have found an open neighborhood $\Ucal_x \subseteq \Bcal$ of $x$ and some $\eps(x) > 0$ so that the restriction of $\Funiv$ to $\Ucal_x \times \ball^n_{\eps(x)}$ is a Fredholm section of index $d + n$ that is transverse to the zero section. Using the compactness of $\ZeroFsection \,$, we select finitely many $x_1,\ldots , x_m \in \ZeroFsection$ so that
$\Ucal := \bigcup_{i=1}^m \Ucal_{x_i}$ is an open set containing $\ZeroFsection\,$. By Lemma \ref{lemma: proof perturbation thm - zero set in prescribed nbhd of S_Fsection}, for some $0 < \eps < \min_{i=1,\ldots, m} \eps(x_i) \,$, the zero set of the restriction of $\Funiv$ to $\Bcal \times \ball^n_\eps$ is contained in $\Ucal \times \ball^n_\eps \,$. Then $\Funiv$ restricted to $\Bcal \times \ball^n_\eps$ is a Fredholm section of index $d+n$ that is transverse to the zero section. We have successfully reduced the problem to a local one.

\textit{Step 2:} Now let $x_0 \in \ZeroFsection$ be arbitrary but fixed. We have to find an open neighborhood $\Ucal \subseteq \Bcal$ of $x_0$ and $\eps > 0$ so that $\Funiv$ restricted to $\Ucal \times \ball^n_\eps$ is a Fredholm section of index $d + n$ that is transverse to the zero section. Fix a trivialization $\Ecal|_{\Ucal_0} \cong \Ucal_0 \times \bbmF$ with fiber $\bbmF$ over a neighborhood $\Ucal_0 \subseteq \Bcal$ of $x_0$ that is contained in the domain of a chart for $\Bcal \,$. Such a trivialization pulls back to a trivialization of $\EcalHat$ over $\Ucal_0 \times \R^n\,$. Precomposing the principal part in this fixed trivialization with the chart and abusing notation, we therefore consider $\Funiv$ as a map $\Funiv : \Ucal_0 \times \R^n \rightarrow \bbmF \,$, where $\Ucal_0$ is either open in the Banach space $\bbmE$ on which $\Bcal$ is modeled locally or (if $x_0$ is a boundary point of $\Bcal$) relatively open in some half-space of $\bbmE$. We proceed similarly for $\Fsection$ and $s_1, \ldots , s_n \,$.  
The ordinary differential of these maps, as maps between Banach spaces, lies in $\mathcal{L}(\bbmE \oplus \R^n,\bbmF)$ respectively $\mathcal{L}(\bbmE,\bbmF)$ and is defined everywhere on $\Ucal_0 \times \R^n$ respectively $\Ucal_0 \,$, not just at zeroes. This ordinary differential of the principal part will temporarily be denoted by $D \,$. At a zero of a section, its ordinary differential $D$ coincides with its vertical differential $D^{\mathrm{v}} \,$, see  Remark \ref{rmk: vertical differential}.

Since $\Fsection = \Fsection_{0}$ is a Fredholm operator of index $d$ by assumption, the differential $D \Fsection (x_0) \in \mathcal{L}(\bbmE,\bbmF)$ is a Fredholm operator of index $d \,$. Hence, by Lemma \ref{app lemma: Fredholm finite dim summands} in the appendix, the differential $D \Funiv (x_0, 0) : \bbmE \oplus \R^n \rightarrow \bbmF$ is Fredholm of index $d + n \,$. It is a fact that the set of Fredholm operators is open in $\mathcal{L}(\bbmE \oplus \R^n,\bbmF)$ and the Fredholm index is locally constant, see \cite{Lang_Real_Analysis}. By continuity of the differential, therefore $D \Funiv$ is a Fredholm operator of index $d + n$ in a sufficiently small neighborhood of $(x_0, 0) \,$. In particular, $\Funiv$ restricts to a Fredholm section on a small neighborhood $\Ucal \times \ball^n_\eps$ of $(x_0,0) \,$, with $\Ucal \subseteq \Ucal_0$ relatively open. 

For transversality we similarly only need to verify transversality at $(x_0,0) \,$. In more detail, it suffices to show that 
\begin{equation}
\label{eq: proof perturbation thm - D Funiv(x_0,0) surjective}
    \text{the differential } D \Funiv(x_0,0) : \, \bbmE \oplus \R^n \rightarrow \bbmF \text{ is surjective.}
\end{equation}
Let us assume \eqref{eq: proof perturbation thm - D Funiv(x_0,0) surjective} for the moment. Recall that the subset of surjective bounded linear maps is open in $\mathcal{L}(\bbmE\oplus\R^n,\bbmF)$, see \cite[Ch.~XV, Thm.~3.4]{Lang_Real_Analysis}. Thus, after possibly making $\Ucal$ and $\eps $ smaller, $D \Funiv(x,\lambda) $ is surjective for every $(x,\lambda) \in \Ucal \times \ball^n_\eps \,$. In particular, the differential is surjective at every zero $(x,\lambda) \in \Ucal \times \ball^n_\eps $ of $\Funiv \,$. Since the differential is a Fredholm operator, its kernel automatically has a closed complement. This shows that the restriction of $\Funiv$ to $\Ucal \times \ball^n_\eps$ is transverse to the zero section.

It remains to prove \eqref{eq: proof perturbation thm - D Funiv(x_0,0) surjective}.
In the fixed trivialization and chart, the differential of the universal section at $(x, \lambda) = (x,(\lambda_1,\ldots, \lambda_n))$ in direction $(\xi,\mu) = (\xi, (\mu_1,\ldots , \mu_n))$  is given by
    \begin{align*}
     D \Funiv (x, \lambda) \, [\xi, \mu] = D \Fsection(x) [\xi] + \sum_{i=1}^n \lambda_i \, Ds_i(x) [\xi] +  \sum_{i=1}^n  \mu_i\, s_i(x) \,\, .
    \end{align*}
    At $(x_0,0)$ this simplifies to
    \begin{align*}
        D \Funiv(x_0,0) \, [\xi, \mu] = D \Fsection(x_0) [\xi] + \sum_{i=1}^n \mu_i \, s_i(x_0) = \Dvert{x_0} \Fsection [\xi] + \sum_{i=1}^n \mu_i \, s_i(x_0) \,\, .
    \end{align*}
    Since we have chosen the sections $s_1,\ldots , s_n$ to span the cokernel of the vertical differential of $\Fsection$ at every of its zeroes, the differential $D \Funiv(x_0, 0)$ is indeed surjective. This finishes Step 2.
\end{proof}

\begin{lemma}
\label{lemma: proof perturbation thm - Funiv transverse boundary}
For $\eps > 0$ sufficiently small the restriction $\Funiv : \partial \Bcal \times \ball^n_\eps \rightarrow \EcalHat|_{\partial \Bcal \times \ball^n_\eps}$ is transverse to the zero section.
\end{lemma}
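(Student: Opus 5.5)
The argument mirrors the proof of Lemma \ref{lemma: proof perturbation thm - Fredholm and transversality of Funiv}, applied to the restricted section $\Funiv|_{\partial \Bcal \times \R^n}$, which is a section of the pullback of $\Ecal|_{\partial \Bcal}$ to $\partial \Bcal \times \R^n$. There are two steps: a compactness reduction to neighbourhoods of the finitely many relevant points, and a local surjectivity argument at each such point.

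\emph{Reduction.} If $\ZeroFsection \cap \partial \Bcal$ is empty, I will use Lemma \ref{lemma: proof perturbation thm - zero set in prescribed nbhd of S_Fsection} with the open set $\Ucal := \Bcal \backslash \partial\Bcal$ (interior of $\Bcal$). This gives $\eps>0$ such that $\Funiv$ has no zeroes on $\partial\Bcal \times \ball^n_\eps$, and then transversality holds vacuously. In general, I will show that for every $x_0 \in \ZeroFsection \cap \partial \Bcal$ there exist a neighbourhood $\Ucal_{x_0}$ of $x_0$ and $\eps(x_0)>0$ such that $\Funiv|_{(\Ucal_{x_0}\cap\partial\Bcal)\times \ball^n_{\eps(x_0)}}$ is transverse. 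Since $\ZeroFsection\cap\partial\Bcal$ is compact, finitely many $\Ucal_{x_i}$ cover it. Their union together with the interior of $\Bcal$ is an open set $\Ucal \supseteq \ZeroFsection$. Lemma \ref{lemma: proof perturbation thm - zero set in prescribed nbhd of S_Fsection} then gives $\eps < \min_i \eps(x_i)$ such that every zero of $\Funiv$ in $\Bcal \times \ball^n_\eps$ lies in $\Ucal\times\ball^n_\eps$. Any such zero lying on $\partial\Bcal$ is not in the interior, so it lies in some $\Ucal_{x_i}$, and transversality there follows from the local statement.

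\emph{Local step.} Fix $x_0 \in \ZeroFsection \cap \partial\Bcal$, a chart sending $x_0$ to $0$ with $\partial \Bcal$ corresponding locally to $\ker \ell$ for a nonzero functional $\ell$ on $\bbmE$, and a trivialization of $\Ecal$ with fibre $\bbmF$. Restricted to the boundary, the principal part becomes a smooth map $\Funiv : (\Ucal_0\cap \ker\ell)\times\R^n \to \bbmF$, where $\Ucal_0 \cap \ker \ell$ is open in the closed subspace $\ker\ell$. Its differential at $(x_0,0)$ is
\[
(\xi,\mu) \mapsto \Dvert{x_0}\Fsection[\xi] + \sum_{i=1}^n \mu_i\, s_i(x_0), \qquad \xi \in \ker\ell = T_{x_0}\partial\Bcal .
\]
This operator is surjective because the $s_i$ were chosen in Subsection \ref{subsubsec: proof perturbation thm- choosing B^+ sections} so that $s_1(x_0),\ldots,s_n(x_0)$ span the cokernel of $\Dvert{x_0}(\Fsection|_{\partial\Bcal}) = (\Dvert{x_0}\Fsection)|_{T_{x_0}\partial\Bcal}$. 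The set of surjective operators is open in $\mathcal{L}(\ker\ell\oplus\R^n,\bbmF)$ \cite[Ch.~XV, Thm.~3.4]{Lang_Real_Analysis} and the differential depends continuously on the base point. Hence the differential is surjective on a neighbourhood $(\Ucal\cap\ker\ell)\times \ball^n_\eps$ of $(x_0,0)$, in particular at every zero there.

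For the closed-complement condition, $\Fsection|_{\partial\Bcal}$ is Fredholm by Remark \ref{rmk: restricting sections to submfds}. So $D\Funiv|_{\partial\Bcal\times\R^n}(x_0,0)$ is Fredholm by Lemma \ref{app lemma: Fredholm finite dim summands}, and it stays Fredholm nearby since Fredholm operators form an open set. Its kernel is therefore finite-dimensional and has a closed complement. Alternatively, a surjective operator with kernel of finite codimension in the kernel of the full differential suffices.

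The main point needing care is that the cokernel-spanning property on the boundary holds only at zeroes of $\Fsection$ on $\partial\Bcal$, that is, at $\lambda=0$. This is why the argument has to go through openness of surjectivity near $(x_0,0)$ combined with Lemma \ref{lemma: proof perturbation thm - zero set in prescribed nbhd of S_Fsection}, which confines the zeroes for small $\lambda$ to these neighbourhoods.
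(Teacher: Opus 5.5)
Your proof is correct and follows the same two-step scheme the paper sketches. First you confine the zeroes of $\Funiv$ on $\partial\Bcal\times\ball^n_\eps$ to small neighbourhoods of $\ZeroFsection\cap\partial\Bcal$; you obtain this directly from Lemma~\ref{lemma: proof perturbation thm - zero set in prescribed nbhd of S_Fsection} by adjoining the interior of $\Bcal$ to the open set, which is exactly the ``analogous version'' the paper invokes. Then you use that $s_1(x_0),\ldots,s_n(x_0)$ span the cokernel of $\Dvert{x_0}(\Fsection|_{\partial\Bcal})$, together with openness of surjective operators; your explicit check of the closed-complement condition via the Fredholm property of $\Fsection|_{\partial\Bcal}$ supplies a detail the paper leaves implicit in ``completely analogous''.
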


\noindent The proof of Lemma \ref{lemma: proof perturbation thm - Funiv transverse boundary} is completely analogous to the proof of the previous Lemma \ref{lemma: proof perturbation thm - Fredholm and transversality of Funiv}. We briefly explain how to adapt the argument but skip the full proof. In the first step one uses an analogous version of Lemma \ref{lemma: proof perturbation thm - zero set in prescribed nbhd of S_Fsection}, asserting that, for every relatively open subset $\Ucal \subseteq \partial \Bcal$ containing $\ZeroFsection \cap \partial \Bcal \,$, the zero set of the restriction of $\Funiv$ to $\partial \Bcal \times \ball^n_\eps$ is contained in $\Ucal \times \ball^n_\eps$ for $\eps$ sufficiently small. 
    In the second step one uses that we have chosen the sections $s_1,\ldots , s_n$ to span the cokernel of $\Dvert{x} (\Fsection|_{\partial \Bcal})$ at every $x \in \ZeroFsection \cap \partial \Bcal \,$.

\begin{lemma}
\label{lemma: proof perturbation thm - zero set second countable}
For $\eps > 0$ sufficiently small the zero set $S_{\Funiv} \cap (\Bcal \times \ball^n_\eps)$ is second-countable.
\end{lemma}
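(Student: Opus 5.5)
The plan is to combine Lemma \ref{lemma: proof perturbation thm - Fredholm and transversality of Funiv} with the compactness statement of Lemma \ref{lemma: Funiv compact zero set}, and to use Remark \ref{rmk: Banach mfds - topological properties} \ref{it: Banach mfds topological - locally R^n and compact}.

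\emph{Step 1: choice of $\eps$.} Choose $\eps > 0$ so small that two things hold. First, $\Funiv$ restricted to $\Bcal \times \ball^n_{2\eps}$ is a Fredholm section of index $d+n$ that is transverse to the zero section. Second, $2\eps \leq \tfrac{1}{\sqrt{n}}$. For the boundary it is safest to also make $\eps$ small enough for Lemma \ref{lemma: proof perturbation thm - Funiv transverse boundary}, so that Corollary \ref{cor: implicit function thm Fredholm sections} applies. Then $Z := S_{\Funiv} \cap (\Bcal \times \ball^n_{2\eps})$ is a Banach submanifold of $\Bcal \times \ball^n_{2\eps}$, locally modeled on $\R^{d+n}$ (with boundary). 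Here I use that the index $d+n$ is constant, so $Z$ is modeled on $\R^{d+n}$ and its half-spaces.

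\emph{Step 2: countable atlas via compactness.} By Lemma \ref{lemma: Funiv compact zero set}, the set $K := S_{\Funiv} \cap (\Bcal \times \ballclosed^n_{\eps})$ is compact, and $K \subseteq Z$. Every point of $K$ has a chart domain of $Z$ around it, so finitely many chart domains cover $K$. Now write
\[
S_{\Funiv} \cap (\Bcal \times \ball^n_\eps) = \bigcup_{k \in \N} S_{\Funiv} \cap (\Bcal \times \ballclosed^n_{\eps - \eps/(k+1)}).
\]
Each set in this union is compact by Lemma \ref{lemma: Funiv compact zero set}, so it is covered by finitely many charts of $Z$. Restricting these charts to the open subset $S_{\Funiv} \cap (\Bcal \times \ball^n_\eps)$ of $Z$ therefore gives a countable atlas. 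Alternatively, the open set $S_{\Funiv} \cap (\Bcal \times \ball^n_\eps)$ lies in the compact set $K$, which is already covered by finitely many charts, so even a finite atlas suffices.

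\emph{Step 3: conclusion.} By Remark \ref{rmk: Banach mfds - topological properties} \ref{it: Banach mfds topological - locally R^n and compact}, a Banach manifold modeled on $\R^{d+n}$ that admits a countable atlas is second-countable. Hence $S_{\Funiv} \cap (\Bcal \times \ball^n_\eps)$ is second-countable.

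The main obstacle is only bookkeeping. One must make sure the manifold structure from the implicit function theorem exists on a neighborhood of the compact set $K$, which is why transversality is taken on the larger ball $\ball^n_{2\eps}$. One must also check that the model space is $\R^{d+n}$ including at boundary points, which follows from Corollary \ref{cor: implicit function thm Fredholm sections}.
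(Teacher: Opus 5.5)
Your proposal is correct and follows the paper's argument. Transversality (interior and boundary) plus the implicit function theorem make the zero set a manifold modeled on $\R^{d+n}$. Compactness from Lemma \ref{lemma: Funiv compact zero set} then gives a countable (even finite) atlas, and Remark \ref{rmk: Banach mfds - topological properties} yields second-countability. Your larger ball $\ball^n_{2\eps}$ and the exhaustion by closed balls just spell out what the paper's one-line ``cover with countably many charts'' leaves implicit.
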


\begin{proof}
    By the transversality results in Lemmata \ref{lemma: proof perturbation thm - Fredholm and transversality of Funiv} and \ref{lemma: proof perturbation thm - Funiv transverse boundary} and the implicit function theorem, the zero set $S_{\Funiv} \cap (\Bcal \times \ball^n_\eps)$ is a Banach manifold modeled on the Banach space $\R^{n+d} \,$, at least for $\eps  >0$ small.
   By the compactness result in Lemma \ref{lemma: Funiv compact zero set}, we can cover $S_{\Funiv} \cap (\Bcal \times \ball^n_\eps)$ with countably many charts.
\end{proof}

\subsubsection{Parametric transversality}
\label{subsubsec: proof perturbation thm - parametric transversality}

Lemmata \ref{lemma: proof perturbation thm - Fredholm and transversality of Funiv} and \ref{lemma: proof perturbation thm - Funiv transverse boundary} show that for $\eps > 0$ sufficiently small the restricted universal section $\Funiv : \Bcal \times \ball^n_\eps \rightarrow \EcalHat|_{\Bcal \times \ball^n_\eps}$ is a Fredholm section of index $d + n$ that is transverse to the zero section and that its restriction to the boundary $\partial \Bcal \times \ball^n_\eps $ is transverse to the zero section as well. Its zero set is second-countable by Lemma \ref{lemma: proof perturbation thm - zero set second countable}. 
Consequently $\Funiv|_{\Bcal \times \ball^n_\eps}$ satisfies all assumptions in the parametric transversality theorem (Proposition \ref{prop: Parametric Transversality}). Therefore
\begin{align}
\label{eq: proof perturbation thm - parametric transversality Fredholm}
    \Fsection_\lambda : \Bcal \rightarrow \Ecal \text{ is a Fredholm section of index } d \text{ for every } \lambda \in \ball^n_\eps
\end{align}
and moreover there exists a residual subset $\Bres \subseteq \ball^n_\eps$ so that 
\begin{align}
\label{eq: proof perturbation thm - parametric transversality transverse}
   \Forall \lambda \in \Bres : \,\, \text{Both } \Fsection_\lambda \text{ and } \Fsection_\lambda|_{\partial \Bcal} \text{ are transverse to the zero section respectively.}
\end{align}

\subsubsection{Concluding the proof}
\label{subsubsec: proof perturbation thm - conclusion}

We have now chosen \Bplus-sections $s_1,\ldots , s_n$ with support in $\Bcal \backslash \Dcal$ (in Subsection \ref{subsubsec: proof perturbation thm- choosing B^+ sections}), a sufficiently small $\eps \in (0,\tfrac{1}{\sqrt{n}}]$ (in Subsection \ref{subsubsec: proof perturbation thm - parametric transversality}) and a residual subset $\Bres \subseteq \ball^n_\eps$ (in Subsection \ref{subsubsec: proof perturbation thm - parametric transversality}). We claim that, with these choices, \ref{it: perturbation thm conclusion - Fredholm}\,-\,\ref{it: perturbation thm conclusion - zero set compact mfd} in Theorem \ref{mthm: perturbation thm} are satisfied for every $\lambda \in \Bres$ and moreover \ref{it: perturbation rmk - Fredholm}\,-\,\ref{it: perturbation rmk - compactness} in Remark \ref{rmk: perturbation thm} hold.
That \ref{it: perturbation rmk - Fredholm} holds, and thus a fortiori also \ref{it: perturbation thm conclusion - Fredholm} for every $\lambda \in \Bres \,$, is stated in \eqref{eq: proof perturbation thm - parametric transversality Fredholm}. That \ref{it: perturbation thm conclusion - transverse} holds for every $\lambda \in \Bres $ is asserted in \eqref{eq: proof perturbation thm - parametric transversality transverse}. Property \ref{it: perturbation rmk - compactness} holds by Lemma \ref{lemma: Funiv compact zero set} since $\eps \leq \tfrac{1}{\sqrt{n}} \,$.

We now show that also \ref{it: perturbation thm conclusion - zero set compact mfd} holds for given $\lambda \in \Bres \,$. Using the already established properties \ref{it: perturbation thm conclusion - Fredholm} and \ref{it: perturbation thm conclusion - transverse}, by Corollary \ref{cor: implicit function thm Fredholm sections} the zero set $S_{\Fsection_\lambda}$ is a submanifold of $\Bcal$ with boundary $\partial S_{\Fsection_\lambda} = S_{\Fsection_\lambda} \cap \partial \Bcal$ and modeled on $\R^d$. It is compact due to the already proven \ref{it: perturbation rmk - compactness}. In particular $S_{\Fsection_\lambda}$ is also second-countable, see Remark \ref{rmk: Banach mfds - topological properties} \ref{it: Banach mfds topological - locally R^n and compact}, and therefore a $d$-dimensional submanifold.
Lastly, $S_{\Fsection_\lambda} \cap \Dcal = \ZeroFsection \cap \Dcal$ because the \Bplus-sections $s_1,\ldots , s_n$ vanish on $\Dcal \,$.

This finishes the proof of Theorem \ref{mthm: perturbation thm}.

\section{Application: Re-proving Schwarz's Theorem}
\label{sec: Application Schwarz thm}

\subsection{Preliminaries}
\label{subsec: Preliminaries}

In the following, $(M,\omega)$ will always be a closed (\ie compact and without boundary) symplectic manifold of dimension $\dim(M) = 2m \,$. We moreover assume that $M$ is \textit{symplectically aspherical}, that is
\[
\int_{\bbS^2} f^* \omega = 0 \quad \Forall f \in \Cinfty(\bbS^2,M) \,\, ,
\]
or equivalently the map on homology $\omega : H_2(M) \rightarrow \R $ is trivial when restricted to the image of the Hurewicz homomorphism $\pi_2(M,q) \rightarrow H_2(M)$ for every $q \in M \,$. Note that the image of the Hurewicz homomorphism $\pi_2(M,q) \rightarrow H_2(M)$ depends only on the connected component of $M$ containing $q \,$.

\subsubsection{$J$-holomorphic curves}
\label{subsubsec: J-holomorphic Curves}

Let $J$ be an $\omega$-compatible almost complex structure on $M$. It induces the inner product $g_J := \omega(\,\cdot\, , J \,\cdot\,)$ on $M$.
A \textit{$J$-holomorphic curve} on a Riemann surface $(\Sigma,j_\Sigma)$ is a smooth map $u : \Sigma \rightarrow M$ that satisfies $J(u) \circ du = du \circ j_\Sigma \,$. Its \textit{energy} is defined by
\begin{equation}
\label{eq: energy J-holomorphic curves}
E(u) := \tfrac{1}{2} \int_{\Sigma} \lVert du \rVert_J^2 \, \mathrm{dvol}_{\Sigma} = \int_{\Sigma} u^*\omega \,\, ,
\end{equation}
where $\mathrm{dvol}_\Sigma = g_\Sigma(j_\Sigma\,\cdot\comma\cdot\,)$ is the volume form for a choice of conformal metric $g_\Sigma$ on $(\Sigma,j_\Sigma)$
and the norm appearing in the integral is defined for any real linear map $L : T_z \Sigma \rightarrow T_{u(z)}M$ by
\[
\lVert L \rVert_J := \frac{1}{|v|} \sqrt{|L(v)|_J^2 + |L(j_\Sigma v)|^2_J} \quad \text{ for any choice of } v \in T_z \Sigma , \, v \not= 0 \,\, .
\]
The integrand $ \lVert du \rVert_{J}^2 \, \mathrm{dvol}_\Sigma$ is independent of the choice of the conformal metric $g_\Sigma$ (even if $u$ is not $J$-holomorphic).

\begin{remark}
\label{rmk: symp aspherical -> no J-hol curves}
The energy identity \eqref{eq: energy J-holomorphic curves} and the symplectic asphericity imply that the only $J$-holomorphic spheres $u: \bbS^2 \rightarrow M$ are the constant ones.
\end{remark}

\noindent For convenience let us repeat the well-known Removal-of-singularities-theorem. We denote by $\mathbb{D}^2_r = \set{z \in \C}{|z| \leq r}$ the closed disc or radius $r $ and by $\mathbb{D}^2 := \mathbb{D}_1^2$ the unit disk. 

\begin{proposition}[Removal of singularities, \cite{J_holomorphic_curves}]
\label{prop: removal-of-singularities}
If $u : \mathbb{D}^2_r\backslash\{0\} \rightarrow M$ is a $J$-holomorphic curve of finite energy $E(u) < \infty \,$, then $u$ extends to a $J$-holomorphic curve on $\mathbb{D}^2_r \,$.
\end{proposition}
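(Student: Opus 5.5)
The plan is the standard removal-of-singularities argument. Its core is an isoperimetric inequality that yields exponential energy decay on small punctured disks. Conformally identify the punctured disk $\mathbb{D}^2_r\backslash\{0\}$ with the half-cylinder $[\log(1/r),\infty) \times \bbS^1$ via $z = e^{-(s+\i t)}$, so that $v(s,t) := u(e^{-(s+\i t)})$ is a finite-energy $J$-holomorphic half-cylinder. Finite energy gives $\int_{[T,\infty)\times \bbS^1} |\partial_s v|^2 \to 0$ as $T \to \infty$.

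First I prove a mean value inequality: there exist $\hbar>0$ and $C$ such that every $J$-holomorphic $w$ on a disk $B_\rho$ with $\int_{B_\rho}|dw|^2<\hbar$ satisfies $|dw(0)|^2 \le C\rho^{-2}\int_{B_\rho}|dw|^2$. This comes from the subharmonicity-type estimate $\Delta |dw|^2 \ge -A|dw|^4$, where $A$ is controlled by the curvature of $g_J$ and derivatives of $J$, and uses compactness of $M$. Applying it on unit disks around $(s,t)$ gives $\sup_t |\partial_t v(s,t)| \to 0$ as $s\to\infty$. Equivalently, $|z|\,|du(z)| \to 0$, and so the loops $\gamma_s := v(s,\cdot)$ have length $\ell(s) \to 0$.

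Next I use the isoperimetric inequality. For loops of length $\ell$ below the injectivity radius, the local action $a(\gamma)$ (the symplectic area of a small filling disk) satisfies $|a(\gamma)| \le c\,\ell(\gamma)^2$, with $c$ slightly larger than $1/(4\pi)$ once $\ell$ is small enough. Exactness of $\omega$ near a point and Stokes then give $E(s) := \int_{[s,\infty)\times\bbS^1} v^*\omega = a(\gamma_s)$. To justify this, cap each $\gamma_{s'}$ by a small disk, and use that the capped cylinder $[s,s']\times \bbS^1$ lies in a small ball for large $s$ (its diameter is small, because energy on long stretches controls distance) and that the caps' areas tend to zero as $s'\to\infty$. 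Since $\ell(s)^2 \le 2\pi\int_{\bbS^1}|\partial_t v|^2\,dt = -2\pi E'(s)$, this yields $E(s) \le -2\pi c\,E'(s)$, hence $E(s) \le Ke^{-\delta s}$ with $\delta = 1/(2\pi c) >0$ close to $2$.

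Combining exponential decay with the mean value inequality on unit disks gives $|dv(s,t)| \le K'e^{-\delta s/2}$. Back in $z$, this reads $|du(z)| \le K''|z|^{\delta/2-1}$ with $\delta/2 - 1 > -1$. Hence $du\in L^p$ near $0$ for some $p>2$, and $u$ extends continuously to $0$, since $|u(z_1)-u(z_2)|$ is controlled by integrating along radii. Thus $u\in W^{1,p}(\mathbb{D}^2_r)$: the point $0$ has zero capacity, so the weak derivative across it is the pointwise one. The extension also satisfies $\bar\partial_J u=0$ weakly on all of $\mathbb{D}^2_r$. Elliptic bootstrapping for $W^{1,p}$ solutions, $p>2$, of the nonlinear Cauchy–Riemann equation then gives smoothness, so the extension is a $J$-holomorphic curve.

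I expect the main obstacle to be the isoperimetric step. Getting a constant with $\delta>0$ is enough, because $du\in L^p$ for some $p>2$ needs only $\delta>0$, which ensures $\delta/2-1>-1$, so the sharp $1/(4\pi)$ is unnecessary. What does need care is the identification of the tail energy with the local action of $\gamma_s$. I would first try to show that $v([s,\infty)\times\bbS^1)$ is eventually contained in a Darboux-type ball where $\omega=d\lambda$, and then apply Stokes directly with $\lambda$.
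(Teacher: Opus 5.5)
The paper gives no proof of this statement; it quotes it from McDuff--Salamon. Your outline (mean value inequality, isoperimetric inequality, exponential decay of the tail energy, $W^{1,p}$-extension, elliptic bootstrapping) is the standard proof from that source. There is, however, a genuine gap exactly at the step you flag: identifying $E(s)=\int_{[s,\infty)\times\bbS^1}v^*\omega$ with the local action of $\gamma_s$. Your justification needs $v([s,s']\times\bbS^1)$ to lie in one small ball uniformly as $s'\to\infty$, i.e.\ the whole tail. The reason you give does not hold. Finite energy bounds the drift between $\gamma_s$ and $\gamma_{s'}$ only by roughly $\sqrt{(s'-s)\,E(s)}$. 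Also, $\sup_t|\partial_s v(\sigma,t)|\to 0$ does not imply $\int_s^\infty\sup_t|\partial_s v(\sigma,t)|\,d\sigma<\infty$. That integrability is precisely what the exponential decay is supposed to produce. So first showing that the tail sits in a Darboux ball is circular: it is essentially the existence of $\lim_{z\to0}u(z)$, which is what you want to prove.

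The missing idea is that you only need to localise on bounded stretches. This works provided the local action is defined by a filling canonically attached to the loop, rather than by a primitive on some ball. For example, take $\bar\gamma(\rho e^{\i\theta})=\exp_{\gamma(0)}(\rho\,\xi(\theta))$, where $\gamma(\theta)=\exp_{\gamma(0)}(\xi(\theta))$.

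For $\sigma$ large, your mean value estimate gives $\sup_{[\sigma,\sigma+1]\times\bbS^1}|dv|\to0$. Hence $v([\sigma,\sigma+1]\times\bbS^1)$, together with the caps $\bar\gamma_\sigma$ and $\bar\gamma_{\sigma+1}$, lies in a small geodesically convex ball where $\omega$ is exact. Stokes then gives $\int_{[\sigma,\sigma+1]\times\bbS^1}v^*\omega=\pm\big(a(\gamma_\sigma)-a(\gamma_{\sigma+1})\big)$, with the sign fixed by your conventions. Summing over $\sigma=s,s+1,s+2,\dots$ telescopes. Since $|a(\gamma_{s+N})|\le c\,\ell(s+N)^2\to0$, this yields $E(s)=|a(\gamma_s)|$.

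Equivalently, the canonical caps of $\gamma_\sigma$, $\sigma\in[s,s']$, sweep out a $3$-chain whose boundary is the capped cylinder, so $d\omega=0$ gives the identity for every $s'$ at once. This is the statement that $a$ is a primitive of the action one-form on short loops. With this in place, the rest of your argument goes through as written: $E\le-2\pi c\,E'$, exponential decay, $|du(z)|\le K|z|^{\delta/2-1}$, continuous $W^{1,p}$-extension with $p>2$, and bootstrapping.
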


\noindent Finally, we also recall the definition of the anti-holomorphic part of a one-form.

\begin{definition}
\label{def: anti-holom part one-form}
Given a real linear one-form $\alpha \in \Omega^1(\Sigma, E)$ with values in a complex vector bundle $(E,J)$ over $\Sigma$, its \textit{complex antilinear part} is the section 
\[
\alpha^{(0,1)} = \tfrac{1}{2} \left( \alpha + J \circ \alpha \circ j_\Sigma \right)  \in \Omega^{0,1}(\Sigma,E) = \Gamma(\Lambda^{0,1} \otimes_{\C} E)
\]
of the complex bundle $\Lambda^{0,1} \otimes_{\C} E \,$, where $\Lambda^{0,1} := \Lambda^{0,1} T^* \Sigma $ denotes the complex line bundle of complex antilinear functionals $T_z \Sigma \rightarrow \C \comma z \in \Sigma \,$.
\end{definition}

\begin{remark}
\label{rmk: def anti-holom part}
The one-forms $\alpha$ we consider below will take on values in the pullback bundle $(u^*TM, u^*J) \,$, for a given smooth map $u : \Sigma \rightarrow M$ and almost complex structure $J$ on $M$.
\end{remark}

\noindent For any smooth map $u : \Sigma \rightarrow M$ the Cauchy-Riemann operator $\overline{\partial}_J$ applied to $u$ then is the complex antilinear part of the differential of $u$, that is
\[
\overline{\partial}_J \, u := (du)^{(0,1)} \in \Omega^{0,1}(\Sigma, u^*TM) \,\, .
\]

\subsubsection{Embedding the cylinder into the sphere}
\label{subsubsec: cylinder into sphere}

 We identify the cylinder $\R \times \bbS^1$ biholomorphically with the punctured sphere $\bbS^2\backslash\{z_-,z_+\} = \C \mathrm{P}^1\backslash\{0,\infty\}$ via
\[
\R \times \bbS^1 \overset{\cong}{\longrightarrow} \C\backslash \{0\} = \C \mathrm{P}^1\backslash\{0,\infty\} \comma \quad (s,t) \mapsto e^{2 \pi (s+\i t)} \,\, .
\]
Observe that a $J$-holomorphic curve $u : \R \times \bbS^1 \rightarrow M$ has energy $E(u) = \int_{\R \times \bbS^1} |\partial_s u|_J^2 \, ds \,dt \,$. This motivates the next definition of the energy for arbitrary smooth maps.

\begin{definition}
\label{def: energy}
The energy $E(u)$ of a smooth map $u : \R \times \bbS^1 \rightarrow M$ is defined by
\[
E(u) := \int_{\R \times \bbS^1} |\partial_s u|_J^2 \, ds \, dt \, \in [0,\infty] \,\, .
\]
The energy of a map $u : \bbS^2 \rightarrow M$ is $E(u) := E(u|_{\R \times \bbS^1} ) \,$.
\end{definition}

\subsubsection{The action functional}
\label{subsubsec: Action Functional}

Let $\loopspace := \Cinfty_{\mathrm{contr}}(\bbS^1,M)$, the space of contractible smooth loops. Given a Hamiltonian $H \in \Cinfty(M \times \bbS^1,\R) \,$, its Hamiltonian action functional $\actionH : \loopspace \rightarrow \R$ for a contractible loop $x \in \loopspace$ is defined by
\[
\actionH(x) := -\int_{\mathbb{D}^2} \overline{x}^* \omega + \int_0^1 H_t(x(t)) \, dt \,\, ,
\]
where $\overline{x} \in \Cinfty(\mathbb{D}^2,M)$ is any choice of filling disk for $x \,$, that is $\overline{x}|_{\partial \mathbb{D}^2} = x \,$. (This definition is independent of the filling disk $\overline{x}$ because $M$ is symplectically aspherical.)

\subsubsection{The $L^2$-gradient and the Floer equation}
\label{subsubsec: L^2-gradient}

Given an $\omega$-compatible almost complex structure $J$ on $M$ with induced metric $g_J := \omega(\,\cdot\, , J\,\cdot\,) \,$, consider the $L^2$-metric on $\loopspace$ which at $x \in \loopspace$ is defined by
\begin{equation}
\label{eq: L^2 metric}
\langle v_1 , v_2 \rangle_{x} := \int_{\bbS^1} (g_J)_{x(t)}(v_1(t), v_2(t)) \, dt \qquad \Forall v_1,v_2 \in T_x \loopspace = \Gamma(x^*TM) \,\, .
\end{equation}
The gradient of $\actionH$ with respect to this metric is given by
\[
\nabla \actionH (x) =  J(x)(\partial_t x - X_{H_t}(x)) \in \Gamma(x^*TM) \,\, ,
\]
where $X_{H_t}$ is the Hamiltonian vector field defined by $\omega(\,\cdot\, , X_{H_t}) = dH_t \,$. Thus the critical points of $\actionH$ are precisely the contractible one-periodic solutions to $\partial_t x(t) = X_{H_t}(x(t)) \,$.
The negative gradient flow lines $s \mapsto u_s \in \loopspace$ of $\actionH $ are, when considered as maps $u : \R \times \bbS^1 \rightarrow M \,$, the solutions of the Floer equation
\begin{equation}
\label{eq: Floer eq}
\partial_s u + J(u) \, (\partial_t u - X_{H_t}(u) ) = 0 
\end{equation}
and are also called \textit{Floer cylinders}.

\begin{remark}
\label{rmk: energy action functional}
Suppose $u : \R \times \bbS^1 \rightarrow M$ is a solution of \eqref{eq: Floer eq}.
\begin{enumerate}[label=\roman*)]
    \item If $u_s = u(s,\,\cdot\,)$ converges (in the $H^1(\bbS^1,M)$-topology) to loops $x_\pm \in \loopspace $ as $s \rightarrow \pm \infty \,$, then 
    \[
    E(u) = - \int_{-\infty}^\infty \langle \nabla \actionH(u_s) , \partial_s u_s \rangle \, ds =  \actionH(x_-) - \actionH(x_+) \,\, .
    \]
    \item If $E(u)=0 \,$, then $u$ is $s$-independent. If moreover $ u(s,\,\cdot\,) = u(0,\,\cdot\,)$ is contractible, then $u(s,\,\cdot\,)$ is a critical point of $\actionH \,$.
    \item For every $s \in \R$ the reparametrized cylinder $u(\,\cdot\,+ s , \,\cdot\,)$ also solves \eqref{eq: Floer eq}.
\end{enumerate}
\end{remark}

\subsubsection{Gromov compactness}
\label{subsubsec: Gromov Compactness}

Let $H^\lambda_{s,t} : M \rightarrow \R$ be an $(s,t,\lambda)$-dependent Hamiltonian, where $(s,t) \in \R \times \bbS^1$ and $\lambda \in I$ is a parameter in a compact interval $I \subseteq \R $ and the dependence on the parameters is smooth. We assume that for some constant $s_0 >0$ we have
\[
X_{\Hparam }= 0 \qquad \text{ for } |s| \geq s_0 \,  \text{ and } t \in \bbS^1 \comma \lambda \in I
\]
and are interested in solutions of the \textit{parameter-dependent Floer equation}
\begin{equation}
\label{eq: Floer eq parameter}
 \partial_s u + J(u) (\partial_t u - X_{\Hparam}(u)) = 0 \,\, .
\end{equation}
We say that a pair $(\lambda, u) \in I \times \Cinfty(\bbS^2,M)$ is a \textit{solution of \eqref{eq: Floer eq parameter}} if $u|_{\R \times \bbS^1}$ solves \eqref{eq: Floer eq parameter} with parameter $\lambda \,$. 
The next lemma is standard and proofs in its flavor can be found in \cite{J_holomorphic_curves}.

\begin{lemma}
\label{lemma: uniformly bounded gradients and energy}
Let $(\lambda_n, u_n)_{n \in \N}$ be a sequence of solutions of \eqref{eq: Floer eq parameter}.
\begin{enumerate}[label=\alph*)]
    \item\label{it: uniformly bounded gradients} If the gradients $\lVert du_n \rVert_J$ are uniformly bounded, then a subsequence of $(u_n)_n$ converges in $\Cinfty(\bbS^2,M) \,$.
    \item\label{it: uniformly bounded energy} If the $u_n$ have uniformly bounded energy $E(u_n) \,$, then the $u_n$ have uniformly bounded gradients. 
\end{enumerate}
\end{lemma}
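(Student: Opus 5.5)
We prove the two parts separately. Part \ref{it: uniformly bounded gradients} is an Arzel\`a--Ascoli plus elliptic bootstrapping argument. Part \ref{it: uniformly bounded energy} is the usual bubbling-off argument, which is ruled out by symplectic asphericity (Remark \ref{rmk: symp aspherical -> no J-hol curves}). Throughout, $\lVert du_n \rVert_J$ is measured with respect to a fixed conformal metric on $\bbS^2$ (say the round one), and $I$ is compact. So after passing to a subsequence we may always assume $\lambda_n \rightarrow \lambda_\infty \in I$.

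For \ref{it: uniformly bounded gradients}, cover $\bbS^2$ by finitely many holomorphic coordinate discs of two kinds. The first kind are discs contained in $\{|s| < s_0 + 1\} \subseteq \R \times \bbS^1$, on which $u_n$ solves the perturbed Cauchy--Riemann equation \eqref{eq: Floer eq parameter}. The second kind are discs around the poles $z_\pm$, lying in the region $|s| > s_0$, where $X_{\Hparam} = 0$; there, $u_n$ is honestly $J$-holomorphic, and it extends smoothly across the poles since $u_n \in \Cinfty(\bbS^2,M)$.

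The uniform gradient bound makes $(u_n)_n$ equicontinuous into the compact manifold $M$. By Arzel\`a--Ascoli, a subsequence converges in $C^0$. Shrinking the discs, we may assume the images $u_n(\text{disc})$ lie in a single coordinate chart of $M$, so that locally the $u_n$ are maps into $\R^{2m}$ satisfying
\[
\partial_s u_n + J(u_n)\,\partial_t u_n = J(u_n)\, X_{H^{\lambda_n}_{s,t}}(u_n) .
\]
The coefficients of this equation depend smoothly on $(s,t,\lambda)$ and on $u_n$, and they are uniformly bounded since $\lambda_n$ ranges in the compact set $I$. The standard interior elliptic estimates (as in \cite{J_holomorphic_curves}) then bootstrap as follows: a $W^{1,p}$ bound on a slightly smaller disc gives $W^{2,p}$, differentiating the equation gives $W^{k,p}$ for all $k$, and Sobolev embedding gives $C^k$ bounds for every $k$. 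A diagonal subsequence therefore converges in $\Cinfty(\bbS^2,M)$.

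For \ref{it: uniformly bounded energy}, we argue by contradiction. Suppose that, after passing to a subsequence, there are points $z_n$ with $\lVert du_n(z_n) \rVert_J =: c_n \rightarrow \infty$; by compactness of $\bbS^2$ we may assume $z_n \rightarrow z_\infty$. Work in a holomorphic coordinate disc around $z_\infty$. We first apply Hofer's lemma to adjust $z_n$ and a radius $\delta_n \rightarrow 0$ with $c_n \delta_n \rightarrow \infty$, so that $\lVert du_n \rVert_J \leq 2 c_n$ on the $\delta_n$-disc around $z_n$. Then we rescale,
\[
v_n(w) := u_n\bigl(z_n + w / c_n\bigr) .
\]
The rescaled maps satisfy a Cauchy--Riemann equation whose inhomogeneous Hamiltonian term carries a factor $1/c_n \rightarrow 0$. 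The same term is simply absent if $z_\infty$ is a pole, since the equation is then already $J$-holomorphic. Moreover, the $v_n$ have gradient bounded by $2$ on discs of radius $c_n \delta_n \rightarrow \infty$, and satisfy $\lVert dv_n(0) \rVert = 1$.

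By the bootstrapping of part \ref{it: uniformly bounded gradients}, applied on compact subsets, a subsequence converges in $\Cinftyloc(\C,M)$ to a nonconstant $J$-holomorphic plane $v$. Its energy is bounded by $\sup_n E(u_n) < \infty$. This uses two facts. First, the energy density is conformally invariant. Second, on the cylinder part the density $|\partial_s u|^2$ dominates, up to a uniformly controlled error, the density $\tfrac12 \lVert du \rVert_J^2$, because $\partial_t u = J\partial_s u + X_H$ and $X_H$ is bounded. Removal of singularities (Proposition \ref{prop: removal-of-singularities}), applied at $\infty$ via $w \mapsto 1/w$, extends $v$ to a nonconstant $J$-holomorphic sphere. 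This contradicts Remark \ref{rmk: symp aspherical -> no J-hol curves}, so the gradients are uniformly bounded.

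I expect the main obstacle to be the energy bookkeeping in part \ref{it: uniformly bounded energy}. Definition \ref{def: energy} only uses $|\partial_s u|_J^2$ on the cylinder. We must therefore check that a uniform bound on $E(u_n)$ controls the full Dirichlet energy of the rescaled bubbles. This must hold both in the Floer region, where the $X_H$-correction has bounded $L^2$ norm on bounded $s$-intervals, and near the poles, where the maps are $J$-holomorphic and the cylinder energy equals the full energy. A bubble forming exactly at a pole is handled in the pole's own holomorphic coordinate.
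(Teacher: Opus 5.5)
Your proposal is correct. Part b) follows the paper's argument: the paper's ``usual reparametrization trick'' is your Hofer-lemma rescaling. It produces a nonconstant finite-energy $J$-holomorphic plane, which removal of singularities and asphericity then rule out. Your energy bookkeeping only needs finiteness. The sharper bound $E(v) \leq \sup_n E(u_n)$ stated in the paper also holds, because the $X_H$-contributions to the Dirichlet energy vanish on the shrinking discs.

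Part a) is where you take a different route. The paper uses Gromov's graph trick. Since $X_{\Hparam}$ vanishes for $|s| \geq s_0$, the form $\nu_n = dt \otimes X_{H^{\lambda_n}_{s,t}}$ is globally defined on $\bbS^2$. Hence $\widetilde{u}_n(z) = (z, u_n(z))$ is a $\widetilde{J}_n$-holomorphic sphere in $\bbS^2 \times M$ with uniformly bounded gradient, and after passing to $\lambda_n \rightarrow \lambda$ one has $\widetilde{J}_n \rightarrow \widetilde{J}$ in $\Cinfty$. The compactness theorem for $J$-holomorphic curves with bounded gradient, \cite[Thm.~4.1.1]{J_holomorphic_curves}, then gives $\Cinfty$-convergence in one step. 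This treats the Floer region and the poles uniformly and needs only the homogeneous elliptic theory.

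You instead bootstrap the inhomogeneous equation directly in charts. This requires the inhomogeneous version of the elliptic estimates and a separate treatment of Floer discs and pole discs. In exchange it avoids the auxiliary almost complex structure on the product and is more self-contained.
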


\begin{proof}
    Part \ref{it: uniformly bounded gradients} essentially follows from Arzel\'a-Ascoli and elliptic regularity. We provide a concise argument. Instead of viewing $u_n$ as solution of the inhomogeneous Cauchy-Riemann equation $\overline{\partial}_J \, u_n = (\nu_n(z, u_n))^{(0,1)} $ with $\nu_n((s,t),x) :=  dt \otimes X_{H^{\lambda_n}_{s,t}} (x) \,$, we consider its graph $\widetilde{u}_n(z) := (z, u_n(z)) \,$, which is a pseudoholomorphic curve for the almost complex structure $\widetilde{J}_n$ on $\bbS^2 \times M $ defined by
    \begin{equation*}
    \widetilde{J}_n ( z , x) := \left(\begin{matrix}
        j_{\bbS^2}(z) & 0 \\
       - J(x) \circ \nu_n(z,x) + \nu_n(z,x) \circ j_{\bbS^2}(z) & J(x)  
    \end{matrix} \right)  \,\, ,
    \end{equation*}
    see \cite[Ch.~8.1]{J_holomorphic_curves}. Up to a subsequence we may assume that $(\lambda_n)_n$ converges to some $\lambda \in I \,$, so that $\widetilde{J}_n$ converges in the $\Cinfty$-topology to an almost complex structure $\widetilde{J} \,$. Since the $\widetilde{u}_n$ have uniformly bounded gradients, a subsequence converges in $\Cinfty(\bbS^2,\bbS^2\times M)$ by \cite[Thm.~4.1.1]{J_holomorphic_curves}.
    
    For Part \ref{it: uniformly bounded energy}, we assume by contradiction that there exists a sequence $(z_n)_n \subseteq \bbS^2$ so that $\lVert du_n (z_n) \rVert_J \overset{n}{\rightarrow} \infty $ (up to extracting a subsequence of the $u_n$). Using that the norm of the vector field $X_{\Hparam}$ is uniformly bounded, the usual reparametrization trick (see \cite[Ch.~6.6]{Audin-Damian}) yields a non-constant $J$-holomorphic plane $v : \C \rightarrow M$ of finite energy $E(v) \leq \sup_n E(u_n) < \infty\,$. Removing the singularity at infinity (Proposition \ref{prop: removal-of-singularities}), we can extend $v$ to a non-constant $J$-holomorphic sphere $v:\bbS^2 \rightarrow M \,$. This is contradicting the symplectic asphericity of $M$ (see Remark \ref{rmk: symp aspherical -> no J-hol curves}).
\end{proof}

\begin{corollary}
\label{cor: uniformly bounded energy -> compactness}
Let $(\lambda_n, u_n)_{n} \subseteq I \times \Cinfty(\bbS^2,M) $ be a sequence of solutions of \eqref{eq: Floer eq parameter} with uniformly bounded energy. Then a subsequence converges in $I \times \Cinfty(\bbS^2,M)$ to some solution $(\lambda,u)$ of \eqref{eq: Floer eq parameter}.
\end{corollary}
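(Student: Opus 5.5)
The plan is to combine the two parts of Lemma \ref{lemma: uniformly bounded gradients and energy} and then pass to the limit in the equation. Let $(\lambda_n,u_n)_n$ be a sequence of solutions of \eqref{eq: Floer eq parameter} with $\sup_n E(u_n) < \infty$.

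First, since $I$ is compact, we pass to a subsequence along which $\lambda_n \to \lambda \in I$. Next, part \ref{it: uniformly bounded energy} of Lemma \ref{lemma: uniformly bounded gradients and energy} gives uniformly bounded gradients $\lVert du_n \rVert_J$. Part \ref{it: uniformly bounded gradients} of the same lemma then yields a further subsequence converging in $\Cinfty(\bbS^2,M)$ to some $u \in \Cinfty(\bbS^2,M)$. So $(\lambda_n,u_n) \to (\lambda,u)$ in $I \times \Cinfty(\bbS^2,M)$.

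It remains to check that $(\lambda,u)$ is again a solution. Restricted to $\R \times \bbS^1$, the maps $u_n$ converge to $u$ in $\Cinftyloc$. The Hamiltonian $H^{\lambda}_{s,t}$ depends smoothly on $\lambda$, so $X_{H^{\lambda_n}_{s,t}} \to X_{H^{\lambda}_{s,t}}$ locally uniformly. Hence each term in
\[
\partial_s u_n + J(u_n)\bigl(\partial_t u_n - X_{H^{\lambda_n}_{s,t}}(u_n)\bigr) = 0
\]
converges pointwise to the corresponding term for $(\lambda,u)$. Therefore $u|_{\R\times\bbS^1}$ solves \eqref{eq: Floer eq parameter} with parameter $\lambda$.

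The argument has no real obstacle; the analytic work is already contained in Lemma \ref{lemma: uniformly bounded gradients and energy}. The only point needing care is that the lemma's statements refer to gradients and convergence on $\bbS^2$, which holds because each $u_n$ is a smooth map on $\bbS^2$. One could also remark that Fatou gives $E(u) \leq \liminf_n E(u_n)$, though this is not needed for the statement.
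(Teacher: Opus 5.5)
Your proposal is correct and is essentially the argument the paper intends: the corollary is stated without a separate proof, as an immediate consequence of Lemma \ref{lemma: uniformly bounded gradients and energy}, obtained by combining part b) (an energy bound gives a gradient bound) with part a) (a gradient bound gives a $C^\infty$-convergent subsequence, with $\lambda_n \to \lambda$ extracted using compactness of $I$). Your explicit check that the limit solves the equation is a harmless addition; in the paper this step is implicit in the proof of part a), where the graphs are $\widetilde{J}_n$-holomorphic with $\widetilde{J}_n \to \widetilde{J}$.
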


\noindent Next we consider a parameter-independent Hamiltonian $H : M \times \bbS^1 \rightarrow \R$ and solutions to \eqref{eq: Floer eq}.

\begin{lemma}
\label{lemma: Floer compactness parameter-independent case}
Given a sequence $(u_n)_{n \in \N} \subseteq \Cinfty(\R \times \bbS^1,M)$ with uniformly bounded energy $E(u_n) = \int_{\R \times \bbS^1} |\partial_s u_n|_J^2 \, ds \, dt \,$. Suppose moreover, for some sequence $(R_n)_n \subseteq [0,+\infty]$ tending to infinity, that $u_n$ is a solution of \eqref{eq: Floer eq} on $(-R_n,R_n) \times \bbS^1 \,$. Then a subsequence of $(u_n)_n$ converges in $\Cinftyloc(\R \times \bbS^1,M)$ to a solution $u : \R \times \bbS^1 \rightarrow M$ of \eqref{eq: Floer eq} of energy $E(u) \leq \sup_n E(u_n) < \infty \,$.
\end{lemma}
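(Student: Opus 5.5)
The plan is to reduce to the standard compactness for Floer cylinders: first uniform gradient bounds on compact subsets, then Arzelà--Ascoli combined with elliptic bootstrapping, and finally a diagonal argument over an exhaustion of $\R \times \bbS^1$.

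\textit{Step 1: uniform gradient bounds on compacta.} Fix $K>0$ and discard the finitely many $n$ with $R_n \leq K+1$. We claim that $\sup_n \sup_{[-K,K]\times \bbS^1} \lVert du_n \rVert_J < \infty$. Suppose not. Then after passing to a subsequence there are points $z_n \in [-K,K]\times\bbS^1$ with $\lVert du_n(z_n)\rVert_J \to \infty$. Since $u_n$ solves \eqref{eq: Floer eq} on $(-K-1,K+1)\times\bbS^1$ and $X_{H_t}$ is uniformly bounded ($M$ is compact), the reparametrization trick used in the proof of Lemma \ref{lemma: uniformly bounded gradients and energy} \ref{it: uniformly bounded energy} applies. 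Hofer's lemma keeps the rescaled points inside $(-K-1,K+1)\times\bbS^1$, and the rescaled maps have uniformly bounded gradients and converge to a non-constant $J$-holomorphic plane $v$. Its energy satisfies $E(v) \leq \sup_n E(u_n)$, because the energy density $|\partial_s u|^2$ of a Floer solution controls $\lVert du\rVert^2$ up to a term involving $X_{H}$, and that term disappears under rescaling. Proposition \ref{prop: removal-of-singularities} then extends $v$ to a non-constant $J$-holomorphic sphere, which contradicts Remark \ref{rmk: symp aspherical -> no J-hol curves}.

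\textit{Step 2: local $\Cinfty$-convergence.} On $(-K-1,K+1)\times\bbS^1$ we again pass to graphs $\widetilde u_n(z)=(z,u_n(z))$, which are pseudoholomorphic for a fixed almost complex structure $\widetilde J$ on $(\R\times\bbS^1)\times M$; this structure is independent of $n$ because $H$ carries no parameter. Step 1 gives uniform gradient bounds for $\widetilde u_n$. By the compactness theorem for pseudoholomorphic curves with bounded gradient, applied on compact subdomains as in \cite[Thm.~4.1.1]{J_holomorphic_curves}, a subsequence converges in $\Cinfty([-K,K]\times\bbS^1)$. We then take $K=1,2,3,\dots$ and use a diagonal subsequence to obtain convergence in $\Cinftyloc(\R\times\bbS^1,M)$ to a smooth map $u$.

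\textit{Step 3: properties of the limit.} Fix any compact set. For large $n$ each $u_n$ solves \eqref{eq: Floer eq} there, and $\Cinfty$-convergence passes the equation to $u$, so $u$ solves \eqref{eq: Floer eq} on all of $\R\times\bbS^1$. For the energy, we have
\[
\int_{[-K,K]\times\bbS^1}|\partial_s u|_J^2 \, ds\,dt=\lim_n \int_{[-K,K]\times\bbS^1}|\partial_s u_n|_J^2 \, ds\,dt \leq \sup_n E(u_n)
\]
for every $K$. Letting $K\to\infty$ gives $E(u)\leq \sup_n E(u_n)$.

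The main obstacle is Step 1, specifically making sure that the bubbling argument only uses the Floer equation on a slightly larger compact region and that the energy bound survives rescaling. The remaining steps are routine.
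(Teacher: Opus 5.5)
Your proposal is correct and follows essentially the same route as the paper. Both arguments work on compact strips $[-K,K]\times\bbS^1$ and pass to a diagonal subsequence. Both obtain uniform gradient bounds from the energy bound by the localized bubbling argument, which uses the Floer equation only on a slightly larger strip, and both conclude with the graph trick and \cite[Thm.~4.1.1]{J_holomorphic_curves}. Your Step 3 just spells out that the equation and the bound $E(u)\leq\sup_n E(u_n)$ pass to the limit, which the paper leaves implicit; note that you need to apply Step 1 with $K+1$ in place of $K$, so that the gradient bound holds on the larger strip you use in Step 2.
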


\begin{proof}
    It suffices to show for every $R \geq 0$ that a subsequence converges in $\Cinfty([-R,R] \times \bbS^1)$. (The lemma then follows by extracting iteratively, for increasing $n \in \N \,$, subsequences converging on $[-n,n] \times \bbS^1$ and finally taking the diagonal subsequence.) Uniformly bounded energy again implies uniformly bounded gradients on $(-R-1,R+1) \times \bbS^1$ and the proof proceeds as above.
\end{proof}

\begin{corollary}
\label{cor: convergence to critical pts at infty}
Let $u : \R \times \bbS^1 \rightarrow M$ be a finite-energy solution of \eqref{eq: Floer eq} and suppose $u(0,\,\cdot\,)$ is a contractible loop. Then for every sequence $(s_n)_{n \in \N} \subseteq \R$ tending to $+ \infty \,$, there exists a subsequence $(s_{n_k})_{k \in \N}$ and $x_- , x_+ \in \Crit(\actionH)$ so that
\[
u(\pm s_{n_k} , \,\cdot\,) \overset{k \rightarrow \infty}{\longrightarrow} x_\pm \quad \text{ in } \Cinfty(\bbS^1,M) \,\, .
\]
\end{corollary}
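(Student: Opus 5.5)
We would prove the corollary by applying Lemma \ref{lemma: Floer compactness parameter-independent case} to the sequence of shifted cylinders $u_n := u(\,\cdot\, \pm s_n, \,\cdot\,)$, and then identifying the limits as constant-in-$s$ cylinders. We treat the $+$ case; the $-$ case is identical, and a common subsequence is obtained by extracting twice.

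First, set $v_n(s,t) := u(s+s_n,t)$. By Remark \ref{rmk: energy action functional} iii), each $v_n$ again solves \eqref{eq: Floer eq} on all of $\R\times\bbS^1$, so we may take $R_n=+\infty$. Its energy is $E(v_n)=E(u)<\infty$. Lemma \ref{lemma: Floer compactness parameter-independent case} then yields a subsequence $v_{n_k}$ converging in $\Cinftyloc(\R\times\bbS^1,M)$ to a finite-energy Floer cylinder $v$.

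Next, we show that $E(v)=0$. For every $R>0$, $C^\infty$-convergence on $[-R,R]\times\bbS^1$ gives
\[
\int_{[-R,R]\times\bbS^1}|\partial_s v|_J^2 = \lim_k \int_{[s_{n_k}-R,\,s_{n_k}+R]\times\bbS^1}|\partial_s u|_J^2 .
\]
The right-hand side is $0$, since it is the energy of $u$ on a window escaping to $+\infty$ and the total energy of $u$ is finite (tails of a convergent integral). Hence $E(v)=0$, and by Remark \ref{rmk: energy action functional} ii) $v$ is $s$-independent, $v(s,\cdot)=x_+$ with $\partial_t x_+=X_{H_t}(x_+)$.

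It remains to check contractibility of $x_+$, so that $x_+\in\Crit(\actionH)$. The loops $u(s,\cdot)$ are all freely homotopic to $u(0,\cdot)$, via the cylinder itself, hence contractible. Since $u(s_{n_k},\cdot)=v_{n_k}(0,\cdot)\to x_+$ in $C^\infty$, $x_+$ is $C^0$-close to a contractible loop for large $k$, and therefore homotopic to it (using geodesic interpolation in $M$). Thus $x_+$ is contractible, and the convergence $u(s_{n_k},\cdot)\to x_+$ in $\Cinfty(\bbS^1,M)$ is exactly the convergence of $v_{n_k}(0,\cdot)$.

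The only mildly delicate step is the vanishing of the limit energy. It relies on finite energy forcing the energy on translated windows to vanish, which is straightforward.
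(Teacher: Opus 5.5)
Your proof is correct, but you reach the key step, the vanishing of the energy of the limit cylinder, by a different route than the paper.

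The paper argues through the action functional. Since $s\mapsto\actionH(u(s,\cdot))$ is monotone along a Floer cylinder and $E(u)<\infty$, the limits $\lim_{s\to\pm\infty}\actionH(u(s,\cdot))$ exist. Continuity of $\actionH$ under $C^\infty$-convergence of contractible loops then shows that the action is constant along the limit cylinder $u_+$, which forces $\partial_s u_+\equiv 0$.

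You instead pass the energy density through the $\Cinftyloc$-limit. You then bound the energy on the translated windows $[s_{n_k}-R,s_{n_k}+R]\times\bbS^1$ by a tail of the convergent integral $E(u)$.

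Your argument is more elementary and self-contained. It never evaluates $\actionH$, so it needs neither filling disks (with asphericity for well-definedness) nor continuity of the action on the loop space. Contractibility enters only at the end, to place $x_\pm$ in $\Crit(\actionH)$, and your geodesic-interpolation justification there is more explicit than the paper's one-line remark.

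The paper's route gives a by-product: $\actionH(x_\pm)=\lim_{s\to\pm\infty}\actionH(u(s,\cdot))$ for every subsequential limit. This is not required for the statement itself.
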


\begin{proof}
    Because $u$ has finite energy and $s \mapsto \actionH(u(s,\,\cdot\,))$ is decreasing, the limits 
    \[
    \lim_{s \rightarrow \pm \infty} \actionH(u(s,\,\cdot\,)) \in (-\infty,\infty)
    \] 
    exist.
    The reparametrized sequence $u (s_n + \,\cdot\comma \cdot\,)$ of solutions of \eqref{eq: Floer eq} has energy $E(u) \,$, so by the above lemma a subsequence (which we suppress in the notation) converges in $\Cinftyloc (\R \times \bbS^1,M)$ to a solution $u_+ $ of \eqref{eq: Floer eq}. Notice that $u_+(s,\,\cdot\,)$ is a contractible loop for every $s \in \R$ because the $u(s_n +s , \,\cdot\,)$ are. From 
    \[
    \actionH(u_+(\widetilde{s}, \,\cdot\,)) = \lim_{n \rightarrow \infty} \actionH(u(s_n + \widetilde{s} , \,\cdot\,))= \lim_{s \rightarrow +\infty} \actionH(u(s,\,\cdot\,)) \qquad \Forall \widetilde{s} \in \R
    \]
    we infer that $\actionH(u_+)$ is constant, so that $u_+$ has zero energy and thus is $s$-independent and sitting at a critical point $x_+ = u_+ (0,\,\cdot\,)  \in \loopspace$ of $\actionH \,$. Observe that $u(s_n,\cdot\,) \overset{n \rightarrow \infty}{\longrightarrow} x_+$ in $\Cinfty(\bbS^1,M) \,$.
    Repeating the same argument with $s_n$ replaced by $-s_n \,$, we extract a further subsequence and find $x_- \in \Crit(\actionH)$ so that also $u(-s_n, \,\cdot\,) \overset{n \rightarrow \infty}{\longrightarrow}  x_- \,$.
\end{proof}

\subsection{The Perturbed Floer Equation}
\label{subsec: Perturbed Floer Equation}

Let $H : M \times \bbS^1 \rightarrow \R$ be an arbitrary smooth Hamiltonian whose time-$1$ flow $\Phi_{H}^1$ is not the identity. We fix once and for all an $\omega$-compatible almost complex structure $J$ on $M$.
We also fix a family of compactly supported bump functions $(\beta_R)_{R \geq 0} \subseteq \Cinfty(\R,[0,1]) $ such that
\begin{enumerate}[label=($\beta$.\arabic*)]
    \item\label{it: beta_r choice - smooth} the function $\R_{\geq 0} \times \R \rightarrow [0,1] \comma (R,s) \mapsto \beta_R(s) \,$, is smooth,
    \item\label{it: beta_r choice - beta_0} $\beta_0 \equiv 0 \,$,
    \item\label{it: beta_r choice - compact supp} $\beta_R (s) = 0$ for $s \notin (-R-1, R+1) \,$,
    \item\label{it: beta_r choice - one on [-R,R]} $\beta_R(s) = 1$ for $s \in [-R,R]$ and $R \geq 1 \,$,
    \item\label{it: beta_r choice - derivatives in- and decreasing} $\frac{d}{ds}\beta_R (s) \geq 0$ for $s \leq 0$ and $\frac{d}{ds}\beta_R (s) \leq 0$ for $s \geq 0 \,$.
\end{enumerate}
(See also Figure \ref{fig: beta_R bump functions}.) We note that $\lim_{R \rightarrow \infty} \beta_R \equiv 1$ in the pointwise sense.\\
We now interpolate between the $J$-holomorphic curve equation and \eqref{eq: Floer eq} and consider solutions to
\begin{equation}
\label{eq: Floer interpolated}
    \partial_s u(s,t) + J(u(s,t)) \, (\partial_t u (s,t) - \beta_R(s) \, X_{H_t}(u(s,t))) = 0 \,\, .
\end{equation}
For given $R$, solutions $u$ of \eqref{eq: Floer interpolated} are $J$-holomorphic curves outside $(-R-1,R+1) \times \bbS^1$ and, if $R \geq 1$, solutions of the Floer equation \eqref{eq: Floer eq} on $[-R,R] \times \bbS^1$. By removal-of-singularities, any finite-energy solution $u : \R \times \bbS^1 \rightarrow M$ of \eqref{eq: Floer interpolated} extends to a smooth map $u : \bbS^2 \rightarrow M \,$.

Observe that $\beta_R \, dt \otimes X_{H_t}(u)$ extends to a globally defined $u^*TM$-valued one-form on $\bbS^2$ because $\beta_R$ has compact support in $\R \,$.
Given a smooth map $u : \bbS^2 \rightarrow M \,$, its restriction to $\R \times \bbS^1$ is a solution of \eqref{eq: Floer interpolated} if and only if
\begin{equation}
\label{eq: Floer interpolated one-forms}
\overline{\partial}_J \, u - (\beta_R \, dt \otimes X_{H_t}(u))^{(0,1)} = ( du - \beta_R \, dt \otimes X_{H_t}(u))^{(0,1)} = 0 \quad \text{ on } \bbS^2 \,\, .
\end{equation}
Indeed, the complex antilinear one-form $( du - \beta_R \, dt \otimes X_{H_t}(u))^{(0,1)}$ is completely determined on the dense subset $\R \times \bbS^1 \subseteq \bbS^2$ by evaluating it with the vector field $\partial_s \,$.

\begin{figure}
    \centering
    \begin{tikzpicture}

\begin{axis}[width = 15cm , height= 7cm,
xmin=-20.2, xmax=20.2, ymin=-0.3, ymax=2,
xtick = {-18,-14,-10,-6,6,10,14,18} , xticklabels = {$-R_2-1$, $-R_2$ ,$-R_1-1$,$-R_1$ ,$R_1$,$R_1+1$, $R_2$, $R_2+1$}, xticklabel style = {font=\footnotesize},
 ytick = {1}, yticklabel style = {yshift=6pt},
 scale=1, restrict y to domain=-1.5:1.2,
 axis x line=center, axis y line= center,
 samples=200,
]

\node   at (axis cs:8,0.6) [anchor=west] {$\beta_{R_1}$};
\node   at (axis cs:16,0.6) [anchor=west] {$\beta_{R_2}$};
\node   at (axis cs:2,0.3) [anchor=west] {$\beta_{r}$};

 \addplot[black, samples=100, smooth, domain=-20.2:20.2, thick]
 {
 ifthenelse( x < 0 , 0.5*(1+tanh(1.5*(x + 8)))  ,  0.5*(1+tanh(1.5*(-x + 8))) )  
 };

\addplot[black, samples=100, smooth, domain=-20.2:20.2, thick]
 {
 ifthenelse( x < 0 , 0.5*(1+tanh(1.5*(x + 16)))  ,  0.5*(1+tanh(1.5*(-x + 16))) )  
 };

 \addplot[black, samples=100, smooth, domain=-20.2:20.2, thick]
 {
 ifthenelse( x < 0 , 0.2*(1+tanh((x + 2)))  ,  0.2*(1+tanh((-x + 2))) )  
 };
 
\end{axis}

\end{tikzpicture}
    \caption{Bump functions $\beta_R$ for parameters $0 < r < 1 < R_1 < R_1+1 < R_2 \,$.}
    \label{fig: beta_R bump functions}
\end{figure}

\subsection{Moduli Spaces}
\label{subsec: Moduli Spaces}

By assumption $\Phi_H^1 \not= \id_M \,$, hence we can fix once and for all a point 
\begin{equation}
\label{eq: q_0 choice}
q_0 \in M \backslash \mathrm{Fix}(\Phi_H^1) \,\, .
\end{equation}
For every $R \geq 0$ we define the moduli spaces
\begin{align*}
    \moduli_R &:= \Bigset{u \in \Cinfty(\bbS^2,M)}{ \begin{matrix}
        \overline{\partial}_J \, u -  (\beta_R \, dt \otimes X_{H_t}(u))^{(0,1)} = 0 \\
        u(0,0) = q_0 \text{ and } [u] = 0 \in \pi_2(M,q_0)
    \end{matrix}} \\
    \moduli_{[0,R]} &= \set{(r,u) \in [0,R] \times \Cinfty(\bbS^2,M)}{u \in \moduli_r} \,\, .
\end{align*}

\noindent The moduli space $\moduli_0$ is particularly easy to understand, as the next lemma explains.

\begin{lemma}
\label{lemma: moduli_0 = pt}
$\moduli_0$ consists of exactly one point, namely the constant sphere $u_0 : \bbS^2 \rightarrow M$ through $q_0 \,$.
\end{lemma}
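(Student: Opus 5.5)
Since $\beta_0 \equiv 0$ by \ref{it: beta_r choice - beta_0}, the defining equation of $\moduli_0$ reduces to $\overline{\partial}_J \, u = 0$ on $\bbS^2$. So $\moduli_0$ consists of the smooth $J$-holomorphic spheres $u : \bbS^2 \rightarrow M$ with $u(0,0) = q_0$ and $[u] = 0 \in \pi_2(M,q_0)$. The plan has two steps: show that every such sphere is constant, and check that the constant sphere at $q_0$ really lies in $\moduli_0$.

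For the first step, take $u \in \moduli_0$. By the energy identity \eqref{eq: energy J-holomorphic curves},
\[
\tfrac{1}{2}\int_{\bbS^2} \lVert du \rVert_J^2 \, \mathrm{dvol}_{\bbS^2} = \int_{\bbS^2} u^*\omega .
\]
The right-hand side vanishes because $M$ is symplectically aspherical; in fact $[u]=0$ already forces this. Hence $du \equiv 0$, and since $\bbS^2$ is connected, $u$ is constant. This is precisely Remark \ref{rmk: symp aspherical -> no J-hol curves}. The marked-point condition $u(0,0) = q_0$ then gives $u \equiv q_0$, that is, $u = u_0$.

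Here $(0,0)$ is read as a point of $\R \times \bbS^1 \subseteq \bbS^2$ under the identification of Subsection \ref{subsubsec: cylinder into sphere}, so it is a genuine point of the sphere. Conversely, the constant map $u_0 \equiv q_0$ is smooth and has $du_0 = 0$, so $\overline{\partial}_J u_0 = 0$. It also satisfies $u_0(0,0) = q_0$ and is null-homotopic, so $u_0 \in \moduli_0$ and $\moduli_0 = \{u_0\}$.

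No step presents a real obstacle. The only point to be careful about is that the energy identity applies on all of $\bbS^2$ and not just on the cylinder, which holds because elements of $\moduli_0$ are by definition smooth maps on $\bbS^2$. The argument also does not need the choice \eqref{eq: q_0 choice}; that choice only matters for larger $R$.
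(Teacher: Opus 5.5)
Your proof is correct and follows the paper's argument. Since $\beta_0 \equiv 0$, elements of $\moduli_0$ are null-homotopic $J$-holomorphic spheres through $q_0$, and these are constant by the energy identity (Remark \ref{rmk: symp aspherical -> no J-hol curves}). Two of your points are valid additions that the paper leaves implicit: the explicit check that $u_0 \in \moduli_0$, and the observation that $[u]=0$ alone already forces $\int_{\bbS^2} u^*\omega = 0$, so symplectic asphericity is not actually needed for this particular lemma.
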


\begin{proof}
    Because $\beta_0 $ vanishes identically, elements $u \in \moduli_0$ are contractible $J$-holomorphic spheres through $q_0 \,$. We already know (see Remark \ref{rmk: symp aspherical -> no J-hol curves}) that the only $J$-holomorphic spheres to $M$ are the constant ones.
\end{proof}

\begin{figure}
    \centering
    \begin{tikzpicture}[scale=0.3, x=7cm, y=7cm]
\tikzstyle{every node}=[font=\Huge]

\draw (-10,0) circle (2);
  \draw (-12,0) arc (180:360:2 and 0.4);
  \draw[dashed] (-8,0) arc (0:180:2 and 0.4);
  \fill[fill=black] (-10,2) circle (10pt) node[above, yshift=4pt]{$0$} ; 
  \fill[fill=black] (-10,-2) circle (10pt) node[below, yshift=-4pt]{$\infty$} ;
  \fill[fill=black] (-9.8,-0.39) circle (10pt) node[above, yshift=4pt, xshift=3pt]{$1$} ;

\draw[ultra thick, ->] (-7.2,0.2) arc (150:30:1.7 and 0.6);
\node at (-5.8,0.75) (u_map) {$u$};


 \draw[thick] (-5.5,-2.2) .. controls (-4.5,-1) and (-2,0.3) .. (-1,1.7);
 \draw[thick] (-3.55,-2.33) .. controls (-2,-0.5) and (0.5,0.3) .. (1,1.66) ;

 \draw[thick] (-5.5,-2.2) arc (180:345:1 and 0.3) ;
 \draw[dashed] (-5.5,-2.2) arc (155:3:1 and 0.3) ;
 \draw[thick] (-1,1.7) arc (180:350:1 and 0.3) ;
 \draw[dashed] (-1,1.7) arc (153:12:1.07 and 0.3) ;

 \draw[thick] (-5.5,-2.2) .. controls (-5.8,-2.6) ..  (-6,-3);
 \draw[thick] (-3.55,-2.33) .. controls (-3.9,-2.7) .. (-4.03,-3.1) ;
 \draw[thick] (-1,1.7) .. controls (-0.9,1.86) and (-0.7,2.05) .. (-0.6,2.4) ;
 \draw[thick] (1,1.66) .. controls (1.05, 1.8) and (1.1, 2.1) .. (1.15,2.3);
 \draw[thick] (-6,-3) arc (180:345:1 and 0.3) ;
 \draw[dashed] (-6,-3) arc (155:10:1.05 and 0.3) ;
 \draw[thick] (-0.6,2.4) arc (180:340:0.9 and 0.3) ;
 \draw[dashed] (-0.6,2.4) arc (155:10:0.9 and 0.3) ;
 
 \draw[thick, rotate around={-19:(-6,-3)}] (-6,-3) arc (180:380:0.98 and 1.6) ;
 \draw[thick, rotate around={-14:(-0.6,2.4)}] (-0.6,2.4) arc (180:18:0.88 and 1);

\fill[fill=black] (-2.3,-0.3) circle (10pt) node[above, yshift=5pt, xshift=30pt] (q_0) {$q_0 = u(1)$} ;
\fill[fill=black] (-5.45,-4.87) circle (10pt) node[below, yshift=-3pt, xshift=-5pt] (q_0) {$u(\infty)$} ;
\fill[fill=black] (0.29,3.18) circle (10pt) node[above, yshift=3pt, xshift=-5pt] (q_0) {$u(0)$} ;

\node at (-2.9,-3.2) {$s=R+1$} ;
\node at (-2.8,-2.4) {$s=R$} ;
\node at (-1.95,1.7) {$s=-R$} ;
\node at (-1.8,2.5) {$s=-R-1$} ;




\end{tikzpicture}
    \caption{Element $u : \bbS^2 = \C \mathrm{P}^1 \rightarrow M$ of the moduli space $\moduli_R$ for $R \geq 1 \,$. Via the embedding $\R \times \bbS^1 \hookrightarrow \C\mathrm{P}^1  \comma (s,t) \mapsto e^{2\pi(s+\i t)} \,$, the point $(s,t)=(0,0)$ corresponds to $1 \in \C\mathrm{P}^1 \,$. For $|s| \geq R+1$ the curve $u$ is $J$-holomorphic. The restriction $u|_{[-R,R] \times \bbS^1}$ is a Floer cylinder for Hamiltonian $H$.}
    \label{fig: moduli_R element}
\end{figure}

\subsection{Analytic Set-Up}
\label{subsec: Analytic Set-Up}

We fix once and for all an even integer $p > 2 \,$. It is well-known that 
\[
W^{1,p}(\bbS^2,M) = \set{u \in C^0(\bbS^2,M)}{u \in W^{1,p}_{\mathrm{loc}} \text{ in any local charts for } \bbS^2 \text{ and } M}
\]
is a second-countable Banach manifold without boundary. By Remark \ref{rmk: Banach mfds - topological properties} \ref{it: Banach mfds topological - second-countable}, consequently $W^{1,p}(\bbS^2,M)$ is also metrizable.

\begin{lemma}
\label{lem: W^1,p(S^2,M) admits smooth bump fcts}
$W^{1,p}(\bbS^2,M)$ admits smooth bump functions.
\end{lemma}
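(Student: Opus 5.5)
We reduce the statement to the model space via Remark \ref{rmk: Banach mfds - bump functions}. Since $W^{1,p}(\bbS^2,M)$ is metrizable, hence regular (Remark \ref{rmk: Banach mfds - topological properties} \ref{it: Banach mfds topological - regular}), part \ref{it: Banach mfds bump fcts - local property} of Remark \ref{rmk: Banach mfds - bump functions} says it suffices to show that the model Banach space admits smooth bump functions. That model is the space of $W^{1,p}$-sections of $u^*TM$ for a smooth map $u$. We realise these concretely by embedding $M \hookrightarrow \R^N$ and viewing the sections as a closed subspace of $W^{1,p}(\bbS^2,\R^N)$. Alternatively, after fixing a unitary trivialisation locally, we identify the model with a closed subspace of $W^{1,p}(\bbS^2,\R^{2m})$, up to an equivalent norm.

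Smooth bump functions restrict to closed subspaces: if $f$ is a smooth bump function on a Banach space, then its restriction to a closed subspace is again smooth, non-vanishing somewhere and of bounded support. So it is enough to treat $W^{1,p}(\bbS^2,\R^N)$. Moreover, an equivalent norm with a smooth $p$-th power suffices, since part \ref{it: Banach mfds bump fcts - Banach space} of Remark \ref{rmk: Banach mfds - bump functions} only needs some norm with smooth $p$-th power away from a bounded set.

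To get such a norm, we use the isometric embedding
\[
W^{1,p}(\bbS^2,\R^N) \hookrightarrow L^p(\bbS^2,\R^N) \times L^p(\bbS^2,\R^{N} \otimes T^*\bbS^2), \qquad \xi \mapsto (\xi, \nabla\xi),
\]
with the norm $\lVert\xi\rVert^p := \int |\xi|^p + \int |\nabla \xi|^p$, where we use a fixed round metric so that pointwise norms are Euclidean. Because $p$ is an \emph{even} integer, the map $\xi \mapsto \int |\xi|^p \,\mathrm{dvol}$ is a continuous homogeneous polynomial of degree $p$ on $L^p$. Indeed, $|\xi|^p = (\langle \xi,\xi\rangle)^{p/2}$ is polynomial pointwise, and the associated $p$-linear form $(\xi_1,\ldots,\xi_p) \mapsto \int \langle \xi_1,\xi_2\rangle \cdots \langle \xi_{p-1},\xi_p\rangle$ is bounded by Hölder's inequality. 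Continuous polynomials are $C^\infty$, so $\lVert\cdot\rVert^p$ is smooth on all of $W^{1,p}$. Then, for a smooth $\phi:\R\to[0,1]$ with $\phi(0)=1$ and $\phi=0$ on $[1,\infty)$, the function $\phi(\lVert\cdot\rVert^p)$ is the desired bump function.

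The main obstacle is bookkeeping: making rigorous that the model space of the Banach manifold, in a fixed chart around $u$, is toplinearly isomorphic to a closed subspace of $W^{1,p}(\bbS^2,\R^N)$. We would handle this via the embedding $M\subseteq\R^N$, using the tangent-space projection $P(u)$ onto $T_{u}M$. The image $\{\xi \in W^{1,p} \mid P(u)\xi=\xi\}$ is closed because multiplication by the $W^{1,p}\cap C^0$ matrix function $P(u)$ is bounded on $W^{1,p}$, using that $p>2$.
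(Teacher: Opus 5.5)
Your proof is correct. It shares the paper's two main ingredients: reducing, via metrizability/regularity and Remark \ref{rmk: Banach mfds - bump functions}, to the model space $W^{1,p}(\bbS^2,u^*TM)$ at a smooth $u$, and using that for even $p$ the $p$-th power of an $L^p$-type norm is smooth. The routes differ in how the good norm on the model is produced.

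The paper stays intrinsic. It covers $\bbS^2$ by finitely many charts $\varphi_j$ with trivialisations $\Phi_j$ of $E=u^*TM$ and writes down directly
$\lVert\eta\rVert^p=\sum_j \lVert \mathrm{pr}_2\circ\Phi_j\circ\eta\circ\varphi_j^{-1}\rVert^p_{W^{1,p}(\Omega_j,\R^{2m})}$.
This norm induces the given topology, and its $p$-th power is smooth because each summand is a smooth function composed with a bounded linear map. Smoothness of $\lVert\cdot\rVert_{L^p}^p$ is quoted from the literature.

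You instead embed $M\subseteq\R^N$ and identify the model with the closed subspace $\{\xi : P(u)\xi=\xi\}$ of $W^{1,p}(\bbS^2,\R^N)$. For smooth $u$ the multiplier $P\circ u$ is smooth, so you do not actually need $p>2$ at that step. You then prove smoothness by hand, exhibiting $\int\lvert\xi\rvert^p+\int\lvert d\xi\rvert^p$ as a continuous homogeneous polynomial via H\"older.

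Your route is more self-contained on the analytic side, at the cost of extra bookkeeping: the ambient embedding, plus the closed-subspace and open-mapping identification of norms. The paper's route needs no ambient space and no argument about subspaces.

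One small caveat concerns your throwaway ``alternative'' via a unitary trivialisation into $W^{1,p}(\bbS^2,\R^{2m})$. That identification requires $u^*TM$ to be globally trivial, which need not hold on every component of $W^{1,p}(\bbS^2,M)$. With only local trivialisations you land in $\prod_j W^{1,p}(\Omega_j,\R^{2m})$, which is precisely the paper's construction.
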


\begin{proof}
    Since $W^{1,p}(\bbS^2,M)$ is metrizable, it is in particular a regular topological space. It thus suffices to show that the local models of $W^{1,p}(\bbS^2,M)$ admit smooth bump functions respectively. Locally around $u \in \Cinfty(\bbS^1,M)$ the Banach manifold $W^{1,p}(\bbS^2,M)$ is modeled on the Banachable space $W^{1,p}(\bbS^2,E) \,$, with smooth vector bundle $E:=u^*TM$ over $\bbS^2 $ of rank $2m$.
    Covering $\bbS^2$ with finitely many charts $\varphi_j : U_j \overset{\sim}{\rightarrow} \Omega_j \subseteq \C \,$, which extend over the closure of $U_j\,$, and choosing smooth extendable trivializations $\Phi_j : E|_{U_j} \overset{\sim}{\rightarrow} U_j \times \R^{2m} \comma j=1,\ldots , \ell \,$, a norm inducing the given topology on $W^{1,p}(\bbS^2,E)$ is given by
    \[
    \left( \lVert \eta \rVert_{W^{1,p}(E)} \right)^p := \sum_{j=1}^\ell \left(\lVert \mathrm{pr}_2 \circ \Phi_j \circ \eta \circ \varphi_j^{-1} \rVert_{W^{1,p}(\Omega_j,\R^{2m})} \right)^p \qquad \Forall \eta \in W^{1,p}(\bbS^2,E) \,\, ,
    \]
    where $\mathrm{pr}_2$ denotes the projection to the second component.
    Since $p$ is an even integer, the $L^p$-norm $\lVert\cdot \rVert_{L^p}^p$ is smooth on $L^p(\Omega_j,\R^{2m}) $, see \cite[Thm.~1]{Lp_smooth_bump_fcts}, so that the $W^{1,p}$-norm $\lVert \cdot \rVert_{W^{1,p}}^p$ is smooth on $W^{1,p}(\Omega_j,\R^{2m})$. Since each map $\eta \mapsto \mathrm{pr}_2 \circ \Phi_j \circ \eta \circ \varphi_j^{-1}$ is bounded linear and thus smooth, the norm $\lVert \cdot \rVert^p_{W^{1,p}(E)} $ is smooth as well. We conclude with Remark \ref{rmk: Banach mfds - bump functions} \ref{it: Banach mfds bump fcts - Banach space} that $W^{1,p}(\bbS^2,E)$ admits smooth bump functions.
\end{proof}

\noindent Every smooth Banach submanifold of $W^{1,p}(\bbS^2,M)$ inherits second-countability, metrizability and existence of smooth bump functions. Now recall that we have fixed the point $q_0 \in M \backslash \mathrm{Fix}(\Phi_H^1)$ in \eqref{eq: q_0 choice} and consider the regular level set $\ev^{-1}(q_0)$ of the surjective submersion $\ev : W^{1,p}(\bbS^2,M) \rightarrow M \comma \ev(u) := u(0,0) \,$. The Banach manifold $\Bcal$ we will work with is the connected component of $\ev^{-1}(q_0)$ containing the constant sphere through $q_0 \,$, that is
\begin{equation}
\label{eq: Bcal def Schwarz thm}
\Bcal = \set{u \in W^{1,p}(\bbS^2,M)}{u(0,0) = q_0 \text{ and } [u] = 0 \in \pi_2(M,q_0)} \,\, .
\end{equation}
By the above considerations, $\Bcal$ is a second-countable, metrizable Banach manifold without boundary which admits smooth bump functions.
\begin{remark}
\label{rmk: tangent space at B}
Denote by $\pi_{TM} : TM \rightarrow M$ the base point projection. The tangent space of $W^{1,p}(\bbS^2,M)$ at $u$ is given by
\[
T_u W^{1,p}(\bbS^2,M) =\set{\xi \in W^{1,p}(\bbS^2,TM)}{\pi_{TM} \circ \xi = u} \,\, .
\]
The differential of the evaluation at $u$ is $d \ev (u) \, [\xi] = \xi(0,0)$ and it is easy to construct a continuous right-inverse of $d \ev (u) : T_u W^{1,p}(\bbS^2,M) \rightarrow T_{u(0,0)} M \,$, showing that $\ev$ indeed is a submersion. It follows that the tangent space of $\Bcal$ at $u \in \Bcal$ is
\begin{equation}
\label{eq: tangent space at B}
T_u \Bcal = \ker d\ev(u) =\set{\xi \in W^{1,p}(\bbS^2,TM)}{\pi_{TM} \circ \xi = u \text{ and } \xi(0,0) = 0 }
\end{equation}
and that it has a closed complement in $T_u W^{1,p}(\bbS^2,M) $ of dimension $\dim(M) = 2m \,$.
\end{remark}

\noindent Over $\Bcal$ we have the smooth Banach bundle $\Ecal \rightarrow \Bcal$ whose fiber over $u \in \Bcal$ is given by
\[
\Ecal_u := L^p (\bbS^2, \Lambda^{0,1}T^*\bbS^2 \otimes_{\C} u^*TM) \,\, .
\]
As before, we also abbreviate $\Lambda^{0,1} := \Lambda^{0,1}T^*\bbS^2 \,$. The fiber of the complex vector bundle $\Lambda^{0,1} \otimes_{\C} u^*TM$ over $z \in \bbS^2$ consists of the complex antilinear maps $(T_z \bbS^2 , j_{\bbS^2}) \rightarrow (T_{u(z)} M , J_{u(z)} ) \,$. The zero section of $\Ecal$ will be denoted by $\mathcal{O}_{\Ecal} \,$. We also define
\[
\BcalHat := \R_{\geq 0} \times \Bcal \quad \text{ and } \BcalHat_{[0,R]} := [0,R] \times \Bcal \quad \text{ for every } R \geq 0 
\]
and denote by $\EcalHat \rightarrow \BcalHat$ respectively $\EcalHat_{[0,R]} \rightarrow \BcalHat_{[0,R]}$ the pullback of $\Ecal$ along the obvious projections $\BcalHat \rightarrow \Bcal$ respectively $\BcalHat_{[0,R]} \rightarrow \Bcal \,$. We define the (smooth) section
\[
\Fsection : \BcalHat \rightarrow \EcalHat \comma \quad \Fsection(R,u) = \Fsection_R(u) := \overline{\partial}_J \, u - (\beta_R \, dt \otimes X_{H_t}(u))^{(0,1)} \,\, .
\]
For fixed $R \,$, the restriction $\Fsection_R : \Bcal \rightarrow \Ecal $ is a section of $\Ecal \,$.

\begin{lemma}
\label{lemma: zeroes of Fsection = moduli}
$\moduli_R$ is the zero set of $\Fsection_R \,$, that is ${\Fsection_R}^{-1}(\mathcal{O}_{\Ecal}) = \moduli_R \,$.
\end{lemma}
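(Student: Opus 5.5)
We need to prove that the zero set of $\Fsection_R$ on $\Bcal$ coincides with $\moduli_R$. The inclusion $\moduli_R \subseteq \Fsection_R^{-1}(\mathcal{O}_{\Ecal})$ is immediate. An element $u \in \moduli_R$ is smooth, hence lies in $W^{1,p}(\bbS^2,M)$. It satisfies $u(0,0)=q_0$ and $[u]=0 \in \pi_2(M,q_0)$, so $u \in \Bcal$ by \eqref{eq: Bcal def Schwarz thm}. By definition of $\moduli_R$ it also satisfies $\Fsection_R(u)=0$.

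The converse inclusion is where the work lies. Let $u \in \Bcal$ with $\Fsection_R(u) = 0$. A priori $u$ is only of class $W^{1,p}$, with $p>2$, so $u$ is continuous. We must show that $u$ is smooth; the pointwise conditions $u(0,0)=q_0$ and $[u]=0$ are then inherited from $u \in \Bcal$.

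For regularity I would use the graph trick from the proof of Lemma \ref{lemma: uniformly bounded gradients and energy}. Set $\nu(z,x) := \beta_R(s)\, dt \otimes X_{H_t}(x)$. As observed before \eqref{eq: Floer interpolated one-forms}, this is a smooth globally defined form on $\bbS^2 \times M$, because $\beta_R$ has compact support in $\R$. Define $\widetilde{u}(z) := (z,u(z))$. Then $\widetilde{u}$ is a $W^{1,p}$ map $\bbS^2 \to \bbS^2 \times M$, and the equation $\overline{\partial}_J u = \nu(z,u)^{(0,1)}$ is equivalent to $\widetilde{u}$ being $\widetilde{J}$-holomorphic for the smooth almost complex structure $\widetilde{J}$ defined in that proof. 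This equivalence is a pointwise algebraic identity, valid almost everywhere for $W^{1,p}$ maps.

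We then invoke elliptic regularity for $W^{1,p}$ pseudoholomorphic curves with $p>2$, namely \cite[Thm.~B.4.1]{J_holomorphic_curves}. It gives that $\widetilde{u}$ is smooth, and hence $u = \mathrm{pr}_2 \circ \widetilde{u}$ is smooth. If one prefers to avoid the graph construction, the same conclusion follows by working locally in charts and bootstrapping the inhomogeneous Cauchy--Riemann equation: $\overline{\partial}_J u \in W^{k,p}$ implies $u \in W^{k+1,p}$, by elliptic regularity for the linear Cauchy--Riemann operator with coefficients depending continuously on $u$. The main obstacle is exactly this regularity step; everything else is bookkeeping.

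With smoothness in hand, $u$ satisfies the equation defining $\moduli_R$ together with the two topological conditions, so $u \in \moduli_R$. The equivalence between the one-form equation on $\bbS^2$ and \eqref{eq: Floer interpolated} on the cylinder, already noted in the text, shows that nothing is lost at the two poles.
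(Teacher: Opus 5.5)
Your proposal is correct and matches the paper's proof: the inclusion $\moduli_R \subseteq \Fsection_R^{-1}(\mathcal{O}_{\Ecal})$ is trivial, and the converse is elliptic regularity for $W^{1,p}$ solutions with $p>2$, citing \cite[App.~B.4]{J_holomorphic_curves}. Your graph-trick reduction just spells out how the inhomogeneous equation is brought into the form treated there, a step the paper leaves implicit.
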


\begin{proof}
If $\Fsection_R(u) = 0 \,$, then \eqref{eq: Floer interpolated one-forms} holds almost everywhere. By elliptic regularity $u$ is smooth and \eqref{eq: Floer interpolated one-forms} holds in fact everywhere (see \cite[App.~B.4]{J_holomorphic_curves}). This shows the inclusion ${\Fsection_R}^{-1}(\mathcal{O}_{\Ecal}) \subseteq \moduli_R \,$. The converse inclusion is trivial.
\end{proof}

\noindent By the preceding lemma the zero set of $\Fsection$ restricted to $\BcalHat_{[0,R]}$ is precisely $\moduli_{[0,R]} \,$.

\subsection{Compactness of Moduli Spaces}
\label{subsec: Compactness Moduli Spaces}

Our next concern is the crucial compactness property of the moduli spaces $\moduli_{[0,R]} \,$. We first recall that the \textit{Hofer norm} $\lVert H \rVert_{\mathrm{Hof}}$ of $H$ is defined as
\[
\lVert H \rVert_{\mathrm{Hof}} := \int_0^1  (\max_M H_t - \min_M H_t ) \, dt \,\in \R_{\geq 0 } \,\, .
\]

\begin{lemma}
\label{lemma: Energy bounded by Hofer norm}
For every $R \geq 0$ and every $u \in \moduli_R$ it holds $E(u) \leq \lVert H \rVert_{\mathrm{Hof}} < \infty \,$.
\end{lemma}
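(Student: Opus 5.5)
The plan is the standard energy identity for the interpolated Floer equation \eqref{eq: Floer interpolated}. Fix $R \geq 0$ and $u \in \moduli_R$; by definition $u$ is smooth on $\bbS^2$. On $\R \times \bbS^1$, equation \eqref{eq: Floer interpolated} gives $\partial_s u = -J(u)(\partial_t u - \beta_R X_{H_t}(u))$. Using compatibility of $J$ with $\omega$ and $\omega(\cdot, X_{H_t}) = dH_t$, I will compute
\[
|\partial_s u|_J^2 = \omega(\partial_s u, J\partial_s u) = \omega(\partial_s u, \partial_t u) - \beta_R(s)\, \omega(\partial_s u, X_{H_t}(u)) = \omega(\partial_s u,\partial_t u) - \beta_R(s)\, \partial_s\big(H_t(u)\big) .
\]
I should double-check the sign convention here, since the paper uses both $\omega(X_{H_t},\cdot) = -dH_t$ and $\omega(\cdot, X_{H_t}) = dH_t$, which agree.

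Integrating over $[-S,S]\times\bbS^1$ and letting $S\to\infty$ gives two terms. Since $u$ extends smoothly over the two punctures, $\int_{\R\times\bbS^1}\omega(\partial_s u,\partial_t u)\,ds\,dt = \int_{\bbS^2} u^*\omega$. This vanishes by symplectic asphericity, because $[u]=0$ anyway. Up to an orientation sign the value is zero, so the sign does not matter.

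For the Hamiltonian term I integrate by parts in $s$:
\[
-\int_{\R\times\bbS^1} \beta_R(s)\,\partial_s\big(H_t(u)\big)\,ds\,dt = \int_0^1\!\int_\R \beta_R'(s)\, H_t(u(s,t))\,ds\,dt .
\]
The boundary terms vanish because $\beta_R$ has compact support. Now I split the $s$-integral into $s\le 0$, where $\beta_R'\ge 0$, and $s\ge 0$, where $\beta_R'\le 0$, using property \ref{it: beta_r choice - derivatives in- and decreasing}. On $s\le0$ the integrand is at most $\beta_R'(s)\max_M H_t$. On $s\ge 0$ it is at most $\beta_R'(s)\min_M H_t$, since $\beta_R' \le 0$ there. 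With $\int_{-\infty}^0\beta_R' = \beta_R(0)$ and $\int_0^\infty \beta_R' = -\beta_R(0)$, this gives
\[
E(u) \le \beta_R(0)\int_0^1(\max_M H_t - \min_M H_t)\,dt \le \lVert H\rVert_{\mathrm{Hof}},
\]
because $\beta_R(0)\in[0,1]$.

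The only delicate point is justifying the global identity $\int_{\R\times\bbS^1} u^*\omega = \int_{\bbS^2}u^*\omega = 0$, together with convergence of the integrals. Both follow from smoothness of $u$ on $\bbS^2$. The complement of $\R\times\bbS^1$ in $\bbS^2$ has measure zero, and $u^*\omega$ is a smooth, hence integrable, $2$-form on $\bbS^2$. Finiteness of $\lVert H\rVert_{\mathrm{Hof}}$ is immediate from compactness of $M\times\bbS^1$.
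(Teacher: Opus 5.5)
Your proof is correct and is essentially the paper's argument. The paper writes your pointwise identity as $E(u) = -\int \frac{d}{ds}\action{R,s}(u_s)\,ds + \int \frac{\partial \action{R,s}}{\partial s}(u_s)\,ds$ using the $(R,s)$-dependent action functional, and kills the first term because $u_{\pm\infty}$ are constant loops and $\beta_R(\pm\infty)=0$; you kill the symplectic-area term directly via $\int_{\bbS^2}u^*\omega=0$, which is the same asphericity input without the limit, while the integration by parts against $\beta_R'$ and the sign splitting at $s=0$ are identical.
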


\begin{proof}
The proof follows the lines of \cite[Lemma 4.1]{Quantum-Cup-length} and \cite[Lemma 3.1]{Albers_Momin}. In the following, for $u : \bbS^2 \rightarrow M$ and $s \in \R$ we abbreviate $u_s := u(s,\,\cdot\,) \in \loopspace \,$.
Consider the $(R,s)$-dependent action functional
\[
\action{R,s} : \loopspace \rightarrow \R \comma \quad \action{R,s}(x) := - \int_{\mathbbm{D}^2} \overline{x}^* \omega + \beta_R(s) \int_0^1  H_t(x(t)) \, dt \,\, ,
\]
where $\overline{x} : \mathbbm{D}^2 \rightarrow M$ is filling disk for the contractible loop $x \,$.
Its gradient for the $L^2$-metric, defined in \eqref{eq: L^2 metric}, is
\[
\nabla \action{R,s} (x) = J(x) (\partial_t x - \beta_R(s) \, X_{H_t}(x)) \,\, ,
\]
so that solutions $(R, u : \bbS^2 \rightarrow M)$ of \eqref{eq: Floer interpolated} (or equivalently of \eqref{eq: Floer interpolated one-forms}) satisfy
\[
\frac{d}{ds} u_s + \nabla \action{R,s}(u_s) = 0 \qquad \Forall s \in \R \,\, .
\]
and hence
\begin{align*}
E(u) &= \int_{-\infty}^{+\infty} \Big\lVert \frac{du_s}{ds} \Big\rVert_{L^2}^2 \, ds = - \int_{-\infty}^{+\infty} d \action{R,s}(u_s) \, \Big[ \frac{du_s}{ds} \Big] \, ds \\
&= -\int_{-\infty}^{+\infty} \frac{d}{ds} \action{R,s}(u_s) \, ds + \int_{-\infty}^{+\infty} \frac{\partial \action{R,s}}{\partial s}(s, u_s) \, ds \\
&=\action{R,-\infty}(u_{-\infty})- \action{R,+\infty}(u_{+\infty}) + \int_{-\infty}^{+\infty} \beta_R'(s) \int_0^1 H_t(u_s(t)) \, dt \, ds \\
&= \int_{-\infty}^{+\infty} \beta_R'(s) \int_0^1 H_t(u(s,t)) \, dt \, ds \,\, ,
\end{align*}
where for the last equation we used that $\beta_R(\pm \infty) = 0 $ and that $u_{\pm \infty}$ are constant loops. Using Properties \ref{it: beta_r choice - compact supp} and \ref{it: beta_r choice - derivatives in- and decreasing} of the family $(\beta_r)_{r \geq 0}$ of bump functions, this can be further simplified to
\begin{align*}
    E(u) &= \int_{-R-1}^{0} \underbrace{\beta_R'(s)}_{\geq 0} \int_0^1 H_t(u(s,t)) \, dt \, ds + \int_{0}^{R+1} \underbrace{\beta_R'(s)}_{\leq 0} \int_0^1 H_t(u(s,t)) \, dt \, ds \\
    &\leq \int_{-R-1}^{0}\beta_R'(s) \int_0^1 \max_M H_t \, dt \, ds + \int_{0}^{R+1} \beta_R'(s)\int_0^1 \min_M H_t \, dt \, ds \\
    &= \underbrace{\beta_R(0)}_{\in [0,1]} \left( \int_0^1 \max_M H_t \, dt - \int_0^1 \min_M H_t \, dt\right) \leq \lVert H \rVert_{\mathrm{Hof}}  \,\, .
\end{align*}
The penultimate step uses the fundamental theorem of calculus and $\beta_R(-R-1) = 0 = \beta_R(R+1) \,$. This shows that solutions $(R, u: \bbS^2 \rightarrow M)$ of \eqref{eq: Floer interpolated one-forms} satisfy the energy bound $E(u) \leq \lVert H \rVert_{\mathrm{Hof}}$ and in particular this holds whenever $u \in \moduli_R \,$.
\end{proof}

\begin{proposition}
\label{prop: moduli space compact}
For every $R \geq 0$ the moduli space $\moduli_{[0,R]}$ is a compact subset of $\BcalHat_{[0,R]} = [0,R] \times \Bcal \,$.
\end{proposition}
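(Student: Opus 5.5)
The plan is to show sequential compactness; by Remark \ref{rmk: Banach mfds - topological properties}, $\BcalHat_{[0,R]}$ is metrizable, so this suffices. Take a sequence $(r_n,u_n)_n \subseteq \moduli_{[0,R]}$. Each $u_n$ lies in $\moduli_{r_n}$, so by Lemma \ref{lemma: Energy bounded by Hofer norm} we have the uniform bound $E(u_n) \leq \lVert H \rVert_{\mathrm{Hof}}$.

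Next we place these maps in the parametric framework of Subsection \ref{subsubsec: Gromov Compactness}. Take the compact parameter interval $I := [0,R]$ and the Hamiltonian $H^{\lambda}_{s,t} := \beta_\lambda(s)\, H_t$. This family depends smoothly on $(\lambda,s,t)$ by \ref{it: beta_r choice - smooth}. By \ref{it: beta_r choice - compact supp} its vector field $X_{H^\lambda_{s,t}} = \beta_\lambda(s) X_{H_t}$ vanishes for $|s| \geq s_0 := R+1$, uniformly in $\lambda \in I$. Then $(r_n,u_n)$ is a solution of \eqref{eq: Floer eq parameter} in the sense defined there, and it has uniformly bounded energy. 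Corollary \ref{cor: uniformly bounded energy -> compactness} therefore gives a subsequence converging in $I \times \Cinfty(\bbS^2,M)$ to a solution $(r,u)$ with $u|_{\R\times\bbS^1}$ solving \eqref{eq: Floer interpolated} for the parameter $r$. Equivalently, \eqref{eq: Floer interpolated one-forms} holds on $\bbS^2$, as explained there, since the limit is a smooth map on the whole sphere.

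It remains to check that $(r,u) \in \moduli_{[0,R]}$ and that the convergence takes place in the topology of $\BcalHat_{[0,R]}$. The conditions are closed under the limit:
\begin{itemize}
\item $u(0,0) = \lim u_n(0,0) = q_0$ by pointwise convergence;
\item $[u] = 0 \in \pi_2(M,q_0)$, because $C^0$-close maps are homotopic, so the homotopy class is locally constant along a convergent sequence.
\end{itemize}
Finally, $\Cinfty$-convergence on the compact surface $\bbS^2$ implies $W^{1,p}$-convergence. Hence $u_{n_k} \to u$ in $W^{1,p}(\bbS^2,M)$, and so in $\Bcal$, while $r_{n_k}\to r\in[0,R]$.

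The main point requiring care is the applicability of Corollary \ref{cor: uniformly bounded energy -> compactness}, in particular Lemma \ref{lemma: uniformly bounded gradients and energy} \ref{it: uniformly bounded energy}. Ruling out bubbling there uses symplectic asphericity; this is already established, so here one only has to verify that the family $\beta_\lambda H_t$ meets the standing hypotheses, and that has been done above. A minor technical point is that the energy used in Lemma \ref{lemma: Energy bounded by Hofer norm} and in Corollary \ref{cor: uniformly bounded energy -> compactness} is the same quantity from Definition \ref{def: energy}, which it is.
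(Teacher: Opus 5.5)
Your proposal is correct and follows essentially the same route as the paper: the energy bound from Lemma~\ref{lemma: Energy bounded by Hofer norm}, then Corollary~\ref{cor: uniformly bounded energy -> compactness} applied to the family $\beta_r(s)H_t$ (which vanishes for $|s|\geq R+1$), then metrizability of $\BcalHat_{[0,R]}$ to pass from sequential compactness to compactness. Your explicit checks that $u(0,0)=q_0$ and $[u]=0$ survive the limit, and that $\Cinfty$-convergence implies $W^{1,p}$-convergence, are the details the paper leaves implicit.
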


\begin{proof}
    Elements $(r,u) \in \moduli_{[0,R]}$ of the moduli space are solutions of the parameter-dependent Floer equation \eqref{eq: Floer interpolated} with Hamiltonian $(r,s,t,x) \mapsto \beta_r(s) H_t(x) \,$, which vanishes for $|s| \geq R+1 \,$. Now Corollary \ref{cor: uniformly bounded energy -> compactness}, which can be applied due to the uniform energy bound in the preceding Lemma \ref{lemma: Energy bounded by Hofer norm}, implies sequential compactness of $\moduli_{[0,R]}$ in the subspace topology of $\BcalHat_{[0,R]}$ (and even in $[0,R] \times \Cinfty(\bbS^2,M)$). Because $\BcalHat_{[0,R]}$ is metrizable, this shows the asserted compactness of $\moduli_{[0,R]} \,$.
\end{proof}

\noindent Let us emphasize again that Corollary \ref{cor: uniformly bounded energy -> compactness}, and therefore the just proven compactness of $\moduli_{[0,R]} \,$, crucially rely on $(M,\omega)$ being symplectically aspherical.

\subsection{Analyzing the Vertical Differential}
\label{subsec: Vertical Differential}

Recall that $\Dvert{u} \Fsection_R : T_u \Bcal \rightarrow \Ecal_u$ denotes the vertical differential of $\Fsection_R$ at $u \in {\Fsection_R}^{-1}(\mathcal{O}_{\Ecal}) = \moduli_R \,$.

\subsubsection{Fredholm property}
\label{subsubsec: D^vert Fredholm}
 We aim to show the following proposition.
\begin{proposition}
\label{prop: D^vert Fsection_R Fredholm + index}
For every $R \geq 0$ and every $u \in \moduli_R \,$, the vertical differential
$\Dvert{u} \Fsection_R$ is a Fredholm operator of index zero.
\end{proposition}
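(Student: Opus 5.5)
We prove the claim by identifying $\Dvert{u}\Fsection_R$ with a real Cauchy--Riemann operator on the trivial bundle pair over $\bbS^2$, restricted to a subspace of codimension $2m$, and then applying Riemann--Roch.

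First we compute the vertical differential. Fix $u \in \moduli_R$; by Lemma \ref{lemma: zeroes of Fsection = moduli} it is smooth. Choose a connection on $TM$, e.g.\ the Levi-Civita connection of $g_J$, and the corresponding exponential-map chart $\xi \mapsto \exp_u(\xi)$ around $u$. Trivialize $\Ecal$ near $u$ by parallel transport. Then, by Remark \ref{rmk: vertical differential}, $\Dvert{u}\Fsection_R$ is the restriction to $T_u\Bcal$ of the linearized operator
\[
D_u : W^{1,p}(\bbS^2,u^*TM) \rightarrow L^p(\bbS^2,\Lambda^{0,1}\otimes_{\C} u^*TM) \comma \quad D_u\xi = (\nabla \xi)^{(0,1)} + A_u \xi \,\, .
\]
Here $A_u$ is a zeroth-order term, a smooth bundle endomorphism built from $\nabla J$ and from $\nabla(\beta_R \, dt\otimes X_{H_t})$. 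The formula is standard, cf.\ \cite[Prop.~3.1.1]{J_holomorphic_curves}. Thus $D_u$ is a real linear Cauchy--Riemann operator on the complex vector bundle $(u^*TM,u^*J)$ over $\bbS^2$, and it has smooth coefficients.

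Next we apply Riemann--Roch. For real Cauchy--Riemann operators on closed Riemann surfaces with $p>2$, $D_u$ is Fredholm, and
\[
\mathrm{ind}(D_u) = 2m\,(1-g) + 2\, c_1(u^*TM)[\bbS^2] = 2m + 2\,c_1(u^*TM)[\bbS^2]
\]
(\cite[Thm.~C.1.10]{J_holomorphic_curves}). Since $[u]=0\in\pi_2(M,q_0)$, the map $u$ is homotopic to a constant. Hence $u^*TM$ is trivial as a complex bundle and the Chern number vanishes, so $\mathrm{ind}(D_u)=2m$. The same conclusion follows from the homotopy invariance of the index along a path of Cauchy--Riemann operators.

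Finally we pass to the subspace. By Remark \ref{rmk: tangent space at B}, $T_u\Bcal=\ker d\ev(u)$ is a closed subspace of $T_uW^{1,p}(\bbS^2,M)$ with a $2m$-dimensional complement. Restricting a Fredholm operator to a closed subspace of finite codimension $k$ gives a Fredholm operator whose index drops by $k$; this is Lemma \ref{app lemma: Fredholm finite dim summands}, applied to the splitting $T_uW^{1,p}\cong T_u\Bcal\oplus\R^{2m}$. Therefore $\Dvert{u}\Fsection_R = D_u|_{T_u\Bcal}$ is Fredholm of index $2m-2m=0$.

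The main obstacle is the first step. One must make sure the vertical differential in the chosen trivialization is exactly a Cauchy--Riemann operator of the stated form. It is also worth noting that $u$ has no boundary or punctures: since $\beta_R$ has compact support, the inhomogeneous term extends smoothly over the poles $0,\infty$. The cylinder-end Fredholm theory, with its asymptotic operators, is therefore not needed, and only compact-surface Riemann--Roch enters.
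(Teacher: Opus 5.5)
Your proof is correct and follows the paper's route: extend to the operator on $T_uW^{1,p}(\bbS^2,M)$, obtain index $2m$ from Riemann--Roch using $c_1(u^*TM)=0$ for null-homotopic $u$, and lose $2m$ when restricting to $T_u\Bcal$ via Lemma~\ref{app lemma: Fredholm finite dim summands}. The only difference is that the paper treats the Hamiltonian term as a compact perturbation of $D_u\overline{\partial}_J$, because it factors through $W^{1,p}\hookrightarrow L^p$, whereas you absorb it into the zeroth-order part of a real Cauchy--Riemann operator. One minor slip: if the parallel transport is taken for the Levi-Civita connection, it does not commute with $J$, so it does not trivialize $\Ecal$ (whose fibers are complex antilinear forms); this is why the paper uses the Hermitian connection $\widetilde\nabla$, and since the vertical differential at a zero is independent of the trivialization, your formula is unaffected.
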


\begin{corollary}
\label{cor: D^vert Fsection Fredholm + index}
For every $(R,u) \in \BcalHat = \R_{\geq 0} \times \Bcal$ with $\Fsection(R,u) = 0\,$, the vertical differential $\Dvert{(R,u)} \Fsection : T_{(R,u)} \BcalHat \rightarrow \EcalHat_{(R,u)}$ is Fredholm of index $1$.
\end{corollary}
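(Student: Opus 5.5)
Since the preceding Proposition \ref{prop: D^vert Fsection_R Fredholm + index} already settles the fiberwise Fredholm property, the corollary reduces to linear algebra plus Lemma \ref{app lemma: Fredholm finite dim summands} from the appendix. Fix a zero $(R,u)$ of $\Fsection$. Then $u \in \moduli_R$ by Lemma \ref{lemma: zeroes of Fsection = moduli}. The tangent space splits as $T_{(R,u)} \BcalHat = \R \oplus T_u \Bcal$, where at $R=0$ the factor $\R$ is the tangent space of the half-line $\R_{\geq 0}$. The fiber is $\EcalHat_{(R,u)} = \Ecal_u$, since $\EcalHat$ is the pullback along $\BcalHat \rightarrow \Bcal$.

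In a trivialization of $\Ecal$ near $u$, pulled back to $\BcalHat$, I would compute the differential of the principal part of $\Fsection$ at $(R,u)$ in direction $(\rho,\xi)$. It is
\[
\Dvert{(R,u)} \Fsection \, [\rho,\xi] = \Dvert{u} \Fsection_R \, [\xi] + \rho \, \partial_R \Fsection(R,u) \,\, ,
\]
where $\partial_R \Fsection(R,u) = -\big(\partial_R \beta_R \, dt \otimes X_{H_t}(u)\big)^{(0,1)} \in \Ecal_u$ by Remark \ref{rmk: vertical differential}. By property \ref{it: beta_r choice - smooth} and the fact that $\beta_R$ has compact support for each $R$, this derivative is a smooth compactly supported one-form on $\bbS^2$, hence lies in $L^p$. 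The restriction of $\Dvert{(R,u)}\Fsection$ to the summand $T_u\Bcal$ is exactly $\Dvert{u}\Fsection_R$, by Remark \ref{rmk: restricting sections to submfds} applied to the slice $\{R\} \times \Bcal$.

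So $\Dvert{(R,u)} \Fsection$ is the Fredholm operator $\Dvert{u}\Fsection_R$ of index $0$ (Proposition \ref{prop: D^vert Fsection_R Fredholm + index}), extended by a one-dimensional summand $\R$ on the domain side. Lemma \ref{app lemma: Fredholm finite dim summands} then gives that the extension is Fredholm with index $0 + 1 = 1$.

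The only mildly delicate point is smoothness of $\Fsection$ in the $R$-variable and the identification of the vertical differential in the $\R$-direction, including at the boundary $R=0$. I would handle this exactly as in Remark \ref{rmk: vertical differential}, extending smoothly to $R<0$ locally if needed.
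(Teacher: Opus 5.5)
Your proposal is correct and is exactly the paper's argument, which simply combines Proposition~\ref{prop: D^vert Fsection_R Fredholm + index} with Lemma~\ref{app lemma: Fredholm finite dim summands}. You have written out the splitting $T_{(R,u)}\BcalHat \cong \R \oplus T_u\Bcal$ and applied that lemma with $V=\R$, $W=0$ to get index $0+1=1$. The explicit form of the $\R$-column is not actually needed, since the lemma only requires the extended operator to be bounded.
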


\begin{proof}
    Combine Proposition \ref{prop: D^vert Fsection_R Fredholm + index} with Lemma \ref{app lemma: Fredholm finite dim summands} in the appendix.
\end{proof}

\noindent To prove Proposition \ref{prop: D^vert Fsection_R Fredholm + index}, we first need to introduce some notation, which will also play a role in Subsection \ref{subsubsec: Transversality to Zero Section} below.

The section $\Fsection_R : \Bcal \rightarrow \Ecal$ admits a natural smooth extension to a section of
\[
\bigcup_u L^p (\bbS^2, \Lambda^{0,1} \otimes_{\C} u^*TM) \rightarrow W^{1,p}(\bbS^2,M)
\]
by the same formula $u \mapsto \overline{\partial}_J \, u - (\beta_R \, dt \otimes X_{H_t}(u))^{(0,1)} \,$. Abusing notation, we will also write $\Fsection_R$ for this extension and $\Ecal_u$ for the fibers of the extended vector bundle over $W^{1,p}(\bbS^2,M) \,$. 

In the following, we adopt the notation of \cite[Ch.~3.1]{J_holomorphic_curves}. Let $\nabla$ denote the Levi-Civita connection for $g_J = \omega(\,\cdot \, , J\, \cdot\,)$ and consider the connection
\[
\widetilde{\nabla}_X Y := \nabla_X Y - \tfrac{1}{2} J (\nabla_X J) Y \,\, ,
\]
which preserves $g_J$ and $J$ (but may have torsion). 
The principal part of $\Fsection_R$ in the trivialization induced by $\widetilde{\nabla}$ centered at $u \in \Cinfty(\bbS^2,M)$ then is
\begin{equation}
\label{eq: vertical part Fsection_R extended}
   T_u W^{1,p}(\bbS^2,M) = W^{1,p}(\bbS^2,u^*TM) \rightarrow \Ecal_u \comma \quad \xi \mapsto \Phi_u(\xi)^{-1} \, \Fsection_R (\exp_u (\xi)) \,\, . 
\end{equation}
Here the exponential map is with respect to $g_J$ and 
\begin{equation}
\label{eq: parallel transport along nabla^tilde}
\Phi_u (\xi) :  u^*TM \overset{\sim}{\rightarrow} \exp_u(\xi)^{*}TM
\end{equation}
is the complex bundle isomorphism given between the fibers over $z \in \bbS^2$ by $\widetilde{\nabla}$-parallel transport along the geodesic $\tau \mapsto \exp_{u(z)}(\tau \,\xi(z)) \,$. We denote by
\[
D_u \Fsection_R : T_u W^{1,p}(\bbS^2,M) \rightarrow \Ecal_u
\]
the differential at the origin of the map given in \eqref{eq: vertical part Fsection_R extended} and point out that
\[
D_u \Fsection_R = \Dvert{u} \Fsection_R  \qquad \text{if } \Fsection_R(u)= 0 \,\, .
\]

\begin{proof}[Proof of Proposition \ref{prop: D^vert Fsection_R Fredholm + index}]
Let $u \in \moduli_R$ be arbitrary.
By Lemma \ref{app lemma: Fredholm finite dim summands} in the appendix, keeping in mind that $T_u \Bcal$ is a complemented subspace in $T_u W^{1,p}(\bbS^2,M)$ of codimension $2m$ (see Remark \ref{rmk: tangent space at B}),
in order to prove the proposition it suffices to show that the vertical differential
\[
D_u \Fsection_R = \Dvert{u} \Fsection_R  : \, T_u W^{1,p}(\bbS^2,M) \rightarrow \Ecal_u
\]
of the extended section is Fredholm of index $2m \,$.
Now recall that $[u] = 0 \in \pi_2(M,q_0)$ since $u \in \moduli_R \,$. The proposition thus follows from the next Lemma \ref{lemma: D_u Fredholm + index}.
\end{proof}

\begin{lemma}
\label{lemma: D_u Fredholm + index}

For every $u \in \Cinfty(\bbS^2,M)$ which is null-homotopic, the operator $D_u \Fsection_R : T_u W^{1,p}(\bbS^2,M) \rightarrow \Ecal_u$ is Fredholm of index $2m \,$.
\end{lemma}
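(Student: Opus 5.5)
The plan is to identify $D_u \Fsection_R$ as a real Cauchy--Riemann operator on the complex vector bundle $(u^*TM, u^*J)$ over $\bbS^2$ and then invoke the Riemann--Roch theorem. Following \cite[Prop.~3.1.1]{J_holomorphic_curves}, I would differentiate the map \eqref{eq: vertical part Fsection_R extended} at $\xi = 0$, using the connection $\widetilde{\nabla}$. The expected formula is
\[
D_u \Fsection_R \, \xi = \big(\widetilde{\nabla} \xi\big)^{(0,1)} + A_u(\xi) \,\, ,
\]
where $A_u$ is a zeroth-order term. It collects the torsion/$\nabla J$ contribution $-\tfrac12 J(\nabla_\xi J)\partial_J u$, as in McDuff--Salamon, together with the linearization of the inhomogeneous term, namely $-\big(\beta_R \, dt \otimes \widetilde{\nabla}_\xi X_{H_t}(u)\big)^{(0,1)}$ up to similar $\nabla J$ corrections. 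The key point is that all these terms are given by pointwise multiplication with smooth (hence bounded) endomorphism-valued one-forms, because $u$ is smooth and $\beta_R$ is compactly supported in $\R$.

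The first part, $\xi \mapsto (\widetilde{\nabla}\xi)^{(0,1)}$, is a real linear Cauchy--Riemann operator on $u^*TM$, since $\widetilde{\nabla}$ preserves $J$. The additional zeroth-order term is an operator $W^{1,p} \to L^p$ that factors through the compact inclusion $W^{1,p} \hookrightarrow C^0$. It is therefore a compact perturbation, which leaves both the Fredholm property and the index unchanged. So the whole problem reduces to $D := (\widetilde{\nabla}\,\cdot\,)^{(0,1)}$ together with its bounded zeroth-order part, which is exactly an operator of the type covered by the Riemann--Roch theorem \cite[Thm.~C.1.10]{J_holomorphic_curves}. That theorem, for $p>2$, gives
\[
\mathrm{ind}(D) = n\,\chi(\bbS^2) + 2 c_1(u^*TM) \,\, ,
\]
with complex rank $n = m$ and $\chi(\bbS^2) = 2$.

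Finally, since $u$ is null-homotopic, the bundle $u^*TM$ is trivial as a complex bundle. Hence $c_1(u^*TM) = \langle c_1(TM), u_*[\bbS^2]\rangle = 0$, and the index equals $2m$, as claimed.

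I expect the main obstacle to be the computation of the linearization: checking carefully that the differentiation of the parallel-transport trivialization and of the $X_{H_t}$ term produces only a zeroth-order term with bounded coefficients, and not some hidden first-order contribution. My approach would be to compare with the computation in \cite[Prop.~3.1.1]{J_holomorphic_curves}, via the graph trick of Lemma \ref{lemma: uniformly bounded gradients and energy}. Viewing $\widetilde{u}(z) = (z,u(z))$ as $\widetilde{J}$-holomorphic reduces the claim to the homogeneous case, where the structure $D = $ Cauchy--Riemann plus zeroth order is standard. The subsequent Fredholm and index statements are then direct citations.
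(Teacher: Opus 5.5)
Your argument is correct and is essentially the paper's proof: you split $D_u \Fsection_R$ into a Cauchy--Riemann part plus the linearization of the Hamiltonian term. The latter involves no derivatives of $\xi$ and is therefore compact (you factor it through $W^{1,p}\hookrightarrow C^0$, the paper through $W^{1,p}\hookrightarrow L^p$ on the target side), and Riemann--Roch with $\chi(\bbS^2)=2$ and $c_1(u^*TM)=0$ then gives index $2m$. The one thing to drop is the suggested graph-trick check: it is unnecessary, and as phrased it only applies when $u$ actually solves the equation, whereas the lemma concerns arbitrary null-homotopic smooth $u$ — McDuff--Salamon's formula for $D_u\overline{\partial}_J$, valid at any smooth $u$, already suffices.
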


\begin{proof}
    Observe that $\Fsection_R$ is the sum of the Cauchy-Riemann section $\overline{\partial}_J$ and a perturbation
    \[
   \sigma: \quad \widetilde{u} \mapsto - (\beta_R \, dt \otimes X_{H_t}(\widetilde{u}))^{(0,1)}
    \]
    which does not differentiate $\widetilde{u} \,$. Hence $D_u \sigma$ factors continuously through the subspace
    \[
    W^{1,p}(\bbS^2, \Lambda^{0,1} \otimes_{\C} u^*TM) \subseteq L^p(\bbS^2, \Lambda^{0,1} \otimes_{\C} u^*TM) \,\, .
    \]
    Because the inclusion $W^{1,p} \subseteq L^p$ is compact, $D_u \sigma$ is a compact operator. Thus $D_u \Fsection_R$ is Fredholm iff $D_u \overline{\partial}_J$ is and, if this is the case, their Fredholm indices coincide. And indeed, by \cite[Prop.~3.1.11]{J_holomorphic_curves}, the operator $D_u \overline{\partial}_J$ is Fredholm and its index computes to
    \[
    \mathrm{ind}(D_u \overline{\partial}_J) = m (2 - 2g) + 2 c_1(u^*TM) \,\, ,
    \]
    where $g$ is the genus of the underlying Riemann surface and $c_1(u^*TM)$ denotes the first Chern number of $(u^*TM,u^*J) \,$.
    The genus of $\bbS^2$ is $g=0$ and $c_1(u^*TM) = 0$ due to the assumption that $u$ is null-homotopic. This proves the lemma.
\end{proof}

\subsubsection{Transversality to the zero section}
\label{subsubsec: Transversality to Zero Section}

Although the zero section $\moduli_{[0,R]}$ of $\Fsection|_{[0,R] \times \Bcal}$ might fail to be a manifold, we at least have automatic transversality to the zero section for the parameter $R = 0 \,$.

\begin{proposition}
\label{prop: Fsection_0 transverse to zero section}
$\Fsection_0 = \overline{\partial}_J : \Bcal \rightarrow \Ecal$ is transverse to the zero section.
\end{proposition}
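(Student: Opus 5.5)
By Lemma \ref{lemma: moduli_0 = pt}, the zero set of $\Fsection_0$ on $\Bcal$ is the single point $u_0$, the constant sphere through $q_0$. So transversality only has to be checked at $u_0$. We must show that $\Dvert{u_0}\Fsection_0 : T_{u_0}\Bcal \rightarrow \Ecal_{u_0}$ is surjective. Its kernel automatically has a closed complement, since by Proposition \ref{prop: D^vert Fsection_R Fredholm + index} the operator is Fredholm (Remark \ref{rmk: vertical differential}).

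First we compute the linearization at the constant map. Since $u_0$ is constant, $u_0^*TM = \bbS^2 \times T_{q_0}M$ is a trivial complex bundle with constant complex structure $J_{q_0}$. Also $du_0 = 0$, so every term of $D_{u_0}\overline{\partial}_J$ involving derivatives of $J$ or torsion of $\widetilde{\nabla}$ multiplied by $du_0$ drops out. By the formula in \cite[Ch.~3.1]{J_holomorphic_curves}, $D_{u_0}\overline{\partial}_J\xi = \tfrac12(\widetilde{\nabla}\xi + J\,\widetilde{\nabla}\xi\circ j) + (\text{terms with } du_0)$. Hence $D_{u_0}\overline{\partial}_J$ is the standard Cauchy--Riemann operator $\overline{\partial}$ on $\C^m$-valued functions $\xi \in W^{1,p}(\bbS^2,\C^m)$, after identifying $(T_{q_0}M,J_{q_0})\cong\C^m$.

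Next we show this operator is surjective on the full space $T_{u_0}W^{1,p}(\bbS^2,M)$. By Lemma \ref{lemma: D_u Fredholm + index} it has index $2m$. Its kernel consists of the $W^{1,p}$ holomorphic maps $\bbS^2\rightarrow\C^m$; these are smooth by elliptic regularity, hence constant by Liouville. So the kernel is $T_{q_0}M$, of real dimension $2m$, and the cokernel has dimension $2m - 2m = 0$. Equivalently, the cokernel of $\overline{\partial}$ on the trivial bundle over $\bbS^2$ is $H^{0,1}(\bbS^2,\mathcal{O})^{\oplus m} = 0$.

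Finally we pass to $T_{u_0}\Bcal = \{\xi : \xi(0,0)=0\}$. The kernel of the full operator consists of the constants, and $\ev$ restricted to it is an isomorphism onto $T_{q_0}M$. Hence $T_{u_0}W^{1,p} = T_{u_0}\Bcal \oplus \ker D_{u_0}$, and the restriction to $T_{u_0}\Bcal$ has the same image, namely all of $\Ecal_{u_0}$. The restricted operator is therefore surjective, of index $0$, with trivial kernel. This also matches Proposition \ref{prop: D^vert Fsection_R Fredholm + index}. The main obstacle is the precise justification that the linearization at a constant map is exactly $\overline{\partial}$, and that $D_{u_0}$ computed in the $\widetilde{\nabla}$-trivialization agrees with the vertical differential. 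The latter holds because $u_0$ is a zero of $\Fsection_0$. For the former we rely on the explicit formula for $D_u$ in \cite[Prop.~3.1.1]{J_holomorphic_curves} with $du=0$.
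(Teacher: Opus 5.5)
Your proposal is correct and follows essentially the same route as the paper: you reduce to the unique zero $u_0$, identify the linearization at the constant sphere with the standard Cauchy--Riemann operator, show via Liouville that its kernel consists of constants, and conclude surjectivity from the Fredholm index. The only difference is bookkeeping. The paper shows injectivity directly on $T_{u_0}\Bcal$ (a constant vanishing at $(0,0)$ is zero) and uses index $0$ there. You instead count dimensions on the full space $T_{u_0}W^{1,p}(\bbS^2,M)$ with index $2m$, then transfer surjectivity via the splitting $T_{u_0}W^{1,p}(\bbS^2,M) = T_{u_0}\Bcal \oplus \ker D_{u_0}\overline{\partial}_J$; the two are equivalent.
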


\begin{proof}
    Observe that $\Fsection_0 = \overline{\partial}_J$ because $\beta_0 $ vanishes identically.
    We already know (Proposition \ref{prop: D^vert Fsection_R Fredholm + index}) that the vertical differential of $\Fsection_0$ at a zero is Fredholm, so its kernel automatically has a closed complement. Thus transversality to the zero section is equivalent to surjectivity of $\Dvert{u_0} \overline{\partial}_J : T_{u_0} \Bcal \rightarrow \Ecal_{u_0}$ at the constant sphere $u_0$ through $q_0$ (which is the unique zero of $\Fsection_0 \,$, see Lemma \ref{lemma: moduli_0 = pt}). Because $\Dvert{u_0} \overline{\partial}_J$ has Fredholm index zero, again by Proposition \ref{prop: D^vert Fsection_R Fredholm + index}, it suffices to show injectivity to prove surjectivity.
    
    The differential of $\overline{\partial}_J$ at an arbitrary $u \in \Cinfty(\bbS^2,M)$ is given explicitly by
    \[
    D_u \overline{\partial}_J : T_u W^{1,p}(\bbS^2,M) \rightarrow \Ecal_u \comma \quad \xi \mapsto (\nabla \xi)^{(0,1)} - \tfrac{1}{2} J(u) \, (\nabla_\xi J)(u) \, \partial_J  (u)  \,\, ,
    \]
    see \cite[Prop.~3.1.1]{J_holomorphic_curves}. Because $u_0$ is constant, its differential simplifies to
    \begin{align*}
    \Dvert{u_0} \overline{\partial}_J = D_{u_0}\overline{\partial}_J : T_{u_0} W^{1,p}(\bbS^2,M) \rightarrow \Ecal_{u_0} \comma \quad \xi \mapsto (\nabla \xi)^{(0,1)} = \tfrac{1}{2} (\nabla \xi + J(q_0) \circ \nabla \xi \circ j_{\bbS^2}) 
    \end{align*}
    and its restriction to $T_{u_0} \Bcal$ is the vertical differential of $\overline{\partial}_J : \Bcal \rightarrow \Ecal$ at $u_0 \,$.
    
     Moreover, still because $u_0$ is constant, tangent vectors $\xi \in T_{u_0} W^{1,p}(\bbS^2,M)$ are just $W^{1,p}$-functions $\xi : \bbS^2 \rightarrow T_{q_0} M$ and $\nabla \xi$ is the usual differential $d \xi$ of $\xi$ as map to the vector space $T_{q_0} M \,$.
    If $\xi \in T_{u_0} W^{1,p}(\bbS^2,M) $ is in the kernel of $D_{u_0} \overline{\partial}_J \,$, then $\xi$ must be constant (see \cite[Lemma 6.7.6]{J_holomorphic_curves}): Indeed, the restriction of $\nabla \xi = d\xi$ to the punctured Riemann sphere $\C \mathrm{P}^1\backslash\{\infty\} = \C$ is a one-form on $\C$ with values in $T_{q_0} M$ for which
    \[
    d\xi + J(q_0) \circ d\xi \circ \i = 0 \,\, .
    \]
    Hence $\xi : (\C,\i) \rightarrow (T_{q_0} M,J(q_0))$ is an entire holomorphic function which extends continuously to $\C \mathrm{P}^1 \,$. Such a function must be constant by Liouville's theorem. 

    A tangent vector $\xi \in T_{u_0} \Bcal$ in the kernel of $\Dvert{u_0} \overline{\partial}_J : T_{u_0} \Bcal \rightarrow \Ecal_{u_0}$ therefore is constant and additionally satisfies $\xi(0,0) = 0$ by \eqref{eq: tangent space at B}, hence is the zero function.
\end{proof}

\begin{remark}
\label{rmk: Fsection_0 transverse - surjectivity}
One can also verify surjectivity of $D_{u_0} \overline{\partial}_J |_{T_{u_0} \Bcal}$ directly by using the fundamental solution of the Cauchy-Riemann operator. Then one can even read of the Fredholm index of $D_u \Fsection_R|_{T_u \Bcal}$ at arbitrary $(R,u)$ without invoking any formula: At $(R,u) = (0,u_0)$ the index is $\dim (\moduli_0) = 0$ due to transversality. Because $\Cinfty(\bbS^2,M) \rightarrow \Z \comma u \mapsto \mathrm{ind}(D_u \overline{\partial}_J) \,$, is a continuous function and $\Bcal \cap \Cinfty(\bbS^2,M)$ is path-connected (both in the $\Cinfty$-topology respectively), the index 
\[
\mathrm{ind}(D_u \Fsection_R|_{T_u \Bcal}) = \mathrm{ind}(D_u \overline{\partial}_J) - 2m 
\]
must be zero for all $R \geq 0$ and $u \in \Bcal \cap \Cinfty(\bbS^2,M)\,$.
\end{remark}

\subsection{Applying the Perturbation Theorem}
\label{subsec: Schwarz thm - perturbation thm application}

For fixed $R > 0$ we will apply Corollary \ref{mcor: perturbation thm} of the perturbation theorem to the Banach manifold $\BcalHat_{[0,R]} = [0,R] \times \Bcal $ and the section $\Fsection : \BcalHat_{[0,R]} \rightarrow \EcalHat_{[0,R]} \,$. Notice that $\BcalHat_{[0,R]}$ admits smooth bump functions because $\Bcal$ does.
To apply the perturbation theorem, we also need a B-bundle pair $(\EcalHat_{[0,R]}, \EcalHat_{[0,R]}^1)$ over $\BcalHat_{[0,R]} \,$. To this end, consider the bundle $\Ecal^1 \rightarrow \Bcal$ whose fiber over $u \in \Bcal$ is given by
\[
\Ecal_u^1 := W^{1,p} (\bbS^2, \Lambda^{0,1} \otimes_{\C} u^*TM) \subseteq \Ecal_u \,\, .
\]

\begin{remark}
\label{rmk: Ecal^1_u def Schwarz theorem}
Apart from the higher regularity of the sections, this is the same definition as the one of $\Ecal_u \,$. However, since $u$ is now of the same regularity $W^{1,p}$ as sections in its fiber $\Ecal^1_u \,$, the above definition is slightly more subtle than before. We define $W^{1,p}(\bbS^2,\Lambda^{0,1} \otimes_{\C} u^*TM)$ to be the space of continuous sections $\eta$ of the complex $C^0$-bundle $\Lambda^{0,1} \otimes_{\C} u^*TM$ over $\bbS^2$ so that $(u, \eta(X) ) \in W^{1,p}(\bbS^2,TM)$ for every smooth vector field $X \in \mathfrak{X}(\bbS^2) \,$.
\end{remark}

\noindent It is not difficult to see that $(\Ecal, \Ecal^1)$ is a B-bundle pair over $\Bcal \,$. Indeed, a vector bundle chart (\ie a trivialization combined with a chart on the base) of $\Ecal$ over a chart domain centered at a smooth $u \in \Cinfty(\bbS^2,M)$ is given via the pointwise parallel transport map $\Phi_{u}$ introduced in \eqref{eq: parallel transport along nabla^tilde}. Such a vector bundle chart maps the fibers of $\Ecal^1$ to
\[
W^{1,p}(\bbS^2,\Lambda^{0,1} \otimes_{\C} u^*TM) \subseteq L^p(\bbS^2,\Lambda^{0,1} \otimes_{\C} u^*TM) \,\, .
\]
The above pair $(L^p,W^{1,p})$ is a B-pair. Details are carried out in Appendix \ref{app subsec: B-bundle pair Schwarz}. The B-bundle pair $(\Ecal,\Ecal^1)$ over $\Bcal$ pulls back to a B-bundle pair $(\EcalHat_{[0,R]} , \EcalHat_{[0,R]}^1)$ over $\BcalHat_{[0,R]} = [0,R] \times \Bcal \,$.
\\

\noindent The following lemma relies on the perturbation theorem and is the crucial ingredient in the proof of Theorem \ref{mthm: Schwarz theorem}. It is here were we deviate the most from \cite{J_holomorphic_curves}.

\begin{lemma}
\label{lemma: moduli_R non-empty}
$\moduli_R \not= \varnothing$ for every $R \geq 0 \,$.
\end{lemma}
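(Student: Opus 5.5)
We argue by contradiction via a cobordism argument, using Corollary \ref{mcor: perturbation thm}. Fix $R > 0$ and suppose $\moduli_R = \varnothing$. (For $R = 0$ the claim is Lemma \ref{lemma: moduli_0 = pt}.) We apply the corollary to the Banach manifold with boundary $\BcalHat_{[0,R]} = [0,R] \times \Bcal$, the B-bundle pair $(\EcalHat_{[0,R]}, \EcalHat^1_{[0,R]})$ and the section $\Fsection : \BcalHat_{[0,R]} \rightarrow \EcalHat_{[0,R]}$. We first check the hypotheses:
\begin{itemize}
\item $\BcalHat_{[0,R]}$ admits smooth bump functions, since $\Bcal$ does (Lemma \ref{lem: W^1,p(S^2,M) admits smooth bump fcts}).
\item The zero set is $\moduli_{[0,R]}$, which is compact by Proposition \ref{prop: moduli space compact}.
\item $\Fsection$ is Fredholm of index $1$ by Corollary \ref{cor: D^vert Fsection Fredholm + index}.
\item The boundary is $\partial \BcalHat_{[0,R]} = \{0\} \times \Bcal \,\cup\, \{R\} \times \Bcal$, since $\Bcal$ itself has no boundary.
\end{itemize}

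Next we verify that $\Fsection|_{\partial \BcalHat_{[0,R]}}$ is transverse to the zero section. On $\{0\} \times \Bcal$ the restriction is $\Fsection_0 = \overline{\partial}_J$. It is transverse by Proposition \ref{prop: Fsection_0 transverse to zero section}, and its zero set is the single point $u_0$ by Lemma \ref{lemma: moduli_0 = pt}. On $\{R\} \times \Bcal$ the restriction is $\Fsection_R$, whose zero set $\moduli_R$ is empty by our contrary assumption; hence it is trivially transverse (Remark \ref{rmk: vertical differential}). Corollary \ref{mcor: perturbation thm} therefore yields a compact $1$-dimensional smooth submanifold $\widetilde{S} \subseteq [0,R] \times \Bcal$ with
\[
\partial \widetilde{S} = \widetilde{S} \cap \partial \BcalHat_{[0,R]} = \moduli_{[0,R]} \cap \partial \BcalHat_{[0,R]} = \{(0,u_0)\} \,\, .
\]

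This gives the contradiction. By the classification of compact one-manifolds, every compact $1$-dimensional manifold with boundary has an even number of boundary points. Here $\widetilde{S}$ has exactly one boundary point, which is impossible. Hence $\moduli_R \neq \varnothing$.

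The main point needing care is the boundary structure and the indices. The index of $\Fsection$ on $[0,R] \times \Bcal$ must be $1$, and the restrictions to the boundary faces must have index $0$; this matches Remark \ref{rmk: restricting sections to submfds}, so that $(0,u_0)$ is an isolated, transversally cut out boundary point. One must also confirm that the corollary's identity $\partial \widetilde{S} = S_{\Fsection} \cap \partial \BcalHat$ holds, with no new boundary zeros created. This holds because the corollary takes $\Dcal = \partial \BcalHat_{[0,R]}$, so the perturbation vanishes on the boundary. The remaining steps are direct citations of earlier results.
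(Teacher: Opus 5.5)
Your proposal is correct and follows essentially the same route as the paper: assume $\moduli_R = \varnothing$ for some $R > 0$, apply Corollary~\ref{mcor: perturbation thm} to $\Fsection$ on $[0,R] \times \Bcal$ (Fredholm of index $1$, compact zero set, transverse on both boundary faces), and obtain a compact $1$-manifold whose only boundary point is $(0,u_0)$, a contradiction. Your hypothesis checks (bump functions, B-bundle pair, boundary structure, trivial transversality of $\Fsection_R$ when its zero set is empty) are exactly the ones the paper uses.
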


\begin{proof}
    By Lemma \ref{lemma: moduli_0 = pt}, $\moduli_0 = \{u_0\}$ consists of a single point. Now let $R > 0$ be arbitrary and assume by contradiction that $\moduli_R$ is empty. 
    The smooth section $\Fsection : \BcalHat_{[0,R]} = [0,R] \times \Bcal \rightarrow \EcalHat_{[0,R]}$ is a Fredholm section of index $d = 1$ (Corollary \ref{cor: D^vert Fsection Fredholm + index}) and has compact zero set $\moduli_{[0,R]}$ (Lemma \ref{lemma: zeroes of Fsection = moduli}, Proposition \ref{prop: moduli space compact}). Moreover, its restriction to the boundary $\{0,R\} \times \Bcal$ is transverse to the zero section as well. Indeed, $\Fsection_0$ is transverse to the zero section by Proposition \ref{prop: Fsection_0 transverse to zero section} and $\Fsection_R$ is transverse to the zero section since its zero set $\moduli_R$ is empty by assumption. By Corollary \ref{mcor: perturbation thm} there exists a smooth compact $1$-dimensional manifold whose boundary is
    \[
    \moduli_{[0,R]} \,\cap \, \partial ([0,R] \times \Bcal) = \moduli_0 \cup \moduli_R = \moduli_0 = \{u_0\} \,\, .
    \]
    But every compact $1$-dimensional manifold has an even number of boundary points. Contradiction.
\end{proof}

\subsection{Finish}
\label{subsec: Finish Schwarz Theorem}

We are now ready to finalize the proof of Schwarz's theorem. Recall from \eqref{eq: q_0 choice} that we have chosen $q_0 \in M \backslash \mathrm{Fix}(\Phi_H^1)$ and that $u(0,0) = q_0$ for every $u \in \moduli_R \,$.

\begin{proof}[Proof of Theorem \ref{mthm: Schwarz theorem}]
    Fix any sequence $(R_n)_n \subseteq \R_{\geq 1}$ tending to infinity. By Lemma \ref{lemma: moduli_R non-empty}, for each $n \in \N$ we can choose $u_n \in \moduli_{R_n} \,$. Then $u_n$ solves \eqref{eq: Floer interpolated} and thus is a solution of \eqref{eq: Floer eq} on $[-R_n,R_n] \times \bbS^1 \,$. Moreover $E(u_n) \leq \lVert H \rVert_{\mathrm{Hof}}$ by Lemma \ref{lemma: Energy bounded by Hofer norm}. By Lemma \ref{lemma: Floer compactness parameter-independent case}, up to a subsequence (which we suppress in the notation), the sequence $(u_n)_n$ converges in $\Cinftyloc(\R \times \bbS^1,M)$ to a finite-energy solution $u : \R \times \bbS^1 \rightarrow M$ of \eqref{eq: Floer eq}. Notice that each $u_n(s,\,\cdot\,)$ is a contractible loop ($u_n$ being defined on the sphere), so that its limit $u(s,\,\cdot\,) $ is contractible as well. Moreover $u(0,0) = \lim_n u_n(0,0) = q_0 \,$. By Corollary \ref{cor: convergence to critical pts at infty}, there exists a sequence $(s_n)_n \subseteq \R$ tending to $+\infty$ and $x_- , x_+ \in \Crit(\actionH)$ with
    \[
    u(\pm s_n, \,\cdot\,) \overset{n \rightarrow \infty}{\longrightarrow} x_\pm \quad \text{ in } \Cinfty(\bbS^1,M) \,\, .
    \]
    We have
    \begin{align*}
    0 \leq E(u) &= - \lim_{n \rightarrow \infty} \int_{-s_n}^{s_n} \frac{d}{ds} \actionH(u(s,\,\cdot\,)) \, ds =
    \lim_{n \rightarrow \infty} \Big( \actionH(u(-s_n,\,\cdot\,)) - \actionH(u(s_n,\,\cdot\,)) \Big) \\
    &= \actionH(x_-) - \actionH(x_+) \,\, .
    \end{align*}
    If $E(u) = 0 \,$, then $u$ is $s$-independent and $u(s,\,\cdot\,) = x_-=x_+ \in \Crit(\actionH) \,$. Consequently $q_0 = u(0,0) \in \mathrm{Fix}(\Phi_H^1) \,$, contradicting the choice of $q_0 \in M \backslash \mathrm{Fix}(\Phi_H^1)$.\\
    Hence $E(u) > 0$ and we have found critical points $x_-, x_+ \in \Crit(\actionH)$ with
    \[
    \actionH(x_-) > \actionH(x_+) \,\, .
    \]
    This concludes the proof.
\end{proof}

\section{Perturbing Evaluation Maps}
\label{sec: Perturbing Evaluation Maps}

\noindent In Section \ref{sec: Abstract Perturbation Thm}, our set-up was comparatively modest, in the sense that we only dealt with the zero set of a Fredholm section $\Fsection$ on a Banach manifold $\Bcal \,$. However in applications, where $\Bcal$ is a Banach manifold of maps having the finite-dimensional manifold $M$ as target, there often appears a natural evaluation map $e : \Bcal \rightarrow M$. For example, in Section \ref{sec: Application Schwarz thm} we worked with the evaluation map $e = \ev : u \mapsto u(0,0) \,$.
This particular evaluation map is smooth but in general, if the point at which one evaluates depends on $u \in \Bcal$, then $e$ is merely continuous.
Having perturbed the zero set $\ZeroFsection$, we now also perturb such an evaluation map. Our intention is to provide a reference for an upcoming paper. The main result of this section is Theorem \ref{mthm: evaluation maps - cobordism}. It very roughly says the following (the precise assumptions are stated in Subsection \ref{subsec: evaluation maps - cobordisms}): Let $\Fsection : [0,1] \times \Bcal \rightarrow \Ecal$ be a Fredholm section with compact zero set, $e$ be an evaluation on $[0,1] \times \Bcal$ and $Z \subseteq M$ be a submanifold and closed subset of $M$. If the number of zeroes $(0,x) \in \ZeroFsection$ with $e(0,x) \in Z $ is odd, then there also must exist a zero $(1,x) \in \ZeroFsection$ with $e(1,x) \in Z $.

\subsection{Generic Transversality}
\label{subsec: evaluatin maps - Generic Transversality}

In this subsection we fix
\begin{itemize}
    \item a compact finite-dimensional manifold $S$,
    \item a compact submanifold $Q \subseteq \partial S$ without boundary,
    \item a finite-dimensional manifold $M$ without boundary,
    \item a distance function $d_M : M \times M \rightarrow \R_{\geq 0}$ (that is, a metric) inducing the given topology on $M$,
    \item a countable collection $\{Z_n\}_{n \in \N}$ of submanifolds of $M$ without boundary.
\end{itemize}

\noindent The submanifolds $Z_n \subseteq M , n \in \N$, do not have to be distinct, that is, we allow for repetitions. Following the conventions described in Subsection \ref{subsec: Conventions}, all of the finite-dimensional manifolds above are smooth, second-countable and pure-dimensional.
In the following, transversality is denoted by $\pitchfork \,$. We write $\ball_r^k \subseteq \R^k$ (respectively $\ballclosed^k_r$) for the open (respectively closed) ball of radius $r$ and $\ball^k = \ball^k_1$ for the open unit ball (respectively $\ballclosed^k$ for the closed unit ball).

\begin{lemma}
\label{lemma: evaluation maps - transversality}
For $f \in \Cinfty(S,M)$ suppose that $f|_Q \pitchfork Z_n$ for every $n \in \N \,$. Then there exists $k \in \N \,$, a residual subset $B_{\mathrm{res}} \subseteq \ball^k$ and a smooth map $F : S \times \ball^k \rightarrow M$ so that, writing $F_a := F(\,\cdot\,,a)$ for $a \in \ball^k \,$, it holds
\begin{enumerate}[label=\arabic*)]
    \item\label{it: evaluation maps - F_0 = f} $F_0  = f \,$.
    \item\label{it: evaluation maps - F_a|_Q = f|_Q} $F_a|_Q = f|_Q$ for all $a \in \ball^k \,$.
    \item\label{it: evaluation maps - F_a transverse} $F_a  \pitchfork Z_n$ and $F_a|_{\partial S} \pitchfork Z_n$ for all $a \in B_{\mathrm{res}}$ and $n \in \N \,$.
\end{enumerate}
\end{lemma}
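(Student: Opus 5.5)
The plan is to build a finite-dimensional family of deformations of $f$ that fixes $Q$ and, away from $Q$, makes the map $F : S \times \ball^k \to M$ a submersion. Parametric transversality (the finite-dimensional Sard argument behind Proposition \ref{prop: Parametric Transversality}) then gives residual sets of good parameters for each $Z_n$, for $S$ and for $\partial S$. Intersecting these countably many residual sets gives $B_{\mathrm{res}}$.

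\emph{Step 1: freezing a neighborhood of $Q$.} Since $f|_Q \pitchfork Z_n$ and $T_qQ \subseteq T_q S$, we already have $f \pitchfork Z_n$ at points of $Q$. The same holds for $f|_{\partial S}$, since $T_q Q\subseteq T_q\partial S$. For a single $Z_n$, transversality is an open condition near the compact set $Q$, but for countably many $Z_n$ there is no common neighborhood. So I would not try to keep the deformation off a neighborhood of $Q$. Instead I would use a cutoff $\rho : S \to [0,1]$ vanishing exactly on $Q$, for instance $\rho(x)$ comparable to $\mathrm{dist}(x,Q)^2$ near $Q$ and equal to $1$ away from $Q$. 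At points of $Q$ the map $F_a$ then agrees with $f$ to first order, so transversality at $Q$ is inherited directly from $f|_Q$. On $S\setminus Q$ the family will be a submersion in $a$.

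\emph{Step 2: the family.} Embed $M \hookrightarrow \R^N$ (Whitney) and fix a tubular neighborhood retraction $r : U \to M$. Take $k = N$ and
\[
F(x,a) := r\big(f(x) + \eps\,\rho(x)\, a\big),
\]
with $\eps>0$ small enough that $f(S) + \eps\ballclosed^N \subseteq U$; this is possible because $S$ is compact. Properties \ref{it: evaluation maps - F_0 = f} and \ref{it: evaluation maps - F_a|_Q = f|_Q} hold by construction. For $x \notin Q$ the partial derivative $\partial_a F(x,a) = \eps\rho(x)\, dr(\cdot)$ is onto $T_{F(x,a)}M$, because $dr$ restricted to $T M$ is the identity and $dr$ is surjective. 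Hence $F|_{(S\setminus Q)\times \ball^N}$ and $F|_{(\partial S\setminus Q)\times\ball^N}$ are submersions. In particular they are transverse to every $Z_n$.

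\emph{Step 3: Sard, and the points of $Q$.} For each $n$, the preimages $W_n := F|_{(S\setminus Q)\times\ball^N}^{-1}(Z_n)$ and the corresponding boundary version $W_n'$ are second-countable manifolds. The regular values of their projections to $\ball^N$ form residual sets $R_n$ and $R_n'$. The standard computation (as in the proof of Proposition \ref{prop: Parametric Transversality}) shows that $F_a \pitchfork Z_n$ on $S\setminus Q$ for $a \in R_n$, and $F_a|_{\partial S}\pitchfork Z_n$ on $\partial S\setminus Q$ for $a\in R_n'$. Because $S$ has boundary, I would either apply Sard separately to the interior and to the boundary pieces, or apply it to the manifold with boundary $W_n$ directly.

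At points $q\in Q$ we have $F_a = f$ near $q$ to first order, so $dF_a(q) = df(q)$; this needs $\rho$ to vanish to second order along $Q$. Then $dF_a(q)(T_qS) \supseteq df(q)(T_qQ)$, which together with $T Z_n$ spans $TM$ by hypothesis. The same argument works on $\partial S$. Set $B_{\mathrm{res}} := \bigcap_n (R_n \cap R_n')$, which is residual in $\ball^N$.

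\emph{Main obstacle.} The delicate point is the cutoff $\rho$. It must be smooth, vanish exactly on $Q$ and vanish to second order there, so that $dF_a = df$ along $Q$. It must also be positive off $Q$, so that the family is a submersion on $S \setminus Q$. Such a function exists, since every closed set is the zero set of a smooth function; near $Q$ one can take $\mathrm{dist}(\cdot,Q)^2$ computed in a tubular neighborhood of $Q$ in $S$, suitably adapted at $\partial S$. With this choice the countability issue disappears, because no neighborhood of $Q$ independent of $n$ is ever needed.
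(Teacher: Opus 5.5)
Your proof is correct and uses the same construction as the paper, $F(x,a)=r\big(f(x)+\rho(x)\,a\big)$ with $\rho^{-1}(0)=Q$; the paper applies parametric transversality once on all of $S\times\ball^k$, getting $F\pitchfork Z_n$ along $Q\times\ball^k$ from $f|_Q\pitchfork Z_n$, rather than running Sard on $(S\setminus Q)\times\ball^N$ and treating $Q$ separately as you do. The second-order vanishing of $\rho$ that you single out as the main obstacle is not needed: your inclusion $dF_a(q)(T_q S)\supseteq df(q)(T_q Q)$ (and likewise for $T_q\partial S$) uses only $F_a|_Q=f|_Q$, which holds for any smooth $\rho$ with $\rho^{-1}(0)=Q$.
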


\begin{proof}
The proof is a minor adaption of \cite[Thm.~6.36]{Lee}. Embed $M$ into Euclidean space $\R^k $ and choose a tubular neighborhood $U_M \subseteq \R^k$ of $M$ and a retraction $r : U_M \rightarrow M$ (that is $r|_M = \id_M$) that is also a smooth submersion. Choose a constant $\eps > 0$ so that $f(x) +a \in U_M $ for all $x \in S$ and $a \in \ball^k_\eps\,$. Because $Q \subseteq S$ is a closed subset, we can moreover choose a function $\rho \in \Cinfty(S,[0,\eps])$ with $\rho^{-1}(0) = Q \,$. We now define
\[
F(x,a) := r(f(x) + \rho(x) \, a) \quad \text{ for } (x,a) \in S \times \ball^k \,\, .
\]
Properties \ref{it: evaluation maps - F_0 = f} and \ref{it: evaluation maps - F_a|_Q = f|_Q} are immediate since $r$ is a retraction. For every $x \in S$ with $\rho(x) >  0 \,$, the map
\[
\ball^k \rightarrow U_M \subseteq \R^k \comma \quad a \mapsto f(x) + \rho(x) \, a
\]
is a local diffeomorphism, so its composition with $r$ is a submersion. Hence $F(x,\,\cdot\,) : \ball^k \rightarrow M$ is a submersion for $x \in S \backslash Q \,$. On the other hand, $F|_{ Q \times \ball^k}$ is transverse to each $Z_n$ since $f|_Q$ is. Remembering that $Q \subseteq \partial S$, altogether this shows that
\begin{align*}
    \begin{cases}
    F \pitchfork Z_n & \Forall n \in \N \\
    F|_{\partial S \times \ball^k} \pitchfork Z_n & \Forall n \in \N \,\, .
    \end{cases}
\end{align*}
By parametric transversality, for every $n \in \N$ there exists a residual subsets $B_{n} \subseteq \ball^k$ so that both $F_a \pitchfork Z_n$ and $F_a|_{\partial S} \pitchfork Z_n$ for every $a \in B_n \,$. Taking the countable intersection $B_{\mathrm{res}} := \bigcap_{n \in \N} B_n$ gives the desired residual set. 
\end{proof}

\begin{corollary}
\label{cor: evaluation maps - transversality}
Let $f \in C^0(S,M)$ and suppose that $f|_Q$ is smooth and transverse to each $Z_n \comma n \in \N \,$. Then for every $\eps > 0$ there exists $h \in \Cinfty(S,M)$ with
\begin{enumerate}[label=\arabic*)]
    \item\label{it: evaluation maps - sim transversality - f = ftilde on Q} $f|_Q = h|_Q \,$.
    \item\label{it: evaluation maps - sim transversality - ftilde transverse} $h \pitchfork Z_n$ and $h|_{\partial S} \pitchfork Z_n$ for every $n \in \N \,$.
    \item\label{it: evaluation maps - sim transversality - ftilde close to f} $\max_S d_M(f, h) < \eps \,$.
\end{enumerate}
\end{corollary}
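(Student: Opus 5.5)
The plan is to reduce Corollary \ref{cor: evaluation maps - transversality} to Lemma \ref{lemma: evaluation maps - transversality}. First I replace the merely continuous $f$ by a smooth map that agrees with $f$ on $Q$ and is uniformly close to $f$. Then I apply the lemma to that smooth map and choose a parameter $a$ that is both residual and small.

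\emph{Step 1: smooth approximation relative to $Q$.} I will produce $g \in \Cinfty(S,M)$ with $g|_Q = f|_Q$ and $\max_S d_M(f,g) < \eps/2$. Since $f|_Q$ is smooth and $Q$ is a compact submanifold of $\partial S$, I first extend $f|_Q$ to a smooth map $\widetilde g$ on a collar neighborhood $V$ of $Q$ in $S$. Shrinking $V$ and using uniform continuity on the compact $S$, I can make $\widetilde g$ uniformly close to $f$ on $V$. I would use the tubular retraction $r$ from the proof of the lemma to define $\widetilde g$ near $Q$.

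Next I apply the Whitney approximation theorem relative to a closed set, as in \cite[Thm.~6.21, 6.26]{Lee}. This gives a smooth map uniformly close to $f$ that agrees with $\widetilde g$ near $Q$. Concretely, I embed $M \subseteq \R^k$ with tubular neighborhood $U_M$ and retraction $r$. I glue with a cutoff $\chi$ that equals $1$ near $Q$, setting $g = r(\chi \widetilde g + (1-\chi) \phi)$, where $\phi$ is a smooth $\R^k$-valued approximation of $f$. Convexity of balls in $\R^k$ keeps the convex combination inside $U_M$ as long as everything is sufficiently close. Continuity of $r$ on a compact neighborhood of $f(S)$ then turns Euclidean closeness into $d_M$-closeness, because $d_M$ and the Euclidean distance induce the same topology and $f(S)$ is compact.

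\emph{Step 2: apply the lemma.} Now $g$ is smooth and $g|_Q = f|_Q \pitchfork Z_n$ for all $n$. Lemma \ref{lemma: evaluation maps - transversality} then yields $F : S \times \ball^k \to M$ with $F_0 = g$, $F_a|_Q = g|_Q$, and transversality for all $a \in B_{\mathrm{res}}$. Since $F$ is continuous and $S$ is compact, there is $\delta > 0$ with $\max_S d_M(F_a, g) < \eps/2$ for all $|a| < \delta$. By Baire, $B_{\mathrm{res}}$ is dense, so I can pick $a \in B_{\mathrm{res}} \cap \ball^k_\delta$ and set $h := F_a$. Properties 1) and 2) follow from the lemma, and 3) follows from the triangle inequality.

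\emph{Main obstacle.} The only nontrivial step is Step 1, the relative smooth approximation where $f$ must be kept exactly equal on $Q$. If the gluing via $r$ becomes messy, I would fall back on citing the relative Whitney approximation theorem directly. Its hypothesis is that $f$ is smooth on a neighborhood of $Q$, which the collar extension $\widetilde g$ supplies after a preliminary homotopy-free modification of $f$ near $Q$.
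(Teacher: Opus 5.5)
Your proposal is correct and follows essentially the same route as the paper. First you replace $f$ by a smooth map that agrees with $f$ on $Q$ and is $\eps/2$-close to it; then you apply Lemma~\ref{lemma: evaluation maps - transversality} and set $h := F_a$, with $a$ chosen in the dense residual set inside a ball small enough that $F_a$ stays $\eps/2$-close to $F_0$. The only difference is that the paper handles the relative approximation step by pointing to the proof of Lee's Theorem~6.26, whereas you write out the cutoff-and-retraction gluing explicitly, which is a valid way to carry it out.
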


\begin{proof}
\textit{Step 1:} First, additionally assume that $f$ is smooth on $S$. Choose a smooth map $F : S \times \ball^k \rightarrow M$ and a residual subset $B_{\mathrm{res}} \subseteq \ball^k$ as in Lemma \ref{lemma: evaluation maps - transversality}. Because $S$ is compact, for given $\eps > 0$ there exists $\delta > 0$ so that 
\[ 
\max_S d_M(f , F_a) = \max_S d_M(F_0,F_a) < \eps \qquad \Forall a \in \ball^k_\delta \,\, .
\]
Now choose $a \in \ball^k_\delta \cap B_{\mathrm{res}} \not= \varnothing$ and set $h := F_a \,$.

\textit{Step 2:} We prove the general case. Since $f|_Q$ is smooth and $Q \subseteq S$ is a closed subset, we can find a smooth map $f_1 \in \Cinfty(S,M)$ that equals $f$ on $Q$ and with $\max_S d_M(f,f_1) < \eps/2 \,$. Such a map exists, following the proof of \cite[Thm.~6.26]{Lee}. Now apply Step 1 to $f_1$ and $\eps/2 \,$.
\end{proof}

\subsection{Existence of Perturbed Evaluation Maps}
\label{subsec: evaluation maps - Existence Perturbed Evaluation Maps}

As in the setting of the perturbation theorem (Theorem \ref{mthm: perturbation thm}), suppose we are given
\begin{itemize}
    \item a Banach manifold $\Bcal$ admitting smooth bump functions,
    \item a B-Bundle pair $(\Ecal,\Ecal^1)$ over $\Bcal\,$,
    \item a Fredholm section $\Fsection : \Bcal \rightarrow \Ecal$ of index $d + 1 \in \Z$ with compact zero set $\ZeroFsection \,$.
\end{itemize}
For reasons which will become apparent in the next subsection, it is convenient to let $\Fsection$ have index $d+1$ (instead of index $d$ as in Theorem \ref{mthm: perturbation thm}).

We now add some further structure to this.
\begin{itemize}
    \item Let $\Dcal \subseteq \Bcal$ be a union of boundary components of $\Bcal$ so that $\Fsection|_{\Dcal} : \Dcal \rightarrow \Ecal|_{\Dcal}$ is transverse to the zero section and moreover $\Dcal \cap S_{\Fsection} \not= \varnothing \,$.
\end{itemize}
Notice that $\Dcal$ is a closed subset of $\Bcal \,$, that condition \ref{it: perturbation thm assumption} in Theorem \ref{mthm: perturbation thm} is satisfied and that $\Dcal \cap \ZeroFsection$ is a compact smooth $d$-dimensional submanifold of $\Bcal$ without boundary. Suppose, we have additionally fixed
\begin{itemize}
     \item a finite collection $\Dcal_1,\ldots , \Dcal_\ell \subseteq \Bcal $ of closed subsets,
     \item a finite-dimensional smooth manifold $M$ without boundary and a distance function $d_M$ on $M$ inducing the given topology,
     \item finitely many smooth submanifolds $Z_1, \ldots , Z_k \subseteq M \,$, each without boundary,
     \item a continuous map $e : \Bcal \rightarrow M$ so that $e|_{\Dcal \cap \ZeroFsection}$ is smooth and transverse to $Z_j$ for every $j=1,\ldots ,k\,$.
\end{itemize}

\noindent Lastly, we fix finitely many \Bplus-sections $s_1,\ldots , s_n : \Bcal \rightarrow \Ecal$ with support contained in $\Bcal \backslash \Dcal \,$, a sufficiently small $\eps > 0$ and a residual subset $\Bres \subseteq \ball_{\eps}^n$ so that properties \ref{it: perturbation thm conclusion - Fredholm}\,-\,\ref{it: perturbation thm conclusion - zero set compact mfd} in Theorem \ref{mthm: perturbation thm} are satisfied for every parameter $\lambda \in \Bres $ and moreover the compactness assertion \ref{it: perturbation rmk - compactness} in Remark \ref{rmk: perturbation thm} holds. We know that such choices exist and let them be fixed but arbitrary in the following.
As in the proof of Theorem \ref{mthm: perturbation thm}, we also set
\[
 \Funiv(\,\cdot\, , \lambda) := \Fsection_\lambda :=\Fsection + \sum_{i=1}^n \lambda_i \, s_i \, : \, \Bcal \rightarrow \Ecal \quad \text{ where } \lambda =(\lambda_1,\ldots, \lambda_n) \in \R^n \,\, .
\]
By property \ref{it: perturbation thm conclusion - zero set compact mfd} in the perturbation theorem, for every $\lambda \in \Bres$ the zero set $S_{\Fsection_{\lambda}}$ is a compact $(d+1)$-dimensional smooth submanifold of $\Bcal$ with boundary $ \partial S_{\Fsection_{\lambda}} = S_{\Fsection_{\lambda}} \cap \partial \Bcal$ and with $\varnothing \not=  \ZeroFsection \cap \Dcal = S_{\Fsection_{\lambda}} \cap \Dcal \subseteq \partial S_{\Fsection_{\lambda}} \,$. In particular $S_{\Fsection_\lambda}$ is non-empty.

\begin{proposition}
\label{prop: perturbed evaluations}
For given $\delta > 0$ there exists $\lambda \in \Bres$ and a smooth map $e_{\lambda} : S_{\Fsection_{\lambda}} \rightarrow M$ with the following properties:
\begin{enumerate}[label=\arabic*)]
    \item\label{it: evaluation maps - distance} $d_M(e, e_{\lambda}) < \delta $ on $S_{\Fsection_{\lambda}}$.
    \item\label{it: evaluation maps - equal on Dcal} $e_{\lambda} = e$ on $S_{\Fsection_{\lambda}} \cap \Dcal = \ZeroFsection \cap \Dcal \,$.
    \item\label{it: evaluation maps - transverse} both $e_{\lambda}$ and $e_{\lambda}|_{\partial S_{\Fsection_{\lambda}} } $ are transverse to each $Z_j \comma j=1,\ldots , k\,$.
    \item\label{it: evaluation maps - empty preimage} for every $i=1,\ldots , \ell$ and $j=1,\ldots ,k$ it holds
    \begin{align*}
        \set{x \in \Dcal_i \cap \ZeroFsection}{e(x) \in \overline{Z_j}} = \varnothing \quad \Longrightarrow \quad (e_{\lambda})^{-1}(\,\overline{Z_j}\,) \cap \Dcal_i = \varnothing \,\, .
    \end{align*}
    Here $\overline{Z_j}$ denotes the closure of $Z_j$ in $M$.
\end{enumerate}
\end{proposition}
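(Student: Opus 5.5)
The plan is to apply Corollary \ref{cor: evaluation maps - transversality} to $S := S_{\Fsection_\lambda}$, $Q := \ZeroFsection \cap \Dcal$ and $f := e|_{S_{\Fsection_\lambda}}$, with the countable collection $Z_1,\ldots,Z_k$ (repeated). First I would check the hypotheses for any $\lambda \in \Bres$. By \ref{it: perturbation thm conclusion - zero set compact mfd}, $S$ is a compact $(d+1)$-dimensional manifold. The set $Q = S \cap \Dcal$ is a union of boundary components of $S$: indeed $\Dcal$ is a union of boundary components of $\Bcal$, $\partial S = S \cap \partial \Bcal$, and $\Dcal$ is open and closed in $\partial \Bcal$. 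Hence $Q$ is a compact submanifold of $\partial S$ without boundary. Moreover, $f|_Q = e|_{\Dcal \cap \ZeroFsection}$ is smooth and transverse to each $Z_j$ by assumption. The corollary then gives $h \in \Cinfty(S,M)$ with $h|_Q = e|_Q$, with $h$ and $h|_{\partial S}$ transverse to all $Z_j$, and with $\max_S d_M(e,h) < \eta$ for any prescribed $\eta>0$. Setting $e_\lambda := h$ yields \ref{it: evaluation maps - equal on Dcal} and \ref{it: evaluation maps - transverse}, and \ref{it: evaluation maps - distance} follows once $\eta \le \delta$. Items 1)--3) therefore hold for every $\lambda \in \Bres$.

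The real content is \ref{it: evaluation maps - empty preimage}, and this is where $\lambda$ has to be chosen small. Fix a pair $(i,j)$ for which $\{x \in \Dcal_i \cap \ZeroFsection : e(x) \in \overline{Z_j}\}$ is empty. The set $K := e^{-1}(\overline{Z_j}) \cap \Dcal_i$ is closed in $\Bcal$ and disjoint from $\ZeroFsection$. I would then argue by compactness in two steps.

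\emph{Step (a): the perturbed zero set stays a positive distance from $\overline{Z_j}$ on $\Dcal_i$.} Consider the compact set $\Sigma := \{(x,\lambda) : |\lambda|\le \eps,\ \Fsection_\lambda(x)=0\}$ from \ref{it: perturbation rmk - compactness}. I claim there exist $\eps_{ij}>0$ and $\eta_{ij}>0$ such that $d_M(e(x),\overline{Z_j}) \ge 2\eta_{ij}$ for all $x \in \Dcal_i \cap S_{\Fsection_\lambda}$ with $|\lambda| < \eps_{ij}$. Suppose not. Then there are $(x_m,\lambda_m) \in \Sigma$ with $x_m \in \Dcal_i$, $\lambda_m \to 0$ and $d_M(e(x_m),\overline{Z_j}) \to 0$. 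Since $\Sigma$ is compact, hence sequentially compact, a subsequence converges to some $(x,0)$ with $x \in \ZeroFsection$. Because $\Dcal_i$ is closed, $x \in \Dcal_i$. By continuity of $e$ and of the distance to a closed set, $e(x) \in \overline{Z_j}$. This contradicts the assumption on $(i,j)$. The same argument is essentially Lemma \ref{lemma: proof perturbation thm - zero set in prescribed nbhd of S_Fsection}.

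\emph{Step (b): choosing the parameters.} Take $\lambda \in \Bres \cap \ball^n_{\min \eps_{ij}}$, which is nonempty since $\Bres$ is dense. Then choose $\eta$ below $\delta$ and below every $\eta_{ij}$, the minima running over the finitely many relevant pairs. For $x \in S \cap \Dcal_i$ we get $d_M(e_\lambda(x),\overline{Z_j}) \ge 2\eta_{ij} - \eta > 0$, so $e_\lambda(x) \notin \overline{Z_j}$. This proves \ref{it: evaluation maps - empty preimage}.

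I expect the main obstacle to be Step (a), namely getting a uniform distance to $\overline{Z_j}$ that holds for all small $\lambda$ at once. This is exactly what the compactness assertion \ref{it: perturbation rmk - compactness} provides, and that is why $\lambda$ must be chosen after $\eps_{ij}$ but before $\eta$. Two minor points also need checking: that $Q$ is really a closed boundary submanifold of $S$, and that $d_M$ restricted to $\overline{Z_j}$ behaves correctly. For the latter, the distance to a closed set is positive off that set.
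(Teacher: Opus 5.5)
Your proof is correct and follows essentially the paper's route. You apply Corollary~\ref{cor: evaluation maps - transversality} on $S_{\Fsection_\lambda}$ with $Q=\ZeroFsection\cap\Dcal$, and you obtain property 4) for small $\lambda$ from the compactness of the parametrized zero set in Remark~\ref{rmk: perturbation thm}, the closedness of $\Dcal_i$ and the continuity of $e$. The only difference is packaging: the paper runs one contradiction argument along sequences $\lambda_m\to 0$ in the residual set and tolerances $\delta_m\to 0$, whereas you first extract a uniform positive distance from $\overline{Z_j}$ and then pick $\lambda$ and the approximation tolerance directly.
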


\noindent The proposition is summarized as diagram in Figure \ref{diagram: perturbing evaluation map}.

\begin{proof}[Proof of Proposition \ref{prop: perturbed evaluations}]
    Let $\delta > 0$ be arbitrary but fixed. Choose a sequence $(\delta_m)_{m \in \N} \subseteq (0,\delta)$ tending to zero and a sequence $(\lambda_m)_{m \in \N} \subseteq \Bres$ tending to the origin in $\R^n \,$. We now apply Corollary \ref{cor: evaluation maps - transversality} to the compact $(d+1)$-dimensional manifold $S_{\Fsection_{\lambda_m}} \,$, its submanifold $  \Dcal \cap \ZeroFsection = \Dcal \cap S_{\Fsection_{\lambda_m}}\subseteq \partial S_{\Fsection_{\lambda_m}}$ and the continuous map $e|_{S_{\Fsection_{\lambda_m}}} $ whose restriction to $\Dcal \cap \ZeroFsection$ is smooth and transverse to each $Z_j \,$. Thus for each $m \in\N$ we can choose a smooth map $e_{\lambda_m} : S_{\Fsection_{\lambda_m}} \rightarrow M$ so that
    \begin{itemize}
        \item $e_{\lambda_m} = e$ on $\Dcal \cap \ZeroFsection$
        \item both $e_{\lambda_m}$ and $e_{\lambda_m}|_{\partial S_{\Fsection_{\lambda_m}}}$ are transverse to each $Z_j \comma j=1,\ldots , k \,$.
        \item $d_M(e, e_{\lambda_m}) <  \delta_m < \delta$ on $S_{\Fsection_{\lambda_m}} \,$.
    \end{itemize}
    So properties \ref{it: evaluation maps - distance}\,-\,\ref{it: evaluation maps - transverse} above hold for each $(\lambda_m, e_{\lambda_m})$ and it remains to find an $m \in \N$ so that also property \ref{it: evaluation maps - empty preimage} holds for $(\lambda_m, e_{\lambda_m})$. We assume by contradiction that no such $m$ exists. Then, up to extracting a subsequence, there exist $i \in \{1,\ldots,\ell\}$ and $j \in \{1,\ldots,k\}$ so that
    \begin{equation}
    \label{eq: evaluation maps - empty preimage proof}
    \set{x \in \Dcal_{i} \cap \ZeroFsection}{e(x) \in \overline{Z_{j}}} = \varnothing
    \end{equation}
    and moreover $(e_{\lambda_m})^{-1}(\,\overline{Z_{j}}) \cap \Dcal_{i} \not= \varnothing $ for all $m \,$. For each $m$ choose 
    \[
    x_m \in (e_{\lambda_m})^{-1}(\,\overline{Z_{j}}) \cap \Dcal_{i} \subseteq S_{\Fsection_{\lambda_m}} \cap \Dcal_i \,\, .
    \]
    Notice that $\Funiv(x_m,\lambda_m) = \Fsection_{\lambda_m}(x_m)  = 0 \,$. Since we suppose that \ref{it: perturbation rmk - compactness} in Remark \ref{rmk: perturbation thm} holds, the restricted zero set $S_{\Funiv} \cap (\Bcal \times \ballclosed_\eps^n) $ is compact. Thus, up to a further subsequence, we may assume that $(x_m,\lambda_m)_m$ converges. Since $\Dcal_i$ is closed and $(\lambda_m)_m$ tends to the origin, the limit of $(x_m,\lambda_m)_m$ must be of the form $(x,0) \in S_{\Funiv}$ with $x \in \Dcal_i \,$. We have $x \in \ZeroFsection$ since $(x,0) \in S_{\Funiv}$ and $\Funiv(\,\cdot\,,0) = \Fsection \,$.
    \begin{claim}
        $e(x)$ lies in the closure of $Z_j \,$.
    \end{claim}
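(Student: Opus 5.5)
The plan is to prove that $d_M(e(x),\overline{Z_j}) = 0$. Since $\overline{Z_j}$ is closed in $M$, this gives $e(x) \in \overline{Z_j}$. The information available is that $e_{\lambda_m}(x_m) \in \overline{Z_j}$ for every $m$, and that $d_M(e,e_{\lambda_m}) < \delta_m$ on all of $S_{\Fsection_{\lambda_m}}$. The zero set $S_{\Fsection_{\lambda_m}}$ contains $x_m$. Hence
\[
d_M\big(e(x_m), \overline{Z_j}\big) \leq d_M\big(e(x_m), e_{\lambda_m}(x_m)\big) < \delta_m .
\]

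Next I pass to the limit, and the key point here is continuity of $e$ on all of $\Bcal$. Along the chosen subsequence, $(x_m,\lambda_m) \to (x,0)$ in $\Bcal \times \R^n$, so $x_m \to x$ in $\Bcal$. Since $e : \Bcal \rightarrow M$ is continuous, $e(x_m) \to e(x)$ in $M$. The function $y \mapsto d_M(y,\overline{Z_j})$ is $1$-Lipschitz with respect to $d_M$, hence continuous. Therefore
\[
d_M\big(e(x),\overline{Z_j}\big) = \lim_{m\to\infty} d_M\big(e(x_m),\overline{Z_j}\big) \leq \lim_{m \to \infty} \delta_m = 0 .
\]
Here I use that $\delta_m \to 0$, which holds by the choice of the sequence $(\delta_m)_m$. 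Since $d_M$ induces the topology of $M$ and $\overline{Z_j}$ is closed, we conclude that $e(x) \in \overline{Z_j}$. (If $\overline{Z_j}$ were empty, no $x_m$ could exist, so we may assume it is nonempty.)

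The only subtle step is making sure the closeness estimate can be applied at the moving points $x_m$. This works because each $x_m$ lies in the zero set $S_{\Fsection_{\lambda_m}}$, which is the domain on which the estimate from Corollary \ref{cor: evaluation maps - transversality} holds. The limit $x$ itself need not lie in any $S_{\Fsection_{\lambda_m}}$, so we use only the continuity of $e$ on $\Bcal$ there.

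With the claim in hand, the proof of the proposition concludes as follows. We have $x \in \Dcal_i \cap \ZeroFsection$ and $e(x) \in \overline{Z_j}$. This contradicts \eqref{eq: evaluation maps - empty preimage proof}.
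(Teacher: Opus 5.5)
Your proof is correct and uses the same ingredients as the paper: the estimate $d_M(e,e_{\lambda_m})<\delta_m$ at the points $x_m\in S_{\Fsection_{\lambda_m}}$, continuity of $e$ at $x$, and $\delta_m\to 0$. The only difference is packaging. The paper uses the triangle inequality to show $e_{\lambda_m}(x_m)\to e(x)$ and then invokes closedness of $\overline{Z_j}$, whereas you show $d_M(e(x),\overline{Z_j})=0$; the two are equivalent.
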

    \noindent Assuming the claim, $x \in \Dcal_i \cap \ZeroFsection$ satisfies $e(x) \in \overline{Z_j} \,$, contradicting \eqref{eq:  evaluation maps - empty preimage proof}. This means there exists an $m \in \N$ so that properties \ref{it: evaluation maps - distance}\,-\,\ref{it: evaluation maps - empty preimage} hold for $(\lambda_m, e_{\lambda_m})\,$, which finishes the proof of the proposition.
    \begin{proofClaim}
        Since $e_{\lambda_m}(x_m) \in \overline{Z_j}$ for every $m \,$, it suffices to show that $e_{\lambda_m}(x_m)$ converges to $e(x) \,$. To this end, recall that we have chosen the functions $e_{\lambda_m} : S_{\Fsection_{\lambda_m}} \rightarrow M$ so that $d_M(e,e_{\lambda_m}) < \delta_m $ on $S_{\Fsection_{\lambda_m}} \,$. We now estimate 
        \begin{align*}
        d_M(e(x), e_{\lambda_m}(x_m)) &\leq d_M(e(x), e(x_m)) + d_M(e(x_m), e_{\lambda_m}(x_m)) \\
        &\leq d_M(e(x),e(x_m)) + \delta_m \,\, .
        \end{align*}
        The first term on the right-hand side tends to zero by continuity of $e$ and $\delta_m$ tends to zero as well,
        hence so does $d_M(e(x),e_{\lambda_m}(x_m)) \,$. 
    \end{proofClaim}
\end{proof}

\begin{figure}
    \centering
    \tikzset{
  symbol/.style={
    draw=none,
    every to/.append style={
      edge node={node [sloped, allow upside down, auto=false]{$#1$}}}
  }
}

\begin{tikzcd}[row sep = small, column sep = tiny]
 |[xshift=1ex]| \mathcal{E}^1 \ar[r, hook] \ar[dddr, "\pi^1"' {yshift=1ex}] & \mathcal{E} \ar[ddd,"\pi"' {yshift=1ex}] &  &  & \\
   &  &  & &   \\ & & & & \\
  \mathcal{D} \ar[r, symbol = \subseteq]  & \mathcal{B} \ar[rrr, "e"] \ar[uuu, bend right, "\mathcal{F}_\lambda"'] &  &  &  M \supseteq Z_j \\
  \mathcal{D} \cap S_{\mathcal{F}_\lambda} \ar[u, symbol=\subseteq] \ar[r, symbol=\subseteq]   &  S_{\mathcal{F}_\lambda} \ar[u, symbol=\subseteq] \ar[urrr, dashed, end anchor = south west, shorten = -2mm , "e_{\lambda}"' {near start, yshift=0.25ex }, "\pitchfork Z_j"  {rotate=25 ,  xshift=2ex ,yshift=-0.25ex}] &  &  & 
\end{tikzcd}
    \caption{Situation in Proposition \ref{prop: perturbed evaluations}. The upper triangle with corners $\Ecal, \Ecal^1, \Bcal$ and the lower trapezoid with corners $\Dcal, \, \Dcal \cap S_{\Fsection_\lambda} , \, S_{\Fsection_\lambda} , \, M$ commute.}
    \label{diagram: perturbing evaluation map} 
\end{figure}

\subsection{Cobordisms and Evaluation Maps}
\label{subsec: evaluation maps - cobordisms}

We will apply Proposition \ref{prop: perturbed evaluations} to derive our last main result, Theorem \ref{mthm: evaluation maps - cobordism}, below.
Suppose we are given
\begin{itemize}
    \item a Banach manifold $\Bcal$ without boundary which admits smooth bump functions,
    \item a B-bundle pair $(\Ecal,\Ecal^1)$ over $\Bcal \,$,
    \item a real number $R > 0 \,$; We write $\BcalHat := [0,R] \times \Bcal$ and $\EcalHat$ for the pullback of $\Ecal$ along the projection $\BcalHat \rightarrow \Bcal $ to the second factor.
    \item a Fredholm section $\Fsection : \BcalHat \rightarrow \EcalHat$ of index $d + 1$ with compact zero set $\ZeroFsection\,$; We also write $\Fsection_r(x) := \Fsection(r,x)$ for $(r,x) \in [0,R] \times \Bcal$ and assume that $\Fsection_0 : \Bcal \rightarrow \Ecal$ is transverse to the zero section.
    \item a finite-dimensional manifold $M$ without boundary and a submanifold $Z \subseteq M$ without boundary that is a closed subset of $M \,$,
    \item a continuous map $e : \BcalHat \rightarrow M$ so that, writing $e_r(x) := e(r,x)$ for $(r,x) \in [0,R] \times \Bcal \,$, the map $e_0 : \Bcal \rightarrow M$ is smooth.
\end{itemize}
Notice that, due to the assumed transversality of $\Fsection_0 \,$, the zero set $S_{\Fsection_0}$ is a (possibly empty) compact $d$-dimensional submanifold of $\Bcal $ without boundary. The map $e_0$ then restricts to a smooth map $e_0 : S_{\Fsection_0} \rightarrow M \,$. If $e_0|_{S_{\Fsection_0}}$ is transverse to $Z \,$, then the preimage $(e_0|_{S_{\Fsection_0}})^{-1}(Z)$ is a compact submanifold of $S_{\Fsection_0}$ without boundary of dimension $d - \mathrm{codim}_M(Z) \,$. So, if $Z$ is additionally of codimension $d$, the preimage consists of finitely many points.

\begin{MainThm}
\label{mthm: evaluation maps - cobordism}
Suppose that $\mathrm{codim}_M(Z) = d$ and that $e_0 |_{S_{\Fsection_0}} : S_{\Fsection_0} \rightarrow M$ is transverse to $Z$ with $\# \, (e_0|_{S_{\Fsection_0}})^{-1}(Z) \in \N_0$ odd. Then there exists $x \in S_{\Fsection_R}$ with $e_R(x) \in Z \,$.
\end{MainThm}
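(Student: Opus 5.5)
We argue by contradiction: suppose no $x \in S_{\Fsection_R}$ satisfies $e_R(x) \in Z$. The plan is to apply Proposition \ref{prop: perturbed evaluations} to $\BcalHat = [0,R] \times \Bcal$ and build a compact $1$-manifold with an odd number of boundary points, which is impossible. Set $\Dcal := \{0\} \times \Bcal$. This is a boundary component of $\BcalHat$ (recall $\Bcal$ has no boundary, so $\partial\BcalHat = \{0,R\} \times \Bcal$). By assumption $\Fsection|_{\Dcal} = \Fsection_0$ is transverse to the zero section. Also $e|_{\Dcal \cap \ZeroFsection} = e_0|_{S_{\Fsection_0}}$ is smooth and transverse to $Z_1 := Z$. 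Moreover $\Dcal \cap \ZeroFsection \neq \varnothing$, since $(e_0|_{S_{\Fsection_0}})^{-1}(Z)$ has odd, hence positive, cardinality. Take $\ell = 1$ with $\Dcal_1 := \{R\} \times \Bcal$, which is closed. Since $Z$ is closed, $\overline{Z} = Z$, and the contradiction hypothesis says exactly that $\set{x \in \Dcal_1 \cap \ZeroFsection}{e(x) \in \overline{Z}} = \varnothing$.

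Next I fix \Bplus-sections, $\eps$ and $\Bres$ as in Theorem \ref{mthm: perturbation thm}, and check hypothesis \ref{it: perturbation thm assumption} for $\Dcal$. At $x \in \Dcal \cap \ZeroFsection$ the vertical differential of $\Fsection$ restricted to the boundary is $\Dvert{}\Fsection_0$, which is surjective. Hence the full vertical differential on $T\BcalHat \supseteq T(\{0\}\times\Bcal)$ is surjective as well. Proposition \ref{prop: perturbed evaluations} then gives $\lambda \in \Bres$ and a smooth $e_\lambda : S_{\Fsection_\lambda} \to M$ with the following properties:
\begin{itemize}
\item $S_{\Fsection_\lambda}$ is a compact $(d+1)$-manifold with boundary $S_{\Fsection_\lambda} \cap (\{0,R\}\times\Bcal)$;
\item $S_{\Fsection_\lambda} \cap \Dcal = \{0\} \times S_{\Fsection_0}$ and $e_\lambda = e_0$ there;
\item $e_\lambda$ and $e_\lambda|_{\partial S_{\Fsection_\lambda}}$ are transverse to $Z$;
\item $e_\lambda^{-1}(Z) \cap \Dcal_1 = \varnothing$, by item \ref{it: evaluation maps - empty preimage}.
\end{itemize}

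Let $C := e_\lambda^{-1}(Z) \subseteq S_{\Fsection_\lambda}$. Since $e_\lambda$ and its boundary restriction are transverse to $Z$ and $Z$ is closed, $C$ is a compact submanifold with boundary $\partial C = C \cap \partial S_{\Fsection_\lambda}$. Its dimension is $(d+1) - d = 1$. This is the standard preimage theorem for maps from manifolds with boundary, which applies because both $e_\lambda$ and $e_\lambda|_\partial$ are transverse. The boundary of $S_{\Fsection_\lambda}$ splits into the part over $r = 0$ and the part over $r = R$. The part over $r=R$ lies in $\Dcal_1$ and so contributes nothing to $\partial C$. The part over $r = 0$ is exactly $\{0\}\times S_{\Fsection_0}$, where $e_\lambda = e_0$. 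Therefore $\partial C \cong (e_0|_{S_{\Fsection_0}})^{-1}(Z)$, which has odd cardinality. A compact $1$-manifold has an even number of boundary points, so we reach a contradiction.

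The main obstacle is bookkeeping rather than analysis. One must check that $\partial S_{\Fsection_\lambda}$ really is the disjoint union of its parts over $r=0$ and $r=R$. One must also check that $e_\lambda|_{\partial S_{\Fsection_\lambda}} \pitchfork Z$ gives, at points over $r=0$, transversality of $e_0|_{S_{\Fsection_0}}$; this is consistent with $e_\lambda = e_0$ there. Finally, the preimage theorem with boundary needs care in the degenerate case $d = 0$, and in the case where $Z$ has components of different dimension; we take $Z$ pure-dimensional per the conventions.
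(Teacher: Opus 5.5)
Your proposal is correct and is essentially the paper's proof. It uses the same choices $\Dcal = \{0\}\times\Bcal$, $\Dcal_1 = \{R\}\times\Bcal$, $Z_1 = Z$, the same appeal to Theorem~\ref{mthm: perturbation thm} and Proposition~\ref{prop: perturbed evaluations}, and the same parity count of boundary points of the compact $1$-manifold $e_\lambda^{-1}(Z)$. The only cosmetic difference is that you assume the conclusion fails at the outset, whereas the paper first shows $\widetilde{e}^{\,-1}(Z)\cap\Dcal_1\neq\varnothing$ by parity and then applies the contrapositive of item~\ref{it: evaluation maps - empty preimage}.
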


\begin{proof}
    Notice that the assumption that $\#\,(e_0|_{S_{\Fsection_0}})^{-1}(Z)$ is an odd number in particular means that $S_{\Fsection_0}$ is non-empty.
    We apply the perturbation theorem to the Banach manifold with boundary $\BcalHat = [0,R] \times \Bcal \,$. Consider its boundary components $\Dcal := \{0\} \times \Bcal$ and $\Dcal_1 := \{R\} \times \Bcal \,$. By assumption, $\Fsection_0 = \Fsection|_{\Dcal} : \Dcal \rightarrow \EcalHat|_{\Dcal}$ is transverse to the zero section, $\Dcal \cap \ZeroFsection = \{0\} \times S_{\Fsection_0}$ is non-empty and the restriction $e|_{\Dcal \cap \ZeroFsection} = e_0|_{S_{\Fsection_0}}$ is transverse to $Z \,$. Consequently, the setting is as described in the previous Subsection \ref{subsec: evaluation maps - Existence Perturbed Evaluation Maps}.
    From Theorem \ref{mthm: perturbation thm} and Proposition \ref{prop: perturbed evaluations} we deduce the existence of a compact $(d+1)$-dimensional submanifold $\widetilde{S}$ of $[0,R] \times \Bcal$ with boundary
    \begin{align*}
      \partial \widetilde{S} = \widetilde{S} \cap (\{0,R\} \times \Bcal) = (\widetilde{S} \cap \Dcal) \sqcup (\widetilde{S} \cap \Dcal_1) = (\{0\} \times S_{\Fsection_0}) \sqcup (\widetilde{S} \cap \Dcal_1)  
    \end{align*}
    and a smooth map $\widetilde{e} : \widetilde{S} \rightarrow M$ satisfying
\begin{itemize}
    \item $\widetilde{e} = e$ on $\{0\} \times S_{\Fsection_0} \,$.
    \item Both $\widetilde{e}$ and $\widetilde{e} \,|_{\partial \widetilde{S}}$ are transverse to $Z \,$.
    \item If there does not exist $(R,x) \in S_{\Fsection}$ with $e(R,x) \in Z \,$, then $\widetilde{e}^{\,-1}(Z) \cap \Dcal_1 = \varnothing \,$.
\end{itemize}
Due to the transversality, the preimage $\widetilde{e}^{\,-1}(Z) \subseteq \widetilde{S}$ is a submanifold of dimension $\dim(\widetilde{S}) - \mathrm{codim}_M(Z) = 1$ and with boundary
\begin{align*}
   \partial (\widetilde{e}^{\,-1}(Z)) &= \widetilde{e}^{\,-1}(Z) \cap \partial \widetilde{S} = ( e^{-1}(Z) \cap (\{0\} \times S_{\Fsection_0}) ) \sqcup (\widetilde{e}^{\,-1}(Z) \cap \Dcal_1) \\
   &\cong (e_0|_{S_{\Fsection_0}})^{-1}(Z) \sqcup (\widetilde{e}^{\,-1}(Z) \cap \Dcal_1) \,\, ,
\end{align*}
where we used the first bullet point for the second equation. By continuity, $\widetilde{e}^{\,-1}(Z)$ is a closed subset of $\widetilde{S}$ and therefore compact itself. As compact $1$-dimensional manifold, $\widetilde{e}^{\,-1}(Z)$ has an even number of boundary points. So, from the assumption that $(e_0|_{S_{\Fsection_0}})^{-1}(Z)$ consists of an odd number of points, we infer that $\widetilde{e}^{\,-1}(Z) \cap \Dcal_1$ is non-empty. Thus, by the last bullet point, there exists $x \in S_{\Fsection_R}$ with $e(R,x) \in Z \,$. 
\end{proof}

\begin{remark}
\label{rmk: evaluation maps - cobordism}
More generally, instead of requiring that $Z$ has codimension $d$ and that $(e_0|_{S_{\Fsection_0}})^{-1}(Z)$ consists of an odd number of points, we could assume that the submanifold $(e_0|_{S_{\Fsection_0}})^{-1}(Z)$ is not cobordant to the empty set. Then the conclusion of Theorem \ref{mthm: evaluation maps - cobordism} continues to hold.
\end{remark}

\subsubsection{Illustration}
\label{subsubsec: evaluation maps - cobordisms illustration}

We briefly illustrate how to apply Theorem \ref{mthm: evaluation maps - cobordism} by giving a slightly modified proof of Lemma \ref{lemma: moduli_R non-empty} (the key step in the proof of Schwarz's theorem). Subsequently, we will work in the setting of Section \ref{sec: Application Schwarz thm}. We have fixed a point $q_0 \in M \backslash \mathrm{Fix}(\Phi_H^1) \,$.
Consider
\[
\Bcal := \set{u \in W^{1,p}(\bbS^2,M)}{u \text{ null-homotopic}} \,\, ,
\]
which is a union of connected components of $W^{1,p}(\bbS^2,M) \,$, one for each component of $M$. (The above definition of $\Bcal$ only differs from the Banach manifold defined in \eqref{eq: Bcal def Schwarz thm} in that we do not impose a condition on the evaluation of $u$ at $(0,0)$.) Just as before, we consider the bundle $\Ecal$ over $\Bcal$ with fiber $\Ecal_u := L^p(\bbS^2,\Lambda^{0,1} \otimes_{\C} u^*TM) $ 
and note that we can extend $\Ecal$ to a B-bundle pair $(\Ecal,\Ecal^1) \,$. For fixed $R > 0$ we consider the section
\[
\Fsection : [0,R] \times \Bcal \rightarrow \EcalHat_{[0,R]} \comma \quad \Fsection(r,u) = \Fsection_r(u) = \overline{\partial}_J \, u - (\beta_r \, dt \otimes X_{H_t}(u))^{(0,1)} \,\, .
\]
By Lemma \ref{lemma: D_u Fredholm + index}, this is a Fredholm section of index $2m +1 = \dim(M) +1 \,$. Its zero set $\ZeroFsection$ is compact (analogous to Proposition \ref{prop: moduli space compact}) and $\Fsection_0$ is transverse to the zero section (follows from Proposition \ref{prop: Fsection_0 transverse to zero section}). We work with the smooth evaluation map $e : [0,R] \times \Bcal \rightarrow M$ defined by $e(r,u) := \ev(u) = u(0,0) \,$. Due to the symplectic asphericity of $(M,\omega)$, the zero set $S_{\Fsection_0}$ consists precisely of the constant spheres $u : \bbS^2 \rightarrow M $. Hence the restriction $\ev|_{S_{\Fsection_0}}$ is a diffeomorphism onto $M$. Its inverse is the smooth map $M \rightarrow \Bcal \,$, sending $q \in M$ to the constant sphere $u \equiv q \,$, restricted in the codomain to the submanifold $S_{\Fsection_0} \,$. This shows that $\ev|_{S_{\Fsection_0}}$ is transverse to $Z:= \{q_0\}$ and that $(\ev|_{S_{\Fsection_0}})^{-1}(Z) = \{u_0\} $ consists of a single point. By Theorem \ref{mthm: evaluation maps - cobordism}, there exists $u \in S_{\Fsection_R}$ with $u(0,0) = \ev(u) = q_0 \,$. In other words, the moduli space $\moduli_R$ is non-empty.

\appendix

\section{Lemmata from (Non-)Linear Functional Analysis}
\label{app sec: Lemmata from Functional Analysis}

\begin{lemma}
\label{app lemma: Fredholm finite dim summands}
Let $\bbmE$ and $\bbmF$ be Banach spaces and $V$ and $W$ be finite-dimensional vector spaces. Given a bounded linear operator $A = (A_{1},A_2) : \bbmE \oplus V \rightarrow \bbmF \oplus W$ with components $A_1 : \bbmE \oplus V \rightarrow \bbmF$ and $A_2 : \bbmE \oplus V \rightarrow W \,$. Then $A$ is Fredholm if and only if $A_1|_{\bbmE} : \bbmE \rightarrow \bbmF$ is Fredholm. If this is the case, then their Fredholm indices are related by
\[
\mathrm{ind}(A) = \mathrm{ind}(A_1|_{\bbmE}) + \dim(V) - \dim(W) \,\, .
\]
\end{lemma}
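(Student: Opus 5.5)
The plan is to reduce to the classical fact that a finite-rank (more generally, compact) perturbation of a Fredholm operator is Fredholm with the same index, and that the index is additive under composition with and restriction to finite-codimensional pieces. Write the domain splitting as $\bbmE \oplus V$ and decompose
\[
A = \begin{pmatrix} A_{11} & A_{12} \\ A_{21} & A_{22} \end{pmatrix}, \qquad A_{11} := A_1|_{\bbmE} : \bbmE \rightarrow \bbmF ,
\]
with $A_{12} : V \rightarrow \bbmF$, $A_{21} : \bbmE \rightarrow W$ and $A_{22} : V \rightarrow W$. The off-diagonal and lower-right blocks all have finite-dimensional domain or codomain, hence finite rank. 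So $A$ differs from the block-diagonal operator $A_{11} \oplus 0 : \bbmE \oplus V \rightarrow \bbmF \oplus W$ by a finite-rank operator.

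First I treat the diagonal operator $A_{11} \oplus 0$ directly. Its kernel is $\ker A_{11} \oplus V$ and its image is $\image A_{11} \oplus 0$. The image is closed in $\bbmF \oplus W$ exactly when $\image A_{11}$ is closed in $\bbmF$, and its cokernel is $\mathrm{coker}\, A_{11} \oplus W$. Hence $A_{11} \oplus 0$ is Fredholm if and only if $A_{11}$ is, and in that case
\[
\mathrm{ind}(A_{11} \oplus 0) = \mathrm{ind}(A_{11}) + \dim V - \dim W .
\]

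Second, I invoke the standard stability theorem: if $T$ is Fredholm and $K$ is compact (in particular finite rank), then $T + K$ is Fredholm with $\mathrm{ind}(T+K) = \mathrm{ind}(T)$ (see e.g.\ \cite{Lang_Real_Analysis}). Applying it with $T = A_{11} \oplus 0$ and $K = A - T$ gives the direction "$A_{11}$ Fredholm $\Rightarrow$ $A$ Fredholm", together with the index formula. Applying it with $T = A$ and $K = -(A - A_{11}\oplus 0)$ gives the converse "$A$ Fredholm $\Rightarrow$ $A_{11} \oplus 0$ Fredholm", and then the first step yields that $A_{11}$ is Fredholm.

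The only real subtlety is the stability theorem under finite-rank perturbations; everything else is bookkeeping. If one prefers to avoid citing it, I would prove the finite-rank case by hand, handling the rank-one perturbations of the form $\bbmE \rightarrow \bbmF$ first. For such a perturbation, compare $T$ and $T+K$ on the finite-codimensional closed subspace $\ker K$, where they agree. Restricting a Fredholm operator to a closed subspace of codimension $k$ keeps it Fredholm and lowers the index by $k$, as a direct dimension count on kernel and cokernel shows. Closedness of the image is preserved because adding or removing finitely many dimensions to a closed subspace keeps it closed. This gives equality of the indices of $T$ and $T+K$.
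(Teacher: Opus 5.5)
Your proposal is correct and follows the paper's proof essentially verbatim. The paper likewise writes $A$ as the block-diagonal operator with entries $A_1|_{\bbmE}$ and $0$ plus a finite-rank (hence compact) remainder, uses invariance of Fredholmness and index under compact perturbations, and reads off $\ker = \ker(A_1|_{\bbmE}) \oplus V$ and $\mathrm{coker} \cong \mathrm{coker}(A_1|_{\bbmE}) \oplus W$; your extra remarks on closedness of the image and your optional by-hand proof of finite-rank stability are sound but not needed.
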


\begin{proof}
    Write $A$ in the form
    \[
    A = \left( \begin{matrix}
        A_1|_{\bbmE} & A_1|_{V} \\
        A_2|_{\bbmE} & A_2|_{V}
    \end{matrix} \right) =  \left( \begin{matrix}
        A_1|_{\bbmE} & 0 \\
        0 & 0
    \end{matrix} \right) +  \left( \begin{matrix}
        0 & A_1|_{V} \\
        A_2|_{\bbmE} & A_2|_{V}
    \end{matrix} \right) =: \widetilde{A} + K
    \]
    and observe that $K$ is a bounded linear operator of finite rank and hence compact. Thus $A$ is Fredholm iff $\widetilde{A}$ is and, if this is the case, their Fredholm indices coincide. Moreover $\ker(\widetilde{A}) = \ker(A_1|_{\bbmE}) \oplus V$ and 
    \[
    \mathrm{coker}(\widetilde{A}) = (\bbmF \oplus W) / (\image(A_1|_{\bbmE}) \oplus \{0\}) \cong \mathrm{coker}(A_1|_{\bbmE}) \oplus W \,\, .
    \]
\end{proof}

\begin{lemma}
\label{app lemma: spanning cokernel open condition}
Let $X$ be a topological space and $\bbmE$ and $\bbmF$ Banach spaces. Given a continuous map $F : X \rightarrow \mathcal{L}(\bbmE,\bbmF)$ into the space of bounded linear operators (with the operator norm). Let $s_1,\ldots , s_n : X \rightarrow \bbmF$ be finitely many continuous maps. For given $x_0 \in X$ suppose that $s_1(x_0), \ldots , s_n(x_0)$ span the cokernel of $F(x_0) \,$. Then there exists an open neighborhood $U \subseteq X$ of $x_0$ so that $s_1(x),\ldots , s_n(x)$ span the cokernel of $F(x)$ for every $x \in U \,$.
\end{lemma}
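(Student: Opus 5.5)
The plan is to reduce the statement to the fact that surjectivity is an open condition in a space of bounded operators. Define the continuous map
\[
G : X \rightarrow \mathcal{L}(\bbmE \oplus \R^n, \bbmF) \comma \quad G(x)[\xi,\mu] := F(x)[\xi] + \sum_{i=1}^n \mu_i \, s_i(x) \,\, ,
\]
where $\bbmE \oplus \R^n$ carries, say, the norm $\lVert \xi \rVert_{\bbmE} + |\mu|$. The statement that $s_1(x),\ldots,s_n(x)$ span the cokernel of $F(x)$ means exactly that $\image F(x) + \mathrm{span}\{s_1(x),\ldots,s_n(x)\} = \bbmF$, i.e.\ that $G(x)$ is surjective. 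So the hypothesis is that $G(x_0)$ is surjective, and the goal is to show that $G(x)$ is surjective for all $x$ near $x_0$.

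First I will check that $G$ is continuous into the operator norm topology. The estimate
\[
\lVert G(x) - G(x_0) \rVert \leq \lVert F(x) - F(x_0) \rVert + \max_i \lVert s_i(x) - s_i(x_0) \rVert_{\bbmF}
\]
holds with this choice of norm, and the right-hand side tends to zero by continuity of $F$ and of the $s_i$.

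Next I will use that the set of surjective operators in $\mathcal{L}(\bbmE \oplus \R^n,\bbmF)$ is open, as cited from \cite[Ch.~XV, Thm.~3.4]{Lang_Real_Analysis}. Taking $U := G^{-1}(\text{surjective operators})$ then gives an open neighborhood of $x_0$ with the required property.

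The only step with real content is this openness of surjectivity. If I wanted to be self-contained, I would argue as follows. By the open mapping theorem there is a constant $c>0$ such that every $y \in \bbmF$ has a preimage under $T := G(x_0)$ of norm at most $c\lVert y \rVert$. For any $S$ with $\lVert S - T \rVert < 1/(2c)$, one solves $Sz = y$ by iteration. Start with an approximate preimage $z_0$ of $y$ under $T$, with $\lVert z_0 \rVert \leq c \lVert y \rVert$. The residual $y - Sz_0 = (T-S)z_0$ has norm at most $\tfrac12 \lVert y \rVert$. Repeating on the residual produces a geometric series $\sum_k z_k$ that converges, and its sum is a preimage of $y$ under $S$. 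This shows that $S$ is surjective.
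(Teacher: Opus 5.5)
Your proposal is correct and matches the paper's proof: the paper uses the same auxiliary map $G(x)(e,\lambda) = F(x)e + \sum_i \lambda_i s_i(x)$, notes that spanning the cokernel is equivalent to surjectivity of $G(x)$, and takes $U$ to be the preimage under $G$ of the open set of surjective operators, citing Lang. Your open-mapping and iteration argument is a correct self-contained proof of that cited fact. Your continuity estimate is exact if $|\mu|$ is the $\ell^1$-norm; with the Euclidean norm it only picks up a harmless factor $\sqrt{n}$.
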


\begin{proof}
    Consider the continuous map $G : X \rightarrow \mathcal{L}(\bbmE \oplus \R^n, \bbmF)$ given by
    \begin{align*}
    G(x)(e,\lambda) := F(x) (e) + \sum_{i=1}^n \lambda_i s_i(x) \quad \text{ for } x \in X \comma e \in \bbmE \comma \lambda = (\lambda_1,\ldots , \lambda_n ) \in \R^n \,\, .
    \end{align*}
    Notice that $s_1(x),\ldots , s_n(x)$ span the cokernel of $F(x)$ if and only if $G(x)$ is a surjective linear operator. It is a well-known fact that the subset in $\mathcal{L}(\bbmE \oplus \R^n,\bbmF)$ consisting of surjective operators is open, see \cite[Ch.~XV Thm.~3.4]{Lang_Real_Analysis}. The preimage of this subset under $G$ is the desired open neighborhood $U \subseteq X$ of $x_0 \,$.
\end{proof}

\noindent The next lemma is a special case of \cite[Thm.~A.4.3]{J_holomorphic_curves}.

\begin{lemma}[\cite{J_holomorphic_curves}, Thm.~A.4.3]
\label{app lemma: linearize C^1 function with Fredholm differential}
   Let $\bbmE $ and $\bbmF$ be Banach spaces and $U \subseteq \bbmE$ an open neighborhood of the origin $0 \in \bbmE \,$. Let $f : U \rightarrow \bbmF$ be a $C^1$-map with $f(0)= 0$ for which the differential $A := Df(0) \in \mathcal{L}(\bbmE,\bbmF)$ is a surjective Fredholm operator. Then there exists an open neighborhood $W \subseteq \bbmE$ of the origin and a $C^1$-diffeomorphism $g : W \rightarrow g(W)$ onto the open subset $g(W) \subseteq U$ with 
   \[
   (f \circ g ) (x) =  A x \quad \Forall x \in W
   \]
   and moreover $g(0) = 0$ and $Dg(0) = \mathbbm{1} \,$.
\end{lemma}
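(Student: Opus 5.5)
The statement to prove is Lemma~\ref{app lemma: linearize C^1 function with Fredholm differential}: a local linearization of a $C^1$-map $f$ whose differential $A = Df(0)$ is a surjective Fredholm operator. The plan is to reduce it to the inverse function theorem by adjoining a projection onto the finite-dimensional kernel.

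\emph{Step 1: splitting.} Since $A$ is Fredholm, $K := \ker A$ is finite-dimensional. Choose a closed complement $\bbmE = K \oplus X$ and a continuous projection $P : \bbmE \to K$ with $\ker P = X$. Because $A$ is surjective, $A|_X : X \to \bbmF$ is a toplinear isomorphism by the open mapping theorem.

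\emph{Step 2: augmented map.} Define
\[
\Psi : U \to \bbmF \oplus K, \qquad \Psi(x) := (f(x), Px),
\]
and let $L := (A,P) : \bbmE \to \bbmF \oplus K$ be its linear counterpart. Then $D\Psi(0) = L$. The operator $L$ is injective, since $Ax = 0$ and $Px = 0$ force $x \in K \cap X = \{0\}$. It is also surjective: given $(y,k)$, take $x = k + (A|_X)^{-1}y$. Hence $L$ is a toplinear isomorphism.

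\emph{Step 3: inverse function theorem.} Apply the $C^1$ inverse function theorem to $\Phi := L^{-1} \circ \Psi : U \to \bbmE$. This map satisfies $\Phi(0) = 0$ and $D\Phi(0) = \mathbbm{1}$, so it is a $C^1$-diffeomorphism from an open neighbourhood $U' \subseteq U$ of $0$ onto an open set $W \ni 0$. Set $g := (\Phi|_{U'})^{-1} : W \to U'$. Then $g(0) = 0$, $Dg(0) = \mathbbm{1}$, and $g(W) = U'$ is open in $U$.

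\emph{Step 4: verification.} For $x \in W$ we have $L^{-1}\Psi(g(x)) = x$, i.e.\ $\Psi(g(x)) = Lx$. Comparing first components gives
\[
f(g(x)) = Ax \qquad \text{for all } x \in W,
\]
which is the claim.

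The only point needing care is Step 1. One must know that a complement of $K$ exists, which holds because finite-dimensional subspaces are complemented. One must also know that $A|_X$ has a bounded inverse, which is where both surjectivity and Banach completeness of the spaces are used.
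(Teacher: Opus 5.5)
Your proof is correct and is essentially the paper's argument. With $T := (A|_X)^{-1}$ one has $x - TAx = Px$, so your map $L^{-1}\circ\Psi(x) = Px + Tf(x)$ is exactly the paper's $\psi(x) = x + T(f(x) - Ax)$, to which the paper applies the inverse function theorem before setting $g := \psi^{-1}$; the only difference is that you reach $D\psi(0)=\mathbbm{1}$ by augmenting $f$ with the kernel projection, whereas the paper writes $\psi$ down directly.
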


\begin{proof}
    Choose a complement $\bbmE' \subseteq \bbmE$ of $\ker(A)$ and set $T := (A|_{\bbmE'})^{-1} : \bbmF \rightarrow \bbmE \,$, so that $A T = \mathbbm{1}_{\bbmF} \,$. The map $\psi : U \rightarrow \bbmE$ defined by
    \[
    \psi(x) := x + T(f(x) - Ax)
    \]
    satisfies $\psi(0) = 0$ and has differential $D \psi(0) = \mathbbm{1}\,$. By the inverse function theorem, there exist open neighborhoods $V \subseteq U$ and $W \subseteq \bbmE$ of the origin so that $\psi : V \rightarrow W$ is a $C^1$-diffeomorphism. Now set $g := \psi^{-1}$ and notice that
    \[
    (A \circ \psi ) (x) = Ax + AT(f(x) - Ax) = Ax + f(x) - Ax = f(x) \qquad \Forall x \in V \,\, .
    \]
\end{proof}

\noindent It is known that Fredholm maps are locally proper, see \cite[Thm.~1.6]{Sard-Smale}, and the next lemma can be regarded a reformulation of this fact.

\begin{lemma}
\label{app lemma: non-linear Fredholm map}
Let $\bbmE $ and $\bbmF$ be Banach spaces and $U \subseteq \bbmE$ be an open neighborhood of the origin $0 \in \bbmE \,$. Let $f : U \rightarrow \bbmF$ be a $C^1$-map for which the differential $D f(0) \in \mathcal{L}(\bbmE,\bbmF)$ is a Fredholm operator. Then there exists an open neighborhood $V \subseteq U$ of $0$ so that for every sequence $(x_n)_{n \in \N} \subseteq V$ it holds
\[
(f(x_n))_n \text{ converges in } \bbmF \quad \Longrightarrow \quad  \text{a subsequence of } (x_n)_n \text{ converges in } \bbmE \,\, .
\]
\end{lemma}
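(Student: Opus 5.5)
The plan is to reduce to the case of a surjective Fredholm operator, then use the local linearization Lemma \ref{app lemma: linearize C^1 function with Fredholm differential}. Write $A := Df(0)$. Since $A$ is Fredholm, its cokernel is finite-dimensional. So I choose a finite-dimensional subspace $C \subseteq \bbmF$ with $\image(A) \oplus C = \bbmF$, which is closed since it is finite-dimensional. Then I define the auxiliary map
\[
\widehat{f} : U \times C \rightarrow \bbmF \comma \quad \widehat{f}(x,c) := f(x) + c \,\, .
\]
It is $C^1$ with $D\widehat{f}(0,0)(\xi,c) = A\xi + c$. This differential is surjective, and by Lemma \ref{app lemma: Fredholm finite dim summands} it is Fredholm. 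Applying Lemma \ref{app lemma: linearize C^1 function with Fredholm differential} on the Banach space $\bbmE \oplus C$ yields an open neighborhood $W$ of the origin and a $C^1$-diffeomorphism $g : W \rightarrow g(W) \subseteq U \times C$ with $\widehat{f} \circ g = \widehat{A} := D\widehat{f}(0,0)$ on $W$.

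Next, I choose a closed ball $\overline{B}_r \subseteq W$ of small radius, so that $g$ is defined on $\overline{B}_r$ and $g(\overline{B}_r)$ is a neighborhood of $0$. I then take $V \subseteq U$ open with $V \times \{0\} \subseteq g(B_r)$.

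Now let $(x_n)_n \subseteq V$ with $f(x_n) \rightarrow y$ in $\bbmF$, and set $w_n := g^{-1}(x_n,0) \in B_r$. Then $\widehat{A} w_n = \widehat{f}(x_n,0) = f(x_n)$ converges. The key step is a linear one: a surjective Fredholm operator $\widehat{A}$ restricted to a bounded set is proper in the sequential sense. To see this, decompose $\bbmE \oplus C = \ker(\widehat{A}) \oplus \bbmE'$, where $\widehat{A}|_{\bbmE'}$ is a toplinear isomorphism onto $\bbmF$ by the open mapping theorem. Write $w_n = k_n + e_n$ with respect to this decomposition.

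The two components behave as follows. The $e_n = (\widehat{A}|_{\bbmE'})^{-1}\widehat{A} w_n$ converge. The $k_n$ are bounded, because $w_n$ is bounded and the projection is continuous, and they lie in the finite-dimensional space $\ker(\widehat{A})$. Hence a subsequence of the $k_n$ converges. Therefore a subsequence $w_{n_j} \rightarrow w$ with $w \in \overline{B}_r \subseteq W$. By continuity of $g$ on $W$, we get $(x_{n_j},0) = g(w_{n_j}) \rightarrow g(w)$, so $x_{n_j}$ converges in $\bbmE$.

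The only delicate point is making sure the limit $w$ stays inside the domain of $g$. This is why I shrink to a closed ball $\overline{B}_r \subseteq W$ before defining $V$. The rest is routine.
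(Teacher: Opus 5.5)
Your argument is correct except for one normalization. Lemma~\ref{app lemma: linearize C^1 function with Fredholm differential} assumes the map vanishes at the origin, and it yields $g(0)=0$, which you use when you say $g(\overline{B}_r)$ is a neighborhood of $0$. But $\widehat{f}(0,0)=f(0)$ need not vanish, since the statement does not assume $f(0)=0$. The fix is one line: take $\widehat{f}(x,c) := f(x) - f(0) + c$. Then $\widehat{A}w_n = f(x_n)-f(0)$ still converges, and the rest of your proof goes through verbatim.

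With that fix, your route differs from the paper's only in how surjectivity of the differential is obtained. The paper proceeds in three steps: the linear surjective case, then the nonlinear case with $f(0)=0$ and surjective $Df(0)$ via the linearization lemma, then the general case. In the last step it \emph{shrinks the target}. It replaces $f$ by $\mathrm{pr}_{\mathrm{im}(A)}(f-f(0)) : U \to \mathrm{im}(A)$, discarding the cokernel directions. This is harmless because convergence of $f(x_n)$ implies convergence of the projected sequence.

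You instead \emph{enlarge the domain} by the cokernel complement $C$ (the usual stabilization trick) and then restrict to the slice $V\times\{0\}$. This costs the small bookkeeping of pulling the slice back through $g^{-1}$, but it keeps the full map and merges the paper's three steps into one argument. Both are equally short.

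A genuine merit of your write-up is that you explicitly ensure the limit of $(w_n)$ lies in a closed ball inside $W$, so that $g$ can be applied to it. The paper's Step~2 (``By Step 1 the conclusion holds for $f\circ g$ and thus also for $f$'') leaves implicit that the bounded neighborhood from Step~1 must be chosen with closure contained in $W$.
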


\begin{proof} We show the lemma in increasing generality.

\textit{Step 1:} First we show the special case in which 
$f = A|_U : U \rightarrow \bbmF$ is the restriction of a surjective Fredholm operator $A$ to an open neighborhood $U$ of the origin. Choose a complement $\bbmE' \subseteq \bbmE$ of $K:= \ker(A) $ and write $\mathrm{pr}_{\bbmE'} : \bbmE \rightarrow \bbmE'$ and $\mathrm{pr}_{K} : \bbmE \rightarrow K$ for the corresponding projections. The restriction $A|_{\bbmE'} : \bbmE' \rightarrow \bbmF$ is invertible. Choose any open neighborhood $V \subseteq U $ that is bounded in $\bbmE \,$. Given an arbitrary sequence $(x_n)_n \subseteq V$ for which $(f(x_n))_n = (Ax_n)_n$ converges in $\bbmF \,$. Write $x_n = x_n' + y_n$ with $x_n' = \mathrm{pr}_{\bbmE'} (x_n) $ and $y_n := \mathrm{pr}_{K}(x_n) \,$.
Then $x'_n = (A|_{\bbmE'})^{-1} \circ A x_n$ and hence $(x_n')_n$ converges.
Because $K = \ker(A)$ is finite-dimensional and $(y_n)_n$ has bounded norm, a subsequence of $(y_n)_n$ will converge. Consequently also $(x_n)_n = (x_n' + y_n)_n$ has a convergent subsequence.  

\textit{Step 2:} We now show the lemma under the additional assumption that $f(0)=0$ and $A := Df(0)$ is a surjective Fredholm operator. By Lemma \ref{app lemma: linearize C^1 function with Fredholm differential} there exist an open neighborhood $W \subseteq \bbmE$ of the origin and a $C^1$-diffeomorphism $g : W \rightarrow g(W) $ so that $g(W) \subseteq U$ is an open neighborhood of the origin with $g(0) = 0 \comma Dg(0) = \mathbbm{1}$ and so that 
\[
(f \circ g) (x) = A x \qquad \Forall x \in W \, .
\]
By Step 1 the conclusion of the lemma holds for $f \circ g$ and thus it also holds for $f$.

\textit{Step 3:} We show the lemma in the general case. Set $A := Df(0) $ and
choose a splitting $\bbmF = \mathrm{im}(A) \oplus \bbmF' \,$. We write $\mathrm{pr}_{\mathrm{im}(A)} : \bbmF \rightarrow \mathrm{im}(A)$ for the projection. Then 
\[
\widetilde{f} : \, U \longrightarrow \mathrm{im}(A) \comma \quad \widetilde{f}(x) := \mathrm{pr}_{\mathrm{im}(A)}(f(x) - f(0)) \,\, ,
\]
is of class $C^1$, having as derivative at the origin the surjective Fredholm operator $D \widetilde{f}(0) = A : \bbmE \rightarrow \mathrm{im}(A) \,$. By Step 2 there exists an open neighborhood $V \subseteq U$ of the origin so that every sequence $(x_n)_n \subseteq V \,$, for which $(\widetilde{f}(x_n))_n$ converges, has a convergent subsequence. Now, if $(x_n)_n \subseteq V$ is a sequence so that $(f(x_n))_n $ converges, then clearly also $(\widetilde{f}(x_n))_n$ converges, hence $(x_n)_n$ has a convergent subsequence.
\end{proof}

\begin{remark}
\label{app rmk: non-linear Fredholm map}
Step 3 in the proof above shows that in the lemma it suffices to assume that $Df(0)$ has finite-dimensional kernel and a closed complemented image.
\end{remark}

\section{Constructing B-Bundle Pairs}
\label{app sec: B-Bundle Pair Construction}

\noindent In this appendix we first describe a way to extend a given Banach bundle $\Ecal$ to a B-bundle pair $(\Ecal,\Ecal^1)$. Afterwards we apply this to the setting of Subsection \ref{subsec: Schwarz thm - perturbation thm application}.

\subsection{B-Bundle Pair Construction Lemma}
\label{app subsec: B-Bundle Pair Construction lemma}

Observe that in Corollary \ref{mcor: perturbation thm} and Theorem \ref{mthm: evaluation maps - cobordism} the bundle $\Ecal^1$ is only mentioned in the assumptions but not in the conclusion. To be able to apply these results to a bundle $\Ecal$, it is therefore of interest to know when one can extend $\Ecal$ to a B-bundle pair $(\Ecal,\Ecal^1)\,$. 

\begin{example}
\label{app example: finite rank B-Bundle pair}
If $\Ecal \rightarrow \Bcal$ is a Banach bundle of finite rank, then $(\Ecal,\Ecal)$ is a B-bundle pair.
\end{example}

\noindent The next lemma occasionally enables one to construct a vector bundle structure on a suitable candidate $\Ecal^1 \,$. Although it is trivial, it is useful to once spell it out explicitly.

\begin{lemma}[B-Bundle Pair Construction]
\label{app lem: B-Bundle Construction}
Given a Banach bundle $\pi : \Ecal \rightarrow \Bcal \,$. Suppose we are given the following data
\begin{itemize}
    \item for every $x \in \Bcal$ a linear subspace $\Ecal^1_x \subseteq \Ecal_x \,$,
    \item a smooth bundle atlas $\mathcal{A} := \{\Phi^{(i)} : \Ecal|_{\Ucal_i} \rightarrow \Ucal_i \times \bbmF^{(i)}\}_{i \in I}$ for $\Ecal$ in the given bundle atlas equivalence class,
    \item for each $i \in I$ a B-pair $(\bbmF^{(i)}, \bbmF^{(i)}_1)$ with $\Phi^{(i)}_x (\Ecal_x^1) = \bbmF^{(i)}_1$ for every $x \in \Ucal_i \,$.
\end{itemize}
Suppose moreover that for every $i,j \in I$ and all $x \in \Ucal_i \cap \Ucal_j$ the transition map $ \Phi_x^{(j)} \circ (\Phi_x^{(i)})^{-1} \in \mathcal{L}(\bbmF^{(i)},\bbmF^{(j)})$ restricts to a bounded linear map $\bbmF^{(i)}_1 \rightarrow \bbmF^{(j)}_1$ and that the induced map
\begin{align*}
 \Ucal_i \cap \Ucal_j \rightarrow \mathcal{L}(\bbmF^{(i)}_1, \bbmF^{(j)}_1) \comma \quad x \mapsto \Phi_x^{(j)} \circ (\Phi_x^{(i)})^{-1}|_{\bbmF_{1}^{(i)}}
\end{align*}
is smooth. Then there exists a unique topology and smooth manifold structure on $\Ecal^1 := \bigcup_{x \in \Bcal} \Ecal_x^1$ so that $\mathcal{A}^1 := \{ \Phi^{(i)} : \Ecal^1|_{\Ucal_i} \rightarrow \Ucal_i \times \bbmF^{(i)}_1\}_{i \in I} $ is a smooth bundle atlas for $\pi^1 := \pi|_{\Ecal^1} : \Ecal^1 \rightarrow \Bcal \,$. With the vector bundle structure on $\Ecal^1$ induced by $\mathcal{A}^1 \,$, the pair $(\Ecal,\Ecal^1)$ is a B-bundle pair.
\end{lemma}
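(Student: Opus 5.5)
The plan is to build the topology and smooth structure on $\Ecal^1$ by the standard gluing construction for vector bundles from cocycle data, with the smooth bundle atlas of $\Ecal$ supplying the base charts. For each $i \in I$ the hypothesis $\Phi^{(i)}_x(\Ecal^1_x) = \bbmF^{(i)}_1$ shows that $\Phi^{(i)}$ restricts to a bijection $\Phi^{(i)} : \Ecal^1|_{\Ucal_i} \to \Ucal_i \times \bbmF^{(i)}_1$ that is linear on fibers. We give $\Ucal_i \times \bbmF^{(i)}_1$ the product topology and smooth structure, where $\bbmF^{(i)}_1$ carries its own Banach topology. We then declare a subset $O \subseteq \Ecal^1$ open if $\Phi^{(i)}(O \cap \Ecal^1|_{\Ucal_i})$ is open for every $i$.

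The key step is compatibility on overlaps. On $(\Ucal_i \cap \Ucal_j) \times \bbmF^{(i)}_1$ the transition map is
\[
(x,h) \mapsto \bigl(x, \Phi^{(j)}_x \circ (\Phi^{(i)}_x)^{-1} h\bigr) .
\]
By hypothesis the map $x \mapsto \Phi^{(j)}_x \circ (\Phi^{(i)}_x)^{-1}|_{\bbmF^{(i)}_1} \in \mathcal{L}(\bbmF^{(i)}_1, \bbmF^{(j)}_1)$ is smooth. The evaluation map $\mathcal{L}(\bbmF^{(i)}_1,\bbmF^{(j)}_1) \times \bbmF^{(i)}_1 \to \bbmF^{(j)}_1$ is bounded bilinear, so the transition map is smooth. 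The same holds with $i$ and $j$ swapped, so it is a diffeomorphism, and in particular a homeomorphism between open sets.

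From this the usual argument (as for $\Ecal$ itself, cf.\ Lang) gives the following:
\begin{itemize}
\item each $\Phi^{(i)}$ restricted to $\Ecal^1|_{\Ucal_i}$ is a homeomorphism onto $\Ucal_i \times \bbmF^{(i)}_1$ with open domain;
\item the charts $\Phi^{(i)}$, composed with charts of $\Ucal_i$, form a smooth atlas on $\Ecal^1$;
\item $\pi^1$ is smooth, and $\mathcal{A}^1$ is a smooth bundle atlas.
\end{itemize}
The fiber structure of $\Ecal^1_x$ as a Banachable space is transported via $\Phi^{(i)}_x$. It is well defined because the transition maps are toplinear isomorphisms of the $\bbmF_1$'s: their inverses are also bounded by the symmetric hypothesis.

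Next we check the Hausdorff property, which is needed for $\Ecal^1$ to be a Banach manifold in the paper's sense. Two points in different fibers are separated by preimages under $\pi^1$ of disjoint open sets of the Hausdorff base $\Bcal$. Two points in the same fiber lie in a single chart $\Ucal_i \times \bbmF^{(i)}_1$, which is Hausdorff.

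Uniqueness is then immediate. Any topology and smooth structure for which $\mathcal{A}^1$ is a smooth bundle atlas must make each $\Phi^{(i)}|_{\Ecal^1|_{\Ucal_i}}$ a diffeomorphism from an open set. This forces the open sets and the charts to be exactly those above.

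Finally, the B-bundle pair axioms hold. The inclusion $\Ecal^1 \subseteq \Ecal$ with $\pi^1 = \pi|_{\Ecal^1}$ holds by construction. For every $x$, the trivialization $\Phi^{(i)}$ of $\Ecal$ around $x$ restricts to the smooth trivialization $\Phi^{(i)}$ of $\Ecal^1$ in $\mathcal{A}^1$, and $(\bbmF^{(i)},\bbmF^{(i)}_1)$ is a B-pair. I expect no real obstacle; the only point needing care is the Hausdorff and well-definedness bookkeeping in the gluing step.
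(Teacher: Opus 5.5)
Your proposal is correct and follows the same route as the paper. The paper simply cites Lang's vector bundle construction lemma (Ch.~III, Prop.~1.2) for existence and uniqueness of the bundle structure on $\Ecal^1$, which is exactly the gluing argument you carry out by hand (smooth transition maps via the bounded bilinear evaluation map, Hausdorffness, the fiberwise Banachable structure, and uniqueness forced by the charts), and then notes that the B-bundle pair axioms hold by definition, as you do.
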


\begin{proof}
    Existence and uniqueness of the smooth structure on $\Ecal^1$ are immediate consequences of the vector bundle construction lemma, as stated in \cite[Ch.~III, Prop.~1.2]{Lang_Differential_Geometry} for example. By definition of a B-bundle pair, with the vector bundle structure on $\Ecal^1$ induced by $\mathcal{A}^1 \,$, the pair $(\Ecal,\Ecal^1)$ is a B-bundle pair.
\end{proof}

\subsection{B-bundle Pair Construction in Subsection \ref{subsec: Schwarz thm - perturbation thm application}}
\label{app subsec: B-bundle pair Schwarz}

\noindent In this appendix we verify that the pair $(\Ecal,\Ecal^1) \,$, introduced in Subsection \ref{subsec: Schwarz thm - perturbation thm application}, is indeed a B-bundle pair. To this end we want to invoke Lemma \ref{app lem: B-Bundle Construction}.
In fact, we will show that the natural extension of $(\Ecal,\Ecal^1)$ over $W^{1,p}(\bbS^2,M)$, which we also denote by $(\Ecal,\Ecal^1)$, is a B-bundle pair. This extended B-bundle pair then restricts to a B-bundle pair over $\Bcal \,$.

In the following discussion, we presume the reader is familiar with spaces of Sobolev sections of finite-rank vector bundles, as described in  \cite[App.~B.1]{J_holomorphic_curves} and \cite[App.~A.3]{SFT_Wendl} for example.

Let us first introduce some notation.
By a \textit{vector bundle chart} we mean a trivialization composed with a chart for the base. A smooth $u_1 \in \Cinfty(\bbS^2,M)$ induces the inverse $\Psi_{u_1}$ of a vector bundle chart for $\Ecal$, as in the following commutative diagram
\begin{equation*}
\begin{tikzcd}
   \Ecal|_{\Ucal} \ar[d, "\pi"'] &&& \Vcal \times \mathrlap{ L^p(\bbS^2,\Lambda^{0,1} \otimes_{\C} u_1^*TM) } \ar[lll, "\Psi_{u_1}", "\cong"'] \ar[d, "\mathrm{pr}_1"] \\
  \Ucal &&& \Vcal   \ar[lll, "\exp_{u_1}", "\cong"']
\end{tikzcd}
\end{equation*}
where $\Ucal \subseteq \Bcal$ is an open neighborhood of $u_1$ and the diffeomorphic image under $\exp_{u_1}$ of an open neighborhood
\[
\Vcal \subseteq W^{1,p}(\bbS^2,u_1^*TM)
\]
of the origin. The map $\Psi_{u_1}$ is fiberwise defined as follows: For $\xi \in \mathcal{V} $ and $u := \exp_{u_1}(\xi) \,$, the isomorphism over the fiber of $\xi$ is
\[
(\Psi_{u_1})_{\xi} = \big (\mathrm{id}_{\Lambda^{0,1}} \otimes_{\C} \Phi_{u_1}(\xi) \big)_* \, : \,\, L^p(\bbS^2,\Lambda^{0,1} \otimes_{\C} u_1^*TM) \overset{\cong}{\longrightarrow} L^p(\bbS^2,\Lambda^{0,1} \otimes_{\C} u^*TM) = \Ecal_u \,\, ,
\]
that is, $(\Psi_{u_1})_\xi$ is the pushforward of $L^p$-sections along the bundle isomorphism $\mathrm{id}_{\Lambda^{0,1}} \otimes_{\C} \Phi_{u_1}(\xi) \,$, where the complex bundle isomorphism $\Phi_{u_1}(\xi) : u_1^*TM \rightarrow u^*TM$ has been defined in \eqref{eq: parallel transport along nabla^tilde} via $\widetilde{\nabla}$-parallel transport.

We point out that the parallel transport map just depends on a point $(z,v) \in u_1^*TM \,$. For this reason, for $(z,v) \in u_1^*TM$ sufficiently close to the zero section we also define
\[
\Phi_{u_1} (z,v) : T_{u_1(z)} M \overset{\cong}{\longrightarrow} T_{\exp_{u_1(z)}(v)} M
\]
to be the $\widetilde{\nabla}$-parallel transport along the path $[0,1] \rightarrow M \comma \tau \mapsto \exp_{u_1(z)}(\tau v) \,$. With this notation, we can now express $\Phi_{u_1}(\xi)$ fiberwise as $\Phi_{u_1}(\xi)_z = \Phi_{u_1}(z,\xi(z)) \,$.

\begin{lemma}
\label{app lem: Ecal^1_u in trivialization}
For every $u = \exp_{u_1}(\xi) \in \Ucal$ we have
\[
\Ecal^1_u = (\Psi_{u_1})_{\xi} \Big( W^{1,p}(\bbS^2,\Lambda^{0,1} \otimes_{\C} u_1^*TM) \Big) \,\, .
\]
\end{lemma}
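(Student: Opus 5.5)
The statement is pointwise in $z$, so I would reduce it to a question about multiplying $W^{1,p}$-sections by a matrix-valued function of regularity $W^{1,p}\cap C^0$. Recall that $u_1$ is smooth, $\xi\in W^{1,p}(\bbS^2,u_1^*TM)$ with $p>2$, so $\xi$ is continuous, and $u=\exp_{u_1}(\xi)$. The pushforward $(\Psi_{u_1})_\xi$ acts on $\eta\in L^p(\bbS^2,\Lambda^{0,1}\otimes_\C u_1^*TM)$ by
\[
\big((\Psi_{u_1})_\xi\,\eta\big)(z)=(\id\otimes\Phi_{u_1}(z,\xi(z)))\,\eta(z).
\]
Hence the claim is that, for a continuous section $\eta$, the map $\eta$ is of class $W^{1,p}$ (with respect to $u_1^*TM$) if and only if its image is of class $W^{1,p}$ in the sense of Remark~\ref{rmk: Ecal^1_u def Schwarz theorem}. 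Since $\Phi_{u_1}(z,\xi(z))$ is invertible, I would prove the inclusion ``$\subseteq$'' for the map and the analogous statement for its inverse. Equivalently, I would use the symmetric description: the inverse is parallel transport back along the same geodesics, which again is a smooth function of $(z,\xi(z))$. Both directions then follow from the same argument.

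First I localize. I choose a finite cover of $\bbS^2$ by charts $U_j$ over which $u_1^*TM$ and $TM$ near $u_1(U_j)$ admit smooth trivializations. In such local coordinates, $\Phi_{u_1}(z,v)$ becomes a smooth matrix-valued function $P(z,v)$ defined on a neighborhood of the zero section. Smoothness holds because $\widetilde\nabla$-parallel transport along $\tau\mapsto\exp_{u_1(z)}(\tau v)$ solves a linear ODE whose coefficients depend smoothly on $(z,v)$. Pairing with a smooth vector field $X$ on $\bbS^2$ turns $\eta$ into $\eta(X)$, a section of $u_1^*TM$. The map then reads
\[
\eta(X)\mapsto P(z,\xi(z))\,\eta(X)(z),
\]
and $(u,\cdot)\in W^{1,p}(\bbS^2,TM)$ reduces to the local $W^{1,p}$ condition on the fiber components, since the base component $u$ is already in $W^{1,p}$.

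The key step, and the main obstacle, is the Sobolev multiplication statement: if $A:=P(\cdot,\xi(\cdot))$ and $f\in W^{1,p}\cap C^0$ on a planar domain $\Omega$, then $Af\in W^{1,p}$. For $A$ itself, I would use that $z\mapsto(z,\xi(z))$ lies in $W^{1,p}\cap C^0$ and $P$ is smooth. By the chain rule for Sobolev maps composed with smooth functions, valid since $\xi$ is continuous and bounded, this gives $A\in W^{1,p}\cap C^0$ with
\[
dA=\partial_zP+\partial_vP\cdot d\xi.
\]
Then the product rule gives $d(Af)=(dA)f+A\,df$. Both terms lie in $L^p$ because $f$ and $A$ are bounded, being continuous on compact closures. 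This uses that $W^{1,p}$ is an algebra for $p>2$ in dimension two. Extendability of the charts to the closures ensures the boundedness. Applying the identical argument with $P^{-1}$, which is smooth near the zero section, gives the reverse inclusion.

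Finally, I would check that the definitions match. The defining condition in Remark~\ref{rmk: Ecal^1_u def Schwarz theorem} requires $\eta$ to be continuous, and continuity is preserved by multiplication with the continuous $A^{\pm1}$. The condition for every smooth $X$ is verified chartwise via a partition of unity on $\bbS^2$.
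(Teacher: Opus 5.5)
Your proposal is correct and follows the paper's proof. Like the paper, you localize with finitely many charts and trivializations, reduce membership in $\Ecal^1_u$ to a local $W^{1,p}$ condition on the fiber component, and note that the matrix $A(z)$ built from $\Phi_{u_1}(z,\xi(z))$ and its inverse lie in $W^{1,p}\cap C^0$, so multiplication by $A^{\pm 1}$ preserves $W^{1,p}$ because $p>2$. The differences are cosmetic: the paper uses complex antilinearity to test only against the coordinate field $\partial_s^j$ in each chart rather than all smooth $X$ via a partition of unity, and it cites the Sobolev product pairing instead of spelling out the chain and product rules.
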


\begin{proof}
  Fix a finite holomorphic atlas of $\{(\Omega_j, \psi_j)\}_j$ of $\bbS^2$ and a collection $\{\phi_j : TM|_{U_j} \rightarrow U_j \times \R^{2m}\}_j$ of complex trivializations for $(TM, J)$, \ie $\phi_j$ intertwines $J$ with the standard almost complex structure, so that $u_1(\overline{\Omega_j}) \subseteq U_j$ for every $j$ and so that each chart $(\Omega_j,\psi_j)$ and each trivialization $\phi_j$ can be extended over the closures of $\Omega_j$ respectively $U_j \,$. We denote the coordinate vector fields on $\Omega_j \subseteq \bbS^2$ of the chart $\psi_j$ by $\partial_s^j , \, \partial_t^j \,$. 

  Let $\eta \in \Ecal_u = L^p(\bbS^2,\Lambda^{0,1} \otimes_{\C} u^*TM)$ be an arbitrary element in the fiber over $u = \exp_{u_1}(\xi)$. Our goal is to show
  \begin{equation}
    \label{app eq: Ecal^1_u in trivialization goal}
    \eta \in  \Ecal^1_u \,\, \Longleftrightarrow \,\, (\Psi_{u_1})_{\xi}^{-1}(\eta) \in W^{1,p}(\bbS^2,\Lambda^{0,1} \otimes_{\C} u_1^*TM) \,\, .
  \end{equation}
  We may assume that $\eta$ is a continuous section of $\Lambda^{0,1} \otimes_{\C} u^*TM$ since otherwise both sides in \eqref{app eq: Ecal^1_u in trivialization goal} are false statements. By definition of $\Ecal^1_u$ (see Remark \ref{rmk: Ecal^1_u def Schwarz theorem}) we have
  \begin{align*}
      \eta \in \Ecal^1_u \,\, &\Longleftrightarrow \,\, (u,\eta(X)) \in W^{1,p}(\bbS^2,TM) \quad \Forall X \in \mathfrak{X}(\bbS^2) \\
      & \Longleftrightarrow \,\,  (\phi_j)_u \cdot \eta(\partial_s^j) \in W^{1,p}(\Omega_j,\R^{2m}) \,\,  \quad \Forall j \,\, ,
  \end{align*}
  where for the second equivalence we used that $u \in W^{1,p}(\bbS^2,M)$ and that $\eta$ is complex antilinear.
Since the holomorphic chart $(\Omega_j,\psi_j)$ and the complex trivialization $\phi_j$ over $U_j$ induce a trivialization for $\Lambda^{0,1} \otimes_{\C} u_1^*TM $ over $\Omega_j \,$, we similarly have
\begin{align*}
    &(\Psi_{u_1})_{\xi}^{-1} (\eta) \in  W^{1,p}(\bbS^2,\Lambda^{0,1} \otimes_{\C} u_1^*TM)  \\
       \Longleftrightarrow \,\,  &(\phi_j)_{u_1} \cdot (\Phi_{u_1}(\xi))^{-1} \cdot \eta(\partial_s^j) \in W^{1,p}(\Omega_j,\R^{2m}) \,\,  \quad \Forall j \,\, .
  \end{align*}
  \begin{claim}
    Let $j$ and $\zeta : \Omega_j \rightarrow \R^{2m}$ be arbitrary. Then
    \begin{align*}
        \zeta \in W^{1,p}(\Omega_j, \R^{2m}) \,\, \Longleftrightarrow \,\, (\phi_j)_{u_1} \cdot \Phi_{u_1}(\xi)^{-1} \cdot (\phi_j)_{u}^{-1} \cdot \zeta \in W^{1,p}(\Omega_j,\R^{2m}) \,\, . 
    \end{align*}
  \end{claim}
   \noindent Then \eqref{app eq: Ecal^1_u in trivialization goal} follows from the previously established equivalences and the claim with
  \[
  \zeta := (\phi_j)_u \cdot \eta (\partial_s^j) \,\, .
  \]
  \begin{proofClaim}
    Let us drop the index $j$ for readability.
    For $z \in \Omega$ define the automorphism
    \[
    A(z) := \phi_{u_1(z)} \cdot \Phi_{u_1}(z,\xi(z))^{-1} \cdot \phi_{\exp_{u_1(z)}(\xi(z))}^{-1} \in \mathrm{GL}(2m,\R) \subseteq \R^{2m \times 2m} \,\, .
    \]
    Then $A$ and $A^{-1}$ are of regularity $W^{1,p}$ since $\xi$ is.
    The claim follows because we have a well-defined product pairing
    \[
    W^{1,p}(\Omega,\R^{2m\times 2m}) \times W^{1,p}(\Omega,\R^{2m}) \rightarrow W^{1,p}(\Omega,\R^{2m}) \comma \quad (\widetilde{A}, \widetilde{\zeta}) \mapsto \widetilde{A} \cdot \widetilde{\zeta}
    \]
    since $p > 2 = \dim(\Omega)$.
  \end{proofClaim}

\end{proof}

\begin{lemma}
\label{app lem: (L^p,W^1,p) B-pair}
For every smooth finite-rank vector bundle $E$ over $\bbS^2$ the pair
\[
\big( L^p(\bbS^2,E) , \, W^{1,p}(\bbS^2,E) \big)
\]
is a B-pair.
\end{lemma}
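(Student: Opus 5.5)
The proof reduces to the flat Euclidean case by a finite partition of unity together with local trivializations, and then checks three things: both spaces are Banachable, $W^{1,p}$ is dense in $L^p$, and the inclusion $W^{1,p}\hookrightarrow L^p$ is compact.

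First, I fix a finite atlas of $\bbS^2$ by charts $\varphi_j : U_j \to \Omega_j \subseteq \C$, $j=1,\ldots,\ell$, with $\Omega_j$ bounded with smooth boundary and each chart extending over the closure. I also fix trivializations $\Phi_j : E|_{U_j} \to U_j \times \R^{r}$ that extend over $\overline{U_j}$, and a smooth partition of unity $(\rho_j)_j$ subordinate to $(U_j)_j$. As in the proof of Lemma \ref{lem: W^1,p(S^2,M) admits smooth bump fcts}, the norms
\[
\lVert \eta \rVert^p_{L^p(E)} := \sum_j \lVert \mathrm{pr}_2\circ\Phi_j\circ\eta\circ\varphi_j^{-1}\rVert^p_{L^p(\Omega_j)}
\]
and its $W^{1,p}$-analogue are complete norms inducing the standard topologies. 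Hence both spaces are Banachable, and $W^{1,p}(\bbS^2,E)\subseteq L^p(\bbS^2,E)$ is a linear subspace with continuous inclusion.

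For density, I take $\eta\in L^p(\bbS^2,E)$ and write $\eta=\sum_j\rho_j\eta$. In the $j$-th trivialization, $\rho_j\eta$ becomes an $L^p$-function on $\Omega_j$ with compact support in $\Omega_j$. Mollification produces smooth compactly supported functions converging in $L^p(\Omega_j,\R^r)$. Transporting these back and extending by zero gives smooth sections of $E$, hence elements of $W^{1,p}(\bbS^2,E)$, which converge to $\rho_j\eta$ in $L^p$. Summing over $j$ yields density.

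For compactness, I take a bounded sequence $(\eta_n)_n$ in $W^{1,p}(\bbS^2,E)$. Then each localized sequence $f_{n,j}:=\mathrm{pr}_2\circ\Phi_j\circ(\rho_j\eta_n)\circ\varphi_j^{-1}$ is bounded in $W^{1,p}(\Omega_j,\R^r)$. Multiplication by $\rho_j$ is bounded on $W^{1,p}$, and $\rho_j$ is compatible with the comparison constants between different trivializations on overlaps, since the transition functions are smooth on the compact closures. Each $f_{n,j}$ is supported in a fixed compact subset of $\Omega_j$. By the Rellich--Kondrachov theorem, or more simply by Morrey's embedding $W^{1,p}\hookrightarrow C^{0,1-2/p}$ (available since $p>2$) combined with Arzel\`a--Ascoli, a subsequence converges in $C^0$ and hence in $L^p(\Omega_j)$. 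Extracting successively for $j=1,\ldots,\ell$, I obtain a subsequence along which every $\rho_j\eta_n$ converges in $L^p(\bbS^2,E)$, and therefore $\eta_n=\sum_j\rho_j\eta_n$ converges in $L^p$.

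The only mildly delicate point is checking that the localized norms of $\rho_j\eta$ are controlled by the global $W^{1,p}$ norm. This follows from the smoothness of the $\rho_j$ and of the transition maps on compact closures. The remaining steps are standard Sobolev facts.
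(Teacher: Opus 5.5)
Your proposal is correct and takes essentially the same approach as the paper. The paper's proof is two lines: the inclusion $W^{1,p}(\bbS^2,E)\hookrightarrow L^p(\bbS^2,E)$ is compact because $\bbS^2$ is compact (Rellich--Kondrachov), and $W^{1,p}$ is dense because smooth sections are dense in $L^p$. Your localization via finite charts, trivializations and a partition of unity, followed by mollification for density and Morrey/Arzel\`a--Ascoli for compactness, simply supplies the standard details behind those two assertions.
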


\begin{proof}
    The inclusion $W^{1,p}(\bbS^2,E) \hookrightarrow L^p(\bbS^2,E)$ is compact since $\bbS^2$ is compact. Moreover $\Cinfty(\bbS^2,M)$ is dense in $L^p(\bbS^2,E)$, so $W^{1,p}(\bbS^2,E)$ is dense as well.
\end{proof}

\noindent Now, for smooth $u_1,u_2 \in \Cinfty(\bbS^2,M)$, we investigate the transition map $\Psi_{u_2}^{-1} \circ \Psi_{u_1}$ of the associated vector bundle charts when restricted to the fibers of $\Ecal^1 \,$. Consider the commutative diagram
\begin{equation*}
\begin{tikzcd}
    \mathcal{V}_{12} \times W^{1,p}(\bbS^2,\Lambda^{0,1} \otimes_{\C} u_1^*TM) \ar[r, "\Psi_{u_1}"] \ar[d, "\mathrm{pr}_1"'] & \Ecal^1|_{\Ucal_1 \cap \Ucal_2} \ar[d, "\pi" ] & \mathcal{V}_{21} \times W^{1,p}(\bbS^2,\Lambda^{0,1} \otimes_{\C} u_2^*TM) \ar[l, "\Psi_{u_2}"'] \ar[d, "\mathrm{pr}_1"]  \\
    \mathcal{V}_{12} \ar[r, "\cong", "\exp_{u_1}"'] & \Ucal_1 \cap \Ucal_2 & \mathcal{V}_{21} \ar[l, "\cong"', "\exp_{u_2}"]
\end{tikzcd}
\end{equation*}
The next lemma, which is a slight adaption of \cite[Thm.~6.1]{Eliasson}, asserts that the above transition map is fiberwise bounded linear and smoothly depending on the base point.

\begin{lemma}
\label{app lem: transition maps for Ecal^1 smooth}
Given $u_1,u_2 \in \Cinfty(\bbS^1,M) \,$, the vector bundle charts $\Psi_{u_1}$ and $\Psi_{u_2}$ restricted to $\Ecal^1$ have a smooth transition map, \ie there is a well-defined smooth map
\begin{align}
\Vcal_{12} &\rightarrow \mathcal{L}\Big( W^{1,p}(\bbS^2,\Lambda^{0,1} \otimes_{\C} u_1^*TM) \, , \,  W^{1,p}(\bbS^2,\Lambda^{0,1} \otimes_{\C} u_2^*TM)\Big) \label{app eq: transition map for Ecal^1}\\
\xi &\mapsto (\Psi_{u_2})_{\exp_{u_2}^{-1} \circ \exp_{u_1}(\xi)}^{-1} \circ (\Psi_{u_1})_{\xi} \,\, . \notag
\end{align}
\end{lemma}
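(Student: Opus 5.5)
We write the transition map pointwise as multiplication by a matrix-valued function depending on $\xi$, and then check that $\xi\mapsto$ (this multiplication operator) is smooth into the bounded operators on $W^{1,p}$. For $\xi\in\Vcal_{12}$ put $u=\exp_{u_1}(\xi)$ and $\xi_2:=\exp_{u_2}^{-1}\circ\exp_{u_1}(\xi)$, so $u=\exp_{u_2}(\xi_2)$. Since $(\Psi_{u_i})_{\xi_i}$ is the pushforward along $\mathrm{id}_{\Lambda^{0,1}}\otimes_{\C}\Phi_{u_i}(\xi_i)$, the fiber map in \eqref{app eq: transition map for Ecal^1} is the pushforward along the complex bundle isomorphism
\[
\Theta(\xi)_z:=\Phi_{u_2}(z,\xi_2(z))^{-1}\circ\Phi_{u_1}(z,\xi(z)) : T_{u_1(z)}M\to T_{u_2(z)}M ,
\]
tensored with $\mathrm{id}_{\Lambda^{0,1}}$. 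The key point is that $\Theta(\xi)_z=\Theta(z,\xi(z))$ for a single smooth map $\Theta$ on a neighborhood of the zero section (more precisely, on the open set of $(z,v)\in u_1^*TM$ with $\exp_{u_1(z)}(v)$ close to $u_2(z)$) with values in $\mathrm{Hom}(u_1^*TM,u_2^*TM)$. Indeed, $\xi_2(z)=\exp_{u_2(z)}^{-1}(\exp_{u_1(z)}(\xi(z)))$ depends smoothly and only on $(z,\xi(z))$, and parallel transport along geodesics is smooth in the data.

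Next we localize using the finite holomorphic atlas $\{(\Omega_j,\psi_j)\}$ and the complex trivializations $\phi_j$, as in the proof of Lemma \ref{app lem: Ecal^1_u in trivialization}. With respect to these, $W^{1,p}(\bbS^2,\Lambda^{0,1}\otimes_{\C}u_i^*TM)$ is identified with a closed subspace of $\bigoplus_j W^{1,p}(\Omega_j,\R^{2m})$, via $\eta\mapsto(\phi_j\,\eta(\partial^j_s))_j$. The transition map then becomes
\[
\zeta\mapsto A_j(\xi)\cdot\zeta, \qquad A_j(\xi)(z)=\widehat\Theta_j(z,\xi(z)),
\]
where $\widehat\Theta_j:\overline{\Omega_j}\times B\to\R^{2m\times 2m}$ is smooth and $B$ is a ball in $\R^{2m}$, after also writing $\xi$ in the trivialization $\phi_j$.

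The main step, and the expected obstacle, is smoothness of $\xi\mapsto A_j(\xi)$ as a map $\Vcal_{12}\to W^{1,p}(\Omega_j,\R^{2m\times2m})$. This is a substitution (Nemytskii) operator $\xi\mapsto\widehat\Theta_j(\cdot,\xi(\cdot))$ on $W^{1,p}$ with $p>2$, and such operators are smooth, as in \cite[Thm.~6.1]{Eliasson} or via the standard $\omega$-lemma argument. The argument uses the embedding $W^{1,p}\hookrightarrow C^0$, so that $\xi$ stays in a compact set where all derivatives of $\widehat\Theta_j$ are bounded. The $k$-th derivative is given by $(\eta_1,\dots,\eta_k)\mapsto\partial_v^k\widehat\Theta_j(\cdot,\xi)[\eta_1,\dots,\eta_k]$, and this is bounded multilinear and continuous in $\xi$ by the Banach algebra property of $W^{1,p}(\Omega_j)$ together with the chain rule $d(\widehat\Theta(z,\xi))=\partial_z\widehat\Theta+\partial_v\widehat\Theta\,d\xi$. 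It is here that one needs the trivializations to extend smoothly over the closures $\overline{\Omega_j}$.

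Finally, the multiplication pairing $W^{1,p}(\Omega,\R^{2m\times2m})\times W^{1,p}(\Omega,\R^{2m})\to W^{1,p}(\Omega,\R^{2m})$ is bounded bilinear, as used in the claim in the proof of Lemma \ref{app lem: Ecal^1_u in trivialization}. It therefore induces a bounded linear map $W^{1,p}(\Omega,\R^{2m\times2m})\to\mathcal{L}(W^{1,p},W^{1,p})$. Composing this with the smooth map $\xi\mapsto A_j(\xi)$ and summing over $j$ yields smoothness of \eqref{app eq: transition map for Ecal^1}; well-definedness (that the image lands in $W^{1,p}$ and is bounded) is already part of this. Together with Lemma \ref{app lem: Ecal^1_u in trivialization}, this shows that the chart maps restrict to the fibers of $\Ecal^1$ as required.
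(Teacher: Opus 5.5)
Your proposal is correct and takes essentially the same route as the paper. You write the transition map as the pushforward along the smooth fiber-preserving map $f(z,v)=\mathrm{id}_{\Lambda^{0,1}}\otimes_{\C}\big(\Phi_{u_2}(z,\exp_{u_2(z)}^{-1}\exp_{u_1(z)}(v))^{-1}\circ\Phi_{u_1}(z,v)\big)$, use that the substitution map $\xi\mapsto f\circ\xi$ is smooth into $W^{1,p}$-sections of the $\mathrm{Hom}$-bundle, and compose with the bounded linear multiplication map into $\mathcal{L}(W^{1,p},W^{1,p})$. The only difference is that the paper argues globally in Eliasson's manifold-model formalism, citing Wendl's Thm.~A.23 for the needed properties of $W^{1,p}$ and Eliasson's Lemma~4.1 to upgrade continuity of $\xi\mapsto f\circ\xi$ to smoothness, whereas you localize in charts and trivializations and sketch the $\omega$-lemma by hand.
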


\begin{proof}
    In the terminology of Eliasson \cite{Eliasson}, the \textit{section functor} $\mathfrak{S} := W^{1,p}(\bbS^2, \,\cdot\,)$ over the base manifold $\bbS^2$ is a \textit{manifold model}, that is $\mathfrak{S}$ is a covariant functor from the smooth finite-rank vector bundles over $\bbS^2$ to the category of Banachable spaces so that the following hold:
    \begin{enumerate}[label=\arabic*)]
        \item\label{app it: manifold model - C^0} $\mathfrak{S}(E) \subseteq C^0(\bbS^2,E)$ is a bounded linear inclusion for every smooth bundle $E$.
        \item\label{app it: manifold model - sec(L) < L(sec,sec)} For every smooth vector bundles $E,F$ the map
        \[
        \mathfrak{S}(\mathrm{Hom}(E,F)) \rightarrow \mathcal{L}(\mathfrak{S}(E), \mathfrak{S}(F)) \comma \quad A \mapsto A_* \,\, ,
        \]
        where $A_*$ is defined by $A_*(\eta) (z) := A(z) \cdot \eta(z) \,$, is well-defined and bounded linear.
        \item\label{app it: manifold model - fiber-preserving map}
        Let $E,F$ be vector bundles, $\mathcal{O} \subseteq E$ an open subset and $f : \mathcal{O} \rightarrow F$ be a smooth fiber-preserving map. Then the map
        \[
        \mathfrak{S}(f) : \,\, \mathfrak{S}(\mathcal{O}) = \set{\eta \in \mathfrak{S}(E)}{\eta(\bbS^2) \subseteq \mathcal{O}} \rightarrow \mathfrak{S}(F) \comma \quad \eta \mapsto f \circ \eta
        \]
        is well-defined and continuous.
    \end{enumerate}
    All of the above points essentially follow from $p > 2 = \dim(\bbS^2) $ and are explicitly asserted in \cite[Thm.~A.23]{SFT_Wendl}. Since $\mathfrak{S}$ is a manifold model, by \cite[Lemma 4.1]{Eliasson} the induced map $\mathfrak{S}(f)$ in point \ref{app it: manifold model - fiber-preserving map} is in fact smooth.

    Let $\mathcal{O}_j \subseteq u_j^*TM \comma j=1,2 \,$, be fiberwise convex zero section neighborhoods with $\mathfrak{S}(\mathcal{O}_j) = \mathcal{V}_j \,$. Define
    \[
    \mathcal{O}_{12} := \set{(z,v) \in \mathcal{O}_1}{\exp_{u_1(z)}(v) = \exp_{u_2(z)}(w) \text{ for some } (z,w) \in \mathcal{O}_2} \subseteq u_1^*TM \,\, ,
    \]
    hence $\mathfrak{S}(\mathcal{O}_{12}) = \mathcal{V}_{12} \,$.
    Consider the fiber-preserving map     
    \[
    f: \,\mathcal{O}_{12} \rightarrow \mathrm{Hom}(\Lambda^{0,1} \otimes_{\C} u_1^*TM , \, \Lambda^{0,1} \otimes_{\C} u_2^*TM ) \,\, ,
    \]
    defined by
     \[
    f(z,v) := \mathrm{id}_{\Lambda^{0,1}} \otimes_{\C} \Big( \Phi_{u_2}(z, \exp_{u_2(z)}^{-1} \circ \exp_{u_1(z)} (v))^{-1} \circ \Phi_{u_1}(z,v) \Big) \,\, .
    \]
    Since $f$ is smooth, the induced map
    \[
    \mathfrak{S}(f) : \, \mathcal{V}_{12} \rightarrow \mathfrak{S}\Big( \mathrm{Hom}(\Lambda^{0,1} \otimes_{\C} u_1^*TM , \, \Lambda^{0,1} \otimes_{\C} u_2^*TM ) \Big) 
    \]
    is smooth. Its composition with the bounded linear map from point \ref{app it: manifold model - sec(L) < L(sec,sec)} in the definition of a manifold model
    \[
    \mathfrak{S}\Big( \mathrm{Hom}(\Lambda^{0,1} \otimes_{\C} u_1^*TM , \, \Lambda^{0,1} \otimes_{\C} u_2^*TM ) \Big)  \longrightarrow \mathcal{L}\Big( \mathfrak{S}(\Lambda^{0,1} \otimes_{\C} u_1^*TM) , \, \mathfrak{S} (\Lambda^{0,1} \otimes_{\C} u_2^*TM)\Big)
    \]
    agrees with the transition map in \eqref{app eq: transition map for Ecal^1}, which is therefore smooth.
\end{proof}

\begin{corollary}
\label{app cor: (Ecal,Ecal^1) B-bundle pair}
The pair $(\Ecal,\Ecal^1)$ is a B-bundle pair over $W^{1,p}(\bbS^2,M) \,$.
\end{corollary}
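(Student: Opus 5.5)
The plan is to invoke the B-Bundle Pair Construction Lemma (Lemma \ref{app lem: B-Bundle Construction}) for the Banach bundle $\Ecal \rightarrow W^{1,p}(\bbS^2,M)$. As bundle atlas $\mathcal{A}$ I take the family of vector bundle charts $\Psi_{u_1}^{-1}$, where $u_1$ runs over the smooth maps $\Cinfty(\bbS^2,M)$. Each chart lives over its neighborhood $\Ucal = \exp_{u_1}(\Vcal)$ and has model fiber $\bbmF^{(u_1)} := L^p(\bbS^2,\Lambda^{0,1}\otimes_{\C} u_1^*TM)$. For the distinguished subspaces I take $\bbmF^{(u_1)}_1 := W^{1,p}(\bbS^2,\Lambda^{0,1}\otimes_{\C} u_1^*TM)$.

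First I check that these charts cover the base and belong to the given bundle atlas equivalence class. Smooth maps are dense in $W^{1,p}(\bbS^2,M)$, and for $p>2$ this space embeds in $C^0$. Hence every $u \in W^{1,p}(\bbS^2,M)$ is $C^0$-close, and indeed $W^{1,p}$-close, to some smooth $u_1$, so that $u = \exp_{u_1}(\xi)$ with $\xi \in \Vcal$. The charts $\Psi_{u_1}$ are precisely the standard charts that define the smooth structure of $\Ecal$, so they lie in the given equivalence class.

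Next I verify the three hypotheses of Lemma \ref{app lem: B-Bundle Construction}.
\begin{enumerate}[label=\roman*)]
\item For each $u_1$, the pair $(\bbmF^{(u_1)},\bbmF^{(u_1)}_1)$ is a B-pair. This is Lemma \ref{app lem: (L^p,W^1,p) B-pair}, applied to the smooth bundle $E = \Lambda^{0,1}\otimes_{\C} u_1^*TM$; this bundle is smooth because $u_1$ is smooth.
\item The fiberwise condition $(\Psi_{u_1})_\xi^{-1}(\Ecal^1_u) = \bbmF^{(u_1)}_1$ holds for every $u = \exp_{u_1}(\xi) \in \Ucal$. This is exactly Lemma \ref{app lem: Ecal^1_u in trivialization}.
\item On overlaps, the transition maps must restrict to bounded operators between the $W^{1,p}$-fibers and depend smoothly on the base point in operator norm. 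This is Lemma \ref{app lem: transition maps for Ecal^1 smooth}. I precompose its map with the chart $\exp_{u_1}$ to express smoothness on $\Ucal_1\cap\Ucal_2$ rather than on $\Vcal_{12}$.
\end{enumerate}
With these in place, Lemma \ref{app lem: B-Bundle Construction} yields a Banach bundle structure on $\Ecal^1 = \bigcup_u \Ecal^1_u$. In this structure the charts $\Psi_{u_1}^{-1}$ restrict to smooth trivializations of $\Ecal^1$ with fiber $\bbmF_1^{(u_1)}$, and therefore $(\Ecal,\Ecal^1)$ is a B-bundle pair over $W^{1,p}(\bbS^2,M)$.

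There is one bookkeeping point. The lemma as stated indexes the atlas by trivializations $\Ucal_i\times\bbmF^{(i)}$ over the base, whereas $\Psi_{u_1}$ is a vector bundle chart over $\Vcal$. I handle this by composing with $\exp_{u_1}^{-1}$, which is a diffeomorphism, so this is purely formal. I expect no genuine obstacle, because the analytic content (the $W^{1,p}$ multiplication for $p>2$ and Eliasson's smoothness of $\mathfrak{S}(f)$) has already been absorbed into the preceding lemmas.

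Finally, the restriction to $\Bcal \subseteq W^{1,p}(\bbS^2,M)$ used in Subsection \ref{subsec: Schwarz thm - perturbation thm application} is immediate. $\Bcal$ is a submanifold, namely an open subset of the level set $\ev^{-1}(q_0)$, and restricting trivializations to $\Ucal\cap\Bcal$ preserves every defining property of a B-bundle pair.
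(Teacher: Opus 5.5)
Your proposal is correct and follows the paper's own route: the paper also obtains the corollary directly from the B-bundle pair construction lemma. It checks the hypotheses with exactly the three lemmas you use, namely the identification of $\Ecal^1_u$ in the trivialization $\Psi_{u_1}$, the $(L^p,W^{1,p})$ B-pair lemma, and the Eliasson-type smoothness of the transition maps. Your extra bookkeeping only makes explicit what the paper leaves implicit: that charts centered at smooth $u_1$ cover the base, the passage from $\Vcal$ to $\Ucal$ via $\exp_{u_1}$, and the restriction to $\Bcal$.
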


\begin{proof}
    This follows from the B-bundle pair construction lemma (Lemma \ref{app lem: B-Bundle Construction}), whose assumptions have been checked in Lemmata \ref{app lem: Ecal^1_u in trivialization}, \ref{app lem: (L^p,W^1,p) B-pair} and \ref{app lem: transition maps for Ecal^1 smooth}.
\end{proof}

\printbibliography

\end{document}